\pgfplotsset{compat=1.10}
\pgfplotsset{compat = newest}
\colorlet{olive1}{olive!25}
\colorlet{olive2}{olive!35}
\colorlet{olive3}{olive!45}
\newcommand{\colvec}[2][.8]{%
  \scalebox{#1}{%
    \renewcommand{\arraystretch}{.8}%
    $\begin{bmatrix}#2\end{bmatrix}$%
  }
}
\begin{document}
\newtheorem{thm}{Theorem}[section]
\newtheorem*{thm1}{Theorem 1}
\newtheorem*{thm2}{Theorem 2}
\newtheorem{lemma}[thm]{Lemma}
\newtheorem{lem}[thm]{Lemma}
\newtheorem{cor}[thm]{Corollary}
\newtheorem*{cor*}{Corollary}
\newtheorem{prop}[thm]{Proposition}
\newtheorem{propose}[thm]{Proposition}
\newtheorem{variant}[thm]{Variant}
\theoremstyle{definition}
\newtheorem{notations}[thm]{Notations}
\newtheorem{rem}[thm]{Remark}
\newtheorem{rmk}[thm]{Remark}
\newtheorem{rmks}[thm]{Remarks}
\newtheorem{defn}[thm]{Definition}
\newtheorem{ex}[thm]{Example}
\newtheorem{quest}[thm]{Question}
\newtheorem{claim}[thm]{Claim}
\newtheorem{ass}[thm]{Assumption}
\numberwithin{equation}{section}
\newcounter{elno}                
\def\points{\list{\hss\llap{\upshape{(\roman{elno})}}}{\usecounter{elno}}}
\let\endpoints=\endlist


\catcode`\@=11
%
%
\def\opn#1#2{\def#1{\mathop{\kern0pt\fam0#2}\nolimits}} 
\def\bold#1{{\bf #1}}%
\def\underrightarrow{\mathpalette\underrightarrow@}
\def\underrightarrow@#1#2{\vtop{\ialign{$##$\cr
 \hfil#1#2\hfil\cr\noalign{\nointerlineskip}%
 #1{-}\mkern-6mu\cleaders\hbox{$#1\mkern-2mu{-}\mkern-2mu$}\hfill
 \mkern-6mu{\to}\cr}}}
\let\underarrow\underrightarrow
\def\underleftarrow{\mathpalette\underleftarrow@}
\def\underleftarrow@#1#2{\vtop{\ialign{$##$\cr
 \hfil#1#2\hfil\cr\noalign{\nointerlineskip}#1{\leftarrow}\mkern-6mu
 \cleaders\hbox{$#1\mkern-2mu{-}\mkern-2mu$}\hfill
 \mkern-6mu{-}\cr}}}

%

%
\def\:{\colon}
\let\oldtilde=\tilde
\def\tilde#1{\mathchoice{\widetilde{#1}}{\widetilde{#1}}%
{\indextil{#1}}{\oldtilde{#1}}}
\def\indextil#1{\lower2pt\hbox{$\textstyle{\oldtilde{\raise2pt%
\hbox{$\scriptstyle{#1}$}}}$}}
\def\pnt{{\raise1.1pt\hbox{$\textstyle.$}}}
%

%
\let\amp@rs@nd@\relax

\newdimen\ex@\ex@.2326ex
\newdimen\bigaw@
\newdimen\minaw@
\minaw@16.08739\ex@
\newdimen\minCDaw@
\minCDaw@2.5pc
\newif\ifCD@
\def\minCDarrowwidth#1{\minCDaw@#1}
\newenvironment{CD}{\@CD}{\@endCD}
\def\@CD{\def\A##1A##2A{\llap{$\vcenter{\hbox
 {$\scriptstyle##1$}}$}\Big\uparrow\rlap{$\vcenter{\hbox{%
$\scriptstyle##2$}}$}&&}%
\def\V##1V##2V{\llap{$\vcenter{\hbox
 {$\scriptstyle##1$}}$}\Big\downarrow\rlap{$\vcenter{\hbox{%
$\scriptstyle##2$}}$}&&}%
\def\={&\hskip.5em\mathrel
 {\vbox{\hrule width\minCDaw@\vskip3\ex@\hrule width
 \minCDaw@}}\hskip.5em&}%
\def\verteq{\Big\Vert&&}%
\def\noarr{&&}%
\def\vspace##1{\noalign{\vskip##1\relax}}\relax\let\amp@rs@nd@&\iffalse}\fi
\CD@true\vcenter\bgroup\relax\let\\=\cr\iffalse}\fi\tabskip\z@skip\baselineskip20\ex@
 \lineskip3\ex@\lineskiplimit3\ex@\halign\bgroup
 &\hfill$\m@th##$\hfill\cr}
\def\@endCD{\cr\egroup\egroup}
%
\def\>#1>#2>{\amp@rs@nd@\setbox\z@\hbox{$\scriptstyle
 \;{#1}\;\;$}\setbox\@ne\hbox{$\scriptstyle\;{#2}\;\;$}\setbox\tw@
 \hbox{$#2$}\ifCD@
 \global\bigaw@\minCDaw@\else\global\bigaw@\minaw@\fi
 \ifdim\wd\z@>\bigaw@\global\bigaw@\wd\z@\fi
 \ifdim\wd\@ne>\bigaw@\global\bigaw@\wd\@ne\fi
 \ifCD@\hskip.5em\fi
 \ifdim\wd\tw@>\z@
 \mathrel{\mathop{\hbox to\bigaw@{\rightarrowfill}}\limits^{#1}_{#2}}\else
 \mathrel{\mathop{\hbox to\bigaw@{\rightarrowfill}}\limits^{#1}}\fi
 \ifCD@\hskip.5em\fi\amp@rs@nd@}
\def\<#1<#2<{\amp@rs@nd@\setbox\z@\hbox{$\scriptstyle
 \;\;{#1}\;$}\setbox\@ne\hbox{$\scriptstyle\;\;{#2}\;$}\setbox\tw@
 \hbox{$#2$}\ifCD@
\global\bigaw@\minCDaw@\else\global\bigaw@\minaw@\fi
 \ifdim\wd\z@>\bigaw@\global\bigaw@\wd\z@\fi
 \ifdim\wd\@ne>\bigaw@\global\bigaw@\wd\@ne\fi
 \ifCD@\hskip.5em\fi
 \ifdim\wd\tw@>\z@
 \mathrel{\mathop{\hbox to\bigaw@{\leftarrowfill}}\limits^{#1}_{#2}}\else
 \mathrel{\mathop{\hbox to\bigaw@{\leftarrowfill}}\limits^{#1}}\fi
 \ifCD@\hskip.5em\fi\amp@rs@nd@}
%
%
\newenvironment{CDS}{\@CDS}{\@endCDS}
\def\@CDS{\def\A##1A##2A{\llap{$\vcenter{\hbox
 {$\scriptstyle##1$}}$}\Big\uparrow\rlap{$\vcenter{\hbox{%
$\scriptstyle##2$}}$}&}%
\def\V##1V##2V{\llap{$\vcenter{\hbox
 {$\scriptstyle##1$}}$}\Big\downarrow\rlap{$\vcenter{\hbox{%
$\scriptstyle##2$}}$}&}%
\def\={&\hskip.5em\mathrel
 {\vbox{\hrule width\minCDaw@\vskip3\ex@\hrule width
 \minCDaw@}}\hskip.5em&}
\def\verteq{\Big\Vert&}
\def\novarr{&}
\def\noharr{&&}
\def\SE##1E##2E{\slantedarrow(0,18)(4,-3){##1}{##2}&}
\def\SW##1W##2W{\slantedarrow(24,18)(-4,-3){##1}{##2}&}
\def\NE##1E##2E{\slantedarrow(0,0)(4,3){##1}{##2}&}
\def\NW##1W##2W{\slantedarrow(24,0)(-4,3){##1}{##2}&}
\def\slantedarrow(##1)(##2)##3##4{%
\thinlines\unitlength1pt\lower 6.5pt\hbox{\begin{picture}(24,18)%
\put(##1){\(##2){24}}%
\put(0,8){$\scriptstyle##3$}%
\put(20,8){$\scriptstyle##4$}%
\end{picture}}}
\def\vspace##1{\noalign{\vskip##1\relax}}\relax\let\amp@rs@nd@&\iffalse}\fi
 \CD@true\vcenter\bgroup\relax\let\\=\cr\iffalse}\fi\tabskip\z@skip\baselineskip20\ex@
 \lineskip3\ex@\lineskiplimit3\ex@\halign\bgroup
 &\hfill$\m@th##$\hfill\cr}
\def\@endCDS{\cr\egroup\egroup}

\newdimen\TriCDarrw@
\newif\ifTriV@
\newenvironment{TriCDV}{\@TriCDV}{\@endTriCD}
\newenvironment{TriCDA}{\@TriCDA}{\@endTriCD}
\def\@TriCDV{\TriV@true\def\TriCDpos@{6}\@TriCD}
\def\@TriCDA{\TriV@false\def\TriCDpos@{10}\@TriCD}
\def\@TriCD#1#2#3#4#5#6{%
\setbox0\hbox{$\ifTriV@#6\else#1\fi$}
\TriCDarrw@=\wd0 \advance\TriCDarrw@ 24pt
\advance\TriCDarrw@ -1em
\def\SE##1E##2E{\slantedarrow(0,18)(2,-3){##1}{##2}&}
\def\SW##1W##2W{\slantedarrow(12,18)(-2,-3){##1}{##2}&}
\def\NE##1E##2E{\slantedarrow(0,0)(2,3){##1}{##2}&}
\def\NW##1W##2W{\slantedarrow(12,0)(-2,3){##1}{##2}&}
\def\slantedarrow(##1)(##2)##3##4{\thinlines\unitlength1pt
\lower 6.5pt\hbox{\begin{picture}(12,18)%
\put(##1){\(##2){12}}%
\put(-4,\TriCDpos@){$\scriptstyle##3$}%
put(12,\TriCDpos@){$\scriptstyle##4$}%
\end{picture}}}
\def\={\mathrel {\vbox{\hrule
   width\TriCDarrw@\vskip3\ex@\hrule width
   \TriCDarrw@}}}
\def\>##1>>{\setbox\z@\hbox{$\scriptstyle
 \;{##1}\;\;$}\global\bigaw@\TriCDarrw@
 \ifdim\wd\z@>\bigaw@\global\bigaw@\wd\z@\fi
 \hskip.5em
 \mathrel{\mathop{\hbox to \TriCDarrw@
{\rightarrowfill}}\limits^{##1}}
 \hskip.5em}
\def\<##1<<{\setbox\z@\hbox{$\scriptstyle
 \;{##1}\;\;$}\global\bigaw@\TriCDarrw@
 \ifdim\wd\z@>\bigaw@\global\bigaw@\wd\z@\fi
 \mathrel{\mathop{\hbox to\bigaw@{\leftarrowfill}}\limits^{##1}}}
 \CD@true\vcenter\bgroup\relax\let\\=\cr\iffalse}\fi

\tabskip\z@skip\baselineskip20\ex@
 \lineskip3\ex@\lineskiplimit3\ex@
 \ifTriV@
 \halign\bgroup
 &\hfill$\m@th##$\hfill\cr
#1&\multispan3\hfill$#2$\hfill&#3\\
&#4&#5\\
&&#6\cr\egroup%
\else
 \halign\bgroup
 &\hfill$\m@th##$\hfill\cr
&&#1\\%
&#2&#3\\
#4&\multispan3\hfill$#5$\hfill&#6\cr\egroup
\fi}
\def\@endTriCD{\egroup}
\newcommand{\mc}{\mathcal}
\newcommand{\mb}{\mathbb}
\newcommand{\surj}{\twoheadrightarrow}
\newcommand{\inj}{\hookrightarrow} \newcommand{\zar}{{\rm zar}}
\newcommand{\an}{{\rm an}} \newcommand{\red}{{\rm red}}
\newcommand{\Rank}{{\rm rk}} \newcommand{\codim}{{\rm codim}}
\newcommand{\rank}{{\rm rank}} \newcommand{\Ker}{{\rm Ker \ }}
\newcommand{\Pic}{{\rm Pic}} \newcommand{\Div}{{\rm Div}}
\newcommand{\Hom}{{\rm Hom}} \newcommand{\im}{{\rm im}}
\newcommand{\Spec}{{\rm Spec \,}} \newcommand{\Sing}{{\rm Sing}}
\newcommand{\sing}{{\rm sing}} \newcommand{\reg}{{\rm reg}}
\newcommand{\Char}{{\rm char}} \newcommand{\Tr}{{\rm Tr}}
\newcommand{\Gal}{{\rm Gal}} \newcommand{\Min}{{\rm Min \ }}
\newcommand{\Max}{{\rm Max \ }} \newcommand{\Alb}{{\rm Alb}\,}
\newcommand{\GL}{{\rm GL}\,} 
\newcommand{\ie}{{\it i.e.\/},\ } \newcommand{\niso}{\not\cong}
\newcommand{\nin}{\not\in}
\newcommand{\soplus}[1]{\stackrel{#1}{\oplus}}
\newcommand{\by}[1]{\stackrel{#1}{\rightarrow}}
\newcommand{\longby}[1]{\stackrel{#1}{\longrightarrow}}
\newcommand{\vlongby}[1]{\stackrel{#1}{\mbox{\large{$\longrightarrow$}}}}
\newcommand{\ldownarrow}{\mbox{\Large{\Large{$\downarrow$}}}}
\newcommand{\lsearrow}{\mbox{\Large{$\searrow$}}}
\newcommand{\dlog}{{\rm dlog}\,} 
\newcommand{\longto}{\longrightarrow}
\newcommand{\vlongto}{\mbox{{\Large{$\longto$}}}}
\newcommand{\limdir}[1]{{\displaystyle
{\mathop{\rm lim}_{\buildrel\longrightarrow\over{#1}}}}\,}
\newcommand{\liminv}[1]{{\displaystyle{\mathop{\rm lim}_{\buildrel
\longleftarrow\over{#1}}}}\,}
\newcommand{\norm}[1]{\mbox{$\parallel{#1}\parallel$}}
\newcommand{\boxtensor}{{\Box\kern-9.03pt\raise1.42pt\hbox{$\times$}}}
\newcommand{\into}{\hookrightarrow} \newcommand{\image}{{\rm image}\,}
\newcommand{\Lie}{{\rm Lie}\,} 
\newcommand{\CM}{\rm CM}
\newcommand{\sext}{\mbox{${\mathcal E}xt\,$}} 
\newcommand{\shom}{\mbox{${\mathcal H}om\,$}} 
\newcommand{\coker}{{\rm coker}\,} 
\newcommand{\sm}{{\rm sm}}
\newcommand{\tensor}{\otimes}
\renewcommand{\iff}{\mbox{ $\Longleftrightarrow$ }}
\newcommand{\supp}{{\rm supp}\,}
\newcommand{\ext}[1]{\stackrel{#1}{\wedge}}
\newcommand{\onto}{\mbox{$\,\>>>\hspace{-.5cm}\to\hspace{.15cm}$}}
\newcommand{\propsubset} {\mbox{$\textstyle{
\subseteq_{\kern-5pt\raise-1pt\hbox{\mbox{\tiny{$/$}}}}}$}}
\renewcommand{\propsubset} {\mbox{$\textstyle{
\subseteq_{\kern-5pt\raise-1pt\hbox{\mbox{\tiny{$/$}}}}}$}}
\newcommand{\oOplus}{\mbox{\fontsize{17.28}{21.6}\selectfont\( \oplus\)}}
\newcommand{\sA}{{\mathcal A}}
\newcommand{\sB}{{\mathcal B}} \newcommand{\sC}{{\mathcal C}}
\newcommand{\sD}{{\mathcal D}} \newcommand{\sE}{{\mathcal E}}
\newcommand{\sF}{{\mathcal F}} \newcommand{\sG}{{\mathcal G}}
\newcommand{\sH}{{\mathcal H}} \newcommand{\sI}{{\mathcal I}}
\newcommand{\sJ}{{\mathcal J}} \newcommand{\sK}{{\mathcal K}}
\newcommand{\sL}{{\mathcal L}} \newcommand{\sM}{{\mathcal M}}
\newcommand{\sN}{{\mathcal N}} \newcommand{\sO}{{\mathcal O}}
\newcommand{\sP}{{\mathcal P}} \newcommand{\sQ}{{\mathcal Q}}
\newcommand{\sR}{{\mathcal R}} \newcommand{\sS}{{\mathcal S}}
\newcommand{\sT}{{\mathcal T}} \newcommand{\sU}{{\mathcal U}}
\newcommand{\sV}{{\mathcal V}} \newcommand{\sW}{{\mathcal W}}
\newcommand{\sX}{{\mathcal X}} \newcommand{\sY}{{\mathcal Y}}
\newcommand{\sZ}{{\mathcal Z}} \newcommand{\ccL}{\sL}
 \newcommand{\A}{{\mathbb A}} \newcommand{\B}{{\mathbb
B}} \newcommand{\C}{{\mathbb C}} \newcommand{\D}{{\mathbb D}}
\newcommand{\E}{{\mathbb E}} \newcommand{\F}{{\mathbb F}}
\newcommand{\G}{{\mathbb G}} \newcommand{\HH}{{\mathbb H}}
\newcommand{\I}{{\mathbb I}} \newcommand{\J}{{\mathbb J}}
\newcommand{\M}{{\mathbb M}} \newcommand{\N}{{\mathbb N}}
\renewcommand{\P}{{\mathbb P}} \newcommand{\Q}{{\mathbb Q}}

\newcommand{\R}{{\mathbb R}} \newcommand{\T}{{\mathbb T}}
\newcommand{\U}{{\mathbb U}} \newcommand{\V}{{\mathbb V}}
\newcommand{\W}{{\mathbb W}} \newcommand{\X}{{\mathbb X}}
\newcommand{\Y}{{\mathbb Y}} \newcommand{\Z}{{\mathbb Z}}
\title{Density functions for Epsilon multiplicity and families of ideals}
\author{Suprajo Das}
\address{Department of Mathematics, Indian Institute of Technology Madras, Chennai, Tamil Nadu 600036, India}
\email{dassuprajo@gmail.com}
\author{Sudeshna Roy}
\address{Department of Mathematics, Indian Institute of Technology Madras, Chennai, Tamil Nadu 600036, India}
\email{sudeshnaroy.11@gmail.com}
\author{Vijaylaxmi Trivedi}
\address{School of Mathematics, Tata Institute of Fundamental Research, Homi Bhabha Road, Colaba, Mumbai, Maharashtra 400005, India AND Department of Mathematics, University at Buffalo (SUNY),  Buffalo, NY 14260}
\email{vija@math.tifr.res.in; vijaylax@buffalo.edu}

\subjclass[2020]{Primary 13H15, 14C17, 13A30, 14C20, Secondary 13B22}
\keywords{epsilon multiplicity, density function, filtration}

\begin{abstract}
A density function for an algebraic invariant is a measurable function on $\mathbb{R}$ which {\em measures} the invariant on an $\mathbb{R}$-scale. This function carries a lot more information related to the invariant without seeking extra data. It has turned out to be a useful tool, which was introduced by the third author in \cite{Tri18}, to study the characteristic $p$ invariant, namely Hilbert-Kunz multiplicity of a homogeneous ${\bf m}$-primary ideal.

Here we construct {\em density functions} $f_{A,\{I_n\}}$ for a Noetherian filtration $\{I_n\}_{n\in\N}$ of homogeneous ideals and $f_{A,\{\widetilde{I^n}\}}$ for a filtration given by the saturated powers of a homogeneous ideal $I$ in a
standard graded domain $A$. As a consequence, we get a density function $f_{\varepsilon(I)}$ for the epsilon multiplicity $\varepsilon(I)$ of a homogeneous ideal $I$ in $A$. We further show that the function $f_{A,\{I_n\}}$ is continuous everywhere except possibly at one point, and $f_{A,\{\widetilde{I^n}\}}$  is a continuous function everywhere and is continuously differentiable except possibly at one point.
As a corollary the epsilon density function $f_{\varepsilon(I)}$ is a compactly supported continuous function on $\mathbb{R}$ except at one point, such that $\int_{\mathbb{R}_{\geq 0}} f_{\varepsilon(I)} = \varepsilon(I)$.

All the three functions $f_{A,\{I^n\}}$, $f_{A,\{\widetilde{I^n}\}}$ and $f_{\varepsilon(I)}$ remain invariant under passage to the integral closure of $I$.

As a corollary of this theory, we observe that the `rescaled' Hilbert-Samuel multiplicities of the diagonal subalgebras form a continuous family.
\end{abstract}

\maketitle {}

\section{Introduction}
Several well-known algebraic invariants or properties of an ideal are studied  in terms of the powers of the ideal and often in an asymptotic way. One such invariant is the  epsilon multiplicity $\varepsilon(I)$ of an ideal $I$. It is a well-known fact, which was established by Rees, that two ${\bf m}$ primary deals $I\subseteq J$ in an equidimensional universally catenary Noetherian local ring $(A, {\bf m})$ have the same integral closure if and only if they have the same (Hilbert-Samuel) multiplicity. Due to its immense
applications in commutative algebra and algebraic geometry (especially, in the singularity theory), researchers started searching for an analogue of the multiplicity for non ${\bf m}$-primary ideals to examine integral dependence. For this the notion of epsilon multiplicity was introduced by Kleiman \cite{KUV} and Ulrich-Validashti \cite{UV08}, which was initially defined as follows:

If $(A, {\bf m})$ is a $d$-dimensional local ring and $I$ is an ideal in $A$, then the epsilon multiplicity $\varepsilon(I)$ of $I$ is $$\varepsilon(I) = \limsup_{n\to \infty}\dfrac{\ell_A(H^0_{\bf m}(A/I^n))}{n^d/d!} = \limsup_{n\to \infty}\frac{\ell_A({\widetilde{I^n}}/I^n)}{n^d/d!},$$ because by definition $H^0_{\bf m}(A/I^n) = {\widetilde{I^n}}/I^n$, where ${\widetilde{I^n}} = (I^n:_A{\bf m}^{\infty})$ denotes the saturation of the ideal $I^n$. In particular, if $I$ is ${\bf m}$-primary then it coincides with the usual multiplicity of $I$. It was shown by Cutkosky \cite{Cut14} that the `limsup' can be replaced by `lim' under very mild conditions on $A$. In \cite{CHST05}, the authors gave an example of an ideal in a regular local ring having irrational epsilon multiplicity, which suggests that it might be a difficult invariant to compute in general.
 
In this paper we work in a graded setup and  construct a {\em density function} for $\varepsilon(I)$.
Roughly speaking, a density function for a given invariant, say $\epsilon$, is an integrable function $f_{\epsilon}:\R\longto \R$ which gives a {\em new measure} $\mu_{f_{\epsilon}}$ on $\R$ such that the integration $\int_Ef_{\epsilon}$  on a subset $E\subset \R$ is the  measure of the invariant $\epsilon$ on $E$.
The notion of density function, though seemingly complicated, keeps track of the behaviour of the `original' invariant in more than one way, and also carries information about other related invariants. Such properties were used while studying Hilbert-Kunz multiplicity, which seems hard to compute, and one was able to answer two long-standing questions in \cite{Tri23} and \cite{Tri20}. In \cite{Tri20} one of the crucial steps is the fact (\cite[Theorem 6.4]{TW21}) that the support of the Hilbert-Kunz density function determines the $F$-threshold $c^I({\bf m})$ of $I$ at ${\bf m}$. See \cite{MT20}, \cite{TW22}, \cite{MM24}, and \cite{CSTZ24} for some recent works on these kind of density functions.

Following is a quick  summary of the main results in this paper. Let $A = \oOplus_{n\geq 0}A_n$ be a standard graded finitely generated algebra over a field $A_0 = k$, with homogeneous maximal ideal ${\bf m} = \oOplus_{n\geq 1}A_n$. Further, assume that $A$ is a domain of Krull dimension $d\geq 2$. Let $I\subsetneq A$ be a nonzero homogeneous ideal. Suppose that $I$ is minimally generated in degrees $d_1 < d_2< \cdots < d_l$
(which is well-defined by Nakayama's lemma).
We define a function $f_{A,\{I^n\}}:\R_{\geq 0}\longto 
\R_{\geq 0}$
as $$f_{A,\{I^n\}}(x) = \limsup_{n\to \infty}\frac{\ell_k \big((I^n)_{\lfloor xn\rfloor}\big)}{n^{d-1}/d!}.$$
The following result is proved in Theorem \ref{t3}
in a more general form. A crucial input comes from a result of Sturmfels \cite{Stu95} on vector partition functions.

\begin{thm}\label{0.1}Let the notations be as above. Then the following statements are true:
 \begin{enumerate}
 \item $f_{A,\{I^n\}}(x)$ exists as a limit for all $x\in\mathbb{R}_{\geq 0}$ and it is continuous everywhere except possibly at $x=d_1$.
  \item Further $$f_{A,\{I^n\}}(x) = \begin{cases}
                                      0 & \forall x\in [0, d_1),\\
                                      {\bf p}_1(x) & \forall x\in (d_1, d_2],\\
                                      {\bf p}_j(x) & \forall x\in [d_j, d_{j+1}]~~\mbox{and}~~j=2, \ldots, l-1,\\
                                      {\bf p}_l(x) & \forall x\in [d_l, \infty),
                                     \end{cases}$$
  where for each $j=1,\ldots, l$, ${\bf p}_j(x)$ is
  a nonzero rational polynomial of degree $\leq d-1$. Moreover, ${\bf p}_l(x)$ has degree $d-1$.
  \item For all integers $c>0$, we have 
  $$\int_0^cf_{A,\{I^n\}}(x)dx = \lim_{n\to \infty} \frac{\sum_{m=0}^{cn}\ell_k\big((I^n)_m\big)}{n^d/d!}.$$
 \end{enumerate}
\end{thm}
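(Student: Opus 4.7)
My plan is to analyse the bigraded Hilbert function of the Rees algebra $R(I) = \bigoplus_{n \geq 0} I^n$, equipped with its natural bigrading $R(I)_{(n, m)} = (I^n)_m$. Fixing homogeneous generators $g_1, \ldots, g_s$ of $I$ with $\deg g_i = a_i \in \{d_1, \ldots, d_l\}$, I would present $R(I)$ as a bigraded quotient of $A[y_1, \ldots, y_s]$, where $\deg(y_i) = (1, a_i)$ and $A$ sits in bidegree $(0, *)$ with its standard grading. Since $R(I)$ has Krull dimension $d + 1$, Sturmfels' theorem on vector partition functions, applied via a bigraded free resolution of $R(I)$ over $A[y_1, \ldots, y_s]$, tells us that $H(n, m) := \dim_k (I^n)_m$ is piecewise quasi-polynomial on $\mathbb{Z}^2_{\geq 0}$ of total degree $d - 1$, with the chamber walls being precisely the rays $\{m = d_j n\}_{j = 1}^{l}$ through the origin. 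Along a ray $m = x n$ in the interior of a chamber, the leading $n^{d - 1}$-coefficient of the chamber's quasi-polynomial is an honest polynomial in $x$, which gives existence of $f_{A, \{I^n\}}(x)$ as a limit at all non-wall points and yields part (1) on the complement of $\{d_1, \ldots, d_l\}$.

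To obtain (2), I will read off the polynomials $\mathbf{p}_j$ from these chamber-wise leading terms. Below the first ray, $m < d_1 n$ forces $I^n \subseteq {\bf m}^{d_1 n}$, so $H(n, m) = 0$ and hence $f_{A, \{I^n\}} \equiv 0$ on $[0, d_1)$. For $x \geq d_l$, the defect $\dim_k (A/I^n)_{\lfloor xn \rfloor}$ is $O(n^{d - 2})$ because $A$ is a domain and $I \neq 0$ force $\dim A/I \leq d - 1$; consequently $\mathbf{p}_l$ inherits the leading-order behaviour of the Hilbert polynomial of $A$ at degree $xn$, and in particular $\deg \mathbf{p}_l = d - 1$ with nonzero leading coefficient. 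For continuity at an interior wall $x = d_j$ with $j \geq 2$, I would invoke the fact (intrinsic to the piecewise quasi-polynomial representation) that the quasi-polynomials on adjacent chambers must coincide on the common wall $m = d_j n$; passing to leading $n^{d - 1}$ coefficients yields $\mathbf{p}_{j - 1}(d_j) = \mathbf{p}_j(d_j)$. A genuine jump at $x = d_1$ is permitted, since the limit from the left is zero while $\mathbf{p}_1(d_1)$ need not vanish.

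For (3), I will partition the range $0 \leq m \leq cn$ according to the chambers and use the chamber-wise uniform asymptotic
\[
H(n, m) = \tfrac{n^{d - 1}}{d!} \, \mathbf{p}_j(m/n) + O(n^{d - 2}), \qquad d_j n \leq m \leq d_{j + 1} n.
\]
Summing across chambers and dividing by $n^d / d!$ produces
\[
\frac{d!}{n^d} \sum_{m = 0}^{cn} H(n, m) \;=\; \frac{1}{n} \sum_{j} \sum_{d_j n \leq m \leq \min(d_{j + 1} n,\, cn)} \mathbf{p}_j(m/n) \;+\; O(1/n),
\]
and the right-hand side is a Riemann sum converging to $\sum_j \int_{d_j}^{\min(d_{j + 1}, c)} \mathbf{p}_j(x) \, dx = \int_0^c f_{A, \{I^n\}}(x) \, dx$. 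The hard part will be verifying that Sturmfels' theorem, applied to the non-standard bigraded setting of $R(I)$, furnishes the chamber-wise asymptotic with an $O(n^{d - 2})$ error uniformly in $m$, and checking that the leading $n^{d - 1}$-part of the resulting quasi-polynomial is genuinely a polynomial (not periodic) in the ray parameter $x$, so that each $\mathbf{p}_j$ is well-defined and the Riemann-sum passage in (3) is rigorous.
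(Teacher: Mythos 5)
Your plan follows the same high-level strategy as the paper (Sturmfels' vector partition theorem applied to the bigraded Rees algebra), but there are genuine gaps at precisely the places where the paper has to work hardest. First, the quasi-polynomial expression for $\ell_k\big((I^n)_m\big)$ does not hold on the full chambers $\mathfrak C_j$: presenting $R(I)$ as a quotient of $S$ introduces a rational Hilbert series with numerator $p(x,y) = \sum c_{\alpha,\beta}x^\alpha y^\beta$, and the formula $\ell_k(R_{m,n}) = \sum c_{\alpha,\beta}\,\phi_M(m-\alpha,n-\beta)$ produces a \emph{single} quasi-polynomial only when every translate $(m-\alpha,n-\beta)$ lies in one chamber, i.e.\ only on the translated restricted cone ${\mathfrak R}C_j \subsetneq \mathfrak C_j$ (see Remark~\ref{restcone}). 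For $n\gg 0$ the wall points $(d_jn, n)$ lie in no restricted cone, so your claim that ``the quasi-polynomials on adjacent chambers coincide on the common wall'' never gets applied to the relevant lattice points --- there is simply no quasi-polynomial formula governing them. This is exactly why the paper invokes the nonzerodivisor of bidegree $(d_1,1)$ in $A[It]$ (which exists since $A$ is a domain) together with Lemmas~\ref{i-1} and~\ref{lrr1}, sandwiching $\ell_k\big((I^n)_{\lfloor d_jn\rfloor}\big)$ between lengths at nearby points that \emph{do} lie in ${\mathfrak R}C_{j-1}$ and ${\mathfrak R}C_j$; and it is the failure of this sandwich at $j=1$ (no chamber to the left) that accounts for the possible discontinuity at $d_1$. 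Example~\ref{exr6} shows the discontinuity at interior walls is a real phenomenon when the nonzerodivisor hypothesis fails, so the gap is not cosmetic.

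Second, your argument that $\deg\mathbf p_l = d-1$ is wrong: the bound $\dim_k(A/I^n)_{\lfloor xn\rfloor} = O(n^{d-2})$ is false in general, since $\dim A/I^n \leq d-1$ controls the growth in $m$ for $n$ fixed, not the growth along diagonals $m = xn$. (For $I = (XY,YZ,ZX)$ in $k[X,Y,Z]$ one has $\ell_k(A_{xn}) - \ell_k\big((I^n)_{xn}\big) \sim \tfrac{3}{2}n^{2} = \Theta(n^{d-1})$ for $x\geq 2$.) The paper gets the degree statement from a lower bound: a nonzerodivisor $h\in I$ of some degree gives $h^n\subseteq I^n$ and hence $\mathbf p_l(x) \geq a_0(x-d_i/e_i)^{d-1}$, which forces $\deg\mathbf p_l = d-1$. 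Finally, you flag as ``the hard part'' that the leading $n^{d-1}$-coefficient of the chamber quasi-polynomial is an honest polynomial in $m/n$; that is indeed the crux of Theorem~\ref{t1}, which is proved by a rather delicate comparison of the $h^2$ period classes via the degree-$(1,0)$ nonzerodivisor on the quotient $A/H^0_{\{I_n\}}(A)$, so this gap is real and not filled by your sketch. Granting Theorem~\ref{t1}, your Riemann-sum / dominated-convergence argument for part $(3)$ is fine, modulo accounting for the $O(1)$-many lattice points per $n$ outside all restricted cones (which contribute $O(n^{d-1}) = o(n^d)$ to the sum).
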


Next we formulate and study a density function for the family of ideals $\{\widetilde {I^n}\}_n$.
If the ideal $I$ is $\mathfrak{m}$-primary then $\widetilde{I^n} = A$ for all $n\geq 0$.
Therefore we assume that $I$ is non $\mathfrak{m}$-primary. Let $\sI$ be the ideal sheaf associated to $I$ on $V = \mbox{Proj}~A$. Let
$$\pi:X = {\bf Proj} \left(\oplus_{n\geq 0}\mathcal{I}^n\right) \longto V$$
be the blow up of $V$ along $\sI$ with the exceptional divisor $E$. Let $H$ be the pull back of a hyperplane 
section on $V$. Define the constants
$$\alpha_I = \min\{x\in \R_{\geq 0}\mid xH-E~~~~\mbox{is pseudoeffective}\},\quad 
   \beta_I = \min\{x\in \R_{\geq 0}\mid xH-E~~~~\mbox{is nef}\}
$$
and define the density  function $f_{A,\{\widetilde {I^n}\}}:\R_{\geq 0}\longto \R_{\geq 0}$ as
$$f_{A,\{\widetilde {I^n}\}}(x) = \limsup_{n\to \infty}\frac{\ell_k\big((\widetilde {I^n})_{\lfloor xn\rfloor}\big)}{n^{d-1}/d!}.$$ When the saturated powers and symbolic powers coincide (e.g. if $\dim A/I=1$) then $\alpha_I$ is precisely the `Waldschmidt constant'. The definition of $\beta_I$ is taken from \cite{CEL01}, where it is called the `$s$-invariant' of the ideal sheaf $\mathcal{I}$.

We prove the following result in Section \ref{sec5} under the additional assumption that the underlying field $k$ is algebraically closed. However, Remark \ref{rmkfield} justifies the validity of these results over an arbitrary field. For the proof, we use several results about the volume function of $\R$-divisors, as given in \cite{Laz04a}, \cite{Laz04b}, \cite{LM09}, \cite{Cut14}, and \cite{Cut15}. A slightly weaker result in positive characteristic is obtained simply by using Frobenius morphism and some basic results in algebraic geometry.

\begin{thm}\label{0.2} Let the notations be as before. Then the following statements are true:
 \begin{enumerate}
 \item  $f_{A,\{\widetilde {I^n}\}}(x)$ exists as a limit for all $x\in \mathbb{R}_{\geq 0}$, $f_{A,\{\widetilde {I^n}\}}:\R_{\geq 0}\longto \R_{\geq 0}$ is continuous and
 $$ f_{A,\{\widetilde {I^n}\}}(x) = d\cdot \mathrm{vol}_X(xH-E).$$ Further, it is a continuously differentiable function everywhere except possibly at $x=\alpha_I$.
 \item If $x \leq \alpha_I$, then
 $f_{A,\{\widetilde {I^n}\}}(x) = 0$.
 \item If $x\geq \beta_I$, then 
$$ f_{A,\{\widetilde {I^n}\}}(x) = d\cdot (xH-E)^{d-1}
= \sum_{i=0}^{d-1}(-1)^i
\frac{d!}{(d-1-i)!i!}(H^{d-1-i}\cdot E^i)x^{d-1-i},$$
where $H^{d-1-i}\cdot E^i$ denotes the intersection number of the Cartier divisors $H^{d-1-i}$ and $E^i$.
 \item For all real numbers $y\geq x \geq \alpha_I$, there is an 
 inequality
$$
 \left(f_{A,\{\widetilde {I^n}\}}(y)\right)^{\frac{1}{d-1}} \geq \left(f_{A,\{\widetilde {I^n}\}}(x)\right)^{\frac{1}{d-1}} + (y-x)(d\cdot e_0(A))^{\frac{1}{d-1}},$$
where $e_0(A)$ denotes the multiplicity of $A$.
\item For all integers $c>0$, we have 
  $$\int_0^cf_{A,\{\widetilde {I^n}\}}(x)dx = \lim_{n\to \infty} \frac{\sum_{m=0}^{cn}\ell_k\big((\widetilde {I^n})_m\big)}{n^d/d!}.$$
 \end{enumerate}
\end{thm}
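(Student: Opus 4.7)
My overall strategy is to translate the density function into the volume function of $\mathbb{R}$-divisors on the blowup $X$, and then apply the well-developed theory of such volumes. For $m$ sufficiently large relative to $n$, one has $(\widetilde{I^n})_m = H^0(V, \mathcal{I}^n(m))$, while the projection formula together with the blowup identity $\pi_* \mathcal{O}_X(-nE) = \overline{\mathcal{I}^n}$ gives
\[
h^0(X,\, mH - nE) \;=\; \dim_k H^0\bigl(V,\,\overline{\mathcal{I}^n}(m)\bigr).
\]
Combining these with the integral-closure invariance of the density function (established elsewhere in the paper), I obtain the key identification
\[
f_{A,\{\widetilde{I^n}\}}(x) \;=\; \lim_{n\to\infty} \frac{h^0\bigl(X,\,\lfloor xn\rfloor H - nE\bigr)}{n^{d-1}/d!} \;=\; d\cdot \mathrm{vol}_X(xH - E),
\]
where the last equality uses $\dim X = d-1$, so that $\mathrm{vol}_X$ is normalized by $(d-1)!/n^{d-1}$ and the extra factor of $d$ absorbs the mismatch. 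The existence of the limit on the left, not merely a $\limsup$, comes from work of Cutkosky \cite{Cut14, Cut15} on the asymptotic behavior of saturated powers.

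Granted this geometric formula, parts (1), (2), (3) reduce to standard properties of $\mathrm{vol}_X$ on $N^1(X)_{\mathbb{R}}$. Continuity of the volume function on the N\'eron--Severi space (\cite{Laz04b, LM09}) yields continuity of $f_{A,\{\widetilde{I^n}\}}$. Since $xH - E$ fails to be big for $x \leq \alpha_I$, vanishing of $\mathrm{vol}_X$ off the big cone gives part (2). For part (3), $xH - E$ is nef for $x \geq \beta_I$, so Kleiman's theorem yields $\mathrm{vol}_X(xH - E) = (xH - E)^{d-1}$, and expanding by the binomial theorem (and multiplying by $d$) produces the stated polynomial formula with coefficients $d!/((d-1-i)!\,i!)$.

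Part (4) follows from the log-concavity (Brunn--Minkowski) of the volume function on the big cone, due to \cite{LM09}:
\[
\mathrm{vol}_X(D + D')^{1/(d-1)} \;\geq\; \mathrm{vol}_X(D)^{1/(d-1)} + \mathrm{vol}_X(D')^{1/(d-1)}.
\]
Applied with $D = xH - E$ and $D' = (y-x)H$, and using $\mathrm{vol}_X((y-x)H) = (y-x)^{d-1} H^{d-1} = (y-x)^{d-1}\,e_0(A)$, this yields the stated inequality after scaling by $d^{1/(d-1)}$. The continuous differentiability in part (1) on $(\alpha_I, \infty)$ follows from the $C^1$-regularity of the volume function on the interior of the pseudoeffective cone (a Boucksom--Favre--Jonsson type statement). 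For part (5), setting $f_n(x) := \ell_k((\widetilde{I^n})_{\lfloor xn\rfloor})/(n^{d-1}/d!)$ gives
\[
\int_0^c f_n(x)\,dx \;=\; \sum_{m=0}^{cn-1} \ell_k\bigl((\widetilde{I^n})_m\bigr)\Big/(n^d/d!),
\]
and pointwise convergence $f_n \to f_{A,\{\widetilde{I^n}\}}$ together with compact support and a uniform polynomial bound permits passage to the limit via dominated convergence.

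The principal obstacle lies in the first step: $\widetilde{I^n}$ and $\widetilde{\overline{I^n}}$ need not agree, and the identification $(\widetilde{I^n})_m = H^0(V, \mathcal{I}^n(m))$ holds only once $m$ is sufficiently large relative to $n$. Converting the na\"ive asymptotic statement into a sharp identity along the entire ray $\{xH - E\}$ requires careful control of error terms uniformly in $n$. This is precisely the point at which the integral-closure invariance of the density function is indispensable, and where the delicate asymptotic estimates for saturated powers from \cite{Cut14, Cut15} must be invoked.
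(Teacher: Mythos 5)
Your overall strategy — translating the density function into the volume $\mathrm{vol}_X(xH-E)$ of an $\R$-divisor on the blowup and then invoking standard properties of the volume function (continuity, $C^1$-regularity on the big cone, asymptotic Riemann--Roch for nef divisors, log-concavity, dominated convergence) — is exactly the paper's route, so the parts (1)--(5) that rest on the volume identification are sound.

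The genuine gap is in your derivation of the key identification $f_{A,\{\widetilde{I^n}\}}(x)=d\cdot\mathrm{vol}_X(xH-E)$. You invoke $\pi_*\mathcal{O}_X(-nE)=\overline{\mathcal{I}^n}$ and then fix the resulting mismatch by appealing to the integral-closure invariance of the saturated density function. There are two problems with this. First, $\pi_*\mathcal{O}_X(-nE)=\overline{\mathcal{I}^n}$ is the statement for the \emph{normalized} blowup; for $X=\mathbf{Proj}\bigl(\oplus_{n\geq 0}\mathcal{I}^n\bigr)$ the relevant fact (cited in the paper as \cite[Lemma 5.4.24]{Laz04a}) is that $\pi_*\mathcal{O}_X(-nE)=\mathcal{I}^n$ once $n\geq n_0$, which already gives
$h^0(X,\mathcal{O}_X(mH-nE))=h^0(V,\mathcal{I}^n(m))=\ell_k\bigl((\widetilde{I^n})_m\bigr)$
for all $m\geq m_1$, $n\geq n_0$, with no appeal to integral closures at all. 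Second, and more seriously, the integral-closure invariance $f_{A,\{\widetilde{I^n}\}}=f_{A,\{\widetilde{J^n}\}}$ for $J=\overline{I}$ is Theorem 6.1(2)(b) in the paper, whose proof \emph{already uses} the volume formula $f_{A,\{\widetilde{I^n}\}}(x)=d\cdot\mathrm{vol}_X(xH-E)$ that you are trying to establish. Invoking it here is circular. Replace your first step with the direct identity $\pi_*\mathcal{O}_X(-nE)=\mathcal{I}^n$ for $n\gg 0$ and the projection formula, and the circularity disappears.

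Two minor corrections for part (5): $f_{A,\{\widetilde{I^n}\}}$ does not have compact support (it is a nonzero degree-$(d-1)$ polynomial for $x\geq\beta_I$); what you actually need is uniform boundedness of the step functions $g_n$ on the compact interval $[0,c]$, which follows from the uniform convergence $g_n\to f_{A,\{\widetilde{I^n}\}}$ provided by the Lipschitz-type continuity estimate for the volume function (\cite[Theorem 2.2.44]{Laz04a}). Also the Riemann sum should be $\sum_{m=0}^{cn}$, not $\sum_{m=0}^{cn-1}$, to match the statement; the $O(1/n)$ boundary discrepancy is harmless in the limit but the indexing should be consistent.
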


Now we are ready to construct a density function associated to the $\varepsilon$-multiplicity.

\begin{thm}\label{0.3}
Let the notations be as before. Then the following statements are true:
\begin{enumerate}\item The limit 
 $$f_{\varepsilon(I)}(x) = \lim_{n\to\infty}
 \frac{\ell_k\left(H^0_{\bf m}(A/I^n)\right)_{\lfloor xn \rfloor}}
 {n^{d-1}/d!}$$
 exists for all $x\in \R_{\geq 0}$. Moreover,
 $f_{\varepsilon(I)}(x) = f_{A,\{\widetilde {I^n}\}}(x)-
 f_{A,\{I^n\}}(x)$.
 \item The function $f_{\varepsilon(I)}$ is continuous everywhere except possibly at $x= d_1$. Further, it is continuously differentiable outside the finite set $\{\alpha_I, d_1,\ldots, d_l\}$.
 \item The support of $f_{\varepsilon(I)}$
 is contained in the closed interval $[\alpha_I, d_l]$.
 \item 
 $$\int_0^{\infty} f_{\varepsilon(I)}(x)dx = \lim_{n\to\infty}
 \frac{\ell_A\left(H^0_{\bf m}(A/I^n)\right)}{n^d/d!} =: \varepsilon(I).$$
 \end{enumerate}
\end{thm}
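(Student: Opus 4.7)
The plan is to deduce Theorem~\ref{0.3} by subtracting the two density functions constructed in Theorems~\ref{0.1} and~\ref{0.2}. The starting point is the short exact sequence
\[0 \to I^n \to \widetilde{I^n} \to H^0_{{\bf m}}(A/I^n) \to 0\]
of graded $A$-modules. Taking $k$-dimensions in degree $\lfloor xn\rfloor$ and dividing by $n^{d-1}/d!$ produces
\[\frac{\ell_k\big((H^0_{{\bf m}}(A/I^n))_{\lfloor xn\rfloor}\big)}{n^{d-1}/d!} \;=\; \frac{\ell_k\big((\widetilde{I^n})_{\lfloor xn\rfloor}\big)}{n^{d-1}/d!} \;-\; \frac{\ell_k\big((I^n)_{\lfloor xn\rfloor}\big)}{n^{d-1}/d!}.\]
Theorems~\ref{0.1}(1) and~\ref{0.2}(1) guarantee that both terms on the right converge as $n\to\infty$, hence so does the left; this gives (1) together with the identity $f_{\varepsilon(I)} = f_{A,\{\widetilde{I^n}\}} - f_{A,\{I^n\}}$. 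Part (2) is then immediate: $f_{A,\{\widetilde{I^n}\}}$ is continuous on $\R_{\geq 0}$ and $C^1$ away from $\alpha_I$, while $f_{A,\{I^n\}}$ is piecewise polynomial, continuous outside $\{d_1\}$, and smooth on the open intervals cut out by $\{d_1,\ldots,d_l\}$, so the difference inherits the asserted continuity and $C^1$-regularity.

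For (3), the inclusion $I^n\subseteq\widetilde{I^n}$ yields $0 \leq f_{A,\{I^n\}} \leq f_{A,\{\widetilde{I^n}\}}$ pointwise, so $f_{\varepsilon(I)}\geq 0$; vanishing for $x<\alpha_I$ is then immediate from Theorem~\ref{0.2}(2). The upper bound on the support is the delicate point: I would invoke the asymptotic linearity of Castelnuovo--Mumford regularity for powers of a graded ideal (Cutkosky--Herzog--Trung, Kodiyalam), which supplies a constant $c_0$ with $\mathrm{reg}(I^n)\leq d_l\,n + c_0$ for $n\gg 0$. Consequently $(\widetilde{I^n})_m = (I^n)_m$ whenever $m\geq d_l\,n + c_0$, and for any $x>d_l$ this inequality is eventually satisfied by $m=\lfloor xn\rfloor$; thus $f_{\varepsilon(I)}(x)=0$ and $\mathrm{supp}(f_{\varepsilon(I)})\subseteq[\alpha_I,d_l]$.

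Finally, (4) follows by combining Theorems~\ref{0.1}(3) and~\ref{0.2}(5) for any integer $c > d_l$:
\[\int_0^c f_{\varepsilon(I)}(x)\,dx \;=\; \lim_{n\to\infty}\frac{\sum_{m=0}^{cn}\ell_k\big((H^0_{{\bf m}}(A/I^n))_m\big)}{n^d/d!}.\]
The support bound from (3) identifies the left-hand side with $\int_0^\infty f_{\varepsilon(I)}$, while the same regularity estimate shows that for $n$ large the partial sum on the right captures the full length $\ell_A(H^0_{{\bf m}}(A/I^n))$; the right-hand side is therefore $\varepsilon(I)$ as a genuine limit rather than merely a $\limsup$. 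The main obstacle throughout is precisely the asymptotic linear regularity bound used in (3) and (4); once this classical input is invoked, everything else is routine book-keeping on top of the two density functions already at our disposal.
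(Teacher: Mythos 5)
Your decomposition $f_{\varepsilon(I)}=f_{A,\{\widetilde{I^n}\}}-f_{A,\{I^n\}}$ and the derivation of parts (1) and (2) from it are exactly what the paper does, and are fine. The interesting divergence is in your treatment of the support bound in part (3). You invoke the asymptotic linearity of Castelnuovo--Mumford regularity, $\mathrm{reg}(I^n)\leq d_l\,n+c_0$ for $n\gg0$, to force $(\widetilde{I^n})_m=(I^n)_m$ once $m\geq d_l\,n+c_0$ and hence $f_{\varepsilon(I)}(x)=0$ for $x>d_l$. That is a correct argument, but note that the cited results of Cutkosky--Herzog--Trung and Kodiyalam are stated over a polynomial ring; to apply them over an arbitrary standard graded domain one should cite the Trung--Wang extension. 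The paper instead stays inside the machinery it has already built: on $(d_l,\infty)$ both $f_{A,\{\widetilde{I^n}\}}$ and $f_{A,\{I^n\}}$ are polynomials in $x$ of degree $\leq d-1$ (Theorem~\ref{t2sat}(3) and Theorem~\ref{t3}), their difference is nonnegative, and the bound $\int_0^m f_{\varepsilon(I)}\leq C_0$ coming from the finiteness of $\varepsilon(I)$ (\cite{UV08}) forces a nonnegative polynomial with bounded integral on $[d_l,\infty)$ to vanish identically. The paper's route is thus more self-contained and requires only the already-established polynomiality plus the boundedness of $\varepsilon$, whereas yours imports a classical but somewhat heavier external input. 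Both give the same conclusion, and your use of the regularity estimate also cleanly handles the ``tail'' issue in part (4), which the paper glosses over slightly when it identifies $\sum_{m=0}^{d_l n}\ell_k((\widetilde{I^n}/I^n)_m)$ with $\ell_A(\widetilde{I^n}/I^n)$. So: correct, and a genuinely different argument for (3); just be precise about which version of the regularity theorem you are citing so that it applies to standard graded domains rather than only polynomial rings.
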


The function $f_{\varepsilon(I)}$ will be called the $\varepsilon$-density function. Here we list some applications of density functions (see Sections \ref{sec5}, \ref{sec6} and \ref{sec7} for more details).

\begin{cor} Let the notations be as before. Then the following statements are true:
 \begin{enumerate}
  \item We have $0<\alpha_I \leq d_1$, (it is known that $\alpha_I\leq \beta_I \leq d_l$).
  \item If $\alpha_I = \beta_I$, then $\varepsilon(I)$ is an algebraic number.
  \item If $\dim A = 2$, then $f_{\varepsilon(I)}$ is a piecewise (with finitely many pieces) rational polynomial of degree at most one and
  $\varepsilon(I)\in \Q$.
  \item If $\dim A = 3$ then $f_{\varepsilon(I)}$ is a piecewise (with countably many pieces) real polynomial of degree at most two.
  \item There exist constants $c\geq 0$ and $\beta\geq 0$ such that for all $m\geq nd_l+c$ and $n\geq \beta$,
  $$\ell_k\left((I^n)_m\right) =  \sum_{i=0}^{d-1}
  \dfrac{(-1)^i}{(d-1-i)!i!}(H^{d-1-i}\cdot E^i)m^{d-1-i}n^i
  +Q_l(m,n),$$
  where $Q_l(X,Y) \in \Q[X,Y]$ is a polynomial of degree $<d-1$ (see Remark~\ref{rmkintro}). In particular, the mixed multiplicity $e_i(A[It])$ of the Rees algebra of the ideal $I$ is given in terms of the intersection number (upto sign) of the Cartier divisors $H^{i}$ and $E^{d-1-i}$ on
  $X$.
  \item The multiplicities of the diagonal subalgebras (see Definition \ref{dg}) $R_{\Delta_{I(p,q)}}$ of $R:=A[It]$ are determined by the multiplicity $e\big(R_{\Delta_{I(d_1,1)}}\big)$ and polynomials ${\bf p}_1(x),\ldots,{\bf p}_l(x)$ as in Theorem \ref{0.1}: Given $(p,q)\in\mathbb{N}^2$,
  \begin{align*}
   p/q = d_1 &\implies e(R_{\Delta_{I(d_1,1)}}) = q^{d-1}e(R_{\Delta_{I(d_1,1)}}),\\
   p/q \in I(d_j,d_{j+1}) &\implies e(R_{\Delta_{I(p,q)}}) = \tfrac{q^{d-1}}{d}\cdot \mathbf{p}_j(p/q).
  \end{align*}
  In particular, the rescaled multiplicities of the diagonal subalgebras $\{R_{\Delta_{I(p,q)}}\mid p/q \neq d_1\}$
  form a continuous family.
 \end{enumerate}
\end{cor}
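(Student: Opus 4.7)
My plan is to derive each of the six parts from the density function machinery of Theorems~\ref{0.1} and \ref{0.2}, proceeding in the stated order.

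For part~(1), the key input is $I^n \subseteq \widetilde{I^n}$ termwise, giving $f_{A,\{I^n\}}(x) \leq f_{A,\{\widetilde{I^n}\}}(x)$ pointwise; since Theorem~\ref{0.1}(2) shows $f_{A,\{I^n\}} = {\bf p}_1 \not\equiv 0$ just above $d_1$, one has $\mathrm{vol}_X(xH - E) > 0$ there, forcing $\alpha_I \leq d_1$. Positivity $\alpha_I > 0$ follows because $-E$, being minus a nontrivial effective divisor on the projective variety $X$, is not pseudoeffective. For part~(2), under $\alpha_I = \beta_I$, Theorem~\ref{0.2}(3) makes $f_{A,\{\widetilde{I^n}\}}(x) = d(xH - E)^{d-1}$ on $[\alpha_I, \infty)$; continuity at $x = \alpha_I$ together with vanishing for $x \leq \alpha_I$ (Theorem~\ref{0.2}(1)--(2)) forces $(\alpha_I H - E)^{d-1} = 0$, exhibiting $\alpha_I$ as a root of the nonzero rational polynomial $\sum_i \binom{d-1}{i}(-1)^i (H^{d-1-i} \cdot E^i)\,x^{d-1-i}$; hence $\alpha_I$ is algebraic. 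The identity $\varepsilon(I) = \int_{\alpha_I}^{d_l}(f_{A,\{\widetilde{I^n}\}} - f_{A,\{I^n\}})\,dx$ then expresses $\varepsilon(I)$ as a polynomial in $\alpha_I$ with rational coefficients, hence an algebraic number.

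For part~(3), in dimension two $X$ is a curve, on which pseudoeffectivity and nefness both reduce to nonnegativity of degree; hence $\alpha_I = \beta_I = \deg E / \deg H \in \Q$. Both density functions are then piecewise linear with rational breakpoints and rational slopes, so $f_{\varepsilon(I)}$ is piecewise rational linear of degree at most one and $\varepsilon(I) \in \Q$. For part~(4), $X$ is a projective surface; I would invoke Zariski decomposition to obtain the piecewise polynomial structure of $\mathrm{vol}_X(xH - E)$ of degree at most $d-1=2$ on the big cone, with Zariski chambers contributing countably many pieces, and combine with the finite piecewise polynomial description of $f_{A,\{I^n\}}$ from Theorem~\ref{0.1}(2).

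For part~(5), Theorem~\ref{0.1}(2) gives $f_{A,\{I^n\}}(x) = {\bf p}_l(x)$ on $[d_l, \infty)$, and for $x \geq \beta_I$ the expansion of $d(xH - E)^{d-1}$ from Theorem~\ref{0.2}(3) identifies ${\bf p}_l$ coefficient-by-coefficient with the intersection numbers. Unraveling the limit definition of the density function and appealing to the regularity of the Rees algebra $A[It]$ past a threshold to upgrade asymptotic to exact equality yields the stated formula for $\ell_k((I^n)_m)$ on $m \geq nd_l + c$, $n \geq \beta$, with rational lower-order remainder $Q_l(m,n)$. For part~(6), the $m$-th graded piece of $R_{\Delta_{I(p,q)}}$ is $(I^{qm})_{pm}$, so taking the subsequence $n = qm$ (with $x = p/q$, which avoids the floor) in the density function limit gives $e(R_{\Delta_{I(p,q)}}) = \tfrac{q^{d-1}}{d} \cdot f_{A,\{I^n\}}(p/q)$. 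Theorem~\ref{0.1}(2) rewrites this as $\tfrac{q^{d-1}}{d} \cdot {\bf p}_j(p/q)$ on each interval, and continuity of the rescaled multiplicities in $p/q$ follows from continuity of the density function on $(0,\infty)\setminus\{d_1\}$.

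The main obstacle I anticipate is part~(4): while the piecewise polynomial structure of $\mathrm{vol}_X$ on a surface follows from Zariski decomposition, tracking how the ray $\{xH - E : x \geq \alpha_I\}$ meets the Zariski chambers and verifying that each intersection contributes a polynomial piece of degree at most two to $f_{\varepsilon(I)}$ requires careful bookkeeping. Part~(2) has a secondary subtle point: the integral representation of $\varepsilon(I)$ must be shown to involve $\alpha_I$ through its rational polynomial expression rather than through any other potentially transcendental breakpoint.
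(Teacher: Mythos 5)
Your outline tracks the paper's own route through Sections~\ref{sec5}--\ref{sec7} almost exactly: part~(1) from the termwise inclusion $I^n\subseteq\widetilde{I^n}$ and the observation that $-E$ (an effective nonzero Cartier divisor of positive degree) cannot be pseudoeffective; part~(2) from continuity forcing $(\alpha_I H-E)^{d-1}=0$; part~(3) from the coincidence of the nef and pseudoeffective cones on a curve; part~(4) from Zariski decomposition of $\mathrm{vol}$ on a surface; part~(6) from evaluating the density-function limit along the subsequence $n=qm$.

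There is one genuine elision, in part~(5). Theorem~\ref{0.2}(3) gives the formula $d(xH-E)^{d-1}$ only for the \emph{saturated} density function $f_{A,\{\widetilde{I^n}\}}$ on $[\beta_I,\infty)$, while $\mathbf{p}_l$ governs $f_{A,\{I^n\}}$ on $[d_l,\infty)$; to identify the two, you need the nontrivial fact that $f_{\varepsilon(I)}=f_{A,\{\widetilde{I^n}\}}-f_{A,\{I^n\}}$ vanishes identically on $(d_l,\infty)$. This is Theorem~\ref{0.3}(3) (equivalently Remark~\ref{rmkintro}), obtained in the paper by integrating the nonnegative difference of polynomials over $(d_l,m)$ and using finiteness of $\varepsilon(I)$; it must be invoked explicitly rather than read off Theorem~\ref{0.2}(3) alone. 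Two smaller points: the mechanism that upgrades the asymptotic identification of $\mathbf{p}_l$ to the exact polynomial formula for $\ell_k((I^n)_m)$ on the last restricted cone is the paper's Theorem~\ref{0.5} (via the Sturmfels vector-partition theorem, recovering Herzog--Trung), not ``regularity of the Rees algebra'' as such --- the conclusion is the same but the cited mechanism should match; and in part~(4), $X$ need not be smooth, so the paper first passes to a resolution of singularities $X'\to X$ (volume is a birational invariant) before invoking the Zariski-chamber decomposition of \cite{BKS04}, a step your sketch should make explicit.
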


It should be pointed out that the notion of mixed multiplicities of the Rees algebra \cite[Section 4]{HT03} differs from the classical notion of mixed multiplicities due to Teissier, see \cite{KV89}. However, in Subsection \ref{mixed_mult}, we show how these two notions are related. We also obtain their interpretation in terms of intersection numbers, which does seem to appear in the literature.

We also observe (Theorem \ref{6.1}) that all the above three density functions remain invariant under passage to the integral closure of $I$. In particular, if $I$ and $J$ are two homogeneous ideals with the same integral closure  then
$$f_{\varepsilon(I)}(x) = f_{\varepsilon(J)}(x),\quad\mbox{for all}\;\; x\in \R_{\geq 0}.$$

\begin{rmk}
The converse question about equality of these density functions has been recently answered by the authors in \cite{DRT24}. By using the above theory of density functions, we gave a set of numerical characterizations of the integral dependence of ideals in a graded setup, where the invariants involved are computatable and well-studied.
\end{rmk}

We briefly describe the techniques which go in the proofs. We first study the length function
$$\ell_k:\cup_j{\mathfrak R}C_j\longto \Z_{\geq 0}\quad \mbox{given by}\quad  (m,n)\mapsto  \ell_k(I^n)_m,$$
 where
$\{{\mathfrak R}C_j\}_{1\leq j\leq l}$ is a set of cones in $\R_{\geq 0}^2$. In fact we do it for more general family, namely any Noetherian filtration $\{I_n\}_{n\in\N}$ of ideals, but to avoid technical notations here we  assume  $\{I_n\}_{n\in\N} = \{I^n\}_{n\in\N}$, where   $I$ is generated by homogeneous elements of degrees $d_1<d_2<\cdots <d_l$. So far, other than on diagonal subalgebras,
the behaviour of length function has been studied (see \cite{Rob98}, \cite{HT03}),
on the set 
$\{(m,n)\mid m\geq d_ln+c,~~~n\geq \beta\}$,  where $c$ and $\beta$ are nonnegative constants, {\em i.e.}, for $(m,n)$ lying in the last cone ${\mathfrak R}C_l$.  We recall that for $(m,n)$ belonging to this cone has the nice property that $(m,n)$-diagonal subalgebra is a standard graded ring generated  by $(I^n)_m$.

To elaborate further we need to recall some notations used in the paper. Here $A=\oOplus_{n\geq 0}A_n$ denotes a standard graded Noetherian domain over a field $k$ and $I$ be a homogeneous ideal in $A$. The bigraded algebra $A[It] = \oOplus_{n\geq 0}I^nt^n$ is generated by finitely many homogeneous elements of bidegrees $\{(d_i, 1)\mid 1\leq i\leq l\}$ over $A$.
The bidegrees of $A[It]$ and the Betti numbers of the resolution of  $A[It]$ give the following data in the Euclidean plane.
\begin{enumerate}
 \item For $1\leq j <l$ let ${\mathfrak{C_j}}$ denote the cone in $\R^2$ generated by the vectors
$(d_j, 1)$ and $(d_{j+1}, 1)$ and ${\mathfrak{C_l}}$ be the cone generated by $(d_l, 1) $ and $(1,0)$.
\item  $(\lambda_1, \beta_1),  \ldots, 
(\lambda_l, \beta_l)\in \Q^2_{\geq 0}$ are associated to the Betti numbers (see Remark~\ref{restcone}).
\item ${\mathfrak R}C_j = {\mathfrak C_j}+ (\lambda_j, \beta_j)$ are the  {\em restricted cones} which  are nothing but  the translates of ${\mathfrak C_j}$ by the vector $(\lambda_j, \beta_j)$ (see Figure \ref{cone_polynomials}).
\end{enumerate}

We prove the following (for more general statement on Noetherian family $\{I_n\}_{n\in\N}$ see Theorem \ref{t1} and Theorem \ref{t3}).

\begin{thm}\label{0.5}
 There exist a set of homogeneous polynomials $\{P_1(X,Y)$, $\ldots, P_l(X,Y)\}$ and a set of quasi-polynomials $\{Q_1(X,Y), \ldots, Q_l(X,Y)\}$ in $\Q[X, Y]$ such that for all $j$, $\deg Q_j(X,Y) < \deg P_j(X,Y) = d-1$ and
 $$(m,n)\in {\mathfrak R}C_j\cap \N^2 \implies \ell_k (I^n)_m = P_j(m,n) + Q_j(m,n),$$
and $Q_l(X,Y)$ is a polynomial.
\end{thm}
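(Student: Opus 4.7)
The plan is to realize the Rees algebra $R = A[It] = \bigoplus_{n \geq 0} I^n t^n$ as a finitely generated bigraded module over a bigraded polynomial ring and then to invoke Sturmfels' chamber-wise quasi-polynomial description of vector partition functions \cite{Stu95}. Concretely, I would choose degree-one generators $x_1, \ldots, x_r$ of $A$ over $k$ and minimal homogeneous generators $f_1, \ldots, f_l$ of $I$ with $\deg f_j = d_j$, then form the bigraded polynomial ring $S = k[X_1, \ldots, X_r, T_1, \ldots, T_l]$ with $\deg X_i = (1, 0)$ and $\deg T_j = (d_j, 1)$. The substitution $X_i \mapsto x_i$, $T_j \mapsto f_j t$ yields a bigraded surjection $S \twoheadrightarrow R$, turning $R$ into a finitely generated bigraded $S$-module of Krull dimension $d + 1$. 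Picking a finite minimal bigraded free $S$-resolution $F_\bullet \to R$ and recording the finite multiset $\Sigma \subset \N^2$ of bidegree shifts of its summands, the length $\ell_k(I^n)_m$ can be expressed as an alternating sum of shifted partition functions $\dim_k S_{(m-a, n-b)}$.

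The chamber complex in $\R^2_{\geq 0}$ cut out by the generator bidegrees $(1, 0), (d_l, 1), \ldots, (d_1, 1)$ has its rays already sorted by slope, so its top-dimensional chambers are exactly $\mathfrak{C}_l$ (between $(1, 0)$ and $(d_l, 1)$) and $\mathfrak{C}_j$ (between $(d_{j+1}, 1)$ and $(d_j, 1)$) for $j = 1, \ldots, l-1$, matching the notation in the excerpt. Sturmfels' theorem asserts that $\dim_k S_{(m, n)}$ is a quasi-polynomial of degree $\dim S - 2 = r + l - 2$ on each chamber, valid outside a bounded exceptional region. Combining with the alternating sum from $F_\bullet$ shows that $\ell_k(I^n)_m$ is a quasi-polynomial of degree $\leq \dim R - 2 = d - 1$ on each $\mathfrak{C}_j$ after translating by a vector $(\lambda_j, \beta_j)$ taken to be the coordinatewise maximum over $\Sigma$ of the shifts whose exceptional regions still meet $\mathfrak{C}_j$. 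This furnishes $\mathfrak{R}C_j = \mathfrak{C}_j + (\lambda_j, \beta_j)$ and a single quasi-polynomial expression on $\mathfrak{R}C_j \cap \N^2$, which I split as $P_j + Q_j$ with $P_j$ the homogeneous top-degree part and $Q_j$ the lower-order remainder.

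A standard feature of Sturmfels' residue/volume formula is that the top-degree coefficients are lattice-independent volumes, so $P_j$ is a genuine polynomial rather than a quasi-polynomial. The sharp equality $\deg P_j = d - 1$ should follow from a positivity argument: for $x \in (d_j, d_{j+1})$, the density $\lim_n n^{-(d-1)} \ell_k(I^n)_{\lfloor xn \rfloor}$ equals $P_j(x,1)/(d-1)!$ up to normalization, and it is strictly positive since $A$ is a $d$-dimensional domain and $I^n$ contains monomials $f_1^{a_1} \cdots f_l^{a_l} \cdot g$ of the correct bidegree achieving the expected asymptotic growth. For the last cone, the polynomial (rather than merely quasi-polynomial) nature of $Q_l$ reflects that the ray $(1,0)$ bounding $\mathfrak{C}_l$ corresponds to the degree-one variables $X_i$, whose contribution to $H_R(s, t)$ is through the factor $(1-s)^r$ having only $1$ as a root of unity; equivalently, for $(m, n) \in \mathfrak{R}C_l$ one has $(I^n)_m = A_{m - nd_l} \cdot (I^n)_{nd_l}$ in sufficiently large degrees (cf.\ \cite{HT03}), so $\ell_k(I^n)_m$ becomes an honest polynomial in $(m,n)$ on this chamber.

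The principal obstacle I anticipate is bookkeeping. Sturmfels' theorem produces a separate quasi-polynomial for each summand $S(-\mathbf{a})$ of $F_\bullet$, each valid on a translated version of the chamber complex; extracting a single uniform quasi-polynomial on $\mathfrak{R}C_j$ requires pinning down the shift vector $(\lambda_j, \beta_j)$ in terms of the bigraded Betti numbers so that every summand simultaneously enters its quasi-polynomial region and the alternating sum assembles coherently. Controlling the unbounded chamber $\mathfrak{C}_l$ and promoting the resulting quasi-polynomial there to a genuine polynomial will likely require a separate Castelnuovo-Mumford-type regularity argument beyond the bare Sturmfels formalism.
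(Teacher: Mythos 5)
Your framework---presenting $R=A[It]$ as a quotient of the bigraded polynomial ring $S$, taking a finite bigraded free resolution, and expressing $\ell_k(I^n)_m$ as a shifted alternating sum of vector partition functions---is exactly the paper's starting point (Section 3, leading to Lemma~\ref{qp}). The genuine gap is in the sentence asserting that ``a standard feature of Sturmfels' residue/volume formula is that the top-degree coefficients are lattice-independent volumes, so $P_j$ is a genuine polynomial rather than a quasi-polynomial.'' That feature holds for an \emph{individual} $\phi_M$: there, the main polynomial $P_{\mathfrak C}$ has degree $n_1-m_1$ and all correctors have strictly smaller degree. But after forming the alternating sum $\sum_{(\alpha,\beta)}c_{\alpha,\beta}\,\phi_M(m-\alpha,n-\beta)$, the degree drops from $r+s-2$ down to $d-1$, while the corrector quasi-polynomials (whose degrees run up to $\#\sigma-2$ for the various nontrivial $\sigma\in\Delta(\mathfrak C_j)$, which can exceed $d-1$) are not forced to fall below degree $d-1$ by anything in Sturmfels' theorem. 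Nothing in the partition-function formalism prevents the degree-$(d-1)$ coefficients of the resulting quasi-polynomial from being genuinely periodic. Establishing that they are \emph{not} periodic is precisely the content of ``Step~1'' of the paper's Theorem~\ref{t1}, and it is proved by a completely different mechanism: using that $\mathrm{depth}\,A>0$ supplies a nonzerodivisor of bidegree $(1,0)$, one sandwiches $\ell_k(I_n)_m\le\ell_k(I_n)_{m+j}\le\ell_k(I_n)_{m+h}$ (and similarly in $n$ via containments $I_{n+h}\subseteq I_{n+j}\subseteq I_n$), then compares the limits of $P_{i,j}(m,n)/n^{\tilde r}$ along rays $(m_1 h^i+m_1,\,n_1 h^i+n_1)$ to force all residue-class polynomials to share the same top-degree homogeneous part. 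Your appeal to a volume interpretation does not replace this argument, and without it the decomposition $\ell_k(I^n)_m=P_j+Q_j$ with $P_j$ polynomial and $\deg Q_j<\deg P_j$ is unjustified.

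Two secondary remarks. First, you identify the unbounded chamber $\mathfrak C_l$ as the problematic one requiring a separate regularity argument, but it is in fact the \emph{easy} case: every $\sigma\in\Delta(\mathfrak C_l)$ with $\mathrm{rank}\,M_\sigma=2$ must contain a column of type $(1,0)$ and one of type $(d_j,1)$, so $G_\sigma=0$ and the quasi-polynomial on $\mathfrak C_l$ is already an honest polynomial by Sturmfels alone (Remark~\ref{trivial} and Corollary~\ref{HT}); the intermediate chambers are where the work lies. Second, the ``bookkeeping'' of the translation vectors $(\lambda_j,\beta_j)$ that you flag is handled in the paper simply by translating each $\mathfrak C_j$ by the maximal shift appearing in the numerator $p(x,y)$ of the Hilbert series (so that $(m-\alpha,n-\beta)\in\mathfrak C_j$ for all $(\alpha,\beta)\in N(I)$); this part of your plan is fine, but it is not the main obstacle.
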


In Example \ref{Nfiltration} we list a few interesting families of homogeneous ideals which do not arise as a powers of an ideal but do form Noetherian families. So even for the last restricted cone the result is more general.

The proof of Theorem \ref{0.5} makes crucial use of vector partition theorem (Theorem \ref{corrector}) due to Sturmfels \cite{Stu95}. The vector partition theorem gives that the length function on  the restricted cones ${\mathfrak R}C_j$ is a quasi-polynomial function with period given in terms of $(d_j,1)$. Further (see Corollary \ref{HT}) the length function restricted to the last chamber ${\mathfrak R}C_l$ has no periodicity and therefore is a polynomial function. This recovers of \cite[Theorem 1.1]{HT03}.

However, if ${\mathfrak R}C_j$ is not the last chamber then Example \ref{7.4} shows that the length function  may not be a polynomial function on ${\mathfrak R}C_j$. The main point of  Theorem \ref{0.5} is to show, nevertheless, that the top degree terms, namely, $P_j(X,Y)$ of the length function is a homogeneous polynomial function of degree $d-1$.

Now if $x\in I(d_j, d_{j+1})$ then (see Lemma \ref{density2} and Figure \ref{restrictedCone})  for $n\gg 0$ the vector $(\lfloor xn \rfloor, n)$ belongs to the restricted cone ${\mathfrak R}C_j$.
This along with the above theorem  establishes that the density function $f_{A,\{I^n\}}$ is a polynomial function on each of the open intervals $I(d_j, d_{j+1})$.

However, if  $x=d_j$, where $j>1$, then the vector $(d_j, 1)$ belongs to the  boundaries of the cones $\mathfrak{C}_j$ and $\mathfrak{C}_{j+1}$ and therefore for any $n\gg 0$, the vector  $ ( xn, n)$ does not belong to any of the restricted cones. 

But in the case when $A$ is a domain, we have a nonzerodivisor of degree $(d_1,1)$ in $A[It]$. We use this nonzerodivisor along with elementary Lemmas~\ref{i-1} and \ref{lrr1} in Euclidean plane geometry  to approximate the elements of degrees $( d_jn, n)$, for $n\gg 0$, with some elements in ${\mathfrak R}C_{j-1}$ and ${\mathfrak R}C_j$ and thus prove the continuity of $f_{A,\{I^n\}}$ on $\R\setminus \{d_1\}$.
 
Next we construct the density function $f_{A,\{\widetilde {I^n}\}}$ for the family $\{\widetilde {I^n}\}_{n\in\N}$. Note that in general $\{\widetilde {I^n}\}_{n\in\N}$ (see Example \ref{nagata}) need not form a Noetherian filtration of ideals. In particular, the above approach for constructing $f_{A,\{I_n\}}$ does not work here.

Here we  consider a sequence $\{g_n:\R_{\geq 0}\longto \R_{\geq 0}\}_n$, where  $g_n(x) =  \frac{\ell_k\left((\widetilde {I^n})_{\lfloor xn\rfloor}\right)}{n^{d-1}/d!}$. To construct density function $f_{A,\{\widetilde {I^n}\}}$ in characteristic $p>0$ we give two approaches. The first one (Theorem \ref{t1sat}) uses basic results from algebraic geometry. Here we restrict to a subsequence $\{g_q\mid q = p^n\}_{n\in \N}$ and prove that it is uniformly convergent sequence on any given compact subset of $\R$ and converges to a continous function denoted as $f_{A,\{\widetilde {I^q}\}}$. Further the function $f_{A, \{\widetilde {I^q}/I^q\}}$, which is defined as
$$f_{A,\{\widetilde {I^q}/I^q\}}(x) = f_{A,\{\widetilde {I^q}\}}(x) - f_{A,\{I^q\}}(x)\quad \mbox{gives}\quad
\int_{\R} f_{A,\{\widetilde {I^q}/I^q\}}(x)dx = \lim_{q\to \infty}
\frac{\ell_k(\widetilde {I^q}/I^q)}{q^{d-1}/d!}.$$
Now the result \cite[Theorem $2.3$]{CHST05} or \cite[Corollary $6.3$]{Cut14} about the existence of the epsilon multiplicity $\varepsilon(I)$ as a limit implies that $f_{\{\widetilde {I^q}/I^q\}}$ is the density function for $\varepsilon(I)$.

In the case of arbitrary characteristic, we prove in Theorem \ref{t2sat} the convergence of $\{g_n\}_{n\in \N}$ using several results about volume functions of $\R$-divisors (following \cite{Laz04a}, \cite{LM09} and \cite{Cut14}). In fact, it turns out that the limit function $f_{A,\{\widetilde {I^n}\}}(x)$ equals $d\cdot\mbox{vol}_X(xH-E)$ for all $x\in \R_{\geq 0}$.
This approach gives another proof of the existence of the limit 
$\varepsilon(I) = \lim\limits_{n\to \infty}\frac{\ell_A(\widetilde {I^n}/I^n)}{n^{d-1}/d!}$ (in the graded case), and also implies that $f_{A,\{\widetilde {I^q}\}} = f_{A,\{\widetilde {I^n}\}}$ when $A$ has characteristic $p>0$.

In the last two sections we give some computations and properties of density functions.

\begin{quest}In fact, Example \ref{exr6} shows that $f_{A,\{I^n\}}$ can be discontinuous at $x=d_j$ if $A$ is not a domain.
Let $A$ be a standard graded ring over a field $k$ and of dimension $d\geq 2$ and let $I$ be an ideal with homogeneous generators occuring in degrees $d_1<d_2<\cdots<d_l$. Is it true that the function $f_{A,\{I^n\}}$ is continuous on the set $\mathbb{R}_{\geq 0}\setminus \{d_1\}$.
\end{quest}
We know the answer is affirmative (Corollary \ref{limit}) if $I$ has a nonzerodivisor of degree $d_1$.
We also know that the answer is negative if $d=1$ and $A$ is not a domain, as
in  Example \ref{exr6} where   $f_{A,\{I^n\}}$ is discontinuous at $x=d_1, d_2$ and $d_3$.

\vspace{5pt}

Since the invariant $\varepsilon$-multiplicity is defined over local rings, it is natural to ask the following question.

\begin{quest}
Let $(A,{\bf m})$ be a local ring and $I$ be an ideal in $A$. Can we define a notion of density functions associated to the filtrations $\{\widetilde{I^n}\}_{n\in\mathbb{N}}$ and $\{I^n\}_{n\in\mathbb{N}}$ respectively?
\end{quest}

\section{Vector partition function}

\begin{defn}
Let $M$ be an $m_1\times n_1$ matrix of rank $m_1$ with entries in $\N$. Then the corresponding \emph{vector partition function} $\phi_M\colon \mathbb{N}^{m_1} \to \mathbb{N}$ is defined as
$$\phi_M({\bf u}) = \#\left\{[\lambda]=\colvec{\lambda_1\\ \vdots\\ \lambda_{n_1}}\in\mathbb{N}^{n_1} ~\big|~ M\cdot [{\bf \lambda}] = {\bf u}  \right\}, \quad \mbox{where}\;{\bf u} = \colvec{u_1\\ \vdots\\ u_{m_1}}\in \N^{m_1}$$
\end{defn}

We recall the notations and a result due to Sturmfels \cite{Stu95} which gives a polynomial expression of the function $\phi_M({\bf u})$ in terms of the vector ${\bf u}$. All vectors will be treated as column vectors.

\begin{notations}\label{posh}
Let ${\bf v_i}$ denote the $i^{\rm th}$ column vector of the matrix $M$. Define the polyhedral cone
$$\mathrm{pos}(M) = \Big\{\sum\limits_ {i=1}^{n_1} \lambda_i{\bf v_i}\in\mathbb{R}^{m_1} ~\big|~ \lambda_1,\ldots,\lambda_{n_1} \in \mathbb{R}_{\geq 0}\Big\}.$$

For $\sigma \subset \{1,\ldots,n_1\}$, 
\begin{enumerate}
\item $M_{\sigma} = [{\bf v_i} \mid i\in\sigma]$ is the submatrix of $M$.
\item $\mathrm{pos}(M_{\sigma})$ denotes the polyhedral cone generated by the vectors $\{{\bf v_i} \mid i\in\sigma\}$.
\item $M_{\sigma}\mathbb{Z} \subset \Z^{m_1}$ is the  abelian group spanned by the columns of $M_{\sigma}$.
\end{enumerate}

A subset $\sigma$ of $\{1,\ldots,n_1\}$ is a \emph{basis} if $\#\sigma = \mathrm{rank}(M_{\sigma})= m_1$. The \emph{chamber complex} is the polyhedral subdivision of the cone $\mathrm{pos}(M)$ which is defined as the common refinement of cones $\mathrm{pos}(M_{\sigma})$, where $\sigma$ runs over all bases. By a \emph{chamber}, we mean the polyhedral cone corresponding to a basis $\sigma$. For a chamber ${\mathfrak C}$, let $$\Delta({\mathfrak C}) = \left\{\sigma\subset\{1,\ldots,n_1\}\mid
{\mathfrak C}\subseteq \mathrm{pos}(M_{\sigma})\right\}.$$ 

Since the group $M_{\sigma}\mathbb{Z}$ has finite index in $\mathbb{Z}^{m_1}$ for each $\sigma\in\Delta({\mathfrak C})$, the group of residue classes
$G_{\sigma}:=\mathbb{Z}^{m_1}/M_{\sigma}\mathbb{Z}$ is a finite group. The image of the element ${\bf u}\in \Z^{m_1}$ under the quotient map $\Z^{\sigma}\rightarrow  G_{\sigma}$ is denoted by $[{\bf u}]_{\sigma}$. We say  $\sigma \in\Delta({\mathfrak C})$ is \emph{non-trivial} if $G_{\sigma}\neq \{0\}$.
\end{notations}

\begin{thm}\label{corrector}\cite[Theorem $1$]{Stu95}
If $M$ is an $m_1\times n_1$ matrix of nonnegative integers with $m_1\leq n_1$ and $\phi_M\colon \N^{m_1}\longrightarrow \N$ is the corresponding vector partition function given by
$${\bf u} \longmapsto \#\bigg\{[\lambda] = \colvec{\lambda_1\\ \vdots\\ \lambda_{n_1}}\in \N^{n_1}\mid M\cdot [\lambda] = {\bf u}\bigg\}.$$
Then for  each chamber ${\mathfrak C}$ there exists a polynomial $P_{\mathfrak C}(X_1, \ldots, X_{m_1})\in\mathbb{Q}[X_1,\ldots,X_{m_1}]$ of degree $n_1-m_1$ and for each non-trivial $\sigma\in\Delta({\mathfrak C})$ there exists a polynomial $Q^{\mathfrak C}_{\sigma}\in\mathbb{Q}[X_1, \ldots,X_{m_1}]$ of degree
$(\#\sigma) -m_1$ and a function $\Omega_{\sigma}\colon G_{\sigma}\setminus\{0\}\to \mathbb{Q}$ such that, for  ${\bf u}\in {\mathfrak C} \cap \mathbb{Z}^{m_1}$,
$$\phi_M({\bf u}) = P_{\mathfrak C}({\bf u}) + \sum\limits_{\sigma\in\Delta({\mathfrak C}),\;[{\bf u}]_{\sigma}\neq 0} \Omega_{\sigma}\left([{\bf u}]_{\sigma}\right)\cdot Q^{\mathfrak C}_{\sigma}({\bf u}).$$
\end{thm}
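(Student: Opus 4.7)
The plan is to analyze the multivariate generating function
$$F_M(\mathbf{z}) := \sum_{\mathbf{u}\in\mathbb{N}^{m_1}}\phi_M(\mathbf{u})\,\mathbf{z}^{\mathbf{u}} = \prod_{i=1}^{n_1}\frac{1}{1-\mathbf{z}^{\mathbf{v}_i}}, \quad \mathbf{z}^{\mathbf{v}} := z_1^{v_1}\cdots z_{m_1}^{v_{m_1}},$$
and to recover $\phi_M(\mathbf{u})$ as an iterated residue. For $\mathbf{u}\in\mathfrak{C}\cap\mathbb{Z}^{m_1}$ one writes
$$\phi_M(\mathbf{u}) = \frac{1}{(2\pi i)^{m_1}}\oint_{\Gamma_{\mathfrak{C}}} F_M(\mathbf{z})\,\mathbf{z}^{-\mathbf{u}-\mathbf{1}}\,d\mathbf{z},$$
where $\Gamma_{\mathfrak{C}}$ is a product of small circles around the origin. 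Through a Brion/Jeffrey--Kirwan style deformation whose direction is determined solely by the chamber $\mathfrak{C}$, the contour integral is rewritten as a sum of iterated residues localised at the algebraic subsets where $m_1$ of the factors $1-\mathbf{z}^{\mathbf{v}_i}$ vanish simultaneously.

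The first step is to verify that the common vanishing locus of $\{1-\mathbf{z}^{\mathbf{v}_i}\}_{i\in\sigma}$ is finite exactly when $\mathrm{rank}(M_\sigma)=m_1$, and in that case is canonically in bijection with the finite group $G_\sigma=\mathbb{Z}^{m_1}/M_\sigma\mathbb{Z}$ via the pairing with $\mathrm{Hom}(G_\sigma,\mathbb{C}^*)\subset (\mathbb{C}^*)^{m_1}$. After the deformation, one checks that the bases (and more generally the subsets $\sigma$) which contribute to $\phi_M(\mathbf{u})$ are exactly those in $\Delta(\mathfrak{C})$, i.e.\ those for which $\mathrm{pos}(M_\sigma)\supseteq\mathfrak{C}$. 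This is the core geometric ingredient, and the fact that the choice of contributing $\sigma$'s depends only on the chamber is what makes the resulting formula chamber-constant.

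The second step is to compute each local contribution. At the torus identity $\mathbf{z}=\mathbf{1}$ (the class $0\in G_\sigma$), every factor $1-\mathbf{z}^{\mathbf{v}_i}$ has a simple zero; after the substitution $z_j = e^{2\pi i t_j}$, the $m_1$ factors indexed by a basis $\sigma$ are consumed in computing the iterated residue, while the remaining $n_1-m_1$ factors expand to produce a polynomial in $\mathbf{u}$ of degree exactly $n_1-m_1$, whose top form is non-degenerate because the linear forms $\log\mathbf{z}^{\mathbf{v}_i}$ for $i\notin\sigma$ are generic with respect to $\mathfrak{C}$. Summing these contributions over all bases in $\Delta(\mathfrak{C})$ yields $P_{\mathfrak{C}}(\mathbf{u})$. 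At a non-identity torsion point corresponding to a class $[\mathbf{u}]_\sigma\neq 0$, only those factors $1-\mathbf{z}^{\mathbf{v}_i}$ with $i\in\sigma$ can vanish, and evaluating the surviving factors at the torsion point produces a root-of-unity function of $\mathbf{u}$ that depends only on its class mod $M_\sigma\mathbb{Z}$; averaging gives the rational-valued character $\Omega_\sigma\colon G_\sigma\setminus\{0\}\to\mathbb{Q}$, and the polynomial $Q_\sigma^{\mathfrak{C}}(\mathbf{u})$ of degree $\#\sigma-m_1$ records the number of factors from $\sigma$ that vanish simultaneously at that specific torsion point.

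The main obstacle will be step one: controlling the contour deformation uniformly in $\mathbf{u}\in\mathfrak{C}$ so that precisely the collection $\Delta(\mathfrak{C})$ of subsets is captured and no spurious residues contribute. A subsidiary difficulty is confirming the asserted degrees: one must rule out unexpected cancellations in the sum over bases that would lower the degree of $P_{\mathfrak{C}}$ below $n_1-m_1$ or of $Q_\sigma^{\mathfrak{C}}$ below $\#\sigma-m_1$, which reduces to a non-vanishing statement for certain mixed determinants of the vectors $\mathbf{v}_1,\ldots,\mathbf{v}_{n_1}$ that is guaranteed by the chamber being full-dimensional in $\mathrm{pos}(M_\sigma)$.
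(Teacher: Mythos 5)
The paper does not prove this statement: it is quoted verbatim as \cite[Theorem $1$]{Stu95} with no argument given, so there is no in-paper proof for your sketch to be measured against. Your residue-theoretic outline is one of the standard routes to Sturmfels' theorem (in the Brion--Vergne / Szenes--Vergne circle of ideas, and essentially a contour-integral reformulation of the partial-fraction/Dahmen--Micchelli argument Sturmfels himself uses). The generating-function identity, the identification of the finitely many common zeros of $\{1-\mathbf{z}^{\mathbf{v}_i}\}_{i\in\sigma}$ with the character group of $G_\sigma$ when $\mathrm{rank}(M_\sigma)=m_1$, and the chamber-controlled localisation to $\Delta(\mathfrak{C})$ are all the right ingredients.

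There is, however, a concrete gap in how you propose to extract the corrector polynomials. You assert that at a nontrivial torsion point associated with a basis $\sigma$, ``only those factors $1-\mathbf{z}^{\mathbf{v}_i}$ with $i\in\sigma$ can vanish.'' This is false in general: if $\zeta$ is a nontrivial character of $G_\sigma$ and $\mathbf{v}_j\in\ker\zeta$ for some $j\notin\sigma$ (for instance whenever $\mathbf{v}_j\in M_\sigma\mathbb{Z}$), then $1-\mathbf{z}^{\mathbf{v}_j}$ also vanishes at $\zeta$. These extra vanishings are precisely what make the correctors $Q^{\mathfrak{C}}_\sigma$ have positive degree, and they are the reason $\Delta(\mathfrak{C})$ deliberately includes non-basis subsets $\sigma$ of size $>m_1$. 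The correct bookkeeping is to assign each root of unity $\zeta\neq\mathbf{1}$ to $\sigma_\zeta=\{i\mid\zeta^{\mathbf{v}_i}=1\}$, which automatically lies in $\Delta(\mathfrak{C})$ because it contains a basis of $\Delta(\mathfrak{C})$ and $\Delta(\mathfrak{C})$ is closed upward; the pole at $\zeta$ then has order $\#\sigma_\zeta$ and the iterated residue contributes a polynomial of degree $\#\sigma_\zeta-m_1$ in $\mathbf{u}$ times a character value depending only on $[\mathbf{u}]_{\sigma_\zeta}$. Beyond that, your ``averaging gives the rational-valued character $\Omega_\sigma$'' is doubly imprecise: $\Omega_\sigma$ is not a character, and its rationality requires a Galois-descent argument grouping conjugate characters of $G_\sigma$, not just a group average. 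Finally, to pin down $\deg P_{\mathfrak{C}} = n_1-m_1$ exactly, one must check that the leading form, a sum of $|\det M_\tau|^{-1}$-weighted contributions over bases $\tau\in\Delta(\mathfrak{C})$, has no cancellation; this follows because all contributions carry the same sign on the full-dimensional chamber $\mathfrak{C}$, but it is not automatic from genericity as your sketch suggests. These points are fixable, but as written the proposal does not establish the stated degrees of $P_{\mathfrak{C}}$ and of the $Q^{\mathfrak{C}}_\sigma$.
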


The polynomials $Q^{\mathfrak C}_{\sigma}$ in the above theorem are called \emph{corrector polynomials}. In this paper we will be using the following particular case of vector partition function.

\begin{notations}\label{VPT}
$M$ is a $2\times (r+s)$ matrix  with entries
\begin{equation}\label{matrixA}M = \begin{bmatrix}
1 &\dots &1 &d_1 &\dots &d_s\\
0 &\dots &0 &e_1 &\dots &e_s
\end{bmatrix},
\end{equation}
where $d_i \in\N$ and $e_i \in\N_{>0}$ for $i=1,\ldots,s$.

The corresponding \emph{vector partition function} $\phi_M\colon \mathbb{N}^{2} \to \mathbb{N}$ is defined as
$$\phi_M(m,n) = \#\left\{[\lambda]=\colvec{\lambda_1\\ \vdots\\ \lambda_{r+s}}
                                                                                    \in\mathbb{N}^{r+s} ~\Big|~ M\cdot [{\bf \lambda}] = {\bf u} = \colvec{m\\
n}\right\}.$$

Without any loss of generality we may assume that ${d_1}/{e_1}<{d_2}/{e_2}<\cdots <d_l/e_l$ are all the distinct elements in the set $\{d_j/e_j\mid 1\leq j\leq s\}$. It is easy to see that the  maximal cells in the chamber complex, is the polyhedral cone given by the vectors  $\{({d_j}, {e_j}),~({d_{j+1}}, {e_{j+1}})\}$ or $\{({d_l}, {e_l}), (1,0)\}$. We denote the vector $(1, 0)$ by $(d_{l+1}, e_{l+1})$. Define ${\mathfrak C_{j}} = \{\lambda_1(d_j, e_j)+\lambda_2(d_{j+1}, e_{j+1})\mid \lambda_1 , \lambda_2\in \R_{\geq 0}\}$ for $1\leq j\leq l$. Then we can write
$$\mbox{pos}(M) = {\mathfrak C_1}\cup {\mathfrak C_2}\cup\cdots\cup {\mathfrak C_l}.$$
\end{notations}

In fact in this set each group $G_{\sigma}= \Z^2/M_{\sigma}\Z$ is a quotient of a product of two cyclic groups.

\begin{lemma}\label{density1}
There exists an integer $h>0$ such that for every $\sigma\subset\{1,\ldots,r+s\}$ the quotient map $\Z^2\to G_{\sigma}$ factors through $\Z/h\Z\oplus \Z/h\Z\to G_{\sigma}$. In particular, if $[(m, n)]_{\sigma}$ denotes the image of $(m, n)\in \Z^2$ in $G_{\sigma}$ then
$$[(m,n)]_{\sigma} = [(m+h,n)]_{\sigma} = [(m,n+h)]_{\sigma}.$$
\end{lemma}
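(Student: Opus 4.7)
The plan is to produce a single positive integer $h$ that simultaneously annihilates every finite quotient $G_\sigma$, by restricting to the (finite) collection of subsets $\sigma$ for which $M_\sigma$ has rank $2$ and then extracting a common exponent via the structure theorem for finite abelian groups.

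First I would observe that the statement is only substantive for those $\sigma\subseteq\{1,\ldots,r+s\}$ with $\mathrm{rk}(M_{\sigma})=2$. Indeed, any $\sigma$ that contributes to a chamber $\mathfrak{C}$ via $\sigma\in\Delta(\mathfrak{C})$ must satisfy $\mathfrak{C}\subseteq\mathrm{pos}(M_{\sigma})$, and since each chamber in $\mathrm{pos}(M)\subseteq\R^2$ is two-dimensional this forces $\mathrm{rk}(M_{\sigma})=2$. For the remaining $\sigma$ the quotient $G_{\sigma}$ is infinite and never appears in the sum of Theorem \ref{corrector}, so the factoring can simply be understood on the relevant (finite) set of $\sigma$.

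Next, for each such rank-$2$ subset $\sigma$, the columns of $M_{\sigma}$ generate a sublattice of full rank in $\Z^2$, so $G_{\sigma}=\Z^2/M_{\sigma}\Z$ is a finite abelian group. By the structure theorem we may write $G_{\sigma}\cong \Z/a_{\sigma}\Z\oplus \Z/b_{\sigma}\Z$ with $a_{\sigma}\mid b_{\sigma}$, and hence the exponent of $G_{\sigma}$ divides $h_{\sigma}:=b_{\sigma}$. In particular $(h_{\sigma},0)$ and $(0,h_{\sigma})$ both lie in $M_{\sigma}\Z$, i.e. $h_{\sigma}\Z\oplus h_{\sigma}\Z\subseteq M_{\sigma}\Z$.

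Finally, since there are only finitely many subsets $\sigma$ of $\{1,\ldots,r+s\}$, I would set $h:=\mathrm{lcm}\{h_{\sigma}\mid \sigma\subseteq\{1,\ldots,r+s\},\ \mathrm{rk}(M_{\sigma})=2\}$, which is a well-defined positive integer. Then $h\Z\oplus h\Z\subseteq M_{\sigma}\Z$ for every relevant $\sigma$, so the quotient map $\Z^2\to G_{\sigma}$ is trivial on $h\Z\oplus h\Z$ and therefore factors through the canonical projection $\Z^2\to \Z^2/(h\Z\oplus h\Z)=\Z/h\Z\oplus \Z/h\Z$. The asserted relations $[(m,n)]_{\sigma}=[(m+h,n)]_{\sigma}=[(m,n+h)]_{\sigma}$ are then just the statement that the generators $(h,0)$ and $(0,h)$ of $h\Z\oplus h\Z$ vanish in $G_{\sigma}$. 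I do not foresee any real obstacle: the argument is essentially formal once one notices that only finitely many $\sigma$ matter and that each contributes a finite cyclic-times-cyclic group with a controlled exponent.
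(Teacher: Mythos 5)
Your proof is correct and follows essentially the same approach as the paper's: each $G_\sigma$ is finite, so $M_\sigma\Z$ contains $(h_\sigma,0)$ and $(0,h_\sigma)$ for some $h_\sigma>0$, and one takes a common multiple over the finitely many relevant $\sigma$. Your version is slightly more explicit than the paper's in two places the paper leaves implicit — that only rank-$2$ subsets $\sigma$ (i.e.\ those appearing in some $\Delta(\mathfrak{C})$) need to be considered, and that a single uniform $h$ is obtained by taking an lcm over all of them.
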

\begin{proof}
Note that the matrix $M_{\sigma}$ has full rank. So $G_{\sigma}$ is a finite abelian group. Therefore, $M_{\sigma}\Z$ must contain vectors of the form $(h,0)$ and $(0,h)$ for some integer $h>0$. In particular, the quotient map $\Z^2\to G_{\sigma}$ factors through $\Z/h\Z\oplus \Z/h\Z\to G_{\sigma}$. The second assertion is obvious.
\end{proof}

\begin{rmk}\label{trivial}
Suppose that $(d_j, e_j) = (d_j, 1)$ for all $1\leq j\leq s$. Then the group $G_{\sigma}$ is trivial for $\sigma \in \Delta({\mathfrak C_l})$. This is because
$\sigma$ contains linearly independent vectors $(1,0)$ and $(d_j,1)$ which also generates $\Z^2$ as $\Z$-modules. So $[(m,n)]_{\sigma} = 0$ for all $(m,n)\in {\mathfrak C_l}$. In particular, $\phi_M({\bf u}) = P_{{\mathfrak C_l}}({\bf u})$.
\end{rmk}

The following subsection is independent of the above part of the section.

\subsection{Membership in a cone}

\begin{notations}\label{chambers}
\hspace{2em}
\begin{enumerate}
\item Let $(d_1, e_1), \ldots, (d_l, e_l)\in \N\times \N_{> 0}$ such that $d_1/e_1<d_2/e_2<\cdots <d_l/e_l$.
\item Let $\mathfrak{C}_j\subset \R^2$ be the cone generated by the vectors $(d_j, e_j)$ and $(d_{j+1}, e_{j+1})$, where we define $(d_{l+1}, e_{l+1}) = (1,0)$.
We define $\mathrm{interior}\,(\mathfrak{C}_j) = \{r_1(d_j, e_j)+r_2(d_{j+1}, e_{j+1})\mid r_1, r_2 \in \R_{>0}\}$.
\item Further let $\lambda_j, \beta_j \in \Q_{\geq 0}$ such that
$$\lambda_j-\beta_jd_j/e_j \geq 0\quad\mbox{and}\quad \beta_{j}d_{j+1}-\lambda_je_{j+1}\geq 0\quad\mbox{for}\quad 1\leq j\leq l.$$
\item Let ${\mathfrak R}C_j={\mathfrak C}_j+(\lambda_j, \beta_j)$, for $1\leq j \leq l$ such that ${\mathfrak R}C_j\subseteq {\mathfrak C}_j$, where ${\mathfrak C}_j+(\lambda_j, \beta_j)$ denotes the translation of the cone ${\mathfrak C}_j$ from the point $(0, 0)$ to $(\lambda_j, \beta_j)$.
\item We define $d_{j}/{e_{j}} = \infty$ if the vector $(d_{j}, e_{j}) = (1, 0)$.
\item To avoid confusion with a tuple $(r_1, r_2) \in \R^2$ we will denote the open interval $(r_1, r_2)$ by $I(r_1, r_2)$ and the closed interval $[r_1, r_2]$ by $I[r_1, r_2]$.
\end{enumerate}
\end{notations}

\begin{lemma}\label{density0}
Let $m_1$ and $m_2$ be two positive real numbers. Then
\begin{enumerate}
\item $\frac{m_1}{m_2} \in I[\frac{d_j}{e_j},\frac{d_{j+1}}{e_{j+1}}]\iff (m_1, m_2)\in \mathfrak{C}_j$, and $\frac{m_1}{m_2} \in I(\frac{d_j}{e_j},\frac{d_{j+1}}{e_{j+1}})\iff (m_1, m_2)\in \mathrm{interior}\,(\mathfrak{C}_j).$
\item $(a, b)\in \mathfrak{R}C_{j}\;\;\mbox{and}\;\; (m_1, m_2)\in\mathfrak{C}_j \implies (a+m_1, b+m_2)\in \mathfrak{R}C_{j}.$
\end{enumerate}
\end{lemma}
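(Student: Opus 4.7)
The plan is to handle both parts by direct and elementary linear-algebraic arguments. For part (1), my strategy is to exploit the fact that the two generators $(d_j, e_j)$ and $(d_{j+1}, e_{j+1})$ of $\mathfrak{C}_j$ are linearly independent in $\R^2$: the determinant $d_j e_{j+1} - d_{j+1} e_j$ is nonzero because $d_j/e_j < d_{j+1}/e_{j+1}$ (and in the edge case $j = l$ the second generator is $(1,0)$, which is linearly independent from $(d_l, e_l)$ since $e_l > 0$). Thus every $(m_1, m_2) \in \R^2$ admits a unique representation $(m_1,m_2) = \lambda_1 (d_j, e_j) + \lambda_2 (d_{j+1}, e_{j+1})$, and one can extract explicit Cramer's-rule formulas for $\lambda_1$ and $\lambda_2$.

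The next step is to translate $\lambda_1 \geq 0$ and $\lambda_2 \geq 0$ (which is exactly the membership $(m_1, m_2) \in \mathfrak{C}_j$) into slope inequalities. Using $m_2 > 0$ and keeping track of the sign of the determinant $d_j e_{j+1} - d_{j+1} e_j$, the two nonnegativity conditions become precisely $d_j/e_j \leq m_1/m_2 \leq d_{j+1}/e_{j+1}$. The claim about the interior follows by replacing each non-strict inequality by the strict one throughout the same argument. The edge case $j = l$ requires a one-line separate computation because $(d_{l+1}, e_{l+1}) = (1, 0)$ has ``slope'' $\infty$; there the system reduces to $\lambda_1 = m_2/e_l > 0$ and $\lambda_2 = m_1 - \lambda_1 d_l \geq 0$, matching the convention $d_{l+1}/e_{l+1} = \infty$ from \textbf{Notations \ref{chambers}}.

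For part (2), the argument is immediate from the cone structure: $\mathfrak{C}_j$ is stable under addition, since the sum of two nonnegative combinations of a fixed pair of generators is again such a combination. Writing $(a, b) = (\lambda_j, \beta_j) + v$ for some $v \in \mathfrak{C}_j$ by the definition of $\mathfrak{R}C_j$, the vector $v + (m_1, m_2)$ again lies in $\mathfrak{C}_j$, and therefore $(a + m_1, b + m_2) = (\lambda_j, \beta_j) + \bigl(v + (m_1, m_2)\bigr) \in (\lambda_j, \beta_j) + \mathfrak{C}_j = \mathfrak{R}C_j$. No serious obstacle is expected; the only minor subtlety is correctly accommodating the $j = l$ convention $(d_{l+1}, e_{l+1}) = (1, 0)$ so that the slope formulation and the determinant argument both extend uniformly to the last chamber.
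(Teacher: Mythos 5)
Your proposal is correct and, at heart, takes the same route as the paper: part (1) is a change-of-basis argument expressing $(m_1,m_2)$ in the basis $\{(d_j,e_j),(d_{j+1},e_{j+1})\}$ and translating nonnegativity of the coefficients into slope inequalities, while part (2) is the one-line observation that $\mathfrak{C}_j$ is closed under addition. The only difference is cosmetic: the paper parametrizes via a convex-combination parameter $\eta\in(0,1)$ on the slopes, whereas you invoke Cramer's rule and track the sign of the determinant $d_je_{j+1}-d_{j+1}e_j$; these are two phrasings of the identical linear-algebra computation. One small merit of your write-up is that you isolate the chamber $j=l$ (where $(d_{l+1},e_{l+1})=(1,0)$ and the ``slope'' is $\infty$) and verify it separately, since the $\eta$-formulation in the paper does not literally apply there — the paper leaves that degenerate case implicit.
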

\begin{proof}
\emph{Assertion $(1)$}.\quad If $\frac{m_1}{m_2} \in I(\frac{d_{j}}{e_{j}},\frac{d_{{j}+1}}{e_{{j}+1}})$ then there exists $0< \eta <1$ such that
$$\tfrac{m_1}{m_2} = \eta\tfrac{d_{j}}{e_{j}}+(1-\eta)\tfrac{d_{{j}+1}}{e_{{j}+1}}$$
which implies that
$$(m_1, m_2) = \eta\tfrac{m_2}{e_{j}}(d_{j}, e_{j})+(1-\eta) \tfrac{m_2}{e_{{j}+1}}(d_{{j}+1}, e_{{j}+1}) \in \mbox{interior}\,(\mathfrak{C}_{j}).$$

Conversely, let $(m_1, m_2) \in \mbox{interior}\,(\mathfrak{C}_{j})$. Then there exist $\lambda_1,\lambda_2 > 0$ such that
\begin{align*}
 (m_1, m_2) &= \lambda_1(d_{{j}},e_{{j}}) + \lambda_2(d_{{j}+1},e_{{j}+1})\\
 &= \Big(\lambda_1e_{j}\tfrac{d_{{j}}}{e_{j}}+\lambda_2 e_{j+1}\tfrac{ d_{j+1}}{e_{j+1}}, \lambda_1e_{j}+\lambda_2e_{j+1}\Big).
\end{align*}
Let $\eta = \frac{\lambda_1e_j}{\lambda_1e_j+\lambda_2e_{j+1}}$. Then $0<\eta <1$ and so
$$\tfrac{m_1}{m_2} = \eta\tfrac{d_j}{e_j}+(1-\eta)\tfrac{d_{j+1}}{e_{j+1}}\in I\big(\tfrac{d_j}{e_j},\tfrac{d_{j+1}}{e_{j+1}}\big).$$

The arguments regarding the equivalent criteria for $\tfrac{m_1}{m_2}\in I[\tfrac{d_j}{e_j}, \tfrac{d_{j+1}}{e_{j+1}}]$ are identical.

\vspace{5pt}

\noindent{\em Assertion $(2)$}.\quad It follows from the definition that ${\mathfrak{R}}C_{j} = \mathfrak{C}_{j} + (\lambda_{j}, \beta_{j})$ for some $\lambda_{j}, \beta_{j} \in \Q_{\geq 0}$ and $\mathfrak{C}_j$ is closed under pointwise addition of the vectors.
\end{proof}

\begin{lemma}\label{density2}
Let $x\in I(\frac{d_{j}}{e_{j}}, \frac{d_{{j}+1}}{e_{{j}+1}})$ be a fixed real number. Then for any given finite set $\mathcal{S}\subset \mathbb{\R}^2$ there exists an integer $n_{\mathcal{S}}$ such that $\left(\lfloor xn\rfloor + \alpha, n+\beta\right)\in \mathfrak{R}C_{j}$ for every $n\geq n_{\mathcal{S}}$ and all $(\alpha, \beta)\in \mathcal{S}$.
\end{lemma}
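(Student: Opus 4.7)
The plan is to reduce the membership assertion to a statement about ratios of coordinates, and then use Lemma \ref{density0}(1) together with a routine asymptotic estimate. Since the finite set $\mathcal{S}$ contributes only a bounded perturbation, the estimate will be uniform in $(\alpha,\beta)\in\mathcal{S}$.

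First I would note that by definition $\mathfrak{R}C_j = \mathfrak{C}_j + (\lambda_j, \beta_j)$, so
$$(\lfloor xn\rfloor + \alpha,\, n+\beta)\in \mathfrak{R}C_j \iff (m_1, m_2) := (\lfloor xn\rfloor + \alpha - \lambda_j,\, n+\beta - \beta_j)\in \mathfrak{C}_j.$$
Since $\mathcal{S}$ is finite and $(\lambda_j, \beta_j)$ is fixed, there is a constant $C>0$ (depending on $\mathcal{S}$ and $j$) such that $|\alpha - \lambda_j| \leq C$ and $|\beta - \beta_j| \leq C$ for all $(\alpha,\beta)\in\mathcal{S}$. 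Using $\lfloor xn\rfloor = xn - \{xn\}$ with $\{xn\}\in [0,1)$, for $n$ sufficiently large (independent of $(\alpha,\beta)\in\mathcal{S}$) we have $m_2 = n+\beta - \beta_j > 0$ and $m_1 = \lfloor xn\rfloor + \alpha - \lambda_j > 0$, and moreover
$$\frac{m_1}{m_2} = \frac{xn + O(1)}{n+O(1)} \longrightarrow x \quad \text{as } n\to\infty,$$
uniformly in $(\alpha, \beta)\in\mathcal{S}$.

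Next I would use the hypothesis $x\in I\!\left(\tfrac{d_j}{e_j},\tfrac{d_{j+1}}{e_{j+1}}\right)$ to pick $n_{\mathcal{S}}$ so large that for every $n\geq n_{\mathcal{S}}$ and every $(\alpha,\beta)\in\mathcal{S}$, the positive ratio $m_1/m_2$ lies strictly in $I\!\left(\tfrac{d_j}{e_j},\tfrac{d_{j+1}}{e_{j+1}}\right)$ as well. Applying Lemma \ref{density0}(1) to the positive pair $(m_1, m_2)$ then gives $(m_1, m_2)\in \mathrm{interior}(\mathfrak{C}_j)\subseteq \mathfrak{C}_j$, which by the reduction above is exactly the desired conclusion.

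There is no serious obstacle here; the only subtlety I would flag is the boundary case $j = l$, where $(d_{l+1}, e_{l+1}) = (1,0)$ so that $d_{l+1}/e_{l+1} = \infty$ in the convention of Notations \ref{chambers}(5). In that case the open interval reads $I(d_l/e_l, \infty)$, and the equivalence of Lemma \ref{density0}(1) still applies provided $m_2>0$, which we have ensured. Everything else is a direct and uniform limit computation, so this completes the proof plan.
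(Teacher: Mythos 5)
Your proposal is correct and follows essentially the same route as the paper: translate by $(\lambda_j,\beta_j)$, observe that $(\lfloor xn\rfloor+\alpha-\lambda_j)/(n+\beta-\beta_j)\to x$ as $n\to\infty$, and invoke Lemma~\ref{density0}(1) once the ratio has entered $I(\tfrac{d_j}{e_j},\tfrac{d_{j+1}}{e_{j+1}})$. Your explicit remarks on uniformity over the finite set $\mathcal{S}$, positivity of the coordinates, and the $j=l$ boundary convention are slightly more careful than the paper's write-up but do not change the argument.
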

\begin{proof}
Let $\mathfrak{R}C_j = \mathfrak{C}_j + (\lambda_j, \beta_j)$. It is enough to prove that for given $(\alpha, \beta)\in \mathcal{S}$ there is an integer

\vspace{0.15cm}
\begin{minipage}{.25 \columnwidth}
\centering
\captionsetup{type=figure}
\begin{tikzpicture}[scale=0.8, font=\tiny]
\begin{axis}[axis equal image,
xmin=-0.75, xmax=6,
ymin=-0.75, ymax=8,
axis lines=middle,
axis line style={-latex},
ytick=\empty,
xtick=\empty,
xlabel=$m$,ylabel=$n$,
]
\node[red] at (1.5,1.5){\tiny \textbullet};
\node at (0.45,-0.3) {(0,0)};
\node[black] at (2.25,1.75){\tiny \textbullet};
\addplot[domain=0:3, blue, densely dashdotted] {(3*x)/(1)};
\addplot[domain=0:5, blue, densely dashdotted] {(3*x)/(2)};
\addplot[domain=0:7, blue, densely dashdotted] {(2*x)/(3)};
\draw[blue, densely dashdotted] (0,0)--(5.5,0);
\addplot[domain=1.5:5, red, densely dashed] {(3*x-3)/(1)};
\addplot[domain=1.5:5, red, densely dashed] {(6*x-3)/(4)};
\addplot[domain=1.5:7, red, densely dashed] {(4*x+3)/(6)};
\draw[red, densely dashed] (1.5,1.5)--(5.5,1.5);
\addplot[domain=0:7, black] {(7*x)/(9)};
\end{axis}
\end{tikzpicture}
\vspace{-0.35cm}
\captionof{figure}{}
\label{restrictedCone}
\end{minipage}
\begin{minipage}{.7\textwidth}
$n_{\alpha, \beta}$ such that $\left(\lfloor xn\rfloor + \alpha-\lambda_j, n+\beta-\beta_j\right)\in {\mathfrak C_{j}}$, for all $n\geq n_{\alpha, \beta}$. Let  $\alpha_1 = \alpha - \lambda_j$ and $\beta_1 = \beta -\beta_j$. Now
$$\lim\limits_{n\to\infty}\tfrac{\lfloor xn\rfloor + \alpha_1}{n+\beta_1} = \lim\limits_{n\to\infty}\tfrac{xn + \delta_n+\alpha_1}{n+\beta_1} = \lim\limits_{n\to\infty}\tfrac{\lfloor x(n+\beta_1)\rfloor +\delta_n+ \alpha_1-x\beta_1}{n+\beta_1} = x,$$
where $\delta_n$ is any number such that $|\delta_n|< 1$. Choose a real number $\epsilon>0$ such that $I(x-\epsilon,x+\epsilon)\subset I(\frac{d_{j}}{e_{j}}, \frac{d_{j+1}}{e_{j+1}})$. Then there exists an integer $n_{\alpha, \beta} > 0$ such that for all $n\geq n_{\alpha, \beta}$
$$\Big|\tfrac{\lfloor xn\rfloor + \alpha_1}{n+\beta_1}-x\Big|<\epsilon \;\; \text{and therefore}\;\; \tfrac{\lfloor xn\rfloor + \alpha_1}{n+\beta_1} \in
I(\tfrac{d_{j}}{e_{j}},~ \tfrac{d_{{j}+1}}{e_{{j}+1}}).$$
\end{minipage}

Now by Lemma \ref{density0} the assertion follows.
\end{proof}

The following two lemmas will be used in Section $\ref{continuity}$ to show that the density function $f_{A,\{I_n\}}$ is continuous at the points $x=d_i/e_i$ for $1<i\leq l$.

\begin{lemma}\label{i-1}
Let the hypotheses be as in Notations~\ref{chambers}. Let $\mathcal{S}\subset \R$ be a finite set and $2\leq i\leq l$. Then for all $a\in \mathcal{S}$ and all $n\gg 0$,
\begin{enumerate}
\item there exists $m_0\in e_1\N_{>0}$ such that $(a, 0)+n(\frac{d_i}{e_i}, 1) + m_0(\frac{d_1}{e_1}, 1) \in {\mathfrak R}C_{i-1}$ and
\item there exists $m_1\in \N_{>0}$ such that $(a, 0)+n(\frac{d_i}{e_i}, 1) - m_1(1,0)
\in {\mathfrak R}C_{i-1}$,
\end{enumerate}
\end{lemma}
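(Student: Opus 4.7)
The plan is to check membership in $\mathfrak{R}C_{i-1} = (\lambda_{i-1}, \beta_{i-1}) + \mathfrak{C}_{i-1}$ by writing the candidate vector uniquely as $(\lambda_{i-1}, \beta_{i-1}) + \mu (d_{i-1}, e_{i-1}) + \nu (d_i, e_i)$ and then arranging the parameters so that $\mu, \nu \geq 0$. Uniqueness of the decomposition is automatic from the linear independence of $(d_{i-1}, e_{i-1})$ and $(d_i, e_i)$, which holds since $d_{i-1}/e_{i-1} < d_i/e_i$. The two coefficients $\mu, \nu$ will be analyzed by reading off explicit rational expressions from a $2\times 2$ linear system, and the asserted conclusions will follow from sign considerations in which the key fact is that $d_1/e_1 < d_{i-1}/e_{i-1} < d_i/e_i$ (strict, using the hypothesis $i \geq 2$).

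For part (1), equating coordinates of $(a, 0) + n(d_i/e_i, 1) + m_0(d_1/e_1, 1)$ with the target decomposition and eliminating $\nu$ from the resulting $2 \times 2$ system yields
\[
\mu \cdot \frac{d_{i-1}e_i - d_i e_{i-1}}{e_i} \;=\; a - \lambda_{i-1} + \frac{d_i\beta_{i-1}}{e_i} + m_0\left(\frac{d_1}{e_1} - \frac{d_i}{e_i}\right).
\]
The denominator on the left is negative since $d_{i-1}/e_{i-1} < d_i/e_i$, and the coefficient of $m_0$ on the right is also negative since $d_1/e_1 < d_i/e_i$. Hence $\mu$ is a linear function of $m_0$ with positive slope and, crucially, \emph{independent of $n$}. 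Choosing $m_0 \in e_1\N_{>0}$ large enough (and uniformly over the finite set $\mathcal{S}$) forces $\mu > 0$. With this $m_0$ fixed, $\nu = (n + m_0 - \beta_{i-1} - \mu e_{i-1})/e_i$ is linear in $n$ with positive leading coefficient, so it becomes $\geq 0$ once $n$ exceeds a threshold depending only on $\mathcal{S}$.

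Part (2) is entirely analogous but slightly cleaner: the same elimination applied to $(a + nd_i/e_i - m_1,\, n) = (\lambda_{i-1}, \beta_{i-1}) + \mu (d_{i-1}, e_{i-1}) + \nu (d_i, e_i)$ causes the $n$-dependent terms on the right-hand side to cancel, leaving
\[
\mu \cdot \frac{d_{i-1}e_i - d_i e_{i-1}}{e_i} \;=\; a - m_1 - \lambda_{i-1} + \frac{d_i\beta_{i-1}}{e_i}.
\]
Since the denominator is negative, $\mu > 0$ as soon as $m_1$ exceeds a bound depending only on $a$, $\lambda_{i-1}$, $\beta_{i-1}$ (not on $n$); by finiteness of $\mathcal{S}$ a uniform choice is available. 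The equation $\nu = (n - \beta_{i-1} - \mu e_{i-1})/e_i$ then gives $\nu \geq 0$ for all $n$ past an explicit threshold.

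The main ``obstacle'' is really only bookkeeping: one must keep the quantifiers in the correct order (choose $m_0$ or $m_1$ first, as a function of $a$, and only then let $n$ be large depending on $\mathcal{S}$). The integrality constraints $m_0 \in e_1\N_{>0}$ and $m_1 \in \N_{>0}$ are harmless because the required conditions on $\mu$ are strict inequalities, so one may round $m_0$ and $m_1$ up to the next admissible integer without affecting any of the sign conclusions.
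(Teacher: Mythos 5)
Your proof is correct, and it takes a genuinely different (though logically equivalent) computational route from the paper's. The paper reduces membership in $\mathfrak{R}C_{i-1}$ to a slope condition: after translating to the origin, it invokes Lemma~\ref{density0} to replace cone membership by the requirement that the ratio of coordinates lie in the closed interval $I[d_{i-1}/e_{i-1},\,d_i/e_i]$, and then manipulates that fraction, choosing $m_0$ (resp.\ $m_1$) to make the numerator nonnegative and then $n$ large to bound the quotient. You instead parameterize the cone directly by the coefficients $\mu,\nu$ in the decomposition along $(d_{i-1},e_{i-1})$ and $(d_i,e_i)$, solve the resulting $2\times 2$ linear system, and reduce membership to the sign conditions $\mu,\nu\geq 0$. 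Your version has a small expository advantage: the $n$-dependence is isolated entirely in the coefficient $\nu$ and is visibly monotone in $n$, so the correct quantifier order (first fix $m_0$ or $m_1$ uniformly over $\mathcal{S}$, then take $n$ large) falls out automatically, whereas the slope approach has to track an $n$-dependent denominator through the chain of equivalences. Both arguments hinge on the same two strict inequalities $d_1/e_1<d_i/e_i$ and $d_{i-1}/e_{i-1}<d_i/e_i$. Incidentally, the paper's displayed proof writes $(\lambda_i,\beta_i)$ as the translation vector even though the target cone is $\mathfrak{R}C_{i-1}=\mathfrak{C}_{i-1}+(\lambda_{i-1},\beta_{i-1})$; this is evidently an index typo, and your computation uses the correct $(\lambda_{i-1},\beta_{i-1})$.
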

\begin{proof}{\em Assertion $(1)$}. \quad It is enough to prove that there exist $m_0\in e_1\N_{>0}$ and $n_{m_0}\in \N$ such that for every $n\geq n_{m_0}$ and all $a\in \mathcal{S}$,
$$(a, 0)+n(\tfrac{d_i}{e_i}, 1) + m_0(\tfrac{d_1}{e_1}, 1)-(\lambda_i, \beta_i) \in {\mathfrak C}_{i-1},$$
which is equivalent to proving that
$$\frac{a+n\tfrac{d_i}{e_i}+m_0\tfrac{d_1}{e_1}-\lambda_i}{n+m_0-\beta_i} \in I[\tfrac{d_{i-1}}{e_{i-1}}, \tfrac{d_i}{e_i}].$$
But this holds 
\begin{align*}
 &\iff \frac{a+(n+m_0-\beta_i)\tfrac{d_i}{e_i}+(m_0-\beta_i)(\tfrac{d_1}{e_1}-\tfrac{d_i}{e_i})+\beta_i\tfrac{d_1}{e_1}-\lambda_i}{n+m_0-\beta_i} \in I[\tfrac{d_{i-1}}{e_{i-1}},\tfrac{d_i}{e_i}]\\
 &\iff \frac{a+(m_0-\beta_i)(\tfrac{d_1}{e_1}-\tfrac{d_i}{e_i})+\beta_i\tfrac{d_1}{e_1}  -\lambda_i}{n+m_0-\beta_i} \in I[\tfrac{d_{i-1}}{e_{i-1}}-\tfrac{d_i}{e_i},0]\\
 &\iff \frac{-a+(m_0-\beta_i)(\tfrac{d_i}{e_i}-\tfrac{d_1}{e_1})-\beta_i\tfrac{d_1}{e_1}  +\lambda_i}{n+m_0-\beta_i} \in I[0, ~~\tfrac{d_{i}}{e_{i}}-\tfrac{d_{i-1}}{e_{i-1}}].
\end{align*}

Since $d_i/e_i - d_1/e_1 >0$, we can choose $m_0\in e_1\N_{>0}$ such that
$$(m_0-\beta_i)(\tfrac{d_i}{e_i}-\tfrac{d_1}{e_1})\geq a+\beta_i\tfrac{d_1}{e_1}-\lambda_i \quad \mbox{for all}\;\; a\in \mathcal{S}.$$
Having chosen $m_0$, we can now choose $n_{m_0}$ such that
$$\frac{-a+(m_0-\beta_i)(\tfrac{d_i}{e_i}-\tfrac{d_1}{e_1})-\beta_i\tfrac{d_1}{e_1}  +\lambda_i}{n_{m_0}+m_0-\beta_i} \leq  \tfrac{d_{i}}{e_{i}}-\tfrac{d_{i-1}}{e_{i-1}}.$$
This proves the first assertion.
 
\vspace{5pt}

\noindent{\em Assertion $(2)$}.\quad We need to find $m_1\in \N_{>0}$ and $n_{m_1}$ such that for every $n\geq n_{m_1}$ and all $a\in \sS$
$$(a,0)+n(\tfrac{d_i}{e_i}, 1)-m_1(1, 0)-(\lambda_i, \beta_i)\in \mathfrak{C}_j.$$
This holds if and only if
$$\frac{-a-\beta_i\tfrac{d_i}{e_i}+m_0+\lambda_i}{n-\beta_i} \in I[0,~~\tfrac{d_i}{e_i}-\tfrac{d_{i-1}}{e_{i-1}}].$$
Now we choose $m_1$ such that $m_1 \geq a+\beta_i\tfrac{d_i}{e_i}-\lambda_i$ for all $a\in \sS$. The assertion follows by choosing $n_{m_1}$ such that
$$\frac{-a-\beta_i\tfrac{d_i}{e_i}+m_0+\lambda_i}{n_{m_1}-\beta_i} \leq  \tfrac{d_i}{e_i}-\tfrac{d_{i-1}}{e_{i-1}}.$$
\end{proof}

\begin{lemma}\label{lrr1}
Let $\{d_j/e_j\}_{j=1}^l$ be the set of elements as in Notations $\ref{chambers}$. Let $\sS\subset \R$ be a finite set. Then the following holds.
\begin{enumerate}
\item If $d_1/e_1 < d_{i}/e_{i} < d_{l+1}/e_{l+1}$, {\em i.e.}, $2\leq i\leq l$ then there exists $m_0\in e_1\N_{>0}$ such that
$$(a, 0)+ n(\tfrac{d_i}{e_i}, 1) - m_0(\tfrac{d_1}{e_1}, 1)\in {\mathfrak R}C_{i}\quad\mbox{for every}\;\; a\in \sS\;\;\mbox{and all}\;\; n\gg 0.$$
\item If $d_1/e_1\leq d_i/e_i < d_{l+1}/e_{l+1}$, {\em i.e.}, $1\leq i \leq l$ then there exists $m_1\in \N_{>0}$ such that
$$(a, 0) + n(\tfrac{d_i}{e_i}, 1) + m_1(1, 0)\in {\mathfrak R}C_{i},\quad\mbox{for every}\;\; a\in \sS\;\;\mbox{and all}\;\; n\gg 0.$$
\end{enumerate} 
\end{lemma}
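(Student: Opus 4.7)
The plan is to adapt the proof of Lemma \ref{i-1} almost verbatim, only aiming for $\mathfrak{R}C_i$ rather than $\mathfrak{R}C_{i-1}$. Using $\mathfrak{R}C_i = \mathfrak{C}_i + (\lambda_i,\beta_i)$ and Lemma \ref{density0}(1), both assertions reduce to showing that the ratio of the two coordinates of a suitable point lies in the interval $I[d_i/e_i,\,d_{i+1}/e_{i+1}]$, where by convention $d_{l+1}/e_{l+1}=\infty$ when $i=l$. The strategy in each case is: algebraically decompose this ratio as $d_i/e_i$ plus a \emph{correction term}, then pick the integer shift ($m_0$ or $m_1$) large enough to make the numerator of the correction nonnegative for every $a\in\sS$, and finally take $n$ large enough to make the correction smaller than the length $d_{i+1}/e_{i+1}-d_i/e_i$ (a vacuous step when $i=l$).

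For part (1), I would consider the point $(a,0)+n(d_i/e_i,1)-m_0(d_1/e_1,1)-(\lambda_i,\beta_i)$, and once $n-m_0-\beta_i>0$, write its coordinate ratio as
$$
\rho_n \;=\; \frac{d_i}{e_i} \;+\; \frac{a+m_0\bigl(d_i/e_i-d_1/e_1\bigr)+\beta_i\,d_i/e_i-\lambda_i}{n-m_0-\beta_i}.
$$
Since $d_i/e_i>d_1/e_1$, I can choose $m_0\in e_1\N_{>0}$ large enough that the numerator is nonnegative for every $a\in\sS$, giving $\rho_n\geq d_i/e_i$. With $m_0$ fixed, the correction tends to $0$, so for all $n\gg 0$ it is bounded above by $d_{i+1}/e_{i+1}-d_i/e_i$ (when $i<l$) and no upper bound is needed when $i=l$.

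For part (2), the same manipulation applied to $(a,0)+n(d_i/e_i,1)+m_1(1,0)-(\lambda_i,\beta_i)$ yields, once $n-\beta_i>0$, the expression
$$
\rho_n \;=\; \frac{d_i}{e_i} \;+\; \frac{a+m_1+\beta_i\,d_i/e_i-\lambda_i}{n-\beta_i}.
$$
Here I choose $m_1\in\N_{>0}$ with $m_1\geq \lambda_i-\beta_i\,d_i/e_i-a$ for all $a\in\sS$ to force the numerator to be nonnegative; the upper bound for $\rho_n$ (vacuous when $i=l$) follows again by taking $n$ large. Note that unlike part (1), this argument does \emph{not} need $d_i/e_i>d_1/e_1$, which is why part (2) is allowed to start from $i=1$.

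There is no real obstacle; the argument is purely bookkeeping in the Euclidean plane. The only subtleties to monitor are the two edge cases: $i=l$, where the upper bound on $\rho_n$ is vacuous under the convention $d_{l+1}/e_{l+1}=\infty$, and $i=1$ in part (2), where the absence of a $-m_0(d_1/e_1,1)$ shift is precisely what allows the weaker hypothesis $d_1/e_1\leq d_i/e_i$. The divisibility constraint $m_0\in e_1\N_{>0}$ is harmless since it only requires adjusting the chosen $m_0$ upward by a multiple of $e_1$.
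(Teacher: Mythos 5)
Your proposal is correct and follows essentially the same route as the paper's proof: translate the target point by $(\lambda_i,\beta_i)$ to reduce to membership in $\mathfrak{C}_i$, rewrite the coordinate ratio as $d_i/e_i$ plus a correction term whose numerator is exactly the expression the paper uses, pick $m_0$ (resp. $m_1$) to make that numerator nonnegative for all $a\in\sS$, and then take $n\gg 0$ to push the correction below $d_{i+1}/e_{i+1}-d_i/e_i$. Your added remarks on why part (2) tolerates $i=1$ and why the $e_1$-divisibility of $m_0$ is cost-free are accurate but not new ideas; the argument matches the paper's.
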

\begin{proof}
From Notations $\ref{chambers}$, we recall that $(d_{l+1}, e_{l+1}) = (1, 0)$ and $d_{l+1}/e_{l+1} = \infty$.

\vspace{5pt}

\noindent{\em Assertion $(1)$}.\quad We need to find $m_0$ and $n_{m_0}$ such that
$$(a, 0)+ n(\tfrac{d_i}{e_i}, 1)-m_0(\tfrac{d_1}{e_1}, 1)-(\lambda_i, \beta_i)\in {\mathfrak C}_{i},$$
for every $n\geq n_{m_0}$ and all $a\in \sS$. This is same as showing that
$$\frac{a+m_0(\tfrac{d_i}{e_i}-\tfrac{d_1}{e_1})+\beta_i\tfrac{d_i}{e_i}-\lambda_i}{n-m_0-\beta_i}\in I[0,\tfrac{d_{i+1}}{e_{i+1}}-\tfrac{d_i}{e_i}].$$
Since $\tfrac{d_i}{e_i}-\tfrac{d_1}{e_1}>0$ we can choose $m_0\in e_1\N_{>0}$ such that
$$a_0+ m_0(\tfrac{d_i}{e_i}-\tfrac{d_1}{e_1})+\beta_i\tfrac{d_i}{e_i}-\lambda_i > 0.$$
Now the assertion follows for any choice of $n_{m_1}$ with the property that
$$\frac{a+m_0(\frac{d_i}{e_i}-\tfrac{d_1}{e_1})+\beta_i\tfrac{d_i}{e_i}-\lambda_i}{n-m_0-\beta_i} <\tfrac{d_{i+1}}{e_{i+1}}-\tfrac{d_i}{e_i}.$$

\vspace{5pt}

\noindent{\em Assertion $(2)$}.\quad To prove this, we need to find $m_1$ and $n_{m_1}$ such that
$$(a, 0) + n(\frac{d_i}{e_i}, 1) + m_1(1, 0)-(\lambda_i, \beta_i)\in{\mathfrak C}_{i},$$
for every $n\geq n_{m_1}$ and all $a\in \sS$. This is same as showing that
$$\frac{a+\beta_i\tfrac{d_i}{e_i} + m_1-\lambda_i}{n-\beta_i}\in I[0,\tfrac{d_{i+1}}{e_{i+1}}-\tfrac{d_i}{e_i}].$$
We can choose $m_1\in \N_{>0}$ such that
$$m_1 > -a-\beta_i\tfrac{d_i}{e_i}+\lambda_i,\quad\mbox{for all}\;\; a\in \sS.$$
Now any choice of $n_{m_1}$ satisfying
$$\frac{a+\beta_i\tfrac{d_i}{e_i} + m_1-\lambda_i}{n_{m_1}-\beta_i} < \tfrac{d_{i+1}}{e_{i+1}}-\tfrac{d_i}{e_i}$$
establishes the assertion.
\end{proof}

\section{The  length function  of  bigraded algebra and 
vector partition theorem}

\subsection{The length function of a polynomial bigraded algebra}

Let $k$ be a field. Let $S = k[X_1, \ldots, X_r, Y_1, \ldots, Y_s]$ be a bigraded polynomial $k$-algebra with grading $\deg X_i = (1,0)$ and $Y_j = (d_j,e_j)\in \N\times \N_{>0}$ for $i=1, \ldots, r$ and $j=1, \ldots, s$.

One can relate the $\ell_k(S_{m,n})$ with a  vector partition function as follows:
\begin{align*}\label{poly}
\ell_k(S_{m,n}) =~& \#\big\{(\lambda_1, \ldots, \lambda_{r+s})\in \N^{r+s}\mid
x^{\lambda_1}\cdots
x^{\lambda_r}(x^{d_1}y^{e_1})^{\lambda_{r+1}}\cdots 
(x^{d_s}y^{e_s})^{\lambda_{r+s}} = x^my^n\big\}\\
=~& \#\big\{(\lambda_1, \ldots, \lambda_{r+s})\in \N^{r+s}\mid
e_1\lambda_{r+1}+\cdots+e_s\lambda_{r+s} = n\quad\mbox{and}\quad
\sum_{i=1}^r\lambda_i +   \sum_{j=1}^s\lambda_{r+j}d_{j} = m\big\}.
\end{align*}
Hence 
$\ell_k(S_{m,n}) = \phi_M(m, n)$, where $\phi_M:\N^2\longto \N$ is the vector partition function corresponding to the matrix $M$ as in (\ref{matrixA}). On the other hand, we have the power series expansion
\begin{multline*}
\dfrac{1}{(1-x)^r(1-x^{d_1}y^{e_1})\cdots (1-x^{d_s}y^{e_s})}
 = (1+x+x^2+\cdots )\cdots 
{\mbox{$r$ times}}\cdots (1+x+x^2+\cdots )\cdot\\
(1+x^{d_1}y^{e_1}+(x^{d_1}y^{e_1})^2+\cdots )
\cdots (1+x^{d_s}y^{e_s}+(x^{d_s}y^{e_s})^2+\cdots ).\end{multline*}
Therefore,
\begin{equation}\label{HSforP}\sum_{(m,n)\in \N^2} \ell_k(S_{m,n})x^my^n =
\dfrac{1}{(1-x)^r(1-x^{d_1}y^{e_1})\cdots (1-x^{d_s}y^{e_s})}.
\end{equation}

\subsection{The length function of a Noetherian bigraded algebra over \texorpdfstring{$k$}{k}}\label{ss3}

Let $A = \oOplus_{n\geq 0}A_n$  be  a standard graded algebra over  a field $A_0 = k$ with
homogeneous maximal ideal $\mathfrak{m} = \oOplus_{n\geq 1}A_n$.
Therefore, we can write $A = k[x_1, \ldots, x_r]$, where $x_i$'s are degree $1$  elements.

Let $R = \mathop{\oOplus}_{(m,n)\in \N^2}R_{m,n}$ be a finitely generated bigraded
$A$-algebra generated by elements of degrees 
$\{(d_i, e_i)\in \N^2\mid 1\leq i \leq s\}$, where bidegree of $x_i$ is $(1,0)$. Then 
 ${R}$ is represented as a graded quotient of 
the bigraded polynomial ring 
$S=k[X_1,\ldots,X_r,Y_1,\ldots,Y_s]$, where 
$X_i$ maps to $x_i$ and $Y_i$ map to a degree $(d_i, e_i)$ generator of $R$. Therefore there exists \cite[Proposition $8.18$]{MS05} a bigraded minimal
free resolution of finite length  
$$0\to \mathop{\oOplus}_{j=1}^{\eta_t}
S(-a_{tj},-b_{tj})^{\beta_{tj}} \to \cdots \to \mathop{\oOplus}_{j=1}^{\eta_1}
S(-a_{1j},-b_{1j})^{\beta_{1j}} \to S \to {R}\to 0$$ 
of ${R}$ as an $S$-module. Hence the bigraded Hilbert series of $R$ is given by
$$\begin{array}{lcl}
\sum\limits_{(m,n)\in\mathbb{N}^2}\ell_k(R_{m,n}) x^my^n & = &
\sum\limits_{i=0}^t (-1)^i\Big(\sum\limits_{j=1}^{\eta_i}
\beta_{ij}\big(\sum\limits_{(m,n)\in\mathbb{N}^2}
\ell(S_{m-a_{ij},n-b_{ij}}) x^my^n\big)\Big)\\\
& = & \dfrac{p(x,y)}{(1-x)^r(1-x^{d_1}y^{e_1})\cdots (1-x^{d_s}y^{e_s})},
\end{array}$$
where $p(x,y) = \sum_{i=0}^t (-1)^i
\big(\sum_{j=1}^{\eta_i}\beta_{ij}x^{a_{ij}}y^{b_{ij}}\big)
\in\mathbb{Z}[x,y]$.
 We rewrite
$$p(x,y)  = \sum_{(\alpha, \beta)\in N(I)}c_{\alpha,
\beta}x^{\alpha}y^{\beta}\in \Z[x, y],\quad\mbox{where}\quad
 N(I) = \{(\alpha, \beta)\in \N\times \N\mid c_{\alpha, \beta}\neq 0\}.$$
This gives
\begin{equation}\label{pe}\sum\limits_{(m,n)\in\mathbb{N}^2}\ell_k(R_{m,n}) x^my^n =
\sum_{(\alpha,\beta)\in N(I)}c_{\alpha,\beta}~\ell_k(S_{m-\alpha, n-\beta})x^my^n.\end{equation}

\begin{notations}\label{ell(Rmn)} Let $R$ be a finitely generated bigraded $A$-algebra where $A$ is a standard graded ring over a field $k$.
 Let  
 $$\{(d_1, e_1), (d_2, e_2), \ldots, (d_s, e_s)\}$$ be a set of bidegrees of a set of generators of $R$ as an $A$-algebra.
We follow  Notations \ref{chambers} associated to this set.

Note that since the constant term of the polynomial $p(x,y)$ is  the length of bigraded component $R_{0,0}$ which is nonzero, we have  $(0,0)\in N(I)$.
In particular, ${\mathfrak R}C_j \subseteq  {\mathfrak C_{j}}$. If $R$ is the polynomial ring $S$ itself then each ${\mathfrak R}C_j = {\mathfrak C}_j$.
\end{notations}

\begin{rmk}\label{restcone}We can partition
$$\N\times \N = ({\mathfrak C}_0\cap \N\times \N)\cup
 ({\mathfrak C}_1\cap \N \times \N)\cup
 \cdots \cup({\mathfrak C}_l\cap \N\times\N),$$
 where ${\mathfrak C}_0$ is the cone generated by the vectors $(0, 1)$ and $(d_1, e_1)$. If  $(m,n)\in \N\times \N$ then
 $$(m,n)\in {\mathfrak C}_j\iff 
\tfrac{d_j}{e_j}\leq \tfrac{m}{n}\leq \tfrac{d_{j+1}}{e_{j+1}}.$$
 Therefore, if $1\leq j<l$ then
 $$(m,n)\in {\mathfrak R}C_j\iff 
 \tfrac{d_j}{e_j}+\tfrac{1}{n}\big(\lambda_j-\tfrac{\beta_jd_j}{e_j}\big)\leq \tfrac{m}{n}\leq \tfrac{d_{j+1}}{e_{j+1}}
 -\tfrac{1}{n}\big(\tfrac{\beta_jd_{j+1}}{e_{j+1}}-\lambda_j\big)$$
and $(m,n)\in {\mathfrak R}C_l\iff  \tfrac{d_l}{e_l}+\tfrac{1}{n}\big(\lambda_l-\frac{\beta_ld_l}{e_l}\big)\leq \tfrac{m}{n}.$
 \end{rmk}
 
\begin{lemma}\label{qp}Following Notations $\ref{ell(Rmn)}$, we have that for each cone $\mathfrak{C}_{j_0}$ there is a quasi-polynomial $QP_{j_0}(X, Y)$ which is periodic of period $h$ in both variables $X$ and $Y$ such that $$(m,n)\in {\mathfrak{R}}C_{j_0} \implies \ell(R_{m,n})= QP_{j_0}(m,n).$$
\end{lemma}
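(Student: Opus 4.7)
The plan is to combine the decomposition in equation (\ref{pe}) with Sturmfels' vector partition theorem (Theorem \ref{corrector}) applied to the length function of the polynomial ring $S$, and then to use the periodicity result (Lemma \ref{density1}) to package the correction terms into a quasi-polynomial with period $h$ in each variable.

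First I would rewrite equation (\ref{pe}) pointwise as the finite sum
\[
\ell_k(R_{m,n}) \;=\; \sum_{(\alpha,\beta)\in N(I)} c_{\alpha,\beta}\,\ell_k(S_{m-\alpha,\,n-\beta}) \;=\; \sum_{(\alpha,\beta)\in N(I)} c_{\alpha,\beta}\,\phi_M(m-\alpha,n-\beta),
\]
where $\phi_M$ is the vector partition function associated to the matrix $M$ in (\ref{matrixA}). This reduces the problem to analysing $\phi_M$ on shifts of $(m,n)$ by a finite set of vectors $N(I)\subset \mathbb{N}^2$.

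The second step, and the only geometric input, is to verify that whenever $(m,n)\in{\mathfrak R}C_{j_0}$ and $(\alpha,\beta)\in N(I)$, the shifted point $(m-\alpha,n-\beta)$ still lies in the chamber $\mathfrak{C}_{j_0}$. This is exactly the feature encoded in the definition of the restricted cones: the vector $(\lambda_{j_0},\beta_{j_0})$ is chosen so that $(\lambda_{j_0}-\alpha,\,\beta_{j_0}-\beta)\in \mathfrak{C}_{j_0}$ for every $(\alpha,\beta)\in N(I)$. Since $\mathfrak{C}_{j_0}$ is closed under vector addition from itself, writing $(m,n)=(x,y)+(\lambda_{j_0},\beta_{j_0})$ with $(x,y)\in\mathfrak{C}_{j_0}$ gives $(m-\alpha,n-\beta)=(x,y)+(\lambda_{j_0}-\alpha,\beta_{j_0}-\beta)\in\mathfrak{C}_{j_0}$. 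This is the step I expect to require the most care, since it is what justifies applying Sturmfels' theorem inside a single chamber uniformly for every term in the finite sum.

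Once cone membership is secured, Theorem \ref{corrector} gives, for each $(\alpha,\beta)\in N(I)$,
\[
\phi_M(m-\alpha,n-\beta) = P_{\mathfrak{C}_{j_0}}(m-\alpha,n-\beta) + \!\!\sum_{\substack{\sigma\in\Delta(\mathfrak{C}_{j_0})\\ [(m-\alpha,n-\beta)]_\sigma\neq 0}} \!\!\Omega_\sigma\bigl([(m-\alpha,n-\beta)]_\sigma\bigr)\,Q^{\mathfrak{C}_{j_0}}_\sigma(m-\alpha,n-\beta).
\]
The polynomials $P_{\mathfrak{C}_{j_0}}(m-\alpha,n-\beta)$ and $Q^{\mathfrak{C}_{j_0}}_\sigma(m-\alpha,n-\beta)$ are, for each fixed $(\alpha,\beta)$, polynomials in $(m,n)$ with rational coefficients. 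By Lemma \ref{density1} there is a uniform integer $h>0$ such that $[(m-\alpha,n-\beta)]_\sigma$ depends only on the residues of $m$ and $n$ modulo $h$; hence each $\Omega_\sigma\bigl([(m-\alpha,n-\beta)]_\sigma\bigr)$ is a function of $(m\bmod h, n\bmod h)$ alone.

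Finally, summing the finitely many contributions over $(\alpha,\beta)\in N(I)$, the polynomial parts combine into a polynomial in $(m,n)$, while the correction terms combine into a sum of (polynomial in $(m,n)$) times (function of $(m\bmod h, n\bmod h)$). This is precisely a quasi-polynomial $QP_{j_0}(X,Y)\in\mathbb{Q}[X,Y]$ that is periodic of period $h$ in both variables, agreeing with $\ell_k(R_{m,n})$ on $\mathfrak{R}C_{j_0}\cap \mathbb{N}^2$, as claimed.
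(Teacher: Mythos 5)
Your proof takes essentially the same route as the paper's: expand $\ell_k(R_{m,n})$ via the finite sum over $N(I)$ of shifted vector partition functions $\phi_M(m-\alpha,n-\beta)$, check cone membership of the shifts, apply Sturmfels' theorem chamber-by-chamber, and invoke Lemma~\ref{density1} for the period $h$. You spell out the cone-membership verification (step 2) a bit more explicitly than the paper, which simply asserts that $(m,n)\in\mathfrak{R}C_{j_0}$ forces $(m-\alpha,n-\beta)\in\mathfrak{C}_{j_0}$, but the underlying idea is the same.
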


\begin{proof}
By (\ref{pe}),
$$\sum\limits_{(m,n)\in\mathbb{N}^2}\ell_k(R_{m,n}) x^my^n =
\sum_{(\alpha,\beta)\in N(I)}c_{\alpha,\beta}~
\ell_k(S_{m-\alpha, n-\beta})x^my^n.$$
If $(m,n)\in {\mathfrak{R}}C_{j_0}$ then $(m-\alpha, n-\beta) \in {\mathfrak C}_{j_0}$ for each $(\alpha,\beta)\in N(I)$. Therefore,
\begin{equation}\label{lengthformula}
\ell_k(R_{m,n}) = \sum_{(\alpha,\beta)\in N(I)} c_{\alpha,\beta}\cdot\phi_M(m-\alpha,n-\beta),
\end{equation}
where $\phi_M:\N^2\longto \N$ is the vector partition function corresponding to the matrix $M$ as in (\ref{matrixA}). In particular, by Theorem \ref{corrector}, for the set of chambers $\{\mathfrak C_j\}_{1\leq j\leq l}$ there is  a set of corresponding polynomials
$$\{P_{\mathfrak C_j},~ Q^{\mathfrak C_j}_{\sigma}\in 
Q[X, Y] \mid 1\leq j \leq l, \sigma \in \Delta({\mathfrak C_j})\}$$ 
such that $(m,n)\in {\mathfrak{R}}C_{j_0}$ implies
\begin{multline}\label{lformula}
\ell_k(R_{m,n})=
\sum_{(\alpha,\beta)\in N(I)} c_{\alpha,\beta}
\Big[P_{\mathfrak C_{j_0}}\left(m- \alpha, n-\beta\right)\\
  +\sum_{\big\{\sigma\in\Delta({\mathfrak C_{j_0}})\;\big\vert\;  [\left(m - \alpha,
n-\beta\right)]_{\sigma}\neq 0\big\}}\Omega_{\sigma}[\left(m -
\alpha, n-\beta\right)]_{\sigma}\cdot 
Q^{\mathfrak C_{j_0}}_{\sigma}\left(m - \alpha, n-\beta\right)
\Big].\end{multline}
Now for $h$  as in Lemma~\ref{density1}, each function $\Omega_{\sigma}$  
is periodic of period $h$ both as a function of $X$ and $Y$. 
Therefore we have the quasi-polynomial as stated in the lemma.
\end{proof}

Following corollary covers Theorem $1.1$ of \cite{HT03}.

\begin{cor}\label{HT} Let $R$ be  the bigraded ring as in Notations $\ref{ell(Rmn)}$ with the  condition that $e_j=1$ for $j=1, \ldots, s$.
Then there exists a polynomial $P(X, Y)\in \Q[X,Y]$ such that
$$m\geq d_{l}n + n_0 \implies \ell_k(R_{m,n}) = P(m,n),$$
$\mbox{where} ~n_0 =  \lceil{\lambda_l} - d_l \beta_l\rceil$ with $(\lambda_l, \beta_l) \in \Q^2_{\geq 0}$, asssociated to ${\mathfrak{R}{C_l}}= {\mathfrak{C_l}}+(\lambda_l, \beta_l)$.
\end{cor}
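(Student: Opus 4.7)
The plan is to specialize Lemma \ref{qp} to the last cone $\mathfrak{C}_l$ and to leverage the observation recorded in Remark \ref{trivial}: under the hypothesis $e_j = 1$ for all $j$, every basis $\sigma \in \Delta(\mathfrak{C}_l)$ consists of $(1,0)$ together with some $(d_j, 1)$, and this pair already spans $\Z^2$ as a $\Z$-module. Hence $G_\sigma = \Z^2/M_\sigma\Z$ is trivial for every such $\sigma$, which forces each correction summand $\Omega_\sigma([\,\cdot\,]_\sigma)\cdot Q_\sigma^{\mathfrak{C}_l}$ in formula (\ref{lformula}) to vanish. This is the one feature that separates the last cone from the intermediate ones, and it is precisely what kills the quasi-periodic part of the quasi-polynomial $QP_l$ produced by Lemma \ref{qp}.

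Substituting directly into (\ref{lformula}) then yields, for $(m,n) \in {\mathfrak R}C_l \cap \N^2$,
\[
\ell_k(R_{m,n}) \;=\; \sum_{(\alpha,\beta) \in N(I)} c_{\alpha,\beta}\, P_{\mathfrak{C}_l}(m-\alpha,\, n-\beta),
\]
so defining $P(X,Y) := \sum_{(\alpha,\beta) \in N(I)} c_{\alpha,\beta}\, P_{\mathfrak{C}_l}(X-\alpha,\, Y-\beta) \in \Q[X,Y]$ gives the desired polynomial identity.

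The remaining work is to rephrase the cone membership $(m,n) \in {\mathfrak R}C_l$ as the stated inequality $m \geq d_l n + n_0$. Since $\mathfrak{C}_l = \{(x,y) : x \geq d_l y,\, y \geq 0\}$ and ${\mathfrak R}C_l = \mathfrak{C}_l + (\lambda_l, \beta_l)$, membership amounts to $m - \lambda_l \geq d_l(n - \beta_l)$ (together with $n \geq \beta_l$), which rearranges to $m \geq d_l n + (\lambda_l - d_l\beta_l)$; rounding up produces $n_0 = \lceil \lambda_l - d_l\beta_l \rceil$. The only step calling for any care is verifying, as is already implicit in the proof of Lemma \ref{qp}, that the translation vector $(\lambda_l, \beta_l)$ was chosen so that whenever $(m,n) \in {\mathfrak R}C_l$ every shift $(m-\alpha, n-\beta)$ with $(\alpha,\beta) \in N(I)$ lies inside $\mathfrak{C}_l$ — which is precisely the design purpose of the restricted cone. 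Granted that, all ingredients fall into place and no further technical difficulty arises.
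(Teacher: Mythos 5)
Your proposal is correct and follows essentially the same route as the paper's own proof: apply Sturmfels' theorem through formula (\ref{lformula}), invoke Remark \ref{trivial} (that $(1,0)$ and $(d_j,1)$ generate $\Z^2$, so $G_\sigma = 0$) to kill the corrector terms, and use Remark \ref{restcone} to rewrite membership in ${\mathfrak R}C_l$ as $m \geq d_l n + n_0$. The paper cites Remark \ref{restcone} for the last step rather than rederiving it, but the content is identical.
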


\begin{proof}
By Remark~\ref{restcone}, $m\geq d_{l}n + n_0 \iff (m,n)\in {\mathfrak R} C_l$.
 
Consider the expression for $\ell_k(R_{m,n})$ as in (\ref{lformula}). Now by Remark~\ref{trivial},
$[(m, n)]_{\sigma} =  0$ for all $(m,n)\in {\mathfrak C}_l$.
This gives for $m\geq d_ln+n_0$
$$\ell_k(R_{m,n})  = \sum_{(\alpha,\beta)\in N(I)} c_{\alpha,\beta}
P_{\mathfrak C_l}\left(m- \alpha, n-\beta\right),$$ 
where $\sum_{(\alpha,\beta)\in N(I)} c_{\alpha,\beta}
P_{\mathfrak C_l}\left(X- \alpha, Y-\beta\right)$ is a 
polynomial in $\Q[X, Y]$
\end{proof}

 \begin{notations}\label{noethfamily}
 From now on, we will consider the bigraded $A$-algebra $R =  \mathop{\oOplus}_{(m, n)\in \N^2} (I_n)_mt^n$, where  $\{I_n\}_{n\in \N}$ is a Noetherian filtration of homogeneous ideals in $A$.

We say that $\{I_n\}$ is a \emph{filtration} if for all $n,m\in\N$,
$$I_0 = A,\quad  I_{n+1}\subseteq I_n,\quad
I_n\cdot I_m \subseteq I_{n+m}.$$

Such a filtration is Noetherian if in addition there is an integer $m\geq 1$ such that $I_n\cdot I_m = I_{n+m}$ for all $n\geq m$, see \cite[Proposition $2.1$]{Sch88}. In this case, $I_{nm} = (I_m)^n$ for every $n\geq 1$.

The bigrading on  ${R} = \mathop{\oOplus}_{(m, n)\in \N^2} (I_n)_mt^n$ is given as follows: $(I_n)_m = R_{m,n}$ is the set of homogeneous degree  $m$ elements of $A$ which belong to $I_n$, and the  homogeneous elements of degree $m$ in $A$  have bidegree $(m,0)$ in the bigraded algebra $R$.
\end{notations}

Following are a few examples of Noetherian filtration of ideals which do not arise as a power of an ideal and are generated by bidegrees like $(d_i, e_i)$ where $e_i$ need not be $1$.

\begin{ex}\label{Nfiltration}
\hspace{2em}
\begin{enumerate}
\item[(1)] If $(A, {\bf m})$ is an analytically unramified local ring and $I\subseteq A$ an ideal, then the integral closure filtration $\{\overline {I^n}\}_{n\in\N}$ (\cite[Theorem 1.4]{Ree61}) is a Noetherian filtration. If $A$ has positive characteristic then  $I^n \subseteq {(I^n)}^* \subseteq \overline {I^n}$ by definition, where
$(I^n)^*$ denotes the tight closure of the ideal $I^n$ in $A$. In particular, $\{{(I^n)}^*\}_{n\in\N}$ is a Noetherian filtration. Moreover, if $(A,\bf m)$ is an excellent domain and the analytic spread of $I$ is not maximal then
$\{I^n\colon {\bf m}^{\infty}\}_{n\in\N}$ is a Noetherian filtration, see \cite{CHS10}.
\item[(2)]
Let  $A$ be a polynomial ring over a field $k$. Let $I$ and $J$ be two  monomial ideals in $A$. Then, by \cite[Theorem $3.2$]{HHT07},
$\{(I^n:J^{\infty})\}_{n\geq 0}$ is a Noetherian filtration. Particularly, in this case the ordinary symbolic Rees algebra $\mathop{\oOplus}_{n
\in \N}I^{(n)}t^n$ and the saturated Rees algebra $\mathop{\oOplus}_{n \in \N}(I^n:{\bf m}^{\infty})t^n$ are finitely generated bigraded algebra  over $A$.
\end{enumerate}
\end{ex}

\begin{rmk}\label{I_n} Some of the forthcoming results will hold true for the
following set of bigraded algebras
  $R =\mathop{\oOplus}_{(m,n)\in \N^2}(I_n)_mt^n$, where $\{I_n\}_n$ is a
 filtration of  ideals as above (not necessarily Noetherian) but  
$R$  does have a Hilbert-series of the form as in (\ref{pe}). 
For this kind of bigraded algebra we would assume 
the additional  condition that 
 $\mbox{depth A} >0$. The following two examples discuss two algebras of this kind.
 \end{rmk}

\begin{ex}
Let $k$ be an algebraically closed field of characteristic zero. Let $A=\mathop{\oOplus}_{m\geq 0}A_m$ be a two-dimensional standard graded Noetherian domain over $k$. Let $I\subseteq A$ be a height one homogeneous ideal. In this situation, it is possible that the saturated Rees algebra $\mathop{\oOplus}_{n\geq 0}\widetilde{I^n}t^n$ is non-Noetherian, see \cite{Ree58}. However, in view of \cite[comment after Theorem $4.1$]{Cut03}, it follows that there is a rational expression of the bivariate Hilbert series associated to the saturated Rees algebra $\mathop{\oOplus}_{(m,n)\in\mathbb{N}^2}\big(\widetilde{I^n}\big)_mt^n$.
\end{ex}

\begin{ex}[Nagata]\label{nagata}
Suppose that $A=\mathbb{C}[X,Y,Z]$ be a standard graded polynomial ring in three variables over $\mathbb{C}$. Let $s\geq 4$ be an integer. Let $p_1,\ldots,p_{s^2}$ be $s^2$ general points in $\mathbb{P}^2_{\mathbb{C}}$. Consider the homogeneous ideal $I = \bigcap_{i=1}^{s^2} I(p_i)$, where $I(p_i)\subseteq A$ is the ideal generated by all homogeneous polynomials vanishing at $p_i$. It was shown by Nagata \cite{Nag59} that the saturated Rees algebra $\mathop{\oOplus}_{n\geq 0}\widetilde{I^n}t^n$ is non-Noetherian. However the Segre-Harbourne-Gimigliano-Hirschowitz (SHGH) conjecture which
is known to be true in our setup (see \cite{Mir06}, \cite{Eva07} and \cite{Roe06}) gives  that $\ell_k\big((\widetilde{I^n})_m\big) = \max\{0,\binom{m+2}{2} - s^2\binom{n+1}{2}\}$ for all integers $m\geq 0$ and $n\geq 0$. In particular, the bigraded Hilbert series of the saturated Rees algebra $\mathop{\oOplus}_{(m,n)\in\mathbb{N}^2}(\widetilde{I^n})_mt^n$ has a rational expression.
\end{ex}

\begin{thm}\label{t1} Let $R= \mathop{\oOplus}_{(m,n)\in \N^2}R_{m,n} =  \mathop{\oOplus}_{(m,n)\in \N^2}(I_n)_mt^n$ be a bigraded $A$-algebra, which is either Noetherian or as given in Remark~\ref{I_n}. Let $\dim A = d\geq 1$ and $\mathrm{depth}~A >0$. Then for a given cone  ${\mathfrak C_{j_0}}$ there exist a homogeneous polynomial $P_{j_0}(X, Y)$ and a quasi-polynomial $Q_{j_0}(X,Y)$ in $\Q[X, Y]$ with $\deg~Q_{j_0}(X,Y) < \deg~P_{j_0}(X, Y)$ such that
 \begin{equation}\label{et1}
 (m,n)\in {\mathfrak{R}C_{j_0}} \cap \N^2 \implies  
\ell_k\big((I_n)_m\big) = P_{j_0}(m,n) + Q_{j_0}(m, n).\end{equation}
Further $\deg P_{j_0}(X,Y) \leq d-1$.
\end{thm}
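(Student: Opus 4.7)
The plan is to bootstrap off Lemma~\ref{qp}, which already produces a quasi-polynomial expression $\ell_k((I_n)_m) = QP_{j_0}(m,n)$ of formal degree $r+s-2$ on $\mathfrak{R}C_{j_0} \cap \N^2$ via Sturmfels' Theorem~\ref{corrector}. Using (\ref{lformula}) I decompose $QP_{j_0} = \tilde{P} + \tilde{Q}$, where $\tilde{P}(m,n) = \sum_{(\alpha,\beta) \in N(I)} c_{\alpha,\beta}\, P_{\mathfrak{C}_{j_0}}(m-\alpha, n-\beta)$ is an honest polynomial and $\tilde{Q}$ collects the residue-dependent corrector contributions from the $\Omega_\sigma \cdot Q^{\mathfrak{C}_{j_0}}_\sigma$ terms. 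This is the starting expression.

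Next I cut the effective total degree down to $d-1$. Since $R_{m,n} = (I_n)_m \subseteq A_m$ and $\dim A = d$, we have $\ell_k(R_{m,n}) \leq \ell_k(A_m) = O(m^{d-1})$. On the restricted cone the ratio $m/n$ is bounded between $d_{j_0}/e_{j_0}$ and $d_{j_0+1}/e_{j_0+1}$ (up to a lower-order correction from the translation by $(\lambda_{j_0}, \beta_{j_0})$), so $QP_{j_0}(m,n) = O(n^{d-1})$ uniformly across residue classes. Hence the polynomial $P_{(\bar m_0, \bar n_0)}(m,n)$ that represents $QP_{j_0}$ on each residue class modulo $h$ has total $(m,n)$-degree at most $d-1$.

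The main task is then to show that the degree-$(d-1)$ homogeneous component $T_{(\bar m_0, \bar n_0)}$ of $P_{(\bar m_0, \bar n_0)}$ is independent of the residue class; once this is done, one sets $P_{j_0}(X,Y) := T_{(\bar m_0, \bar n_0)}$ and $Q_{j_0}(X,Y) := QP_{j_0}(X,Y) - P_{j_0}(X,Y)$, which is a quasi-polynomial of strictly smaller total degree. For residue-invariance in the $m$-direction I invoke the depth hypothesis: pick a nonzerodivisor $x \in A_1$, which remains a nonzerodivisor on $R$, and deduce
\begin{equation*}
\ell_k(R_{m,n}) - \ell_k(R_{m-1,n}) = \ell_k((R/xR)_{m,n}) \leq \ell_k((A/xA)_m) = O(m^{d-2}),
\end{equation*}
since $\dim(A/xA) = d-1$. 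Matching this with the quasi-polynomial difference $P_{(\bar m_0,\bar n_0)}(m,n) - P_{(\bar m_0 - 1,\bar n_0)}(m-1,n)$ and extracting the degree-$(d-1)$ part (the discrete shift $m \mapsto m-1$ lowers the degree by one) forces $T_{(\bar m_0,\bar n_0)} = T_{(\bar m_0 - 1, \bar n_0)}$; iterating makes $T$ independent of $\bar m_0$. For the $n$-direction I would use the Noetherian filtration property to produce an $m_0$ with $I_{n+m_0} = I_n \cdot I_{m_0}$ for $n$ large, pick an element $g \in I_{m_0}$ of degree $d'$ that is a nonzerodivisor on $A$ (via prime avoidance, assuming $I_{m_0}$ contains a nonzerodivisor), and run the analogous argument with the injection $g \colon R_{m,n} \to R_{m+d', n+m_0}$, using $\ell_k(R_{m+d', n+m_0}) - \ell_k(R_{m,n}) = O(n^{d-2})$.

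The main obstacle will be this $n$-direction step: extracting a ``vertical'' nonzerodivisor of bigraded degree $(d', m_0)$ is more delicate than the horizontal case, since depth $>0$ on $A$ does not automatically supply a nonzerodivisor inside $I_{m_0}$. Resolving this requires either an extra hypothesis (such as $A$ being a domain, which is the setting of most applications in the paper) or a more careful bigraded depth analysis of $R$ itself; with either ingredient in hand, the two invariance statements combine to show that all $T_{(\bar m_0, \bar n_0)}$ coincide with a single homogeneous polynomial $P_{j_0}(X,Y) \in \Q[X,Y]$ of degree at most $d-1$, whence the theorem.
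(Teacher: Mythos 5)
Your starting point (Lemma~\ref{qp}) and the degree bound $\deg P_{j_0}\le d-1$ via $\ell_k(R_{m,n})\le\ell_k(A_m)=O(m^{d-1})$ agree with the paper's Step~2. The gap is in the residue-invariance argument. For the $m$-direction, the inequality $\ell_k\big((R/xR)_{m,n}\big)\le\ell_k\big((A/xA)_m\big)$ that you use is not true in general: the natural map $(I_n)_m/x(I_n)_{m-1}\to A_m/xA_{m-1}$ has kernel isomorphic to $(I_n\colon x)_{m-1}/(I_n)_{m-1}$, which is nonzero whenever $x$ is a zerodivisor on $A/I_n$ in the relevant degree (for instance whenever $I_n$ has an embedded component at $\bf m$); unwinding, your inequality asserts that the Hilbert function of $A/I_n$ is nondecreasing, which need not hold. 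Without that, the required uniform bound $\ell_k\big((R/xR)_{m,n}\big)=O(m^{d-2})$ on the restricted cone is essentially the content of Theorem~\ref{t1} applied to $R/xR$ over $A/xA$, so the step is circular as written (it could perhaps be rescued by a genuine induction on $d$, but you would have to set that up). For the $n$-direction, you have flagged the obstruction yourself: the theorem's hypotheses supply no nonzerodivisor inside $I_{m_0}$, and in fact none is needed.

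The paper circumvents both problems by exploiting only monotonicity together with the period $h$, rather than bounds on successive differences. In the $m$-direction, $x^j$ and $x^{h-j}$ give injections $(I_n)_m\hookrightarrow(I_n)_{m+j}\hookrightarrow(I_n)_{m+h}$, hence $P_{0,0}(m,n)\le P_{j,0}(m+j,n)\le P_{0,0}(m+h,n)$ for lattice points with $m\equiv 0$, $n\equiv 0\pmod h$; dividing by $n^{\tilde r}$ along a scaling sequence $(m_1h^i+m_1,\,n_1h^i+n_1)$, which keeps $m/n$ fixed, and letting $i\to\infty$ squeezes the top-degree forms together. In the $n$-direction the filtration nesting $I_{n+h}\subseteq I_{n+j}\subseteq I_n$ provides $\ell_k(R_{m,n+h})\le\ell_k(R_{m,n+j})\le\ell_k(R_{m,n})$ for free, replacing the vertical nonzerodivisor you were seeking, and the same squeeze applies. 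This sandwiching device needs neither a difference bound nor any hypothesis beyond $\operatorname{depth} A>0$ and the filtration property, so it is both correct under the stated assumptions and strictly more economical than the route you propose.
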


\begin{center}
\captionsetup{type=figure}
\begin{tikzpicture}[scale=0.8, font=\tiny]
\begin{axis}[axis equal image,
xmin=-0.75, xmax=9,
ymin=-0.75, ymax=5.5,
axis lines=middle,
axis line style={-latex},
ytick=\empty,
xtick=\empty,
xlabel=$m$,ylabel=$n$,
]
\draw[fill, olive3] (0.75,1.1)--(4.65,5)--(2.9,5.35)--(0.75,1.1);
\draw[fill, olive2] (1,0.7)--(5.25,4.9)--(7,3.75)--(1,0.7);
\draw[fill, olive1] (1,0.27)--(7.2,3.3)--(8.25,0.27)--(1,0.27);
\node at (3,5.25) {$\frac{1}{d_1}$};
\node at (7.25,3.3) {$\frac{1}{d_s}$};
\node[blue] at (1,3.5) {\small{$0$}};
\node at (0.45,-0.3) {(0,0)};
\node[blue] at (6,1.7) {$P_{\mathcal{R}(I)}(m,n)$};
\node[blue] at (6,1.2){(HT03)};
\addplot[domain=0:3, blue, densely dashdotted] {(x)/(0.5)};
\addplot[domain=0:5, blue, densely dashdotted] {(x)/(1)};
\addplot[domain=0:7, blue, densely dashdotted] {(x)/(2)};
\end{axis}
\end{tikzpicture}
\vspace{-0.35cm}
\captionof{figure}{}
\label{cone_polynomials}
\end{center}

\begin{proof} Here $\ell_k(R_{m,n}) = \ell_k\big((I_n)_m\big)$. By Lemma~\ref{qp}, for a given cone ${\mathfrak C}_{j_0}$, there is a quasi-polynomial $QP_{j_0}(X, Y)$
which is periodic of period $h$ in both variables $X$ and $Y$. Thus there exists a set
$$\sS_{j_0} =  \{P_{i,j}[X, Y]\in \mathbb{Q}[X,Y] \mid 0\leq i, j<h\}$$ with $P_{i+h, j}(X, Y)  = P_{i, j}(X, Y)$
and $P_{i, j}(X, Y) = P_{i, j+h}(X, Y)$, such that given $(m, n)\in \mathfrak{R}C_{j_0}\cap \N^2$,
\[\ell_k\big((I_n)_{m}\big) = P_{i', j'}(m, n) \;\; \mbox{for some}\;\; P_{i',j'}(X,Y)\in \sS_{j_0}.\]
Then
\begin{enumerate}
\item[(i)] $\ell_k\big((I_n)_{m+i}\big) =
P_{i'+i, j'}(m+i, n)$ if $(m+i, n)\in {\mathfrak R}C_{j_0}$, and
\item[(ii)] $\ell_k\big((I_{n+j})_{m}\big) =
P_{i', j'+j}(m+i, n)$ if $(m, n+j)\in {\mathfrak R}C_{j_0}$.
\end{enumerate}

Let $\deg P_{i,j}(X, Y) = r_{i,j}$ and set $r_0 = \max\{r_{i,j}\}_{i,j}$. We fix  an element $x\in I(\frac{d_{j_0}}{e_{j_0}}, \frac{d_{{j_0}+1}}{e_{{j_0}+1}})$ and the finite set
$$\sS=\{(\alpha, \beta)\in \N^2\mid 0\leq \alpha\leq (r_0+2)h,\quad 0\leq \beta\leq h\}.$$
Then by Lemma~\ref{density2}, there is  $n_x>0$ such that for all $n\geq n_x$
\begin{equation}\label{n_x}
 (\lfloor xn\rfloor + \alpha, n+\beta) \in {\mathfrak R}C_{j_0}\cap 
\N^2\quad\mbox{for each}\quad
(\alpha, \beta)\in \sS.\end{equation}
Moreover, for given $\alpha_0, \beta_0\in \{1, \ldots, 2h\}$ each of the  following sets
 $$\left\{\tfrac{\lfloor xn\rfloor + \alpha_0+ih}{n+\beta_0}\mid 
0\leq i\leq 
r_0\right\}\quad \mbox{ and}\quad 
\left\{\tfrac{n+\beta_0}{\lfloor xn\rfloor + \alpha_0+ih}\mid 
0\leq i\leq r_0\right\}$$ 
consists of $r_0+1$ distinct elements.

\vspace{5pt}

\noindent{\bf Step} $1$.\quad Here we pick a polynomial, say, $P_{0,0}(X, Y)$ in $\sS_{j_0}$ and prove the following.

\vspace{5pt}

\noindent{\bf Claim}
\begin{enumerate}
\item[$(i)$] The top degree terms of  $P_{0,0}(X, Y)$ = the top degree terms of $P_{j,0}(X, Y)$, for all $1\leq j\leq h$.
\item[$(ii)$] The top degree terms of $P_{0,0}(X, Y)$ = the top degree terms of $P_{0,j}(X, Y)$, for all $1\leq j\leq h$.
\end{enumerate}

Since the choice of $P_{0,0}$ is random, the same proof goes through by replacing $P_{0,0}$ by any $P_{i_0, j_0}$. This will prove the first assertion of the theorem.

Now for $P_{0,0}(X,Y)$ and given $(\lfloor xn\rfloor, n)\in {\mathfrak R}C_{j_0}$ there exist
 $\alpha_0, \beta_0 \in \{1, \ldots, h\}$ such that 
 $$\ell_k\big((I_{n+\beta_0})_{\lfloor xn\rfloor + \alpha_0}\big) =
P_{0, 0}(\lfloor xn\rfloor + \alpha_0, n+\beta_0).$$
Further with these choices of $\alpha_0, \beta_0$ we have
$\ell_k\big((I_{n+\beta_0+j})_{\lfloor xn\rfloor + \alpha_0+i}\big)  =
P_{i, j}(\lfloor xn\rfloor + \alpha_0+i, n+\beta_0+j)$. Therefore,
\begin{equation}\label{e2_poly}
\ell_k\big((I_{n+\beta_0+j})_{\lfloor xn\rfloor +
\alpha_0+i+i_1h}\big) =
P_{i, j}(\lfloor xn\rfloor + \alpha_0+i+i_1h, n+\beta_0+j),\end{equation}
for all $0\leq i_1\leq r_0$.

\vspace{5pt}

\noindent{\emph {Proof of the Claim $(i)$}}.\quad Let ${\tilde r} = \max\{r_{0,0}, r_{j,0}\}$. Then we can express
$$P_{0,0}(X,Y) = \sum_{l=0}^{\tilde r}\lambda_l^0X^lY^{{\tilde r}-l}+R_{0,0}(X,Y)\quad\mbox{and}\quad   
P_{j,0}(X,Y) = \sum_{l=0}^{\tilde r}\lambda_l^jX^lY^{{\tilde r}-l}+R_{j,0}(X,Y),$$
where degrees of $R_{0,0}(X,Y)$ and $R_{j,0}(X,Y)$ are 
strictly less than ${\tilde r}$.

Therefore we can write 
$$\tfrac{P_{0,0}(X,Y)}{Y^{\tilde r}} =
\sum_{l=0}^{\tilde r}\lambda^0_l(\tfrac{X}{Y})^l + \sum_{i=1}^{\tilde r}
\tfrac{R^0_{{\tilde r}-i}({X}/{Y})}{Y^i},\quad\quad
\tfrac{P_{j,0}(X,Y)}{Y^{\tilde r}} =
\sum_{l=0}^{\tilde r}\lambda^j_l(\tfrac{X}{Y})^l + \sum_{i=1}^{\tilde r}
\tfrac{R^j_{{\tilde r}-i}({X}/{Y})}{Y^i},$$
where $R^0_i({X}/{Y})$ and $R^j_i({X}/{Y})$ are polynomials in $\Q[{X}/{Y}]$ of degrees $\leq i$. Note that if $r_{j,0} < {\tilde r}$ then all $\lambda^j_l = 0$ for all $l$. Similar statement holds if $r_{0,0} < {\tilde r}$.

If the claim does not hold then
$\sum_{l=0}^{\tilde r}(\lambda^0_l-\lambda^j_l)(\tfrac{X}{Y})^l$ is a nonzero polynomial. Hence there is $i_1\in \{0, 1, \ldots, {\tilde r}\}$  (here ${\tilde r}\leq r_0$), and by \eqref{e2_poly}
$$(m_1, n_1) = (\lfloor xn\rfloor + \alpha_0+i_1h, n+\beta_0)~\mbox{such that}~ \sum_{l=0}^{{\tilde r}}(\lambda^0_l-\lambda^j_l)
\left(\tfrac{m_1}{n_1}\right)^l \neq 0.$$
If $\sum_{l=0}^{\tilde r}(\lambda^0_l-\lambda^j_l)(\tfrac{X}{Y})^l$ is a constant then we simply choose $(m_1, n_1) = (\lfloor xn\rfloor + \alpha_0+i_1h, n+\beta_0)$. Since $(m_1, n_1)\in {\mathfrak R}C_{j_0}$ and $(m_1h^i, n_1h^i) \in \mathfrak{C}_{j_0}$, by Lemma~\ref{density0}
$$\{(m_1h^i+m_1, n_1h^i+n_1)\mid i\geq 1\} \subset {\mathfrak R}C_{j_0}\cap \N^2.$$
For the similar reason $(m_1+j, n_1)$ and $(m_1+h, n_1)\in 
\mathfrak{R}C_{j_0}$  implies
\begin{multline}
\{(m_1h^i+m_1+j, n_1h^i+n_1)\mid i\geq 1\}
\cup \{(m_1h^i+m_1+h, n_1h^i+n_1)\mid i\geq 1\}
\subseteq {\mathfrak R}C_{j_0}\cap \N^2.\end{multline}
Since  $\ell_k\big((I_{n_1})_{m_1}\big) = P_{0,0}(m_1, n_1)$, we have
$$\begin{array}{lll}
\ell_k\big((I_{n_1h^i+n_1})_{m_1h^i+m_1}\big) & = & P_{0,0}(m_1h^i+m_1, n_1h^i+n_1),\\\
\ell_k\big((I_{n_1h^i+n_1})_{m_1h^i+m_1+j}\big) & = & P_{j,0}(m_1h^i+m_1+j, n_1h^i+n_1),\\\
\ell_k\big((I_{n_1h^i+n_1})_{m_1h^i+m_1+h}\big) & = & P_{0,0}(m_1h^i+m_1+h, n_1h^i+n_1).
\end{array}$$
On the other hand,  since $\mbox{depth}~A\geq 1$, the bigraded ring $R$ has a nonzerodivisor of bidegree $(1,0)$. This gives the inequalities
$$\ell_k\big((I_{n_1h^i+n_1})_{m_1h^i +m_1}\big) \leq \ell_k\big((I_{n_1h^i+n_1})_{m_1h^i +m_1+j}\big)
\leq \ell_k\big((I_{n_1h^i+n_1})_{m_1h^i+m_1+h}\big)$$
 for all $i\geq 1$. Therefore,
\begin{align*}
\lim_{i\to\infty}\dfrac{P_{0,0}(m_1h^i+m_1,n_1h^i+n_1)}{(n_1h^i+
n_1)^{\tilde r}} &\leq
\lim_{i\to\infty}\dfrac{P_{j,0}(m_1h^i+m_1+j,n_1h^i+n_1)}{(n_1h^i+
n_1)^{\tilde r}}\\
&\leq \lim\limits_{n\to\infty}
\dfrac{P_{0,0}(m_1h^i+m_1+h,n_1h^i+n_1)}{(n_1h^i+n_1)^{\tilde r}},
\end{align*}
which implies 
$$\sum_{l=0}^{{\tilde r}}
\lambda^0_l\left(\tfrac{m_1}{n_1}\right)^l \leq 
\sum_{l=0}^{{\tilde r}}
\lambda^j_l\left(\tfrac{m_1}{n_1}\right)^l 
\leq \sum_{l=0}^{{\tilde r}}
\lambda^0_l\left(\tfrac{m_1}{n_1}\right)^l.$$
This contradicts the choice of $(m_1, n_1)$ and thus proves Claim $(i)$.

\vspace{5pt}

\noindent{\emph{Proof of the Claim $(ii)$}}.\quad Let ${\bar r} = \max\{r_{0,0}, r_{0,j}\}$. Then we can express
$$\tfrac{P_{0,0}(X,Y)}{Y^{\bar r}} =
\sum_{l=0}^{\bar r}\nu^0_l(\tfrac{X}{Y})^l + \sum_{i=1}^{\bar r}
\tfrac{S^0_{{\bar r}-i}(\tfrac{X}{Y})}{Y^i},\quad\quad
\frac{P_{0,j}(X,Y)}{Y^{\bar r}} =
\sum_{l=0}^{\bar r}\nu^j_l(\tfrac{X}{Y})^l + \sum_{i=1}^{\bar r}
\tfrac{S^j_{{\bar r}-i}(\tfrac{X}{Y})}{Y^i},$$
where $S^0_i(\tfrac{X}{Y})$ and $S^j_i(\tfrac{X}{Y})$ are polynomials in $\Q[\tfrac{X}{Y}]$ of degree $\leq i$.
Note that if $r_{0,j} < {\bar r}$ then all $\nu^j_i = 0$. Same goes for $r_{0,0}$.

If the claim does not hold then $\sum_{l=0}^{\bar r}(\nu^0_l-\nu^j_l)(\tfrac{X}{Y})^l$ is a nonzero polynomial. Thus there is $0\leq i_2\leq {\bar r}$ such that
$$\sum_{l=0}^{\bar r}(\nu^0_l-\nu^j_l)\left(\tfrac{n_2}{m_2}\right)^l \neq 
0\quad\mbox{for}\quad 
(m_2, n_2) = (\lfloor xn\rfloor + \alpha_0+i_2h, n+\beta_0).$$
Since $(m_2, n_2)$,  $(m_2, n_2+j)$ and $(m_2, n_2+h) \in \mathfrak{R}C_{j_0}$  we have from Lemma~\ref{density0} that
\begin{multline}
\{(m_2h^i+m_2, n_2h^i+n_2)\mid i\geq 1\} \cup 
\{(m_2h^j+m_2, n_2h^i+n_2+j)\mid i\geq 1\}\\\
\cup \{(m_2h^i+m_2, n_2h^i+n_2+h)\mid i\geq 1\}
\subseteq {\mathfrak R}C_{j_0}\cap \N^2.\end{multline}
By (\ref{e2_poly}), $\ell_k\big((I_{n_2})_{m_2}\big) = P_{0,0}(m_2, n_2)$. Therefore
$$\ell_k\big((I_{n_2h^i+n_2+j})_{m_2h^i+m_2}\big) = P_{0,j}(m_2h^i+m_2, n_2h^i+n_2+j).$$
As $I_{n_2h^i+n_2+h}\subset I_{n_2h^i+n_2+j}\subset I_{n_2h^i+n_2}$ so we have
$$(I_{n_2h^i+n_2+h})_{m_2h^i+m_2} \subset (I_{n_2h^i+n_2+j})_{m_2h^i+m_2}
\subset (I_{n_2h^i+n_2})_{m_2h^i+m_2},$$
which  gives 
$$\begin{array}{lcl}
P_{0,0}(m_2h^i+m_2, n_2h^i+n_2+h) & \leq & P_{0,j}(m_2h^i+m_2, n_2h^i+n_2+j)\\\
& \leq & P_{0,0}(m_2h^i+m_2, n_2h^i+n_2).\end{array}$$
In particular,
\begin{align*}
\sum_{l=0}^{\bar r}\nu^0_l\left(\tfrac{n_2}{m_2}\right)^l 
&=
\lim_{i\to \infty} \frac{P_{0,0}(m_2h^i+m_2, n_2h^i+n_2+h)}{(m_2h^i)^{\bar r}}\\
&=\lim_{i\to \infty} \frac{P_{0,j}(m_2h^i+m_2, n_2h^i+n_2+j)}{(m_2h^i)^{\bar r}} \\
&= \sum_{l=0}^{\bar r}\nu^j_l\left(\tfrac{n_2}{m_2}\right)^l,
\end{align*}
which contradicts the choice of $(m_2, n_2)$. This proves Claim $(ii)$ and hence the assertion of Step $1$.

\vspace{5pt}

\noindent{\bf Step} $2$.\quad It now remains to show that the degree of the polynomial ${P}_{0,0}(X, Y)$ is bounded above by $d-1$. Let $r_0=\deg P_{0, 0}(X,Y)$. Then
$$\tfrac{P_{0, 0}(X, Y)}{Y^{r_0}} = 
\sum_{l=0}^{r_{0}}\lambda_l(\tfrac{X}{Y})^l+ 
\tfrac{Q_{{r_0}-1}(\tfrac{X}{Y})}{Y}
+\cdots + \frac{Q_1(\tfrac{X}{Y})}{Y^{{r_0}-1}} + \tfrac{Q_0(
\tfrac{X}{Y})}{Y^{r_0}},$$
where $Q_i(\tfrac{X}{Y})$ is a polynomial in $\Q[\frac{X}{Y}]$ of degree $\leq i$ and 
$\sum_{l=0}^{r_0}\lambda_l(\frac{X}{Y})^l$
is a nonzero polynomial of degree $\leq r_0$. Hence there exists $i_0\in \{0, 1, \ldots, r_0\}$ such that
$$\sum_{l=0}^{r_{i,j}}\lambda_l^{i,j}\left({m_0}/{n_0}\right)^l \neq 
0\quad\mbox{for}\quad (m_0, n_0) = (\lfloor xn\rfloor + 
\alpha_0+i+i_0h, n+\beta_0+j).$$
Again by Lemma~\ref{density0},
$$ (m_0, n_0)\in \mathfrak{R}C_{j_0}\cap \N^2 \implies \{(m_0((2h)^k+1), n_0((2h)^k+1)\mid k\geq 1\}\subset \mathfrak{R}C_{j_0}\cap \N^2.$$
Therefore,
$\ell_k\big((I_{n_0((2h)^k+1)})_{m_0((2h)^k+1)}\big) = P_{0,0}(m_0((2h)^k+1),n_0((2h)^k+1))$,
whereas  
$$ \ell_k\big((I_{n_0((2h)^k+1)})_{m_0((2h)^k+1)}\big) \leq \ell_k(A_{m_0((2h)^k+1)}) \leq
O((m_0((2h)^k+1))^{d-1}).$$ 
Now if $r_{0} > d-1$ then in particular $r_{0}\geq 1$. Therefore,
$$\lim\limits_{k\to\infty}
\dfrac{P_{0,0}(m_0((2h)^k+1),n_0((2h)^k+1))}{(n_0(2h)^k)^{r_0}} = 
\sum\limits_{l=0}^{r_0}\lambda_l\left(\dfrac{m_0}{n_0}
\right)^l\neq 0.$$
On the other hand 
$$0 \leq \lim\limits_{k\to\infty}
\dfrac{P_{0,0}(m_0((2h)^k+1),n_0((2h)^k+1))}{(n_0(2h)^k)^{r_0}} 
\leq \lim\limits_{k\to\infty}
\dfrac{\ell_k(A_{m_0((2h)^k+1)})}{(n_0(2h)^k)^{r_0}}=0,$$
which contradicts the choice of  $(m_0,n_0)$.
\end{proof}

\begin{notations}\label{H^0(I)} Let $R = 
 \mathop{\oOplus}_{(m,n)\in \N^2}R_{m,n} = \mathop{\oOplus}_{(m,n)\in \N^2}{(I_n)}_mt^n$ be a Noetherian bigraded $A$-algebra. We define
 $$H^0_{\{I_n\}}(A) =  \{x\in A\mid I_nx = 0~~\mbox{for some}~~n\geq 0\}.$$
 Set $d^{\sF} = \dim A/H^0_{\{I_n\}}(A)$, where $\sF$ denotes the filtration $\{I_n\}_{n \in \N}$.
\end{notations}

\begin{thm}\label{t2} Following  Notation $\ref{H^0(I)}$, let $d^\sF \geq 1$. Then there exists an integer $n_1$ such that for each cone  ${\mathfrak C_{j_0}}$ similar conclusion as in Theorem $\ref{t1}$ holds provided $n\geq n_1$ and $d$ is replaced by $d^\sF$.
\end{thm}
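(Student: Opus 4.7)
The plan is to reduce to Theorem \ref{t1} applied over the quotient $B := A/H$, where $H := H^0_{\{I_n\}}(A)$. Since $H$ is a finitely generated graded $A$-module and $\{I_n\}$ is Noetherian, I can choose $n_1$ large enough so that simultaneously $I_{n_1}H = 0$ and the Noetherian-filtration identity $I_{n + n_1} = I_n I_{n_1}$ holds for all $n \geq n_1$. Set $B := A/H$ and $I'_n := (I_n + H)/H \subseteq B$; then $\{I'_n\}$ is a Noetherian filtration on the standard graded $k$-algebra $B$, and $\dim B = d^\sF$.

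A short calculation shows $H^0_{\{I'_n\}}(B) = 0$: if $y \in B$ with $I'_n y = 0$ has a lift $\tilde y \in A$, then $I_n \tilde y \subseteq H$, so $I_{n_1} I_n \tilde y \subseteq I_{n_1} H = 0$; invoking $I_{n_1 + n} = I_{n_1} I_n$, this gives $I_{n_1 + n}\tilde y = 0$, so $\tilde y \in H$ and $y = 0$. From this vanishing I extract two corollaries. First, $\mathfrak{m}_B$ cannot be an associated prime of $B$ (else there would be a nonzero $x \in B$ with $\mathfrak{m}_B x = 0 \supseteq I'_n x$ forcing $x \in H^0_{\{I'_n\}}(B)=0$), hence $\mathrm{depth}(B) \geq 1$. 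Second, $I'_n$ lies in no associated prime of $B$, hence $\mathrm{height}(I'_n) \geq 1$ and $\dim(B/I'_n) \leq d^\sF - 1$.

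Next, for $n \geq n_1$ the short exact sequence $0 \to I_n \cap H \to I_n \to I'_n \to 0$ yields $\ell_k((I_n)_m) = \ell_k((I_n \cap H)_m) + \ell_k((I'_n)_m)$. Apply Theorem \ref{t1} to the Noetherian bigraded $B$-algebra $\bigoplus_{(m,n)} (I'_n)_m t^n$; the hypotheses are satisfied since $\mathrm{depth}(B) \geq 1$ and $\dim B = d^\sF \geq 1$. This gives, on each restricted cone $\mathfrak{R}C_{j_0}$, a decomposition $\ell_k((I'_n)_m) = P'_{j_0}(m,n) + Q'_{j_0}(m,n)$ with $P'_{j_0}$ homogeneous of degree $\leq d^\sF - 1$ and $\deg Q'_{j_0} < \deg P'_{j_0}$. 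For the remaining term, observe that $I_n \cap H \subseteq I_{n_1} \cap H$, and the $A$-module $I_{n_1} \cap H$ is annihilated both by $I_{n_1}$ (since $I_{n_1} H = 0$) and by $H$ (since $I_{n_1} \cap H \subseteq I_{n_1}$). Hence
\begin{equation*}
\dim_A(I_{n_1} \cap H) \;\leq\; \dim A/(I_{n_1} + H) \;=\; \dim B/I'_{n_1} \;\leq\; d^\sF - 1,
\end{equation*}
giving the uniform estimate $\ell_k((I_n \cap H)_m) \leq \ell_k((I_{n_1} \cap H)_m) = O(m^{d^\sF - 2})$ for all $n \geq n_1$.

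Finally, the bigraded $R$-submodule $T := \bigoplus_n (I_n \cap H)t^n$ inherits a rational bigraded Hilbert series from $R$, so by the vector-partition analysis of Section 3, $\ell_k(T_{m,n})$ is a quasi-polynomial on $\mathfrak{R}C_{j_0}$; the uniform growth bound just obtained forces its total degree to be $\leq d^\sF - 2$. Adding this quasi-polynomial to $Q'_{j_0}$ yields the required $Q_{j_0}$, while $P_{j_0} := P'_{j_0}$ provides the homogeneous top-degree piece, preserving $\deg Q_{j_0} < \deg P_{j_0}$ precisely because the $T$-contribution is strictly lower order. The main obstacle is the passage from the abstract vanishing $H^0_{\{I'_n\}}(B) = 0$ to the crucial height bound $\mathrm{height}(I'_n) \geq 1$ in $B$, which produces the strict dimension drop $\dim(I_{n_1} \cap H) \leq d^\sF - 1$; without this drop the error term would be of the same order as the main term and would contaminate the top-degree piece $P_{j_0}$.
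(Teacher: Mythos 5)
Your proposal takes a genuinely different route from the paper and, while most of the reduction is sound, it has a gap at the final absorption step. The paper uses the Noetherian filtration structure together with the Artin--Rees lemma to prove the stronger fact that $I_n\cap H^0_{\{I_n\}}(A)=0$ for all $n\geq n_1$, which gives the bigraded isomorphism displayed in the proof: in every component of filtration degree $n\geq n_1$ one has $\ell_k\big((I_n)_m\big)=\ell_k\big((I'_n)_m\big)$ exactly, where $I'_n$ denotes the image in $B=A/H^0_{\{I_n\}}(A)$. One then applies Theorem~\ref{t1} over $B$ (which has $\mathrm{depth}\geq 1$) with no correction term at all. You avoid Artin--Rees and prove only the weaker facts $I_{n_1}H=0$ and $I_{n+n_1}=I_nI_{n_1}$ (which, note, requires $n_1$ to be chosen as a suitable multiple of the filtration period $m$, not merely ``sufficiently large''); you then split off the error $\ell_k\big((I_n\cap H)_m\big)$ and estimate it in degree.

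The gap is in the last paragraph. You set $P_{j_0}:=P'_{j_0}$ and fold the $T$-contribution into $Q_{j_0}$, asserting that $\deg Q_{j_0}<\deg P_{j_0}$ because the $T$-contribution is ``strictly lower order''. But Theorem~\ref{t1} only guarantees $\deg P'_{j_0}\leq d^{\sF}-1$, not equality; this can genuinely fail to be an equality for a cone $\mathfrak{C}_{j_0}$ that is not the last one, because there need not be a nonzerodivisor in $R'=\bigoplus I'_nt^n$ of the smallest bidegree. When $\deg P'_{j_0}\leq d^{\sF}-2$, the quasi-polynomial $\ell_k(T_{m,n})$ (which you bound only by total degree $\leq d^{\sF}-2$) can have degree $\geq\deg P'_{j_0}$, and then the top-degree part of $\ell_k\big((I_n)_m\big)$ receives a contribution from $T$ that you have not shown to be a homogeneous polynomial rather than a genuine quasi-polynomial; that is precisely the content one must preserve. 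To repair this you would need to run the Step~1 argument of Theorem~\ref{t1} on the $R$-module $T$, but that argument requires a bidegree-$(1,0)$ element acting injectively on the module, which is not automatic when $\mathrm{depth}\,A=0$. The Artin--Rees reduction in the paper's proof avoids this entirely by forcing $T$ to vanish for $n\geq n_1$, which is why it is the clean route.
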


\begin{proof}Since $\{I_n\}$ is a Noetherian filtration, there exists an integer $c$ such that $I_{n+c} = I_nI_c$ for all $n\geq c$. In particular, $I_{cn} = (I_c)^n$. It can be verified that $H^0_{I_c}(A) = H^0_{\{I_n\}}(A)$. So there exists $c_0 \geq 0$ such that $I_c^{c_0}\cdot H^0_{I_c}(A) = 0$.

Further we can show that there exists $n_1 \geq 0$ such that $I_n\cap H^0_{I_c}(A) = 0$, for all $n\geq n_1$. By the Artin-Rees lemma there exists $c_2 \geq 0$ such that for all $n\geq c_2$, we have $I^n_{c}\cap H^0_{I_c}(A) = I_c^{n-c_2}(I_{c}^{c_2}\cap H^0_{I_c}(A))$. Therefore, $I^n_{c}\cap H^0_{I_c}(A) = 0$ for every $n-c_2 \geq c_0$. Hence $I_m\cap H^0_{I_c}(A) = 0$ for $m\geq n_1= c(c+c_2)$. In particular,  the canonical bigraded map of rings
\begin{equation}\label{h^0}
\mathop{\oOplus}_{n\geq 0}I_nt^n
\longto
\mathop{\oOplus}_{n\geq 0}\frac{I_n+
H^0_{\{I_n\}}(A)}{H^0_{\{I_n\}}(A)}t^n
= \mathop{\oOplus}_{n\geq 0}\frac{I_n}{I_n\cap  H^0_{\{I_n\}}(A)}t^n\end{equation}
is an isomorphism for the graded components of degree $n\geq n_1$.

Consider the natural quotient map $\varphi\colon A \to A/H^0_{\{I_n\}}(A)$. Since $d^\sF\geq 1$, we have $\mathrm{grade}~\varphi (I_n)\geq 1$ for all $n\gg 0$ and hence
$\mbox{depth}~(A/H^0_{\{I_n\}}(A))\geq 1$. Now the result follows from Theorem~\ref{t1}.
\end{proof}

\begin{rmk}\label{7.4}The length function $\ell_k\big((I^n)_m\big)$ need not be a polynomial function (but can be only a quasi-polynomial) for $(m,n)\in \mathfrak{R}C_j$, if $j < l$. For example, $I=(X^3Y, XZ^2, Y^4Z)\subseteq k[X,Y,Z]$.
\end{rmk}

\section{Continuity}\label{continuity}

\subsection{Density function for a filtration}

\begin{notations}\label{den}
Suppose that $A=\oOplus_{n\geq 0}A_n$ is a standard graded ring over a field $A_0=k$ and of dimension $d$. Let $\sF = \{I_n\}_{n\in\N}$ be a Noetherian filtration of homogeneous ideals in $A$ with the associated bigraded Rees algebra $R = \oOplus_{n\in \N}I_nt^n$. We adopt Notations \ref{ell(Rmn)}. We further assume that
${d^{\sF}} := \dim \big(A/H^0_{\{I_n\}}(A)\big) \geq 1$.
 
If in addition we have $\dim A/P = \dim A$ for every minimal prime $P$ of $A$ and $I_1$ is not nilpotent then $d = \dim A = \dim \big(A/H^0_{\{I_n\}}(A)\big) = d^{\sF}$.
\end{notations}
 
\begin{defn}\label{ds}
Let $(A, \{I_n\})$ be as in Notations \ref{den}. We define the {\em density function} for the given pair $(A, \{I_n\})$ as the function
$$f_{A,\{I_n\}} \colon \R_{\geq 0}\longto \R_{\geq 0}\quad\mbox{given by}\quad x\longmapsto \limsup_{n\to \infty}\frac{\ell_k\big((I_n)_{\lfloor xn\rfloor}\big)}{n^{d^{\sF}-1}/d^{\sF}!},$$
where ${d^{\sF}} = \dim \big(A/H^0_{\{I_n\}}(A)\big)$.

Further, let $\{f_n\colon \R_{\geq 0}\to \R_{\geq 0}\}_{n\in \N}$ denote the sequence of functions given by $f_n(x) = \frac{\ell_k\big((I_n)_{\lfloor xn\rfloor}\big)}{n^{d^{\sF}-1}/d^{\sF}!}$. Hence $f_{A,\{I_n\}}(x) = \limsup_{n\to \infty}f_n(x)$.
\end{defn}

\begin{lemma}\label{l1cont}
Consider the set $X = \R_{\geq 0}\setminus \{d_1/e_1, d_2/e_2, \ldots, d_l/e_l\}$. Then the sequence $\{f_n\}_{n\in\N}$ converges pointwise to the density function
$f_{A,\{I_n\}}$ on $X$, and $\left.f_{A,\{I_n\}}\right|_X$ is a continuous function. Moreover, the following holds.
\begin{enumerate}
 \item The function $f_{A,\{I_n\}}$ can be described as $$f_{A,\{I_n\}}(x) = \begin{cases}
              0 & \text{if}\;\;x\in I[0, \frac{d_{1}}{e_{1}}),\\
              {\bf p}_j(x) & \text{if}\;\;x\in I(\frac{d_j}{e_j}, \frac{d_{j+1}}{e_{j+1}})\;\;\text{where}\;\;1\leq j<l,\\
              {\bf p}_l(x) & \text{if}\;\;x\in I(\frac{d_l}{e_l},\infty),
             \end{cases}$$
where each ${\bf p}_j(x)$ is polynomial in $\Q[x]$ of degree $\leq d^{\sF}-1$ and ${\bf p}_l(x)$ is a nonzero polynomial of degree $d^\sF-1$.
\item If $R = \oOplus_{n\in\N}I_nt^n$ has a nonzerodivisor of bidegree $(d_i, e_i)$ for some $1\leq i \leq l$ then $f_{A,\{I_n\}}$ is a nowhere vanishing function on the interval $I(\frac{d_i}{e_i}, \infty)$.
\end{enumerate}
\end{lemma}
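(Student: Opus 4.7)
The plan is to combine Theorem~\ref{t2}---which provides a polynomial-plus-quasi-polynomial description of $\ell_k\big((I_n)_m\big)$ on each restricted cone $\mathfrak{R}C_j$---with Lemma~\ref{density2}, which guarantees that for each fixed $x$ in an open chamber interval, the point $(\lfloor xn\rfloor, n)$ eventually lands in the corresponding cone.

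First I would fix $x \in I\big(d_j/e_j,\, d_{j+1}/e_{j+1}\big)$ for some $1\leq j\leq l$, with the convention $d_{l+1}/e_{l+1}=\infty$. Lemma~\ref{density2} applied to $\sS=\{(0,0)\}$ yields an $n_x$ such that $(\lfloor xn\rfloor, n)\in \mathfrak{R}C_j$ for all $n\geq n_x$, and Theorem~\ref{t2} then produces a homogeneous polynomial $P_j(X,Y)\in\Q[X,Y]$ of degree $r_j\leq d^\sF-1$ and a quasi-polynomial $Q_j(X,Y)$ of strictly smaller degree with $\ell_k\big((I_n)_{\lfloor xn\rfloor}\big) = P_j(\lfloor xn\rfloor, n) + Q_j(\lfloor xn\rfloor, n)$ for $n\gg 0$. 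Dividing by $n^{d^\sF-1}/d^\sF!$, the sequence $f_n(x)$ will converge (and so agree with its $\limsup$); I would set ${\bf p}_j(x) := \lim_n f_n(x)$, which equals $d^\sF!\cdot P_j(x,1)$ if $r_j=d^\sF-1$ and $0$ otherwise---in either case a polynomial in $\Q[x]$ of degree at most $d^\sF-1$. For $x\in I[0,d_1/e_1)$, the bigraded structure of $R$ forces every nonzero $R_{m,n}$ with $n\geq 1$ to satisfy $m/n\geq d_1/e_1$ (every generator of $R$ has slope at least $d_1/e_1$), so $(I_n)_{\lfloor xn\rfloor}=0$ for $n\gg 0$ and $f_{A,\{I_n\}}(x)=0$. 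Continuity of $f_{A,\{I_n\}}$ on $X$ is then immediate, since on every open interval of $X$ the function equals a polynomial. That ${\bf p}_l$ has degree exactly $d^\sF-1$ and is nonzero will follow from the identity $\dim R = d^\sF + 1$ (after the standard reduction~\eqref{h^0} to $B=A/H^0_{\{I_n\}}(A)$), so that the bivariate Hilbert polynomial of $R$ on $\mathfrak{R}C_l$ has total degree $d^\sF-1$ with nonvanishing leading form.

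For assertion~(2), suppose $f\in R_{d_i,e_i}=(I_{e_i})_{d_i}$ is a nonzerodivisor. Multiplication by $f^s$ yields an injection $A_{m-sd_i}=R_{m-sd_i,0}\hookrightarrow R_{m,se_i}=(I_{se_i})_m$ whenever $m\geq sd_i$. For $x\in I(d_i/e_i,\infty)$, taking the subsequence $n_s=se_i$ and $m_s=\lfloor xn_s\rfloor$ gives $m_s-sd_i\sim se_i(x-d_i/e_i)\to\infty$, and the standard Hilbert asymptotics $\ell_k(B_m)\sim\tfrac{e_0(B)}{(d^\sF-1)!}m^{d^\sF-1}$ yield
\[
f_{n_s}(x) \;\geq\; d^\sF\cdot e_0(B)\cdot (x-d_i/e_i)^{d^\sF-1}+o(1) \;>\; 0.
\]
Combined with the existence of $\lim_n f_n(x)$ from the first step at points of $X$ (and a direct $\limsup$ comparison at a boundary point $d_j/e_j$ that lies in $I(d_i/e_i,\infty)$), this will give $f_{A,\{I_n\}}(x)>0$ throughout $I(d_i/e_i,\infty)$.

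The hard part will be bridging the subsequence estimate with arbitrary $n$ when $e_i>1$: at $x\in X$ the existence of $\lim_n f_n(x)$ makes this automatic, but extending positivity to the exceptional slopes $d_j/e_j$ lying inside $I(d_i/e_i,\infty)$ requires that the subsequence $n_s=se_i$ be frequent enough to control the $\limsup$, which a routine remainder estimate handles. A secondary subtlety is to verify that the reduction~\eqref{h^0} correctly transports the nonzerodivisor hypothesis to the filtration on $B$ in the bidegrees that matter; this is straightforward since $e_i\geq 1$ keeps us in the range where~\eqref{h^0} is already an isomorphism.
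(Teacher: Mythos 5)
Your overall strategy matches the paper's: use Lemma~\ref{density2} to land $(\lfloor xn\rfloor, n)$ in the restricted cone $\mathfrak{R}C_j$ for $n\gg 0$, apply Theorem~\ref{t1}/\ref{t2} to express $\ell_k\big((I_n)_{\lfloor xn\rfloor}\big)$ as a homogeneous polynomial $P_j$ plus a lower-order quasi-polynomial, deduce pointwise convergence of $f_n(x)$ on each open chamber interval, read off the polynomial formula for $f_{A,\{I_n\}}$, and use a nonzerodivisor of bidegree $(d_i,e_i)$ for the positivity in~(2). The reduction via~\eqref{h^0} is also the first step of the paper's proof.

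There is, however, a genuine gap in your argument for $\deg {\bf p}_l = d^\sF-1$. You assert that this ``will follow from the identity $\dim R = d^\sF+1$ ... so that the bivariate Hilbert polynomial of $R$ on $\mathfrak{R}C_l$ has total degree $d^\sF-1$ with nonvanishing leading form.'' Even granting that the leading form $P_l(X,Y)$ is a nonzero homogeneous polynomial of degree $d^\sF-1$, this does not imply that ${\bf p}_l(x) = d^\sF!\cdot P_l(x,1)$ has degree $d^\sF-1$ as a polynomial in $x$: the monomial $Y^{d^\sF-1}$ is a nonzero leading form whose specialization at $Y=1$ is a constant. What is actually needed is that the coefficient of $X^{d^\sF-1}$ in $P_l$ is nonzero, which dimension theory alone does not deliver. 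The paper circumvents this by proving assertion~(2) \emph{first} and then noting that the nonzerodivisor estimate forces ${\bf p}_l(x)\geq c\,(x-d_i/e_i)^{d^\sF-1}$ for $x\in I(d_l/e_l,\infty)$ and some $c>0$; since ${\bf p}_l$ is a polynomial of degree $\leq d^\sF-1$ dominating a polynomial of degree exactly $d^\sF-1$, it must itself have degree $d^\sF-1$. You already have the needed estimate in your treatment of~(2), so the fix is to reorder and reuse it rather than to invent a new argument.

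A secondary remark: the worry about ``bridging the subsequence estimate with arbitrary $n$'' at the exceptional slopes $d_j/e_j$ is unnecessary. Since $f_{A,\{I_n\}}$ is defined as a $\limsup$, and $\limsup_n f_n(x)\geq \limsup_s f_{n_s}(x)$ for any subsequence $\{n_s\}$, a positive lower bound along $n_s = s e_i$ already gives $f_{A,\{I_n\}}(x)>0$ on all of $I(d_i/e_i,\infty)$, with no remainder estimate or frequency argument required; this is exactly what the paper does.
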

\begin{proof}
Consider the natural quotient map $\varphi\colon A \to A/H^0_{\{I_n\}}(A)$. Then by \eqref{h^0}, for $x\in \R_{x>0}$ there exists an integer $n_{x_1}$ such that $\ell_k\big((I_n)_{\lfloor xn\rfloor}\big) = \ell_k\big((\varphi(I_n))_{\lfloor xn\rfloor}\big)$ for all integers $n\geq n_{x_1}$. Therefore, for any statement regarding the density function $f_{A,\{I_n\}}$ we can always replace $A$, $\{I_n\}$ and $d$ by $A/H^0_{\{I_n\}}(A)$, $\{\varphi(I_n)\}$ and $d^\sF$ respectively. In particular, we can assume that $R$ has a nonzerodivisor of bidegree $(d_i, e_i)$ for some $1\leq i\leq l$.

Let $x\in I(\tfrac{d_j}{e_j},\tfrac{d_{j+1}}{e_{j+1}})$. By Lemma \ref{density2} there exists an integer $n_x>0$ such that $\left(\lfloor xn\rfloor, n\right) \in {\mathfrak R}C_j$ for all $n\geq n_x$. Therefore by Theorem \ref{t1} and Theorem \ref{t2}, there is a homogeneous polynomial $P_j(X,Y)$ of degree $r_j\leq d-1$ and a quasi-polynomial $Q_j(X,Y)$ of degree $<r_j$ such that for all $n\gg 0$,
$$\ell_k\big((I_n)_{\lfloor xn\rfloor}\big) = P_j({\lfloor xn\rfloor}, n) + Q_j({\lfloor xn\rfloor},n).$$
Hence the sequence $\left\{\frac{\ell_k\big((I_n)_{\lfloor xn\rfloor}\big)}{n^{d-1}}\right\}_{n\in\N}$ converges for all $x\in  I(\frac{d_j}{e_j}, \frac{d_{j+1}}{e_{j+1}})$ and
$$f_{A, \{I_n\}}(x) = \lim_{n\to \infty}\frac{\ell_k\big((I_n)_{\lfloor xn\rfloor}\big)}{n^{d-1}/d!} = \lim_{n\to\infty}\dfrac{P_j({\lfloor xn\rfloor}, n)}{n^{d-1}/d!} =:{\bf p}_j(x).$$

Now we first prove  assertion $(2)$ and then deduce that ${\bf p}_l(x)$ is a polynomial of degree $d-1$. Supppose that $R$ has a nonzerodivisor, say $h$, of bidegree $(d_i, e_i)$. Let $HP_A(x) = a_0x^{d-1}+O(x^{d-2})$ denote the Hilbert polynomial of $A$. Then for all $n\gg 0$, we have $\ell_k(A_n) = a_0n^{d-1}+O(n^{d-2})$. By choice, one has $d_1/e_1< \cdots < d_l/e_l$. Let ${\tilde e} = e_i\cdots e_l$ and for $i\leq j\leq l$, let ${\tilde e_j} = {\tilde e}/{e_j}$. If $x\in I(\frac{d_i}{e_i}, \infty)$ then $d_i{\tilde e_i} < x{\tilde e}$. Now $h \in I_{e_i}$ implies that $h^{n{\tilde e_i}}\in I_{{\tilde e}n}$ and we have $\ell_k\big((I_{{\tilde e}n})_{\lfloor x{\tilde e}n\rfloor}\big) \geq \ell_k\big((h^{n{\tilde e_i}})_{\lfloor x{\tilde e}n\rfloor-n{\tilde e_i}d_i}\big)$ for all $n\gg 0$. Therefore,
$$f_{A, \{I_n\}}(x) = \limsup_{n\to \infty}\frac{\ell_k\big((I_{{\tilde e}n})_{\lfloor x{\tilde e}n\rfloor}\big)}{({\tilde e}n)^{d-1}/d!} \geq \frac{a_0(x-d_i/e_i)^{d-1}}{d!}> 0.$$
This proves assertion $(2)$.

Since  $R$  has a nonzerodivisor of bidegree $(d_{i}, e_{i})$ for some $1\leq i\leq l$, we have
$${\bf p}_l(x) \geq \frac{a_0(x-\tfrac{d_i}{e_i})^{d-1}}{d!}\quad\mbox{for all}\;\; x\in I(\tfrac{d_l}{e_l}, \infty),$$
which implies that ${\bf p}_l(x)$ is a polynomial of degree $d-1$.
\end{proof}

\begin{thm}\label{t3}
Let $(A, \{I_n\}_n)$ be as in Notations \ref{den}. Further assume that $R$ has a nonzerodivisor of bidegree $(d_1, e_1)$. Then the density function
$$f_{A,\{I_n\}} \colon \R_{\geq 0}\longto \R_{\geq 0}\quad\mbox{is given by}\quad x\longmapsto \lim_{n\to \infty}\frac{\ell_k\big((I_n)_{\lfloor xn\rfloor}\big)}{n^{d-1}/d!},\quad\mbox{for all}\quad x\neq d_1/e_1$$
and is a continuous function on the interval $I(\frac{d_1}{e_1}, \infty)$. More precisely,
$$f_{A,\{I_n\}}(x) = \begin{cases}
              0 & \text{if}\;\;x\in I[0, \frac{d_{1}}{e_{1}}),\\
              {\bf p}_1(x) & \text{if}\;\;x\in I(\frac{d_1}{e_1},\frac{d_2}{e_2}],\\
              {\bf p}_j(x) & \text{if}\;\;x\in I[\frac{d_j}{e_j}, \frac{d_{j+1}}{e_{j+1}}]\;\;\text{where}\;\;2\leq j<l,\\
              {\bf p}_j(x) & \text{if}\;\;x\in I[\frac{d_l}{e_l},\infty),
             \end{cases}$$
where each ${\bf p}_j(x)$ is a nonzero polynomial in $\Q[x]$ of degree $\leq d-1$ and ${\bf p}_l(x)$ is a nonzero polynomial of degree $d-1$.
\end{thm}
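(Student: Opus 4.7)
The plan is to build on Lemma \ref{l1cont}, which already yields pointwise convergence of $\{f_n\}$ to ${\bf p}_j$ on each open interval $I(\tfrac{d_j}{e_j}, \tfrac{d_{j+1}}{e_{j+1}})$, continuity of $f_{A,\{I_n\}}$ on $X = \R_{\geq 0}\setminus\{d_j/e_j:1\leq j\leq l\}$, nonvanishing of every ${\bf p}_j$, and $\deg {\bf p}_l = d-1$. (These apply because the nonzerodivisor hypothesis of Theorem \ref{t3} is a special case of that of Lemma \ref{l1cont}(2); moreover it forces $H^0_{\{I_n\}}(A)=0$, so $d^{\sF}=d$.) What remains is, for each $i\in\{2,\ldots,l\}$, to prove that $\lim_{n\to\infty} f_n(d_i/e_i)$ exists and equals \emph{both} ${\bf p}_{i-1}(d_i/e_i)$ and ${\bf p}_i(d_i/e_i)$; the piecewise formula and continuity on $I(d_1/e_1,\infty)$ then follow immediately.

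The pivotal tool is the nonzerodivisor $h\in R_{d_1,e_1} = (I_{e_1})_{d_1}$. For every integer $t\geq 0$, multiplication by $h^t$ gives an injection $(I_n)_m\hookrightarrow (I_{n+te_1})_{m+td_1}$, which composed with the filtration inclusion $(I_{n+te_1})_{m+td_1}\subseteq (I_n)_{m+td_1}$ yields the monotonicity estimate $\ell_k\big((I_n)_m\big)\leq \ell_k\big((I_n)_{m+td_1}\big)$ for all $m,n$ and $t\geq 0$. Using this together with the cone-translation Lemmas \ref{i-1} and \ref{lrr1}, I would deduce four one-sided bounds that sandwich $\liminf_n f_n(d_i/e_i)$ and $\limsup_n f_n(d_i/e_i)$ between ${\bf p}_{i-1}(d_i/e_i)$ and ${\bf p}_i(d_i/e_i)$ in both orders.

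Let $P_{i-1}$ and $P_i$ denote the homogeneous degree $d-1$ top parts of the length formula on $\mathfrak{R}C_{i-1}$ and $\mathfrak{R}C_i$ from Theorem \ref{t1}/Theorem \ref{t2}. First, Lemma \ref{i-1}(1) provides $m_0\in e_1\N_{>0}$ with $(\lfloor(d_i/e_i)n\rfloor + m_0d_1/e_1,\, n+m_0)\in\mathfrak{R}C_{i-1}$ for $n\gg 0$; the injection induced by $h^{m_0/e_1}$ combined with the homogeneity of $P_{i-1}$ gives $\limsup_n f_n(d_i/e_i)\leq {\bf p}_{i-1}(d_i/e_i)$. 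Symmetrically, Lemma \ref{lrr1}(1) provides a point in $\mathfrak{R}C_i$ that $h^{m_0/e_1}$ embeds into $(I_n)_{\lfloor (d_i/e_i)n\rfloor}$, yielding $\liminf_n f_n(d_i/e_i)\geq {\bf p}_i(d_i/e_i)$. For the complementary pair, Lemma \ref{lrr1}(2) gives $m_1$ with $(\lfloor (d_i/e_i)n\rfloor + m_1, n)\in\mathfrak{R}C_i$; fixing any integer $t\geq m_1/d_1$, for $n\gg 0$ the point $(\lfloor (d_i/e_i)n\rfloor + td_1, n)$ still lies in $\mathfrak{R}C_i$, so the monotonicity yields $\limsup_n f_n(d_i/e_i)\leq {\bf p}_i(d_i/e_i)$; an identical argument using Lemma \ref{i-1}(2) produces $\liminf_n f_n(d_i/e_i)\geq {\bf p}_{i-1}(d_i/e_i)$. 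Combining all four inequalities,
\[
\max\{{\bf p}_{i-1}, {\bf p}_i\}(d_i/e_i)\;\leq\;\liminf_n f_n(d_i/e_i)\;\leq\;\limsup_n f_n(d_i/e_i)\;\leq\;\min\{{\bf p}_{i-1}, {\bf p}_i\}(d_i/e_i),
\]
forcing equality throughout.

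The only technical wrinkle is that Lemmas \ref{i-1} and \ref{lrr1} are stated for a \emph{finite} perturbation set $\mathcal{S}$, whereas the fractional error $\lfloor (d_i/e_i)n\rfloor - (d_i/e_i)n$ is $n$-dependent. However, this error lies in the bounded set $(-1,0]$ and the cone-membership conclusions are open (hence robust under bounded perturbations), so taking for instance $\mathcal{S}=\{-1,0\}$ suffices. I anticipate no substantive difficulty beyond this bookkeeping; the main conceptual content is the interplay between the two styles of cone translation (Lemmas \ref{i-1}(1)/\ref{lrr1}(1), which shift both coordinates by multiples of $(d_1/e_1,1)$, versus Lemmas \ref{i-1}(2)/\ref{lrr1}(2), which shift only $m$) and the two corresponding consequences of the nonzerodivisor $h$ (the bidegree-shifting injection and the monotonicity in $m$ with step $d_1$).
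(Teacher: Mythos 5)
Your proposal is correct and follows the same architecture as the paper: continuity on the open intervals $I(\tfrac{d_j}{e_j}, \tfrac{d_{j+1}}{e_{j+1}})$ comes from Lemma \ref{l1cont}, and continuity at each boundary point $d_i/e_i$ with $2\le i\le l$ is established by sandwiching $\ell_k\big((I_n)_{\lfloor xn\rfloor}\big)$ between lengths at indices which Lemmas \ref{i-1} and \ref{lrr1} place inside $\mathfrak{R}C_{i-1}$ and $\mathfrak{R}C_i$, then invoking Theorem \ref{t1}. The one genuine variation is how you obtain the horizontal (fixed-$n$) injections: the paper introduces a separate nonzerodivisor $z_0$ of bidegree $(1,0)$ and multiplies by $z_0^{m_1}$, whereas you derive the step-$d_1$ monotonicity $\ell_k\big((I_n)_m\big)\le\ell_k\big((I_n)_{m+td_1}\big)$ directly from the hypothesized bidegree-$(d_1,e_1)$ nonzerodivisor $h$ (either via the filtration inclusion as you state, or more simply because $h$ is a nonzerodivisor on $A$ and $I_n$ is an ideal), and you compensate by choosing the horizontal shifts in Lemma \ref{i-1}(2) and Lemma \ref{lrr1}(2) to be multiples of $d_1$ --- which their proofs permit, since any sufficiently large shift works for $n\gg 0$. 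This is a clean economy: the theorem's hypothesis hands you a nonzerodivisor of bidegree $(d_1,e_1)$ but does not immediately supply one of bidegree $(1,0)$ (extracting a degree-one nonzerodivisor from $A_1$ requires an extra step such as prime avoidance over an infinite field), so your version runs strictly off the stated assumptions. Your remark about the finite set $\mathcal{S}$ is also fine: convexity of $\mathfrak{R}C_j$ lets $\mathcal{S}=\{-1,0\}$ control every $b_n\in(-1,0]$, though the paper's explicit choice $\mathcal{S}=\{0,-1/e_i,\ldots,(1-e_i)/e_i\}$ is equally valid since $b_n$ takes only those finitely many values.
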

\begin{proof}
By our hypotheses $\oOplus_{n\geq 0}I_nt^n$ is generated in degrees $\tfrac{d_1}{e_1}<\cdots <\tfrac{d_l}{e_l}$. So we have ${(I_{n})}_{m}=0$ for all $\tfrac{m}{n}<\tfrac{d_1}{e_1}$. Consequently, $f_{A,\{I_n\}}(x)=0$ for all $x<\tfrac{d_1}{e_1}$.

The existence of a nonzerodivisor of bidegree $(d_1, e_1)$ implies that $\mbox{grade}(I_n)>0$ for all $n\gg 0$ which further implies that $H^0_{\{I_n\}}(A) = 0$. In particular, $A= A/H^0_{\{I_n\}}(A)$ and $d= d^\sF$.

In veiw of Lemma \ref{l1cont} we only need to show that $\lim_{n\to \infty}f_n(d_i/e_i) = {\bf p}_{i-1}(d_i/e_i) = {\bf p}_{i}(d_i/e_i)$, for $2\leq i\leq l$.

Let $x=d_i/e_i$, where $2\leq i \leq l$. Consider the set $\sS = \{0, -1/e_i, -2/e_i, \ldots, (1-e_i)/e_i\}$. Let $z_0$ and $z_1$ be nonzero elements in $R$ of degree $(1, 0)$ and $(d_1, e_1)$ respectively. Then for a given integer $n$ there exists $b_n\in \sS$ such that $xn + b_n = \lfloor xn \rfloor$. By Lemma \ref{i-1} there exist integers $m_0 = {\tilde m_0}e_1 \in e_1\N_{>0}$ and $m_1\in \N_{>0}$ such that for all $n\gg 0$,
\begin{align*}
 (\lfloor xn\rfloor, n)+ ({\tilde m_0}d_1,{\tilde m_0}e_1) &= (b_n, 0)+ n(x, 1) + {\tilde m_0}(d_1, e_1) \in {\mathfrak{R}}C_{i-1},\\
 (\lfloor xn\rfloor, n)- (m_1, 0) &= (b_n, 0)+ n(x, 1) -  m_1(1, 0) \in {\mathfrak{R}}C_{i-1}.
\end{align*}
On the other hand, the elements $z_1^{\tilde m_0}$ and $z_0^{m_1}$ are nonzerodivisors of bidegrees $({\tilde m_0}d_1,{\tilde m_0}e_1)$ and $(m_1, 0)$ respectively,
which gives injective maps
\begin{equation}\label{conte1}
(I_n)_{\lfloor xn\rfloor-m_1}\longby{z_0^{m_1}}
(I_n)_{\lfloor xn\rfloor}\longby{z_1^{\widetilde m_0}}
(I_{n+{\tilde m_0}e_1})_{\lfloor xn\rfloor+{\tilde m_0}d_1}.
\end{equation}
Now following the notations of Theorem \ref{t1} we have
\begin{multline*}
 {\bf p}_{i-1}(x) = \lim_{n\to \infty}\frac{\ell_k\big((I_n)_{\lfloor xn\rfloor-m_1}\big)}{n^{d-1}/d!} \leq \liminf_{n\to \infty}\frac{\ell_k\big((I_n)_{\lfloor xn\rfloor}\big)}{n^{d-1}/d!} \leq \limsup_{n\to \infty}\frac{\ell_k\big((I_n)_{\lfloor xn\rfloor}\big)}{n^{d-1}/d!}\\ \leq \lim_{n\to \infty}\frac{\ell_k\big((I_{n+{\tilde m_0}e_1})_{\lfloor xn\rfloor+{\tilde m_0}d_1}\big)}{n^{d-1}/d!} = {\bf p}_{i-1}(x).
\end{multline*}
Therefore,
$$ f_{A, \{I_n\}}(x) = \lim_{n\to \infty}\frac{\ell_k\big((I_n)_{\lfloor xn\rfloor}\big)}{n^{d-1}/d!} = {\bf p}_{i-1}(x).$$

Similarly, by Lemma \ref{lrr1} there exist integers $m_0 = {\tilde m_0}e_1 \in e_1\N_{>0}$ and $m_1\in \N_{>0}$ such that for all $n\gg 0$,
\begin{align*}
 (\lfloor xn\rfloor, n) - ({\tilde m_0}d_1,{\tilde m_0}e_1) &= (b_n, 0)+ n(x, 1) - {\tilde m_0}(d_1, e_1) \in {\mathfrak{R}}C_{i},\\
 (\lfloor xn\rfloor, n)  + (m_1, 0) &= (b_n, 0)+ n(x, 1) + m_1(1, 0) \in {\mathfrak{R}}C_{i}.
\end{align*}
On the other hand there are injective maps
\begin{equation}\label{conte2}
(I_{n-{\tilde m_0}e_1})_{\lfloor xn\rfloor-{\tilde m_0}d_1}\longby{z_1^{\widetilde m_0}}
(I_n)_{\lfloor xn\rfloor}\longby{z_0^{m_1}}
(I_{n})_{\lfloor xn\rfloor+m_1}.
\end{equation}
Again by Theorem \ref{t1} we know that
$$\lim_{n\to \infty}\frac{\ell_k\big((I_n)_{\lfloor xn\rfloor+m_1}\big)}{n^{d-1}/d!} = {\bf p}_i(x) = \lim_{n\to \infty}\frac{\ell_k\big((I_{n-{\tilde m_0}e_1})_{\lfloor xn\rfloor - {\tilde m_0}d_1}\big)}{n^{d-1}/d!}.$$
Therefore,
$$ f_{A, \{I_n\}}(x) = \lim_{n\to \infty}\frac{\ell_k\big((I_n)_{\lfloor xn\rfloor}\big)}{n^{d-1}/d!} = {\bf p}_i(x).$$
This proves the continuity of $f_{A, \{I_n\}}$ at $x=d_i/e_i$.
\end{proof}

\begin{rmk}\label{r6}
Suppose that $R=\oOplus_{(m,n)\in \N^2}R_{m,n} = \oOplus_{(m,n)\in \N^2}(I_n)_mt^n$ is a bigraded $A$-algebra as in Remark \ref{I_n} with $\mbox{depth}~A>0$. Then Lemma \ref{l1cont} and Theorem \ref{t3} holds  for such a (not necessarily Noetherian) filtration of ideals.
\end{rmk}

\begin{rmk}\label{r4}
The following are true.
\begin{enumerate}
\item Let $R$ be as in Notations \ref{den}. If $(d_i, e_i)$ is the degree of one of the generators of $R$ such that the generators of degrees $(d_j,e_j)$ satisfying the inequality $d_j/e_j< d_i/e_i$ are nilpotent elements, then $f_{A, \{I_n\}}(x) = 0$ for all $x\in I[0, d_i/e_i)$.
\item We can further generalise the statement of Theorem \ref{t3} as follows: {\em Let $A$ and $\{I_n\}_{n\in\N}$ be as in Notations \ref{den} such that $\dim A = \dim A/P$ for every minimal prime ideal $P$ of $A$, i.e., $A$ is equidimensional. Let $d_j/e_j$ be the smallest element in the set
$$\{d_i/e_i\mid \mbox{there is a non-nilpotent generator of $R$ of degree}~ (d_i, e_i)\}.$$
Then  $f_{A, \{I_n\}}(x) = 0$ for 
$x\in I[0, d_j/e_j)$ and 
$f_{A, \{I_n\}}$ 
is a nonzero monotonically increasing function on the interval $I(d_j/e_j, \infty)$.}
\end{enumerate}
\end{rmk}

We recall some basic facts about diagonal subalgebras.

\begin{defn}\label{dg}
Let $A$ be a standard graded ring over a field $k$ and  $I$ be a nonzero homogeneous ideal in $A$ generated by homogeneous elements of degrees $d_1< d_2<\cdots < d_l$. Let $R=\oOplus_{(m,n)\in\N^2}{(I^n)}_mt^n$ be the bigraded Rees algebra of $I$.  Given a tuple $(p,q)\in\N^2$, we consider the $(p,q)$-diagonal subalgebra $$R_{\Delta_{I(p,q)}} = \oOplus_{n\geq 0}R_{(pn,qn)} = \oOplus_{n\geq 0} {(I^{qn})}_{pn}t^{qn}.$$
\end{defn}

Note that $R_{\Delta_{I(p,q)}}$ is a finitely generated graded $k$-algebra. To see this, represent ${R}$ as a graded quotient of a bigraded polynomial ring
$S=k[X_1,\ldots,X_r,Y_1,\ldots,Y_s]$ (see \ref{ss3}), where $\deg X_i = (1,0)$ and $\deg Y_j = (d_j,1)$. Then $R_{\Delta_{I(p,q)}}$ is a graded quotient of $S_{\Delta_{(p,q)}}$. From \cite[Section 1.A]{CHTV97} we have that $S_{\Delta_{(p,q)}}$ is an affine semigroup ring. Thus $S_{\Delta_{(p,q)}}$ is a finitely generated $k$-algebra and so is $R_{\Delta_{I(p,q)}}$. By the dimension formula \cite[Theorem 23]{Mat70} it follows that $\dim R_{\Delta_{I(p,q)}} \leq d$. So we have a well-defined notion of Hilbert-Samuel multiplicity $$e\big(R_{\Delta_{I(p,q)}}\big) = \lim_{n\to\infty}\dfrac{\ell_k\left({(I^{qn})_{pn}}\right)}{n^{d-1}/(d-1)!}$$ which could be zero. Since $f_{A,\{I^n\}}(p/q) = \tfrac{d}{q^{d-1}}\cdot e\big(R_{\Delta_{I(p,q)}}\big)$, now using Theorem \ref{t3} we can define the density function $f_{A,\{I^n\}}$ by replacing `$\limsup$' by `$\lim$' everywhere.

Following is a special but more relevant case  of Theorem~\ref{t3}.

\begin{cor}\label{limit}For the pair $(A,I)$ as in Definition~\ref{dg}, if the ideal $I$ has a homogeneous generator of degree $d_1$ which is a nonzerodivisor then
 the density function $f_{A,\{I^n\}}:\R_{\geq 0}\longto \R_{\geq 0}$
 is continuous  and  piecewise polynomial on the set $\R\setminus \{d_1\}$, satisfying the properties as in Theorem \ref{t3}.
 \end{cor}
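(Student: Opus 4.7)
The plan is to recognize Corollary \ref{limit} as a direct specialization of Theorem \ref{t3} to the filtration of ordinary powers $\{I^n\}_{n \in \N}$, and simply verify that the hypotheses of that theorem are met in this setting.

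First I would set up the bigraded picture. The bigraded Rees algebra $R = \oplus_{n \geq 0} I^n t^n$ is minimally generated over $A$ in bidegrees $(d_1,1), (d_2,1), \ldots, (d_l,1)$, so in the notation of Notations \ref{den} and Theorem \ref{t3} we have $e_i = 1$ for every $i$. Consequently the ratios $d_i/e_i$ collapse to the integers $d_i$, and the distinguished point $d_1/e_1$ in Theorem \ref{t3} is exactly the point $d_1$ appearing in the corollary.

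Next I would check the two technical hypotheses. Because the filtration is $\{I^n\}$, it is automatically Noetherian. Let $f \in I$ be a homogeneous nonzerodivisor of degree $d_1$, as given. Then $ft \in R$ is a bihomogeneous element of bidegree $(d_1,1)$, and it is a nonzerodivisor in $R$ because multiplication by $ft$ on each bigraded piece $(I^n)_m$ factors through multiplication by $f$ on $A_m$, which is injective. This supplies the nonzerodivisor of bidegree $(d_1, e_1)$ required by Theorem \ref{t3}. Moreover, any element killed by some $I^n$ is in particular killed by $f^n$ and hence is zero, so $H^0_{\{I^n\}}(A) = 0$ and $d^{\sF} = \dim A = d \geq 1$, as required in Notations \ref{den}.

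With both hypotheses verified, I would invoke Theorem \ref{t3} verbatim. It delivers, for every $x \neq d_1$, existence of the limit $\lim_{n\to\infty} \ell_k((I^n)_{\lfloor xn \rfloor})/(n^{d-1}/d!)$, the piecewise description of $f_{A, \{I^n\}}$ on the intervals $I[0,d_1)$, $I(d_1,d_2]$, $I[d_j, d_{j+1}]$ for $2 \leq j < l$, and $I[d_l, \infty)$ as polynomials ${\bf p}_j(x) \in \Q[x]$ of degree at most $d-1$ (with ${\bf p}_l$ of degree exactly $d-1$), and agreement of consecutive polynomials at the interior gluing points $d_2, \ldots, d_l$. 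This last agreement is precisely the continuity at the points $d_i$ with $2 \leq i \leq l$, while continuity on $I[0, d_1)$ is trivial since the function vanishes there and continuity on each open piece is automatic from polynomiality. Hence $f_{A, \{I^n\}}$ is continuous and piecewise polynomial on $\R_{\geq 0} \setminus \{d_1\}$, as claimed. There is no real obstacle here; the only point that deserves explicit mention is the construction of the bidegree-$(d_1,1)$ nonzerodivisor $ft$ in $R$ from the degree-$d_1$ nonzerodivisor in $I$, since this is exactly the ingredient that enables the Euclidean-plane approximation argument in the proof of Theorem \ref{t3} and thereby forces continuity at the boundary points $d_2, \ldots, d_l$.
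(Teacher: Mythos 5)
Your proposal is correct and matches the paper's treatment: the paper states this as an immediate corollary of Theorem \ref{t3} with no separate argument, and you have simply filled in the routine verifications — that $e_i=1$ for all $i$, that a degree-$d_1$ nonzerodivisor $f\in I$ gives a bidegree-$(d_1,1)$ nonzerodivisor $ft$ in $R=A[It]$, and that this forces $H^0_{\{I^n\}}(A)=0$ so $d^{\sF}=d\geq 1$. Nothing more is needed, and your write-up is accurate.
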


\begin{rmk}
Let $(A,I)$ be the pair as in Definition \ref{dg}. Let $\mathbf{p}_j(x)$ be the polynomial as in Lemma \ref{l1cont}, where we take $(d_i,e_i) = (d_i,1)$ and $d^{\mathcal{F}}=d$. Further let $\mathbf{c}_i=e\big(R_{\Delta_{I(d_i,1)}}\big)$. Then the multiplicities of the diagonal subalgebras are determined by $\mathbf{p}_1(x), \ldots, \mathbf{p}_l(x)$ and $\mathbf{c}_1,\ldots,\mathbf{c}_l$. Given $(p,q)\in \N^2$,
\begin{align*}
 p/q = d_i &\implies e(R_{\Delta_{I(p,q)}}) = q^{d-1}\mathbf{c}_i,\\
 p/q \in I(d_i,d_{i+1}) &\implies e(R_{\Delta_{I(p,q)}}) = \tfrac{q^{d-1}}{d}\cdot \mathbf{p}_i(p/q).
\end{align*}
We recall that such a formula for $e(R_{\Delta_{I(p,q)}})$, where $p/q\in I(d_l,\infty)$, is proved in \cite[Corollary 4.4]{HT03}. The coefficients of the polynomial $\mathbf{p}_l(x)$ are known as the mixed multiplicities of $A[It]$.
\end{rmk}

\begin{ex}\label{r5}
The density function $f_{A,\{I^n\}}$ may not be continuous at $x= d_1$ even in a polynomial ring of any dimension. For example, let $A = k[X_1, \ldots, X_d]$ be a polynomial ring in $d$ variables ($d\geq 2$) over a field  $k$ and consider the ideal $I = (X_1^2, X_2^2, \ldots, X_d^2)$. Then $\ell_k\big((I^n)_{2n}\big) = \binom{n+d-1}{d-1}$ and $\ell_k\big((I^n)_{2n+1}\big) = d\cdot\binom{n+d-1}{d-1}$. Therefore
$$f_{A, \{I^n\}}(x) = \begin{cases}
                       0\quad &\mbox{if}\quad x\in I(0,2)\\
                       d\quad &\mbox{if}\quad x=2\\
                       dx^{d-1} \quad &\mbox{if}\quad x\in I(2,\infty)
                      \end{cases}.$$
\end{ex}

\begin{ex}\label{exr6}
In the following example $R = A[It]$ does not have a nonzerodivisor of bidegree $(1,1)$ or $(2,1)$ but does have a nonzerodivisor of bidegree $(3,1)$, namely $x^3+y^3+z^3$ which belongs to $I$, and the density function is indeed not continuous at $x=1$ and $x=2$, and is continuous on the interval $I(3, \infty)$. Let $A = k[X, Y, Z]/(XY, YZ, ZX)$ over a field $k$, and $I = (x, y^2, z^3)$. Then $$f_{A, \{I^n\}}(x) = \begin{cases}
0 \quad &\mbox{if}\quad x<1\\
1 \quad &\mbox{if}\quad 1\leq x<2\\
2 \quad &\mbox{if}\quad 2\leq x<3\\
3 \quad &\mbox{if}\quad x \geq 3
\end{cases}.
$$
\end{ex}

\begin{ex}
In this example, we show that the polynomials $\mathbf{p}_j$ occurring in Theorem \ref{t3} can have degree $<d-1$ for some $j<l$. Let $A=k[X,Y,Z]$ be a polynomial ring over a field $k$, and $I=(X,Y^2,Z^3)$. Notice that $I$ is generated in distinct degrees $\{1,2,3\}$. Then
$$f_{A, \{I^n\}}(x) = \begin{cases}
0 \quad &\mbox{if}\quad x\leq 1\\
9x^2-18x+9 &\mbox{if}\quad 1\leq x\leq 2\\
18x-27 \quad &\mbox{if}\quad 2\leq x\leq 3\\
3x^2 \quad &\mbox{if}\quad x \geq 3
\end{cases}.
$$
\end{ex}

\section{Density functions for saturated powers of an ideal}\label{sec5}

Throughout this section we have the following.

\begin{notations}\label{nsat}
Let the pair $(A,I)$ denote a standard graded domain $A=\oOplus_{n\geq 0}A_n$ over a field $A_0 = k$ and a homogeneous ideal $I$ in $A$. Also assume that $d = \dim A \geq 2$. Let ${\bf m}$ denote the homogeneous maximal ideal of $A$. Let $\widetilde {I^n} = I^n \colon_A {\bf m}^{\infty} = \bigcup_{i\geq 1} (I^n:_A{\bf m}^i)$ denote the saturation of the ideal $I^n$.

We define the density function for the pair $(A, \{\widetilde {I^n}\})$ as
$$f_{A, \{\widetilde {I^n}\}} \colon \R_{\geq 0}\longto \R_{\geq 0}\quad\mbox{given by}\quad f_{A, \{\widetilde {I^n}\}}(x) = \limsup_{n\to \infty}\dfrac{\ell_k\big((\widetilde {I^n})_{\lfloor xn\rfloor}\big)}{n^{d-1}/d!}.$$
Further we consider a sequence of functions
$$\left\{g_n \colon \R_{\geq 0}\longto \R_{\geq 0}\right\}_{n\in \N}\quad\mbox{given by}\quad g_n(x) =  \dfrac{\ell_k\big((\widetilde {I^n})_{\lfloor xn\rfloor}\big)}{n^{d-1}/d!}.$$
Hence $f_{A, \{\widetilde {I^n}\}} = \limsup\limits_{n\to \infty}g_n(x)$.
\end{notations}

However, it is known that the filtration $\{\widetilde {I^n}\}_{n\in\N}$ is not necessarily Noetherian (see Examples \ref{eg_Nagata} and \ref{eg_Cutkosky}). So we can not apply Lemma \ref{l1cont} to conclude any nice properties of the density function $f_{A, \{\widetilde {I^n}\}}$. Hence a different approach is needed.

\begin{notations}\label{densat}
Without any loss of generality we can assume that $0 < \mbox{height}~I <d$. This is because if $\mbox{height}~I = 0$ then $\widetilde {I^n} = 0$ for all $n$, and if $\mbox{height}~I = d$ then $\widetilde {I^n} = A$ for all $n$.

Let $V = \mbox{Proj}~A$ and let $\sI$ be the sheaf of ideals, associated to the ideal $I$, on $V$. Also let $\sO_V(1)$ be the natural very ample line bundle on $V$. We have $\dim V = d-1$.

Let $X = \mathbf{Proj} \left(\oplus_{n\geq 0}\mathcal{I}^n\right)$ be the blow up of $V$ along $\sI$ with exceptional divisor $E$. Let $H$ be the pull back of a hyperplane section on $V$. Then the natural map $\pi\colon X \longto V$ is birational and $$\sO_X(H)= \pi^*\sO_V(1)\quad\mbox{and}\quad \sO_X(-E) = \sI\sO_X.$$

There exists an integer $m_1\geq 0$ such that $\big(H^1_{\bf m}(A)\big)_m = 0$ for all $m\geq m_1$. We then have an identification $$H^0\left(V,\mathcal{I}^n\otimes \mathcal{O}_V(m)\right) = {\big(\widetilde{I^n}\big)}_m$$ for all $n\geq 0$ and $m\geq m_1$. Moreover, there exists an integer $n_0\geq 0$ such that $\pi_*\sO_X(-nE) = \sI^n$ for all $n\geq n_0$ (see \cite[Lemma $5.4.24$]{Laz04a}). This, combined with the projection formula for the map $\pi$, gives us that for all $m\geq m_1$ and $n\geq n_0$,
\begin{equation}\label{rsat}
h^0(X,\sO_X(mH-nE)) = h^0(V, \sO_V(m)\tensor \pi_*\sO_X(-nE)) = h^0(V, \sI^n(m)) = \ell_k\big({\big(\widetilde {I^n}\big)}_m\big).
\end{equation}
First note that $H^0(X,\sO_X(mH-nE))=0$ if $\deg (mH-nE) = m\deg H - n\deg E<0$, where the \emph{degree of a divisor} is as defined in \cite[Chapter II, Exercise 6.2]{Har77}. Here $\deg H>0, \deg E>0$ since $H, E$ are effective divisors. Moreover, it can shown that there exists a $\gamma\in \mathbb{Q}_{>0}$ such that ${\big(\widetilde {I^n}\big)}_m = 0$ for all $m<\gamma n$, see \cite[Lemma 6.3]{DDRV24}. So by increasing $n_0$ if required, i.e., $n_0>\max\{\tfrac{m_1\deg H}{\deg E},\tfrac{m_1}{\gamma}\}$, we get $h^0(X,\sO_X(mH-nE)) = 0 = (\widetilde {I^n})_m$ for all $0\leq m < m_1$ and $n\geq n_0$. So we may further assume that the equalities in \eqref{rsat} hold for all $m\geq 0$ and $n\geq n_0$.
\end{notations}

\vspace{5pt}

\subsection{Density function in characteristic \texorpdfstring{$p>0$}{}}
In the case $A$ is of characteristic $p>0$, using elementary methods we show (Proposition \ref{p1nd}) that the subsequence $\{g_q\mid q=p^n,~~n\in \N\}$ of the sequence $\{g_n\mid n\in \N\}$ converges and the limit $\lim_{q\to\infty}g_q(x)$ is a continuous function. The inspiration comes from the ideas developed by the third author in \cite{Tri18}.

Throughout this subsection we assume that the underlying field $k$ is perfect. We denote by $\sM = \sO_X(H)$ and $\sL = \sO_X(-E)$. Let $F:X\longto X$ denote the Frobenius morphism.

\begin{lemma}\label{l1sat}
There exists an integer $m_0\geq 0$ and an exact  of $\sO_X$-modules
\begin{equation}\label{e2}
0 \longto \mathop{\oOplus}_{i=1}^{p^{d-1}}\sO_X \longto F_*(\sO_X)\tensor \sM^{m_0} \longto Q\longto 0,
\end{equation}
where $\dim\;\mathrm{Supp}(Q) <\dim~X = d-1$.
\end{lemma}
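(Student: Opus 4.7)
The plan is to construct, for $m_0 \gg 0$, an $\sO_X$-linear morphism $\phi\colon \sO_X^{\oplus p^{d-1}} \longto F_*\sO_X \otimes \sM^{m_0}$ that is an isomorphism at the generic point of $X$, and to take $Q = \coker\phi$. Since $F_*\sO_X \otimes \sM^{m_0}$ is torsion-free (tensor of a torsion-free sheaf with an invertible sheaf on an integral scheme), generic injectivity will upgrade to injectivity of $\phi$ on all of $X$, and $Q$ will be supported on the proper closed locus where $\phi$ degenerates, forcing $\dim \mathrm{Supp}(Q) < d-1$ automatically. The rank $p^{d-1}$ on the source matches the generic rank of $F_*\sO_X$: the Frobenius $F\colon X\to X$ is a homeomorphism, so $F_*\sO_X$ coincides with $\sO_X$ as a sheaf of abelian groups with $\sO_X$-action twisted by $f\cdot s = f^p s$, and its generic stalk is $K := k(X)$ viewed as a module over $K^p$. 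Perfectness of $k$ combined with $K/k$ being separably generated yields $[K:K^p]=p^{\dim X}=p^{d-1}$.

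I would then fix a $p$-basis $y_1,\ldots,y_{d-1}\in K$, so that the monomials $y^I = y_1^{i_1}\cdots y_{d-1}^{i_{d-1}}$ with $0\leq i_j<p$ form a $K^p$-basis of $K$, equivalently a $K$-basis of $(F_*\sO_X)_\eta$. By $F^*\sM=\sM^p$ and the projection formula, $F_*\sO_X\otimes \sM^{m_0}\cong F_*\sM^{pm_0}$, so $H^0(X, F_*\sO_X\otimes \sM^{m_0})=H^0(X,\sM^{pm_0})$, and the latter sits inside $K=(\sM^{pm_0})_\eta$ after fixing a reference local generator. It therefore suffices to produce, for a uniform $m_0$, sections $s_I\in H^0(X,\sM^{pm_0})$ whose images in $K$ form a $K^p$-basis. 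Now $\sM=\pi^*\sO_V(1)$ is big on $X$: it is the pullback of an ample line bundle under the birational morphism $\pi\colon X\to V$, whence $\sM^{d-1}=\sO_V(1)^{d-1}>0$. Bigness implies that every rational function on $X$ is a ratio of two global sections of some power of $\sM$, so for $m_0$ large I can arrange a common denominator $t\in H^0(X,\sM^{pm_0})$ with $y^I=s_I/t$ and $s_I\in H^0(X,\sM^{pm_0})$ for every $I$. The resulting $s_I$ assemble into $\phi$, and at the generic point $s_I = t\cdot y^I$; since multiplication by the nonzero element $t\in K$ is a $K^p$-linear automorphism of $K$, the $\{s_I\}$ still form a $K^p$-basis, so $\phi_\eta$ is an isomorphism.

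The main obstacle I anticipate is the simultaneous clearing of denominators for all $p^{d-1}$ rational sections $y^I$ via a single $m_0$; this is precisely what bigness of $\sM$ delivers through a common denominator $t\in H^0(X,\sM^{pm_0})$. The remaining ingredients---the projection formula for $F$, the identity $F^*\sM=\sM^p$ for the absolute Frobenius, torsion-freeness of $F_*\sO_X\otimes\sM^{m_0}$, and the generic rank formula $[K:K^p]=p^{d-1}$---are standard and formal.
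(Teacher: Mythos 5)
Your argument is correct, and it reaches the same goal as the paper's proof — a generic isomorphism $\sO_X^{\oplus p^{d-1}} \to F_*\sO_X\otimes\sM^{m_0}$ — but via a noticeably different mechanism for the "clearing denominators" step. The paper extends an $\sO_V$-linear isomorphism $\phi$ from the generic stalk to a morphism over an affine open $U=D_+(fg)$ of $V$ over which $\pi$ is an isomorphism, and then invokes Hartshorne II.5.14 (the quasi-compactness lemma) with the section $fg$ of $\sM^{\deg fg}$ to obtain $(fg)^{m_2}\phi_2\in H^0(X,\shom(\sO_X^{\oplus p^{d-1}},F_*\sO_X)\otimes\sM^{m_0})$. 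You instead first collapse the target via the projection formula, $F_*\sO_X\otimes\sM^{m_0}\cong F_*(\sM^{pm_0})$, so that $H^0(X,F_*\sO_X\otimes\sM^{m_0})=H^0(X,\sM^{pm_0})$, and then produce the needed sections $s_I$ directly by observing that $\sM=\pi^*\sO_V(1)$ is big (nef with positive top self-intersection, or equivalently pullback of an ample by a birational morphism) and so writes every rational function as a ratio of sections of a high power of $\sM$; taking the product of the denominators gives a single common denominator $t$ that clears all the $y^I$ simultaneously, with the remaining divisibility-by-$p$ adjustment handled by multiplying through by a nonzero section of $\sM^{k}$. Your route buys a more intrinsic argument on $X$ (no need to descend to affine opens of $V$) at the cost of invoking bigness; the paper's route is more elementary in the sense of using only Hartshorne II.5.14. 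One small imprecision in your write-up: injectivity of $\phi$ from generic injectivity follows from the source $\sO_X^{\oplus p^{d-1}}$ being torsion-free on the integral scheme $X$ (its kernel would be a torsion subsheaf), not from torsion-freeness of the target — though the target is also torsion-free, so the conclusion is unaffected.
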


\begin{proof}
Since $V = \mbox{Proj}\;A$, any nonzero homogeneous element $f$ in ${\bf m}$ gives an affine open set $V_f := D_{+}(f)$ of $V$. Further we can choose such an $f$ in $I$.
Then we have the  isomophism of varieties $\pi\mid_{X_f} \colon X_f = \pi^{-1}(V_f)\longto V_f$.

If $\eta$ denotes the generic point of $V$ then we have an isomophism
$$\phi \colon \mathop{\oOplus}_{i=1}^{p^{d-1}}(\sO_V)_{\eta} \longrightarrow (F_*\sO_V)_{\eta},$$
where $\phi$ is an $\sO_V$-linear map. Hence there exists a nonzero homogeneous element $g$ in $A$ such that $D_+(g)\subset V$ is an affine open set and the map $\phi$ extends to
a map $$\phi_1 \colon \mathop{\oOplus}_{i=1}^{p^{d-1}}\left.(\sO_V)\right|_{D_+(g)} \longrightarrow \left.(F_*\sO_V)\right|_{D_+(g)}.$$
Now we have an affine open set $U= D_+(fg)$ of $V$ such that $\pi^{-1}(U) \cong U$ via the map $\pi$. The map $\phi_1$ restricts to the map
$\phi_2 \colon \oplus_{i=1}^{p^{d-1}} \left.(\sO_V)\right|_{U}\longrightarrow \left.(F_*\sO_V)\right|_{U}$ which is same as
$$ \phi_2 \colon \mathop{\oOplus}_{i=1}^{p^{d-1}}\left.(\sO_X)\right|_{\pi^{-1}(U)}\longrightarrow \left.(F_*\sO_X)\right|_{\pi^{-1}(U)}.$$

Let $\sF = \shom_{\sO_X}(\oplus_{i=1}^{p^{d-1}}\sO_X, F_*(\sO_X))$. Then $\phi_2\in H^0\left(\pi^{-1}(U),\sF\right)$. We note that $\left.\sO_V(1)\right|_{U} = (fg)\sO_{U}$ and therefore $\left.\sM\right|_{\pi^{-1}U} = (fg) \sO_{\pi^{-1}U}$. This gives $\pi^{-1}(U) = \{x\in X\mid fg\not\in {\bf m}_x\sM_x\}$. Hence, by Lemma $5.14$ of \cite[Chapter II]{Har77} there exists an integer $m_2\geq 0$ such that $(fg)^{m_2}\phi_2 \in H^0(X, \sF\tensor \sM^{m_0})$, where $m_0 = (\deg fg)\cdot m_2$. Therefore, we obtain an injective map of $\sO_X$-modules
$$(fg)^{m_2}\phi_2 \colon \mathop{\oOplus}_{i=1}^{p^{d-1}}\sO_X\longrightarrow F_*\sO_X\tensor \sM^{m_0}$$
which is an isomorphism at the generic point of $X$.
\end{proof}

\begin{propose}\label{p1nd}
Given any compact interval $I[0, e]$, where $e\in \R_{>0}$,
\begin{enumerate}
\item the sequence $\{g_q\}_{q=p^n}$ is eventually increasing and converges uniformly.
\item The density function $f_{A, \{\widetilde {I^q}\}}\colon \R_{\geq 0}\longto \R_{\geq 0}$ given by $x\mapsto \lim_{q\to \infty}g_q(x)$
is a well-defined continuous function.
\end{enumerate}
\end{propose}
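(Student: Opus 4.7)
The plan is to exploit the exact sequence \eqref{e2} as a replacement for the Frobenius splitting used in the Hilbert--Kunz density function construction, deriving a relation that compares $g_q$ with $g_{pq}$ up to a controlled error of order $1/q$. Specifically, I would first twist \eqref{e2} by $\sM^{a} \otimes \sL^{b}$ for non-negative integers $a,b$, take global sections, and use the projection formula $F_{*}\sO_{X} \otimes \sO_{X}(aH - bE) \cong F_{*}\big(\sO_{X}(paH - pbE)\big)$ to obtain the bracketing
\begin{equation*}
p^{d-1} h^{0}(\sM^{a}\otimes \sL^{b}) \;\leq\; h^{0}\big(\sM^{p(m_{0}+a)}\otimes \sL^{pb}\big) \;\leq\; p^{d-1} h^{0}(\sM^{a}\otimes \sL^{b}) \;+\; h^{0}\big(Q \otimes \sM^{a}\otimes \sL^{b}\big).
\end{equation*}
Since $\dim \mathrm{Supp}(Q) < d-1$, asymptotic Riemann--Roch on each irreducible component of $\mathrm{Supp}(Q)$ gives $h^{0}\big(Q \otimes \sM^{a}\otimes \sL^{b}\big) = O(\max(a,b)^{d-2})$ uniformly in $a,b$ with $a/b$ bounded.

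Next I would specialize $a = \lfloor xq\rfloor$ and $b = q$ for $q = p^{n}$ and use the identification \eqref{rsat} (valid once $q \geq n_{0}$) to translate the cohomological estimate into
\begin{equation*}
p^{d-1}\,\ell_{k}\big((\widetilde{I^{q}})_{\lfloor xq\rfloor}\big) \;\leq\; \ell_{k}\big((\widetilde{I^{pq}})_{p\lfloor xq\rfloor + pm_{0}}\big) \;\leq\; p^{d-1}\,\ell_{k}\big((\widetilde{I^{q}})_{\lfloor xq\rfloor}\big) \;+\; C\,q^{d-2}.
\end{equation*}
To turn the middle quantity into $\ell_{k}\big((\widetilde{I^{pq}})_{\lfloor xpq\rfloor}\big)$, I would use that $A$ is a domain: fixing a nonzero $z_{0}\in A_{1}$, multiplication by $z_{0}$ gives injections $(\widetilde{I^{pq}})_{m}\hookrightarrow (\widetilde{I^{pq}})_{m+1}$, so lengths are monotone in degree. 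Since $p\lfloor xq\rfloor + pm_{0} - \lfloor xpq\rfloor$ is bounded independent of $q$ (by $pm_{0}+p$), and each unit shift contributes at most $\ell_{k}(A_{m}/z_{0}A_{m-1}) = O(m^{d-2})$, the discrepancy is again $O(q^{d-2})$ uniformly for $x \in [0,e]$. Combining everything and dividing by $(pq)^{d-1}/d!$ gives
\begin{equation*}
\big|\,g_{pq}(x) - g_{q}(x)\,\big| \;\leq\; C'/q \qquad \text{uniformly in } x \in [0,e].
\end{equation*}
Summability of $\sum 1/p^{n}$ then makes the subsequence $\{g_{p^{n}}\}_{n}$ uniformly Cauchy on $[0,e]$, yielding uniform convergence to a limit function which I will denote $f_{A,\{\widetilde{I^{q}}\}}$. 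The \emph{eventually increasing} statement is obtained from the sharper version of the left inequality obtained by substituting $a = \lfloor xq\rfloor - m_{0}$ (so that $p(m_{0}+a) = p\lfloor xq\rfloor \leq \lfloor xpq\rfloor$), which directly gives $g_{q}(x) \leq g_{pq}(x) + O(1/q)$; monotonicity of $g_{q} - \frac{C''}{q}$ follows by choosing the correction constant large enough.

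For continuity of the limit in part (2), I would observe that each $g_{q}$ is a step function, constant on half-open intervals $[k/q,(k+1)/q)$, whose jumps have size $\tfrac{\ell_{k}((\widetilde{I^{q}})_{k}) - \ell_{k}((\widetilde{I^{q}})_{k-1})}{q^{d-1}/d!} \leq \tfrac{\ell_{k}(A_{k}/z_{0}A_{k-1})}{q^{d-1}/d!} = O(1/q)$ for $k \in [0,eq]$. Consequently, the oscillation of $g_{q}$ on any interval of length $\delta$ is at most $c_{0}\delta + O(1/q)$ for a uniform constant $c_{0}$ depending on $e_{0}(A)$ and $e$. Passing to the uniform limit gives oscillation of $f_{A,\{\widetilde{I^{q}}\}}$ on such an interval at most $c_{0}\delta$, i.e., Lipschitz continuity on $[0,e]$. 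Since the bound $e$ is arbitrary, $f_{A,\{\widetilde{I^{q}}\}}$ is continuous on all of $\R_{\geq 0}$.

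The main technical obstacle is guaranteeing the uniformity of the error terms in $x \in [0,e]$: both $h^{0}(Q\otimes \sM^{a}\otimes \sL^{b}) = O(q^{d-2})$ (which requires controlling the growth of the Hilbert function of $Q$ as $(a,b) = (\lfloor xq\rfloor, q)$ varies with the slope $x$) and the length-shift bound $O(q^{d-2})$ (which requires a uniform estimate on $\ell_{k}(A_{m}/z_{0}A_{m-1})$ up to $m = eq$). Both hinge on asymptotic Riemann--Roch applied to coherent sheaves whose support has dimension strictly less than $d-1$; the key point is that the implied constants in these estimates depend only on $e$ and the fixed geometry of $X$, not on the particular $x \in [0,e]$.
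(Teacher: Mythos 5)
Your overall strategy is the same as the paper's: twist the exact sequence \eqref{e2} by $\sM^{m}\otimes\sL^{q}$, apply the projection formula for Frobenius, and reduce the error to $h^{0}(Q\otimes\sM^{m}\otimes\sL^{q})$, which you control via the drop in support dimension. Your handling of the shift between $p\lfloor xq\rfloor+pm_{0}$ and $\lfloor xpq\rfloor$ and your treatment of continuity are where you diverge, and they deserve comment.

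For the shift, you propose bounding $\ell_{k}\bigl((\widetilde{I^{pq}})_{m+1}\bigr)-\ell_{k}\bigl((\widetilde{I^{pq}})_{m}\bigr)$ by $\ell_{k}(A_{m+1}/z_{0}A_{m})$ for a fixed $z_{0}\in A_{1}$. This is not automatic: one would need $(\widetilde{I^{pq}})_{m+1}\cap z_{0}A_{m}=z_{0}(\widetilde{I^{pq}})_{m}$, i.e.\ $z_{0}$ a nonzerodivisor on $A/\widetilde{I^{pq}}$, and a single $z_{0}$ need not work simultaneously for all $q$ (over a finite perfect field such as $\mathbb{F}_{p}$ prime avoidance also becomes delicate). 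The paper's Claim $(ii)$ sidesteps this by working entirely on $X$: a nonzero section of $\sM^{l_{0}}$ gives $0\to\sO_{X}\to\sM^{l_{0}}\to Q_{l_{0}}\to 0$ with $\dim\mathrm{Supp}(Q_{l_{0}})<d-1$, and tensoring with $\sM^{mp}\otimes\sL^{qp}$ produces exactly the needed $O(q^{d-2})$ bound with no ring-theoretic regularity hypothesis. You should replace your $z_{0}$-multiplication step (and the analogous jump estimate in your continuity argument) with this cohomological version, which you already use implicitly in the main bracketing inequality.

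For continuity, your route is genuinely different from the paper's and arguably more transparent. The paper replaces $g_{q}$ with the piecewise-linear interpolants $\widetilde{g_{q}}$ and invokes uniform convergence, while you observe that each $g_{q}$ is a step function whose jumps on $I[0,e]$ are $O(1/q)$, giving uniform equicontinuity and hence Lipschitz continuity of the limit. Both arguments ultimately hinge on the same jump bound (of Claim $(ii)$ type), so once that bound is stated cohomologically your version is complete and self-contained.

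Finally, note that your argument establishes only $g_{pq}(x)\geq g_{q}(x)-C''/q$, i.e.\ eventual monotonicity of $g_{q}-C''/q$, not of $g_{q}$ itself; the literal ``eventually increasing'' assertion of part $(1)$ is not captured. The paper asserts nonnegativity of $A_{q}(x)$ ``by \eqref{e3}'', but since \eqref{e3} produces $\sM^{mp+m_{0}p}$ while $\lfloor xqp\rfloor=mp+m_{1}$ with $0\leq m_{1}<p$, that deduction also requires an extra step when $m_{0}>0$. This gap is harmless for parts of the proposition that matter downstream (uniform convergence alone gives the boundedness used in Theorem \ref{t1sat}), but if you want the monotonicity statement as phrased you would need an argument that genuinely eliminates the $O(1/q)$ correction, not merely absorbs it.
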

\begin{proof} \emph{Assertion} $(1)$.\quad First we show that there exists a constant $C_e>0$ such that for all $q\geq n_0$ (recall $n_0$ from Notations \ref{densat}) and $x\in I(0,e]$,
\begin{equation}\label{charp}
 0\leq g_{qp}(x)-g_q(x)\leq \dfrac{C_e}{q}.
\end{equation}
From (\ref{rsat}), it follows that for all $q\geq n_0$, we have
$$g_q(x) = \dfrac{h^0\left(X, \sM^{\lfloor xq\rfloor}\tensor \sL^q\right)}{q^{d-1}/d!}.$$
Therefore we need to show that there is a constant $C_e>0$ such that for all $x\in I(0,e]$ and $q=p^n$,
\begin{equation}\label{**}
0\leq A_q(x):= \dfrac{h^0\left(X, \sM^{\lfloor xqp\rfloor} \tensor \sL^{qp}\right)}{(qp)^{d-1}} - \dfrac{h^0\left(X, \sM^{\lfloor xq\rfloor}\tensor\sL^q\right)}{q^{d-1}} \leq \dfrac{C_e}{q}.
\end{equation}
Let $m_0$ be the integer as in Lemma \ref{l1sat}. Tensoring \eqref{e2} with the line bundle $\sM^m\tensor \sL^q$ gives the short exact of $\sO_X$-modules
$$0 \longto \mathop{\oOplus}_{i=1}^{p^{d-1}}\sM^m\tensor \sL^q \longto F_*\sO_X\tensor \sM^{m+m_0}\tensor \sL^q \longto Q\tensor \sM^m\tensor \sL^q \longto 0.$$
Now the projection formula gives
$$F_*\sO_X\tensor \sM^{m+m_0}\tensor \sL^q = F_*(\sM^{mp+m_0p}\tensor \sL^{qp})$$
and 
$$H^0(X, F_*(\sM^{mp+m_0p} \tensor \sL^{qp})) = H^0(X, \sM^{mp+m_0p} \tensor \sL^{qp}).$$
Hence we have the induced long exact sequence
\begin{equation}\label{e3}
0 \longto \mathop{\oOplus}_{i=1}^{p^{d-1}} H^0(X, \sM^m\tensor \sL^q) \longto H^0(X, \sM^{mp+m_0p} \tensor \sL^{qp})\longto H^0(X,Q\tensor \sM^m\tensor \sL^q)
\end{equation}
and therefore
$$0 \leq \dfrac{h^0(X, \sM^{mp+m_0p} \tensor \sL^{qp})}{(qp)^{d-1}} - \dfrac{h^0(X, \sM^m\tensor \sL^q)}{q^{d-1}} \leq \frac{h^0(X, Q\tensor \sM^m\tensor \sL^q)}{(qp)^{d-1}}.$$

\noindent{\bf Claim}.
\begin{enumerate}
\item[$(i)$] If ${\mathcal{G}}$ is a coherent sheaf of $\sO_X$-modules with $\dim\; \mathrm{Supp}(\mathcal{G})<d-1$, then there is a constant $C_e(\mathcal{G})>0$ such that for all integers $m\leq eq$, we have
$$h^0(X, \mathcal{G}\tensor \sM^m \tensor \sL^q) \leq C_e(\mathcal{G})q^{d-2}.$$
\item[$(ii)$] For a given integer $l_0>0$, there is a constant $C_{l_0}>0$ such that for all integers $0\leq m_1\leq m_2 \leq l_0$ and $m\leq eq$, we have
$$\left\vert h^0(X, \sM^{mp+m_1} \tensor \sL^{qp}) - h^0(X, \sM^{mp+m_2} \tensor \sL^{qp})\right\vert \leq C_{l_0}(qp)^{d-2}.$$
\end{enumerate}

\vspace{5pt}

\noindent{\emph{Proof of Claim $(i)$}}.\quad By \cite[Example $1.2.33$]{Laz04a}, there exists a constant $C_e(\mathcal{G})>0$ such that $$h^0(X, \mathcal{G}\tensor (\sM^e \tensor \sL)^q)\leq C_e(\mathcal{G})q^{d-2}\quad \text{for all}\;\; q=p^n.$$
We also note that, for any integer $l\geq 0$, we have a nonzero section $a\in H^0(X, \sM^{l})$, which is possible as $H^0(V, \sO_V(l))\neq \{0\}$ and isomorphism of $X$ to $V$ on a dense open set implies $\pi^*a \neq 0$. Now for $m\leq eq$ we can choose a nonzero section $a\in H^0(X, \sM^{eq-m})$ which induces an injective map
$$0\longto \mathcal{G}\tensor \sM^m \tensor \sL^{q} \longto \mathcal{G}\tensor (\sM^e \tensor \sL)^{q}$$
of $\sO_X$-modules and hence the desired inequality
$$h^0(X, \mathcal{G}\tensor \sM^m \tensor \sL^{q}) \leq h^0(X, \mathcal{G}\tensor (\sM^e \tensor \sL)^q)\leq C_e(\mathcal{G})q^{d-2}.$$

\vspace{5pt}

\noindent{\emph{Proof of Claim $(ii)$}}.\quad For a given integer $l_0\geq 1$ we choose a nonzero section $a\in H^0(X, \sM^{l_0})$ which induces an injective map
$$0\longto \sO_X \longto  \sM^{l_0} \longto Q_{l_0}\longto 0$$
of $\sO_X$-modules, where $\dim\;\mathrm{Supp}(Q_{l_0}) < \dim X = d-1$. Hence
$$\left\vert h^0(X, \sM^{mp}\tensor \sL^{qp})-h^0(X, \sM^{mp+l_0}\tensor\sL^{qp})\right\vert \leq h^0(X, Q_{l_0}\tensor\sM^{mp}\tensor\sL^{qp}).$$
By Claim $(i)$ there exists a constant $C_{l_0}>0$ such that
$$h^0(X, Q_{l_0}\tensor\sM^{mp}\tensor\sL^{qp})\leq C_{l_0}({qp})^{d-2}.$$
Now the assertion follows as the existence of a nonzero global section of $\sM^{m_2-m_1}$ gives
$$h^0(X, \sM^{mp+m_1} \tensor \sL^{qp})\leq h^0(X, \sM^{mp+m_2} \tensor \sL^{qp}).$$

\vspace{5pt}

Now, for a given $q=p^n$ and $x\in I(0, e]$, to prove the bounds on $A_q(x)$ as given in (\ref{**}), let $m = \lfloor xq\rfloor$ then $\lfloor xqp\rfloor = mp+m_1$ for some integer $0\leq m_1 <p$. By \eqref{e3} the nonnegativity of $A_q(x)$ follows. On the other hand, \eqref{e3} and the above claim implies that
\begin{multline*}
 A_q(x) \leq \left\vert\dfrac{h^0\left(X, \sM^{mp + m_1} \tensor \sL^{qp}\right)}{(qp)^{d-1}} - \dfrac{h^0\left(X, \sM^{mp + m_0p}\tensor\sL^{qp}\right)}{(qp)^{d-1}}\right\vert\\ + \left\vert \dfrac{h^0\left(X, \sM^{mp + m_0p} \tensor \sL^{qp}\right)}{(qp)^{d-1}} - \dfrac{h^0\left(X, \sM^{m}\tensor\sL^q\right)}{q^{d-1}}\right\vert \leq \dfrac{C_{(m_0+1)p}(qp)^{d-2}}{(qp)^{d-1}} + \dfrac{C_e(Q)q^{d-2}}{(qp)^{d-1}}.
\end{multline*}
Now we can choose $C_e = \frac{C_{(m_0+1)p}}{p} + \frac{C_e(Q)}{p^{d-1}}$.

\vspace{5pt}

\noindent
\emph{Assertion} $(2)$.\quad To prove the continuity of the limiting function $f_{A, \{\widetilde {I^q}\}}$, we consider the sequence $\left\{\widetilde {g_q}\colon \R_{\geq 0}\longto \R_{\geq 0}\right\}_{q=p^n}$ of continuous functions defined by
$$\widetilde {g_q}(x) = \lambda g_q\left(\tfrac{\lfloor xq\rfloor}{q}\right) + (1-\lambda) g_q\left(\tfrac{\lfloor xq\rfloor+1}{q}\right),\quad\mbox{if}\;\; x = \lambda \tfrac{\lfloor xq\rfloor}{q} + (1-\lambda) \tfrac{\lfloor xq\rfloor+1}{q}\;\;\mbox{where}\;\;\lambda\in[0,1).$$
Now  the uniform convergence of $\{g_q\}_q$ implies that $\{\widetilde {g_q}\}_q$ is a uniformly convergent sequence of continuous functions with the same limit. Hence $\lim_{q\to\infty} g_q$ exists and is a continuous function.
\end{proof}

Proposition \ref{p1nd} along with Theorem \ref{t3} gives a canonical notion of a density function for the family of graded finite length $R$-modules $\{\widetilde {I^q}/I^q\}_{q=p^n}$ as follows.

\begin{thm}\label{t1sat}
There exists a sequence of functions
$$\left\{h_q\colon \R_{\geq 0}\longto \R_{\geq 0}\right\}_{q=p^n}\quad\mbox{given by}\quad h_q(x) = \dfrac{\ell_k\big((\widetilde {I^q}/I^q)_{\lfloor xq\rfloor}\big)}{q^{d-1}/d!}$$
such that $\{h_q\}_q$ is pointwise convergent and the limit function $f_{A, \{\widetilde {I^q}/I^q\}}$ is a continuous function on $\R_{\geq 0}\setminus \{d_1\}$, where $d_1$ is the least degree among generators of the ideal $I$. Moreover, the function $f_{A, \{\widetilde {I^q}/I^q\}}$ has compact support and
$$\int_0^{\infty} f_{A,\{\widetilde {I^q}/I^q\}}(x)dx = \lim_{q\to \infty}\dfrac{\ell_A\big(\widetilde {I^q}/I^q\big)}{q^d/d!}.$$
\end{thm}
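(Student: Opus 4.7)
The plan is to reduce the statement to the convergence results already established for the families $\{I^q\}$ and $\{\widetilde{I^q}\}$ separately. From the natural short exact sequence $0 \to I^q \to \widetilde{I^q} \to \widetilde{I^q}/I^q \to 0$, taking $k$-lengths in graded degree $\lfloor xq\rfloor$ and normalising by $q^{d-1}/d!$ yields the decomposition
\[
h_q(x) \;=\; g_q(x) - f_q(x), \qquad \text{where } f_q(x) := \frac{\ell_k\big((I^q)_{\lfloor xq\rfloor}\big)}{q^{d-1}/d!}.
\]
Proposition \ref{p1nd} gives that $\{g_q\}$ converges uniformly on every compact interval to the continuous function $f_{A,\{\widetilde{I^q}\}}$. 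Since $A$ is a domain, any nonzero homogeneous element of $I$ of degree $d_1$ is a nonzerodivisor, hence the bigraded Rees algebra $A[It]$ has a nonzerodivisor of bidegree $(d_1,1)$; applying Theorem \ref{t3} to the Noetherian filtration $\{I^n\}_{n\in\mathbb{N}}$ then yields pointwise convergence of $\{f_q\}$ at every $x \in \R_{\geq 0}\setminus\{d_1\}$, with limit $f_{A,\{I^q\}}$ continuous there. Setting $f_{A,\{\widetilde{I^q}/I^q\}} := f_{A,\{\widetilde{I^q}\}} - f_{A,\{I^q\}}$ then delivers pointwise convergence of $\{h_q\}$ outside $\{d_1\}$ together with continuity of the limit on $\R_{\geq 0}\setminus\{d_1\}$; at $x=d_1$ the value can be assigned arbitrarily (e.g., as $\limsup_q h_q(d_1)$) without affecting any of the remaining claims.

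Next I would show that the $h_q$ all vanish outside a single compact interval $[0,C]$. Since $\widetilde{I^q}/I^q = H^0_{\bf m}(A/I^q)$ is a graded $A$-module of finite length, there is an integer $\rho(q)$ such that $(\widetilde{I^q})_m = (I^q)_m$ for all $m \geq \rho(q)$, and it suffices to produce a linear bound $\rho(q) \leq d_l q + c_0$ with $c_0$ independent of $q$. The cleanest route uses the geometric setup from Notations \ref{densat}: for $q \geq n_0$ one has $\ell_k\big((\widetilde{I^q})_m\big) = h^0(X, \sO_X(mH-qE))$, and standard Serre-type vanishing for the coherent sheaf $\sI^q(m)$ on $V = \mbox{Proj}~A$, which becomes effective once $m/q$ strictly exceeds $d_l$, forces the saturation defect $\ell_k((\widetilde{I^q})_m)-\ell_k((I^q)_m)$ to vanish. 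Consequently $h_q$ is supported in $[0,\, d_l + c_0/q] \subseteq [0,C]$ with $C = d_l + c_0 + 1$, giving a common compact support.

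For the integral identity, a direct computation combined with bounded convergence closes the argument. Because $h_q$ is a step function constant on each interval $[m/q,(m+1)/q)$, termwise summation gives
\[
\int_0^\infty h_q(x)\,dx \;=\; \frac{d!}{q^d}\sum_{m\geq 0}\ell_k\big((\widetilde{I^q}/I^q)_m\big) \;=\; \frac{\ell_A(\widetilde{I^q}/I^q)}{q^d/d!},
\]
whose limit as $q\to\infty$ exists and equals $\varepsilon(I)$ by \cite[Theorem 2.3]{CHST05} or \cite[Corollary 6.3]{Cut14}. The common compact support $[0,C]$, the inequality $0 \leq h_q \leq g_q$, and the uniform boundedness of $\{g_q\}$ on $[0,C]$ furnished by Proposition \ref{p1nd}, taken together with the almost-everywhere pointwise convergence established above, verify the hypotheses of the bounded convergence theorem, so $\int_0^\infty h_q \to \int_0^\infty f_{A,\{\widetilde{I^q}/I^q\}}$, completing the identity. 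The chief difficulty in this scheme is the uniform-in-$q$ compact support; while a linear bound on the saturation index of $I^q$ is in principle standard (via a linear regularity bound for powers of a homogeneous ideal), in the present graded-domain setting the cleanest derivation is the geometric one sketched above through cohomology vanishing on $V=\mbox{Proj}~A$.
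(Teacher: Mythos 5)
Your decomposition $h_q = g_q - f_q$ via the short exact sequence and the appeal to Proposition~\ref{p1nd} and Theorem~\ref{t3} match the paper exactly; the departure is in how the integral identity is established. The paper works with the truncated integrals $\Delta_e = \int_0^e f_{A,\{\widetilde{I^q}/I^q\}}\,dx$, uses dominated convergence on each compact $[0,e]$ to equate $\Delta_e$ with $\lim_q \sum_{m=0}^{eq}\ell_k\big((\widetilde{I^q}/I^q)_m\big)/(q^d/d!)$, notes that $\{\Delta_e\}$ is monotone and bounded above (citing \cite{UV08} for finiteness of the epsilon multiplicity as a limsup), and takes $e\to\infty$. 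You instead try to show that all the $h_q$ share a single compact support $[0,C]$ and then apply bounded convergence once. That is a legitimate alternative route — and, if completed, would actually establish the compact-support claim directly inside this proof, which the paper's own argument defers to Theorem~\ref{MT}(2) — but the uniform support step is not actually proved in your sketch. Serre vanishing gives $(\widetilde{I^q})_m=(I^q)_m$ for $m\gg 0$ with $q$ \emph{fixed}; the uniform linear bound $\rho(q)\le d_l q + c_0$ requires the nontrivial linear regularity result for powers of ideals (Cutkosky--Herzog--Trung/Kodiyalam/Trung--Wang), and the phrase ``standard Serre-type vanishing which becomes effective once $m/q$ strictly exceeds $d_l$'' does not supply this: effectiveness thresholds of Serre vanishing applied to the sheaves $\mathcal{I}^q$ degrade as $q$ grows unless one controls the Castelnuovo--Mumford regularity of $\mathcal{I}^q$ linearly in $q$. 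You flag this difficulty at the end, but the ``geometric derivation'' you sketch is not a substitute for the regularity bound. The paper's $\Delta_e$ argument buys you a way to sidestep this entirely at the cost of citing \cite{UV08}; your argument buys an unconditional common support and a self-contained compact-support claim at the cost of needing the linear regularity bound as a genuine input, which should be cited precisely rather than hand-waved.
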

\begin{proof}
By Proposition \ref{p1nd}, the sequence of functions $\{g_q\}_q$  converges to the limit function $f_{A, \{\widetilde {I^q}\}}$ which is continuous. By Theorem \ref{t3}, the sequence $\left\{f_q\colon \R_{\geq 0}\longto \R_{\geq 0}\right\}_{q}$ given by $f_q(x) = \frac{\ell_k\big((I^q)_{\lfloor xq\rfloor}\big)}{q^{d-1}/d!}$ converges to $f_{A,\{I^q\}}$ which is continuous except possibly at $x=d_1$. This implies that
$$f_{A,\{\widetilde {I^q}/I^q\}}(x) = \lim_{q\to \infty} h_q(x) = \lim_{q\to \infty}\frac{\ell_k\big((\widetilde {I^q})_{\lfloor xq\rfloor}\big)}{q^{d-1}/d!} - \lim_{q\to \infty} \frac{\ell_k\big((I^q)_{\lfloor xq\rfloor}\big)}{q^{d-1}/d!} = f_{A, \{\widetilde {I^q}\}}(x) - f_{A, \{I^q\}}(x).$$
is a well-defined function which is continuous except possibly at $x= d_1$.
 
For every integer $e\geq 0$, let $\Delta_e =\int_0^e f_{A,\{\widetilde {I^q}/I^q\}}(x)dx$. Then $\{\Delta_e\}_e$ is a monotonically increasing sequence.

\noindent{\bf Claim}. $$\Delta_e = \lim\limits_{q\to \infty}\sum\limits_{m=0}^{eq}
\frac{\ell_k\big((\widetilde {I^q}/I^q)_m\big)}{q^d/d!}.$$

\noindent\emph{Proof of Claim}.\quad We showed that the sequence $\{h_q\}_q$ converges to $f_{A, \{\widetilde {I^q}/I^q\}}$ pointwise, where each $h_q$, being a step function, is a (Riemann) integrable function on the closed interval $I[0,e]$.
By definition, $h_q(x)\leq g_q(x)$. Again by Proposition \ref{p1nd}, the sequence $\{g_q\}_q$ is eventually increasing and converges to the continuous function $f_{A, \{\widetilde {I^q}\}}$.
Therefore on a compact interval $I[0,e]$, there exists a constant $C_e>0$ such that $0\leq h_q(x)\leq C_e$ for all $q\gg 0$. Now from the Lebesgue's dominated convergence theorem, we have
$$\lim_{q\to \infty}\int_0^e h_q(x)dx = \int_0^e\big(\lim_{q\to \infty}h_q(x)\big)dx.$$
Hence the claim follows as 
$$\lim_{q\to \infty}\int_0^e h_q(x)dx = \lim\limits_{q\to \infty}\sum\limits_{m=0}^{eq} \frac{\ell_k\big((\widetilde {I^q}/I^q)_m\big)}{q^d/d!}\quad\mbox{and}\quad \int_0^e\big(\lim_{q\to \infty}h_q(x)\big)dx = \Delta_e.$$

Now $\{\Delta_e\}_e$ is a monotonically increasing sequence which is bounded above by \cite{UV08}. So by taking $\lim_{e\to \infty}$ on both the sides we get
$$\lim_{q\to \infty} \frac{\ell_A({\tilde I^q}/I^q)}{q^d/d!} = \lim_{e\to \infty} \Delta_e = \int_0^{\infty} f_{A,\{\widetilde {I^q}/I^q\}}(x)dx.$$
\end{proof}

\begin{rmk}
We recall that the {\em epsilon multiplicity} $\varepsilon(I)$ of an ideal $I$ is
$$\varepsilon(I) = \limsup_{n\to \infty}\dfrac{\ell_A(H^0_{{\bf m}}(A/I^n))}{n^{d}/d!} = \lim_{n\to \infty}\dfrac{\ell_A(H^0_{\bf m}(A/I^n))}{n^{d}/d!},$$
where the last equality, in a more general setup, was proved by Cutkosky in \cite{Cut14}.
 
Therefore we have 
$$\varepsilon(I) = \lim_{n\to \infty}\frac{\ell_A(\widetilde {I^n}/I^n)}{n^d/d!} = \lim_{q\to \infty}\frac{\ell_A(\widetilde {I^q}/I^q))}{q^d/d!} = \int_0^{\infty}
f_{A,\{\widetilde {I^q}/I^q\}}(x)dx.$$
In other words, $f_{A,\{\widetilde {I^q}/I^q\}}$ is a density function for the epsilon multiplicity $\varepsilon(I)$.
\end{rmk}

\subsection{Density function in arbitrary characteristic}
Here we construct a density function for $(A, \{\widetilde {I^n}\})$ in arbitrary characteristic based on several results about the volume function of $\R$-divisors. For this we recall some relevant terminology and results from \cite{Laz04a} and \cite{LM09}.

\subsubsection{Preliminaries}\label{prelim}
Let $X$ be a $(d-1)$-dimensional projective variety over an algebraically closed field $k$. Let $\Div(X)$ denote the group of integral (Cartier) divisors on $X$.
Henceforth, an integral divisor will be referred as a divisor.

Let $N^1(X) = \Div(X)/\equiv_{\mathrm{num}}$ be the numerical equivalence classes of divisors on $X$, where two divisors $D_1$ and $D_2$ are \emph{numerically equivalent}, written $D_1\equiv_{\mathrm{num}}D_2$, if $\left(D_1\cdot C\right) = \left(D_2\cdot C\right)$ for every closed integral curve $C\subset X$. The first basic fact about $N^1(X)$ is the {\em theorem of base} which states that $N^1(X)$, the so-called N\'eron-Severi group, is a free abelian group of finite rank.

A $\Q$-Cartier (resp. $\R$-Cartier) divisor is, by definition, an element of $\mbox{Div}(X)\tensor_{\Z}\Q$ (resp. $\mbox{Div}(X)\tensor_{\Z}\R$). The resulting vector space of numerically equivalence classes is denoted by $N^1(X)_{\Q}$ (resp. $N^1(X)_{\R}$) and we have the equalities $N^1(X)_{\Q} = N^1(X)\tensor_{\Z}\Q$ and $N^1(X)_{\R} = N^1(X)\tensor_{\Z}\R$.

\begin{defn}\label{vol}
The {\em volume} of a divisor $D\in \mbox{Div}(X)$ is the nonnegative real number given by
$$\mbox{vol}_X(D) = \limsup_{n\to \infty} \dfrac{h^0\left(X, \sO_X(nD)\right)}{n^{d-1}/(d-1)!}.$$
For a fixed $a\in \N$ we have $\mbox{vol}_X(aD) = a^{d-1}\mbox{vol}_X(D)$. When $D$ is a $\Q$-divisor, the volume $\mbox{vol}_X(D)$ of $D$ can be defined by choosing some integer $a>0$ such that $aD$ is integral and then set
$$\mbox{vol}_X(D) = \frac{1}{a^{d-1}}\mbox{vol}_X(aD).$$
This definition is independent of the choice of $a$. Also numerically equivalent divisors have the same volume. Therefore we have a well-defined function $\mathrm{vol}_X\colon N^1(X)_{\Q} \longto \R_{\geq 0}$. Now this and the following theorem about the continuity property of the volume function implies that the volume function extends uniquely to a continuous function
$$\mbox{vol}_X\colon N^1(X)_{\R}\longto \R_{\geq 0}.$$
\end{defn}

\begin{thm}[Continuity of volume]\cite[Theorem $2.2.44$]{Laz04a}\label{contivol}
Let $X$ be as in Preliminaries \ref{prelim}. Fix any norm $\|\;\|$ on $N^1(X)_{\R}$ inducing the usual topology on that finite dimensional vector space. Then there is a constant $C>0$ such that
$$\left\vert\mathrm{vol}_X(\psi) - \mathrm{vol}_X(\psi')\right\vert \leq C \cdot \left(\max\left(\|\psi\|, \|\psi'\|\right)\right)^{d-2}\cdot\|\psi-\psi'\|,$$
for any two classes of $\psi$, $\psi' \in  N^1(X)_{\Q}$.
\end{thm}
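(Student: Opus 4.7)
The plan is to reduce the bound to a polynomial Lipschitz estimate inside the big cone and then transfer it to arbitrary classes via a Brunn--Minkowski type inequality for volumes. First, using the homogeneity $\mathrm{vol}_X(k\psi)=k^{d-1}\mathrm{vol}_X(\psi)$ for $k\in\Q_{>0}$, I would clear denominators to assume $\psi,\psi'$ are integral classes, and then rescale by $1/M$ with $M=\max(\|\psi\|,\|\psi'\|)$ to reduce the claim to a uniform Lipschitz estimate
\[
|\mathrm{vol}_X(\xi)-\mathrm{vol}_X(\xi')|\le C'\|\xi-\xi'\|
\]
for $\xi,\xi'$ ranging over the closed unit ball of $N^1(X)_{\R}$. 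Undoing the scaling (which multiplies the left side by $M^{d-1}$ and the right by $M$) reintroduces exactly the $M^{d-2}$ factor of the statement.

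Next, fix an ample integral class $h$ once and for all, and choose $m_0\in\N$ large enough that $m_0 h+\xi$ is ample for every $\xi$ in the unit ball; this is possible by compactness of the ball and openness of the ample cone in $N^1(X)_{\R}$. On this translated region the volume coincides with the top self-intersection, $\mathrm{vol}_X(m_0 h+\xi)=(m_0 h+\xi)^{d-1}$, which is a polynomial in the coordinates of $\xi$ relative to any basis of $N^1(X)$. Expanding via multilinearity yields $|(m_0 h+\xi)^{d-1}-(m_0 h+\xi')^{d-1}|\le C''\|\xi-\xi'\|$ on the bounded region, with $C''$ depending only on $h$, $m_0$, and the intersection form.

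To transfer this estimate from the ample translates $m_0 h+\xi,\ m_0 h+\xi'$ back to $\xi,\xi'$ themselves, I would invoke the Brunn--Minkowski/Khovanskii--Teissier inequality
\[
\mathrm{vol}_X(\alpha+\beta)^{1/(d-1)}\ge \mathrm{vol}_X(\alpha)^{1/(d-1)}+\mathrm{vol}_X(\beta)^{1/(d-1)}
\]
valid for big classes $\alpha,\beta$. Applied with $\alpha$ an $\epsilon$-perturbation of $\xi$ into the big cone and $\beta=m_0 h$, it controls $\mathrm{vol}_X(\xi+m_0 h)$ from below in terms of $\mathrm{vol}_X(\xi)$ and $\mathrm{vol}_X(m_0 h)=(m_0 h)^{d-1}$, while swapping roles of $\alpha,\beta$ gives the reverse bound; combining these with the analogous inequalities for $\xi'$ and using the polynomial Lipschitz estimate of the previous step yields the desired bound on $|\mathrm{vol}_X(\xi)-\mathrm{vol}_X(\xi')|$.

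The main obstacle is that classes $\xi$ in the unit ball need not themselves be big, so the Brunn--Minkowski inequality does not directly apply. The standard workaround is to apply it at $\xi+\epsilon h$ and pass to the limit $\epsilon\to 0^+$, but this secretly uses continuity of $\mathrm{vol}_X$ at the boundary of the big cone, which must be established as a separate preliminary step---typically via a Fujita-approximation argument writing a big class as a nef class plus a small error. Once this boundary continuity is secured, all remaining computations are polynomial in nature, and the exponent $d-2$ of $M$ in the final bound is precisely the number of ``free'' factors left in the multilinear expansion of $(m_0 h+\xi)^{d-1}$ after one factor has been used for $\xi-\xi'$.
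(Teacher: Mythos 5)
The paper does not prove this statement; it is a direct citation of \cite[Theorem 2.2.44]{Laz04a}, so I am comparing your argument against Lazarsfeld's proof. Steps (1) and (2) of your proposal are sound: rescaling by $M=\max(\|\psi\|,\|\psi'\|)$ using degree-$(d-1)$ homogeneity of $\mathrm{vol}_X$ correctly produces the $M^{d-2}$ factor, and on the ample cone $\mathrm{vol}_X$ agrees with the top self-intersection $(\,\cdot\,)^{d-1}$, a polynomial and hence Lipschitz on bounded sets.

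Step (3), however, has a genuine gap. The log-concavity inequality $\mathrm{vol}_X(\alpha+\beta)^{1/(d-1)}\ge \mathrm{vol}_X(\alpha)^{1/(d-1)}+\mathrm{vol}_X(\beta)^{1/(d-1)}$ is \emph{symmetric} in $\alpha$ and $\beta$: swapping them yields the identical inequality, not a ``reverse bound.'' You therefore obtain only one-sided control: with $\alpha=\xi$ (or a small perturbation into the big cone) and $\beta=m_0 h$ one gets
\[
\mathrm{vol}_X(\xi)^{1/(d-1)}\le \mathrm{vol}_X(\xi+m_0 h)^{1/(d-1)}-m_0\,\mathrm{vol}_X(h)^{1/(d-1)},
\]
an upper bound on $\mathrm{vol}_X(\xi)$, but no lower bound beyond the trivial $\mathrm{vol}_X(\xi)\ge 0$. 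Without a matching lower bound there is no sandwich, so the polynomial Lipschitz estimate on the ample translates $\xi+m_0h$, $\xi'+m_0h$ does not transfer to $\xi,\xi'$. Nor does securing ``boundary continuity'' as a preliminary repair this: that is a qualitative statement, while what is needed is a quantitative upper bound on the increment $\mathrm{vol}_X(\xi+th)-\mathrm{vol}_X(\xi)$, which is precisely the substance of the theorem and does not follow from convexity-type inequalities. What your sketch is missing is Lazarsfeld's restriction-to-a-hyperplane argument: for $A$ very ample with $H\in|A|$, the exact sequence $0\to\mathcal{O}_X(mD+(j-1)A)\to\mathcal{O}_X(mD+jA)\to\mathcal{O}_H(mD+jA)\to 0$ gives $h^0(mD+jA)-h^0(mD+(j-1)A)\le h^0(H,(mD+jA)|_H)$, and telescoping over $j=1,\dots,m$ bounds $h^0(m(D+A))-h^0(mD)$ by a sum of $m$ terms each of size $O(m^{d-2})$ (controlled by the norms, since $\dim H=d-2$). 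It is this induction on dimension---not Brunn--Minkowski---that produces the missing upper bound on increments and the exponent $d-2$.
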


\begin{defn}
We know that $N^1(X)_{\R}$ can be considered as a finite dimensional $\R$-vector space with the Euclidean topology. By a {\em cone} in $N^1(X)_{\R}$ we mean a set which is  stable under multiplication by positive scalars.

An $\R$-divisor $D$ is {\em big} if $\mbox{vol}_X(D)>0$. The {\em big cone} $\mbox{Big}(X)\subseteq N^1(X)_{\R}$ of $X$ is the (open) convex cone of all big $\R$-divisor classes on $X$. Similarly, the {\em ample cone} $\mbox{Amp}(X) \subseteq N^1(X)_{\R}$ of $X$ is the (open) convex cone of all ample $\R$-divisor classes on $X$. Also the {\em nef cone} $\mbox{Nef}(X) \subseteq N^1(X)_{\R}$ of $X$ is the (closed) convex cone of all nef $\R$-divisor classes on $X$, where a divisor $D\in \mbox{Div}(X)$ is {\em nef} (numerically effective) if $\left(D\cdot C\right)\geq 0$ for all closed integral curves $C\subset X$. The pseudoeffective cone $\overline{\mbox{Eff}}(X) \subseteq N^1(X)_{\R}$ is the closure of the convex cone spanned by the classes of effective $\R$-divisors on $X$. One has $N^1(X)_{\R} \supseteq \overline{\mbox{Eff}}(X) \supseteq \mbox{Nef}(X)$ and a basic fact is
$$\mbox{interior}\big(\overline{\mbox{Eff}}(X)\big) = \mbox{Big}(X) \quad\mbox{and}\quad \mbox{interior}\big(\mbox{Nef}(X)\big) = \mbox{Amp}(X).$$
 \end{defn}

\begin{thm}[Asymptotic Riemann-Roch]\label{ARR}
Let $X$ be as in Preliminaries \ref{prelim} and $D$ be a nef divisor on $X$. Then
$$h^0(X, \sO_X(nD)) = \dfrac{(D^{d-1})}{(d-1)!}\cdot n^{d-1} + O(n^{d-2}).$$
\end{thm}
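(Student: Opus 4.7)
The plan is to reduce the asymptotic formula for $h^0$ to a statement about the Euler characteristic $\chi(X,\sO_X(nD))$, together with a control on higher cohomology. So I would split the proof into two ingredients and then combine them.

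First, I would invoke Snapper's polynomial / Kleiman's version of asymptotic Riemann--Roch for a projective variety (not requiring smoothness): for any Cartier divisor $D$ on $X$, the Euler characteristic $\chi(X,\sO_X(nD))$ is a numerical polynomial in $n$ of degree at most $d-1 = \dim X$, and its leading coefficient is $(D^{d-1})/(d-1)!$. This is standard and follows, for instance, by writing $D$ as a difference of two very ample divisors $D = A-B$ and inducting on $\dim X$ via the exact sequences obtained by restricting to a general member of $|A|$ and $|B|$; the intersection number $(D^{d-1})$ arises naturally as the appropriate multilinear extension. Thus
\[
\chi(X,\sO_X(nD)) = \frac{(D^{d-1})}{(d-1)!}\,n^{d-1} + O(n^{d-2}).
\]

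Second, I would bound the higher cohomology. The key fact is that for a nef divisor $D$ on a projective variety $X$ of dimension $d-1$, one has $h^i(X,\sO_X(nD)) = O(n^{d-1-i})$ for every $i\geq 0$, and in particular $h^i(X,\sO_X(nD)) = O(n^{d-2})$ for $i \geq 1$. This is due to Fujita (see \cite[Theorem~1.4.40 and Corollary~1.4.41]{Laz04a}). The usual proof is by Noetherian induction on $\dim X$: after replacing $D$ by a multiple if necessary and writing $D \equiv_{\mathrm{num}} A - B$ with $A$ very ample, one restricts to a general hyperplane section $Y\in |A|$, uses the short exact sequence
\[
0 \longto \sO_X(nD - A) \longto \sO_X(nD) \longto \sO_Y(nD|_Y) \longto 0,
\]
and combines the induction hypothesis on $Y$ (where $D|_Y$ remains nef) with Serre vanishing for the twist by the ample $A$ to control $h^i$ on $X$.

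Combining the two, for $i\geq 1$ we have $h^i(X,\sO_X(nD)) = O(n^{d-2})$, hence
\[
h^0(X,\sO_X(nD)) = \chi(X,\sO_X(nD)) + \sum_{i\geq 1}(-1)^{i+1} h^i(X,\sO_X(nD)) = \frac{(D^{d-1})}{(d-1)!}\,n^{d-1} + O(n^{d-2}),
\]
which is the desired asymptotic. The main subtlety in this plan is the higher-cohomology bound: it is the only step that uses nefness of $D$ in an essential way (Snapper's polynomial holds for any Cartier divisor), and its proof requires either Fujita's vanishing or the inductive hyperplane-section argument sketched above; everything else is formal. Since the result is classical and is being cited here only as a tool in Section~\ref{sec5}, I would in practice simply reference \cite[Theorem~1.4.40, Corollary~1.4.41]{Laz04a} rather than reprove these ingredients.
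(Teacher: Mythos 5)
The paper does not prove this theorem; it is stated as a classical fact, and the reference you give, \cite[Theorem 1.4.40 and Corollary 1.4.41]{Laz04a}, is exactly the right one. Your decomposition into (i) Snapper's polynomial for $\chi(X,\sO_X(nD))$, valid for any Cartier divisor on a projective variety, and (ii) Fujita's bound $h^i(X,\sO_X(nD)) = O(n^{\dim X - i})$ for nef $D$, followed by the formal step $h^0 = \chi + O(n^{d-2})$, is the standard proof and is correct. One inaccuracy in your sketch of (ii): with the restriction sequence $0 \to \sO_X(nD-A) \to \sO_X(nD) \to \sO_Y(nD|_Y) \to 0$, Serre vanishing gives no control on $H^i(X,\sO_X(nD-A))$, since the twist is moving in the wrong direction. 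The argument in Lazarsfeld instead twists up: Fujita's vanishing theorem (\cite[Theorem 1.4.35]{Laz04a}) produces an ample $A$ with $H^i(X,\sO_X(nD+A))=0$ for all $n\geq 0$ and $i>0$, and then $0 \to \sO_X(nD) \to \sO_X(nD+A) \to \sO_Y((nD+A)|_Y) \to 0$ yields $h^i(X,\sO_X(nD)) \leq h^{i-1}\bigl(Y,\sO_Y((nD+A)|_Y)\bigr)$, which is bounded by induction on $\dim X$. The uniformity of Fujita vanishing over all nef twists is essential and is not replaceable by ordinary Serre vanishing; since you explicitly defer to \cite{Laz04a} for the full argument, this does not affect the validity of your proposal, but it is worth flagging.
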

In particular, if $D$ is a (integral) divisor which is big and nef then $\mbox{vol}_X(D) = (D)^{d-1} >0$.

The next theorem is a fundamental result which shows that the $\limsup$ in the definition of volume can be replaced by a limit. Under the assumptions that $k$ is algebraically clsoed, it has been proven by Okounkov \cite{Oko03} for an ample divisor $D$ and later by Lazarsfeld-Musta\c{t}\u{a} \cite{LM09} when $D$ is a big divisor. In the following generality, it was proved by Cutkosky \cite{Cut14}.

\begin{thm}\label{volume} Let $X$ be a $(d-1)$-dimensional projective variety over a field $k$ and $D\in \Div(X)$ be an integral divisor. Then
 $$\mathrm{vol}_X(D) = \limsup_{n\to \infty} \frac{h^0(X, \sO_X(nD))}{n^{d-1}/(d-1)!} = \lim_{n\to \infty} \frac{h^0(X, \sO_X(nD))}{n^{d-1}/(d-1)!}.$$
\end{thm}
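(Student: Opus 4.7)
The plan is to establish existence of the limit by reducing to semigroup-theoretic statements and then separating the big and non-big cases. First, I would pass from $k$ to its algebraic closure $\bar{k}$ by flat base change: since $\bar{k}/k$ is faithfully flat and $X_{\bar{k}} \to X$ is flat, the dimensions $h^0(X, \sO_X(nD))$ coincide with $h^0(X_{\bar{k}}, \sO_{X_{\bar{k}}}(nD_{\bar{k}}))$, and $\mathrm{vol}_X(D)$ is also unchanged. If $X_{\bar{k}}$ has several irreducible components, one further reduces to the integral components by additivity of both sides in the dominant-component expression. This lets me assume $k$ is algebraically closed and $X$ is integral.

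Next, suppose $D$ is big, so $\mathrm{vol}_X(D) > 0$. Here I would invoke the Newton--Okounkov body formalism of Lazarsfeld--Musta\c{t}\u{a}~\cite{LM09}. Choose an admissible flag $Y_\bullet \colon X = Y_0 \supset Y_1 \supset \cdots \supset Y_{d-1}$ of subvarieties, with each $Y_i$ of codimension $i$ and smooth at the point $Y_{d-1}$. This defines a rank-$(d-1)$ valuation
$$\nu_{Y_\bullet}\colon H^0(X, \sO_X(nD)) \setminus \{0\} \longto \Z^{d-1}$$
by recording successive vanishing orders along the flag, and one forms the graded semigroup
$$\Gamma(D) = \bigl\{\,(\nu_{Y_\bullet}(s), n) : s \in H^0(X, \sO_X(nD))\setminus\{0\},\ n \geq 1\,\bigr\} \subset \Z^{d-1}\times \N.$$
Bigness of $D$ guarantees the general position hypotheses needed to apply the Khovanskii--Okounkov lattice-point estimate to $\Gamma(D)$, yielding
$$\lim_{n\to\infty} \frac{h^0(X, \sO_X(nD))}{n^{d-1}/(d-1)!} = (d-1)!\cdot \mathrm{vol}_{\R^{d-1}}\bigl(\Delta(D)\bigr) = \mathrm{vol}_X(D),$$
where $\Delta(D)\subset \R^{d-1}$ is the Okounkov body. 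This upgrades the $\limsup$ to a limit in the big case.

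The harder case is when $D$ is not big, so by definition $\mathrm{vol}_X(D) = 0$ and I must show that the actual limit equals zero. If $D$ is not pseudoeffective then $h^0(X, \sO_X(nD)) = 0$ for $n \gg 0$ and the claim is trivial, so assume $D$ lies on the boundary of the pseudoeffective cone. Following Cutkosky~\cite{Cut14}, I would fix an ample divisor $A$ and consider the big $\Q$-divisors $D + \tfrac{1}{m}A$. By continuity of the volume function (Theorem~\ref{contivol}), for every $\varepsilon > 0$ I can choose $m$ so that $\mathrm{vol}_X(mD + A) < \varepsilon$. The big case already handles $mD + A$, giving $h^0\bigl(X, \sO_X(n(mD+A))\bigr) \leq (\varepsilon + o(1))\cdot n^{d-1}/(d-1)!$ as $n\to\infty$. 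Multiplying by a nonzero section of $\sO_X(nA)$ produces an injection $H^0(X, \sO_X(nmD)) \hookrightarrow H^0(X, \sO_X(n(mD+A)))$, so the same bound holds for $h^0(X, \sO_X(nmD))$ along multiples of $m$; an elementary interpolation across residues mod $m$ handles the remaining values of $n$.

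The principal obstacle I anticipate lies in the non-big boundary case, where $\Gamma(D)$ can fail to generate a full-dimensional subgroup and the Khovanskii lattice-point estimate does not apply directly; this is precisely the technical core of Cutkosky's contribution. The descent-from-big-divisors strategy above sidesteps direct semigroup analysis by exploiting the continuity of $\mathrm{vol}_X$, but making the interpolation step uniform in $n$ (so that the error survives division by $n^{d-1}$ and produces a genuine limit rather than only a $\limsup$ bound) requires the careful auxiliary estimates furnished in \cite{Cut14}.
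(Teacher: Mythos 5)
The paper does not prove this theorem; it cites Okounkov, Lazarsfeld--Musta\c{t}\u{a}, and Cutkosky for it, with Remark~\ref{rmkfield} pointing to \cite[Example 2.4]{Cut15} as evidence that naive base change to $\bar k$ does \emph{not} suffice. Your proposal attempts a reconstruction and has two problems.

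First, you have the difficulty inverted. Since $\mathrm{vol}_X(D)$ is \emph{defined} as $\limsup_n h^0(X,\sO_X(nD))/(n^{d-1}/(d-1)!)$, and the terms are nonnegative, the non-big case is immediate: $\limsup = 0$ forces $\liminf = 0$ and hence the limit exists and equals zero. Your entire approximation argument with $D+\tfrac{1}{m}A$ is unnecessary here; moreover, the intermediate claim ``choose $m$ so that $\mathrm{vol}_X(mD+A)<\varepsilon$'' cannot be achieved --- the continuity estimate only gives $\mathrm{vol}_X(D+\tfrac1m A) \leq C/m$, so $\mathrm{vol}_X(mD+A) = m^{d-1}\mathrm{vol}_X(D+\tfrac1m A)$ can be of order $m^{d-2}$; what is small is $\mathrm{vol}_X(mD+A)/m^{d-1}$, which is what you actually end up using, so the conclusion survives but the stated step is false. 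In any case, the only case requiring an argument is the big one.

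Second, and more seriously, the reduction to $\bar k$ is the genuine content of the theorem as cited by the paper, and your handling of it has a gap. Flat base change does preserve $h^0$ and dimension, so the question does transfer to $X_{\bar k}$; but $X_{\bar k}$ need not be a variety. You address reducibility by appealing to ``additivity of both sides in the dominant-component expression.'' Even in the geometrically reduced case this is only additivity up to an $O(n^{d-2})$ correction coming from the pairwise intersections (via the Mayer--Vietoris-type sequence $0 \to \sO_{X_{\bar k}} \to \bigoplus_i \sO_{X_i} \to \cdots$), which would need to be stated and controlled. Much worse, over an imperfect base field $k$ the scheme $X_{\bar k}$ can be nonreduced, in which case $h^0(X_{\bar k},\sL)$ is not computed from the reduced irreducible components at all; the nilradical filtration contributes additional sheaves whose $h^0$ must be controlled separately, and \cite{LM09} simply does not apply. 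The paper flags this in Remark~\ref{rmkfield} and points to \cite[Theorem 10.7]{Cut14} and \cite[Theorems 2.5, 5.6]{Cut15} as the results that handle it; Cutkosky's approach works with the Okounkov semigroup and a valuation constructed from a closed point of $X$ directly over $k$, rather than descending from $\bar k$. Your proposal therefore has a real gap exactly where the theorem has content beyond \cite{LM09}.
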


\subsubsection{Existence of density function for $(A,\{\widetilde {I^n}\})$}

Recall Notations \ref{nsat} and \ref{densat}. Throughout the rest of this section we further assume that the underlying field $k$ is algebraically closed.

\begin{lemma}\label{VF}
Let $H$ and $E$ be the divisors on $X$ as in Notations \ref{densat}. Then for all $x\in \R_{\geq 0}$,
\begin{equation}\label{volformula}
\mathrm{vol}_X(xH-E) = \lim_{n\to \infty} \frac{h^0\left(X, \sO(\lfloor xn\rfloor H -nE)\right)}{n^{d-1}/(d-1)!}.
\end{equation}
\end{lemma}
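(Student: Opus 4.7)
The plan is to prove the identity in two stages: first for rational $x$ by a direct subsequence comparison, and then for irrational $x$ by a squeeze argument based on continuity of the volume function.

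For rational $x=p/q$ with $p,q\in\Z_{>0}$ (the case $x=0$ is immediate since $-E$ is not pseudoeffective, so both sides vanish), consider the subsequence $n=mq$. Then $D_{mq}:=\lfloor xn\rfloor H-nE=m(pH-qE)$ is an integer multiple of the fixed integral divisor $pH-qE$, so Theorem~\ref{volume} together with the homogeneity $\mathrm{vol}_X(aD)=a^{d-1}\mathrm{vol}_X(D)$ yields
\[
\lim_{m\to\infty}\frac{h^0(X,\sO(D_{mq}))}{(mq)^{d-1}/(d-1)!}=\frac{\mathrm{vol}_X(pH-qE)}{q^{d-1}}=\mathrm{vol}_X(xH-E).
\]
For an arbitrary $n$, write $n=mq+r$ with $0\leq r<q$, so that $\lfloor xn\rfloor=mp+\lfloor xr\rfloor$ and $D_n=m(pH-qE)+B_r$ with $B_r:=\lfloor xr\rfloor H-rE$ ranging over a finite set. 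Lazarsfeld's estimate \cite[Lemma~2.2.42]{Laz04a} bounds $|h^0(X,\sO(mD+B))-h^0(X,\sO(mD))|$ by $O(m^{d-2})$, which is of lower order than the $m^{d-1}$-scaling. Combined with $n^{d-1}/(mq)^{d-1}\to 1$, this shows that the limit exists along each residue class of $n$ modulo $q$ and equals $\mathrm{vol}_X(xH-E)$, hence the full limit exists and agrees.

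For irrational $x$, fix $\epsilon>0$ and use the continuity of volume (Theorem~\ref{contivol}) applied along the affine path $y\mapsto yH-E$ in $N^1(X)_{\R}$ to choose rationals $y<x<y'$ with $|\mathrm{vol}_X(yH-E)-\mathrm{vol}_X(xH-E)|<\epsilon$ and similarly for $y'$. Since $\lfloor yn\rfloor\leq\lfloor xn\rfloor\leq\lfloor y'n\rfloor$ and $H$ is effective (being the pullback of a hyperplane section on $V$), multiplication by appropriate nonzero global sections of positive powers of $\sO_X(H)$ induces inclusions
\[
\sO_X(\lfloor yn\rfloor H-nE)\hookrightarrow\sO_X(\lfloor xn\rfloor H-nE)\hookrightarrow\sO_X(\lfloor y'n\rfloor H-nE).
\]
Dividing the resulting inequalities for $h^0$ by $n^{d-1}/(d-1)!$ and applying the already-proved rational case to the outer terms sandwiches both $\liminf_n$ and $\limsup_n$ of the middle quantity between $\mathrm{vol}_X(yH-E)$ and $\mathrm{vol}_X(y'H-E)$. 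Letting $\epsilon\to 0$ forces both to equal $\mathrm{vol}_X(xH-E)$, so the limit exists and has the claimed value.

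The main obstacle is the passage from the subsequence $n=mq$ to all $n$ in the rational case: the perturbation $B_r$ is bounded while $D_n$ grows linearly with $m$, so intuitively its effect on $h^0$ should be of lower order, but a quantitative statement is needed. This is exactly what Lazarsfeld's lemma supplies. Once that is in hand, the remainder of the argument is a routine continuity-plus-monotonicity exercise, with the effectivity of $H$ providing the essential geometric input for the sandwich in the irrational case.
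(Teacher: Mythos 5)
Your proof is correct and follows essentially the same approach as the paper: the rational case is handled by reducing to Theorem~\ref{volume} along residues $n\equiv r \pmod q$ and absorbing the bounded perturbation $B_r$ via a Lazarsfeld-style lower-order estimate on $h^0(mD+B)-h^0(mD)$ (the paper cites \cite[Lemma~2.2.38]{Laz04a} for this step, where you cite Lemma~2.2.42, but the mathematical content is the same), and the irrational case is handled by continuity of the volume function. Your sandwich argument for irrational $x$ makes explicit the monotonicity coming from effectivity of $H$ that the paper's one-line appeal to continuity leaves implicit.
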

\begin{proof}
If $x\in \N$ then it is a direct consequence of Theorem \ref{volume}. If $x\in\Q_{\geq 0}$ the lemma follows by the arguments similar to \cite[Lemma $2.2.38$]{Laz04a}. If $x\in \R_{\geq 0}$ the assertion follows from the continuity property of the volume function.
\end{proof}

\begin{notations}\label{alphabeta}
Let $H$ and $E$ be the divisors on $X$ as in Notations \ref{densat}. Define
$$\alpha_I = \min\left\{x\in \R_{\geq 0}\mid xH-E~~~\mbox{is pseudoeffective}\right\}\quad\mbox{and}\quad \beta_I = \min\left\{x\in \R_{\geq 0}\mid xH-E~~~\mbox{is nef}\right\}.$$
\end{notations}

Since $\mbox{Nef}(X) \subseteq \overline{\mbox{Eff}}(X)$ we have $\alpha_I\leq \beta_I$.

\begin{lem}\label{alpha_beta}
Let the hypotheses be as in Notations \ref{nsat}, \ref{densat} and \ref{alphabeta}. If $I\subset A$ is a homogeneous ideal such that $0 < \mathrm{height}(I)< d$ then
\begin{enumerate}
 \item $\alpha_I = \lim\limits_{n\to\infty}\dfrac{\min\big\{m\in\N \mid (\widetilde{I^n})_m\neq 0\big\}}{n}$, and
 \item $\beta_I = \lim\limits_{n\to\infty} \dfrac{\mathrm{reg}\big(\widetilde{I^n}\big)}{n}$.
\end{enumerate}
\end{lem}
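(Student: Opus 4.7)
The plan is to use the identification $h^0(X, \mathcal{O}_X(mH - nE)) = \ell_k\big((\widetilde{I^n})_m\big)$ (valid for $m \geq 0$ and $n \geq n_0$) established in Notations~\ref{densat} to convert each of the two limits on saturated powers into a statement about the divisor class $xH - E$ on $X$, and then read off the answer from the positivity characterizations of the cones $\overline{\mathrm{Eff}}(X)$ and $\mathrm{Nef}(X)$.

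\textbf{Part (1).} Write $\nu_n = \min\{m \in \mathbb{N} : (\widetilde{I^n})_m \neq 0\}$ for each $n \geq n_0$. First, a nonzero element of $(\widetilde{I^n})_{\nu_n}$ corresponds to a nonzero section of $\mathcal{O}_X(\nu_n H - nE)$, so the divisor $\nu_n H - nE$ is effective, and hence the class $\tfrac{\nu_n}{n}H - E$ lies in $\overline{\mathrm{Eff}}(X)$. By minimality this gives $\nu_n/n \geq \alpha_I$. Conversely, fix any $x > \alpha_I$; then $xH - E$ belongs to $\mathrm{interior}(\overline{\mathrm{Eff}}(X)) = \mathrm{Big}(X)$, so $\mathrm{vol}_X(xH - E) > 0$. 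By Lemma~\ref{VF} this forces $h^0(X, \mathcal{O}_X(\lfloor xn\rfloor H - nE)) > 0$ for all $n \gg 0$, which via the identification means $(\widetilde{I^n})_{\lfloor xn\rfloor} \neq 0$ and therefore $\nu_n \leq xn$ eventually. Since $x > \alpha_I$ is arbitrary, $\limsup \nu_n/n \leq \alpha_I$, and combined with the lower bound we obtain $\lim \nu_n/n = \alpha_I$.

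\textbf{Part (2).} Since $\widetilde{I^n}$ is saturated, its Castelnuovo--Mumford regularity is controlled by the vanishing of the sheaf cohomology $H^i(V, \mathcal{I}^n(r-i)) = 0$ for all $i \geq 1$. I would then transfer these vanishings to $X$ using the projection formula and the Leray spectral sequence for $\pi : X \to V$, together with $\pi_* \mathcal{O}_X(-nE) = \mathcal{I}^n$ for $n \gg 0$ from \cite[Lemma~5.4.24]{Laz04a}, which allows one to rephrase the needed vanishings in terms of $H^i(X, \mathcal{O}_X(mH - nE))$. For the upper bound $\limsup \mathrm{reg}(\widetilde{I^n})/n \leq \beta_I$, fix any $x > \beta_I$; then $xH - E$ is ample, and Fujita's vanishing theorem yields $H^i(X, \mathcal{O}_X(\lfloor xn\rfloor H - nE + \text{bounded correction})) = 0$ uniformly for $i \geq 1$ and $n \gg 0$, which translates to $\mathrm{reg}(\widetilde{I^n}) \leq xn + O(1)$. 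For the lower bound, if $x < \beta_I$ then $xH - E$ fails to be nef, so there exists an integral curve $C \subset X$ with $(xH - E)\cdot C < 0$; restricting the relevant line bundles to $C$ and using asymptotic Riemann--Roch (Theorem~\ref{ARR}) one produces a nonvanishing $H^1$ obstructing the regularity bound, showing $\mathrm{reg}(\widetilde{I^n}) > xn$ for infinitely many $n$.

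\textbf{Main obstacle.} Part (1) is essentially formal once the dictionary between $(\widetilde{I^n})_m$ and sections on $X$ is set up, and the existence of the limit drops out because one bound is for all $n$ and the other holds eventually for every $x$ strictly above the threshold. The real work is in Part (2): one must handle the higher direct images $R^i\pi_* \mathcal{O}_X(mH - nE)$ uniformly in $n$ so that sheaf-regularity on $V$ can be read off from cohomology on $X$, and in the lower bound one needs to convert non-nefness of $xH - E$ into a concrete cohomological obstruction on a negative curve that scales correctly with $n$. These are precisely the issues addressed by the $s$-invariant machinery of \cite{CEL01}, which I would invoke (or adapt) for the lower bound rather than rebuild from scratch.
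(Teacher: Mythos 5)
Your proof of part (1) is correct and in fact cleaner than the paper's own argument. The paper first applies Fekete's subadditive lemma to establish that $\gamma_I := \lim_n \nu_n/n$ exists, and then proves $\gamma_I = \alpha_I$ by two inequalities, the harder of which ($\alpha_I \leq \gamma_I$) is proved by constructing a strictly decreasing sequence of rationals $y_t \downarrow \gamma_I$, picking a nonzero homogeneous element $\theta \in (\widetilde{I^{tq_t N_t}})_{tp_t N_t}$, and estimating $f_{A,\{\widetilde{I^n}\}}(y_t)$ from below by looking at multiples of $\theta$. You replace this with the one-line observation that for $n \geq n_0$ the class $\nu_n H - nE$ admits a nonzero section, hence is effective, hence $\tfrac{\nu_n}{n}H - E \in \overline{\mathrm{Eff}}(X)$ and $\nu_n/n \geq \alpha_I$; this gives $\liminf \nu_n/n \geq \alpha_I$ with no recourse to Fekete, no rational approximants, and no volume computation. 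Your upper bound $\limsup \nu_n/n \leq \alpha_I$ (via bigness of $xH - E$ for $x > \alpha_I$ and Lemma~\ref{VF}) matches the paper's other direction. One step that both you and the paper leave implicit and which deserves a sentence: the inequality $x > \alpha_I \implies xH - E \in \mathrm{Big}(X)$ uses that $xH - E = (\alpha_I H - E) + (x - \alpha_I)H$ is a sum of a pseudoeffective class and a big class (here $H$ is nef with $H^{d-1} = e_0(A) > 0$, hence big), and that $\overline{\mathrm{Eff}}(X) + \mathrm{Big}(X) \subseteq \mathrm{Big}(X)$. For part (2) the paper simply cites \cite[Theorem~3.2]{CEL01} (or \cite[Theorem~5.4.22]{Laz04a}); your sketched argument via Leray, Fujita vanishing, and a negative curve is essentially a re-derivation of that $s$-invariant theorem, and you wisely concede you would invoke \cite{CEL01} for the lower bound anyway, so the two treatments coincide in substance. (Minor quibble: your lower-bound heuristic in (2) speaks of obtaining a nonvanishing $H^1$ via asymptotic Riemann--Roch, but Theorem~\ref{ARR} controls $h^0$, not $h^1$; the genuine CEL01 argument for that inequality is more delicate, which is another reason to just cite.)
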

\begin{proof}
The assertion $(2)$ follows directly from \cite[Theorem $5.4.22$]{Laz04a} or \cite[Theorem $3.2$]{CEL01}.

To prove assertion $(1)$, first define $\gamma_{I,n} = \min \big\{m\in\mathbb{N} \mid (\widetilde{I^n})_m \neq 0\big\}$. As $A$ is a domain it follows that $\{\gamma_{I,n}\}_{n\in\mathbb{N}}$ is a subadditive sequence, i.e., $\gamma_{I,n_1} + \gamma_{I,n_2} \geq \gamma_{I,n_1+n_2}$ for all $n_1,n_2\in\N$. Fekete's subadditive lemma then shows that $\gamma_I = \lim\limits_{n\to\infty} \frac{\gamma_{I,n}}{n}$ exists. Using equation \eqref{rsat} and Lemma \ref{VF}, one has for all $x\in \R_{\geq 0}$,
$$f_{A,\{\widetilde{I^n}\}}(x) = \lim_{n\to\infty}\dfrac{\ell_k\big((\widetilde{I^n})_{\lfloor xn\rfloor}\big)}{n^{d-1}/d!} = d\cdot \lim_{n\to\infty}\dfrac{h^0\left(X,\mathcal{O}_X\left(\lfloor xn\rfloor H -nE\right)\right)}{n^{d-1}/(d-1)!} = d\cdot \mathrm{vol}_X(xH-E).$$ Clearly, if $x<\gamma_I$ then $f_{A,\{\widetilde{I^n}\}}(x) = 0$ which implies that $\mathrm{vol}_X(xH-E)=0$, that is, $xH-E\nin \mathrm{Big}(X)$. Thus $\gamma_I\leq \alpha_I$.

For the converse, let $\{x_t\}_{t\geq 1}$ be a strictly decreasing sequence of rational numbers converging to $\gamma_I$. Write each $x_t = p_t/q_t$ for some integers $p_t, q_t>0$. There exists an integer $N_t>0$ such that for all $n\geq N_t$, $$x_t>\dfrac{\gamma_{I,q_tn}}{q_tn} \implies p_tn>\gamma_{I,q_tn} \implies \big(\widetilde{I^{q_tn}}\big)_{p_tn} \neq 0.$$ For each $t\geq 1$, define $y_t = x_t + \frac{1}{t} = \frac{tp_t+q_t}{tq_t}$ and observe that $\{y_t\}_{t\geq 1}$ is also a strictly decreasing sequence of rational numbers converging to $\gamma_I$ with $y_t>x_t$. Moreover,
$$f_{A,\{\widetilde{I^n}\}}(y_t) = \lim_{n\to\infty}\dfrac{\ell_k\big((\widetilde{I^n})_{\lfloor y_tn\rfloor}\big)}{n^{d-1}/d!} = \lim_{n\to\infty}\dfrac{\ell_k\big((\widetilde{I^{tq_tn}})_{(tp_t+q_t)n}\big)}{(tq_tn)^{d-1}/d!} = \lim_{n\to\infty}\dfrac{\ell_k\big((\widetilde{I^{tq_tN_tn}})_{(tp_t+q_t)N_tn}\big)}{(tq_tN_tn)^{d-1}/d!}.$$ Pick a nonzero homogeneous element $\theta \in \big(\widetilde{I^{tq_tN_t}}\big)_{tp_tN_t}$. Then
$$\lim_{n\to\infty}\dfrac{\ell_k\big((\widetilde{I^{tq_tN_tn}})_{(tp_t+q_t)N_tn}\big)}{(tq_tN_tn)^{d-1}/d!} \geq \lim_{n\to\infty}\dfrac{\ell_k\Big(\Big({(\widetilde{I^{tq_tN_t}})}^n\Big)_{(tp_t+q_t)N_tn}\Big)}{(tq_tN_tn)^{d-1}/d!} \geq \lim_{n\to\infty}\dfrac{\ell_k\left((\theta^n)_{(tp_t+q_t)N_tn}\right)}{(tq_tN_tn)^{d-1}/d!}>0.$$
Therefore, $\mathrm{vol}_X(y_tH-E)>0$ and from the continuity of the volume function we get $\alpha_I\leq \gamma_I$.
\end{proof}

\begin{rmk}
In contrast with Lemma \ref{alpha_beta} $(1)$ we have $\beta_I=\lim\limits_{n \to \infty} \frac{d(\widetilde {I^n})}{n}$ (see \cite[Theorem $3.2$]{CEL01}),
where $d(\widetilde {I^n})$ denotes the maximum among the minimal generating degrees of $\widetilde {I^n}$.
\end{rmk}

\begin{thm}\label{t2sat}
Let the hypotheses be as in Notations \ref{nsat}, \ref{densat} and \ref{alphabeta}. The following are true.
\begin{enumerate}
\item The sequence $\{g_n\}_{n\in\N}$ converges uniformly to the density function $f_{A, \{\widetilde {I^n}\}}$ on any given compact interval in $\R$. Moreover, the function $f_{A, \{\widetilde {I^n}\}}\colon \R_{\geq 0}\longto \R_{\geq 0}$ given by $$f_{A, \{\widetilde {I^n}\}}(x) = \lim\limits_{n\to \infty}\frac{\ell_k \big((\widetilde {I^n})_{\lfloor xn\rfloor}\big)}{n^{d-1}/d!}$$ is continuous on the interval $I[0,\infty)$.
\item The function $f_{A, \{\widetilde {I^n}\}}$ is continuously differentiable on the interval $I(\alpha_I, \infty)$ and also supported on $I(\alpha_I,\infty)$.
\item Further, for all $x\in I[\beta_I, \infty)$,
  $$f_{A, \{\widetilde {I^n}\}}(x) = \sum_{i=0}^{d-1}(-1)^i\frac{d!}{(d-1-i)!i!}(H^{d-1-i}\cdot E^i)x^{d-1-i}.$$
\end{enumerate}
\end{thm}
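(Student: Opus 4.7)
The plan is to translate the density function into the volume of an $\R$-divisor on $X$ and then extract the stated properties from the global theory of volumes. Concretely, the identification \eqref{rsat} together with Lemma \ref{VF} give the pointwise limit
$$\lim_{n\to\infty}g_n(x) \;=\; d\cdot\mathrm{vol}_X(xH-E) \;=:\; f_{A,\{\widetilde{I^n}\}}(x)$$
for every $x\in \R_{\geq 0}$. The continuity of $f_{A,\{\widetilde{I^n}\}}$ on $[0,\infty)$ will be immediate from the continuity of $\mathrm{vol}_X$ on $N^1(X)_{\R}$ (Theorem \ref{contivol}) and the continuity of the affine map $x\mapsto xH-E$. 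To upgrade pointwise to uniform convergence on any compact interval, I would exploit that $A$ is a standard graded domain: for $x\leq y$, multiplication by any nonzero element of $A_{\lfloor yn\rfloor-\lfloor xn\rfloor}$ (which is automatically a nonzerodivisor) embeds $(\widetilde{I^n})_{\lfloor xn\rfloor}\hookrightarrow (\widetilde{I^n})_{\lfloor yn\rfloor}$, making each $g_n$ non-decreasing in $x$. P\'olya's theorem (a monotone sequence of functions converging pointwise to a continuous limit on a compact interval converges uniformly) then completes part (1).

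For part (2), the support statement is immediate from the very definition of $\alpha_I$: for $x<\alpha_I$ the class $xH-E$ lies outside the pseudoeffective cone, so $\mathrm{vol}_X(xH-E)=0$. Moreover the positivity argument used inside the proof of Lemma \ref{alpha_beta}(1) already yields $f_{A,\{\widetilde{I^n}\}}(x)>0$ for every $x>\alpha_I$, so the class $xH-E$ is big throughout $(\alpha_I,\infty)$. To obtain the continuous differentiability on $(\alpha_I,\infty)$, I would invoke a resolution of singularities $\mu\colon \widetilde{X}\to X$, use the birational invariance of the volume function on big classes (so $\mathrm{vol}_{\widetilde{X}}(\mu^*(xH-E))=\mathrm{vol}_X(xH-E)$), and then apply the Lazarsfeld--Musta\c{t}\u{a} theorem \cite{LM09} asserting that $\mathrm{vol}_{\widetilde{X}}$ is $C^1$ on the big cone of the smooth projective variety $\widetilde{X}$. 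Composing with the smooth affine map $x\mapsto \mu^*(xH-E)$ will then deliver the $C^1$ regularity of $f_{A,\{\widetilde{I^n}\}}$ on $(\alpha_I,\infty)$.

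For part (3), on $[\beta_I,\infty)$ the class $xH-E$ is nef by the very definition of $\beta_I$. For rational $x$, I would clear denominators to obtain an integral nef divisor and apply the asymptotic Riemann--Roch Theorem \ref{ARR} to conclude $\mathrm{vol}_X(xH-E)=(xH-E)^{d-1}$; continuity of both sides in $x$ then extends the identity to all real $x\geq \beta_I$. The binomial expansion of $(xH-E)^{d-1}$ yields the displayed polynomial formula after multiplication by $d$.

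The hardest step is the $C^1$ regularity in part (2): the Lazarsfeld--Musta\c{t}\u{a} differentiability result requires smoothness, so one must pass through a suitable resolution, or in positive characteristic through an alteration or a direct Okounkov-body construction of the kind developed in \cite{LM09} and extended by Cutkosky in \cite{Cut15}, and then verify that the volume is preserved (or sharply estimated) under that modification. All other assertions reduce, modulo routine checks, to standard facts about the volume function quoted in the preliminaries of this section.
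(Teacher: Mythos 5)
Your proposal follows essentially the same route as the paper: identify $f_{A,\{\widetilde{I^n}\}}(x)$ with $d\cdot\mathrm{vol}_X(xH-E)$ via \eqref{rsat} and Lemma \ref{VF}, read off continuity from Theorem \ref{contivol}, the support from the definition of $\alpha_I$ and the fact that classes strictly inside the pseudoeffective cone are big, $C^1$-differentiability from Lazarsfeld--Musta\c{t}\u{a}, and the polynomial formula on $I[\beta_I,\infty)$ from nefness together with asymptotic Riemann--Roch. The two places where you differ in detail both sharpen the argument. For uniform convergence the paper simply asserts it follows from the continuity of the volume function; you supply a concrete mechanism, namely that each $g_n$ is non-decreasing in $x$ (since $A$ is a domain, multiplication by a nonzero homogeneous element of the appropriate degree embeds $(\widetilde{I^n})_{\lfloor xn\rfloor}$ into $(\widetilde{I^n})_{\lfloor yn\rfloor}$), after which P\'olya's theorem converts pointwise convergence of monotone functions to a continuous limit into uniform convergence on compacts --- a cleaner and more self-contained justification. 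For the $C^1$ assertion you explicitly pass through a resolution $\mu\colon\widetilde{X}\to X$ and invoke birational invariance of the volume before applying \cite[Remark 4.29]{LM09}; the paper applies that remark directly to the blow-up $X$ without assuming smoothness, although it does insert exactly such a resolution step in the proof of Proposition \ref{dimthree}, so your precaution is consistent with the paper's own practice elsewhere. (One small imprecision: you derive bigness of $xH-E$ for $x>\alpha_I$ from the positivity argument in Lemma \ref{alpha_beta}(1), which only gives positivity along a specific sequence approaching $\alpha_I$; the direct and cleaner argument, used in the proof of Theorem \ref{MT}(2), is that $xH-E=(\alpha_I H-E)+(x-\alpha_I)H$ is pseudoeffective plus big, hence big.) Your part (3) is identical to the paper's.
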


\begin{proof} \emph{Assertion $(1)$}. Let $n_0$ be an integer as in \eqref{rsat}. Then for  $x\in\R_{\geq 0}$, we have
$$ g_n(x) = \frac{\ell_k\big((\widetilde {I^n})_{\lfloor xn \rfloor}\big)}{n^{d-1}/d!} = \frac{h^0(X, \sO_X(\lfloor xn \rfloor H - nE))}{n^{d-1}/d!}\quad\mbox{if}\quad n\geq n_0.$$
Therefore by Lemma \ref{VF}, the function $f_{A, \{\widetilde {I^n}\}}$ given by
$$f_{A, \{\widetilde {I^n}\}}(x) = \lim_{n\to \infty} g_n(x) = d\cdot\mbox{vol}_X(xH - E)$$
is well-defined. Now the uniform convergence of $\{g_n\}_{n\in\N}$ on any compact interval and the continuity property of the density function $f_{A, \{\widetilde {I^n}\}}$ follows from the continuity property of the volume function (see Theorem \ref{contivol}).

\vspace{5pt}

\noindent{\emph{Assertion $(2)$}}.\quad  By \cite[Remark $4.29$]{LM09}, the volume function on $\mathrm{Big}(X)$ and therefore its restriction to the closed subset
$\{xH-E\mid x>\alpha_I\}$ is $\sC^1$-differentiable.

\vspace{5pt}

\noindent{\emph{Assertion $(3)$}}.\quad Note that $xH-E$ is nef for all $x\geq \beta_I$. By the asymptotic Riemann-Roch (see Theorem \ref{ARR}), for all $x\geq \beta_I$,
$$f_{A,\{\widetilde {I^n}\}}(x) = d\cdot\mbox{vol}_X(xH - E) = d (xH-E)^{d-1} = \sum_{i=0}^{d-1}(-1)^i\frac{d!}{(d-1-i)!i!}(H^{d-1-i}\cdot E^i)x^{d-1-i}.$$
\end{proof}

\begin{rmk}\label{densatu} 
Let the hypotheses be as above.
\begin{enumerate}
\item The density function $f_{A, \{\widetilde {I^n}\}}$ is strictly increasing on the interval $I(\alpha_I, \infty)$ such that for all $y>x>\alpha_I$,
$${f_{A, \{\widetilde {I^n}\}}(y)}^{\frac{1}{d-1}}\geq {f_{A, \{\widetilde {I^n}\}}(x)}^{\frac{1}{d-1}} + (y-x){e_0(A)}^{\frac{1}{d-1}},$$
where $e_0(A)$ is the multiplicity of $A$ with respect to its homogeneous maximal ideal $\mathbf{m}$.

To prove this we fix such an $x$ and $y$. Then by the log-concavity relation for volume functions (see \cite[Remark $2.2.50$]{Laz04a} or \cite[Corollary $4.12$]{LM09}), we get
\begin{align*}
 {f_{A, \{\widetilde {I^n}\}}(y)}^{\frac{1}{d-1}} = {(d\cdot\mbox{vol}_X(yH - E))}^{\frac{1}{d-1}} &\geq {(d\cdot\mbox{vol}_X(xH - E))}^{\frac{1}{d-1}} + {(d\cdot\mbox{vol}_X((y-x)H))}^{\frac{1}{d-1}}\\
 &= {f_{A, \{\widetilde {I^n}\}}(x)}^{\frac{1}{d-1}} + (y-x){(d\cdot\mbox{vol}_X(H))}^{\frac{1}{d-1}},
\end{align*}
where it is known that $(H)^{d-1} = (\sO_V(1))^{d-1} = e_0(A)$.
\item Assume that the ideal $I$ has a set of homogeneous generators of degrees $d_1< d_2<\ldots <d_l$. Then
 \begin{enumerate}
  \item $\alpha_I \leq d_1$. This is obvious as $(I^n)_{\lfloor xn\rfloor}\subset (\widetilde {I^n})_{\lfloor xn\rfloor}$, for all $x\in\R_{\geq 0}$ and $n\in\N$. But we can have  $\alpha_I < d_1$ (see the Example \ref{mono}, where $\alpha_I = 4$ and $d_1 = 5$).
\item  $\beta_I\leq d_l$.
To see the inequality $\beta_I\leq d_l$, we recall the arguement used in \cite[Lemma $1.4$]{CEL01}. For the associated ideal sheaf $\sI$ on $V=\mbox{Proj}~A$, we have a natural surjective map of $\sO_V$-modules (obtained by mapping it to the generators of $I$)
$$\sO_V(-d_1)^{\oplus a_1}\oplus \cdots \oplus \sO_V(-d_l)^{\oplus a_l}\longto \sI.$$
Hence $\sI(d_l)$ is generated by global sections. So for $\pi:X = \mathbf{Proj}\left(\oplus_{n\geq 0}\mathcal{I}^n\right)\longto V$,
$$\pi^*(\sI(d_l)) = \pi^{*}(\sI\tensor \sO_V(d_l)) = \sI\sO_X\tensor \pi^*(\sO(d_l)) = \sO_X(-E+d_lH)$$
is globally generated and therefore nef and hence $\beta_I \leq d_l$.
\end{enumerate}
\end{enumerate}
\end{rmk}

Now we are ready to give a well-defined notion of $\varepsilon$-density function.

\begin{defn}\label{epsilonden}
Let $A=\oOplus_{n\geq 0}A_n$ be a standard graded domain over an algebraically closed field $A_0=k$ of dimension $d\geq 2$. Let $I\subset A$ be a homogeneous ideal, which is generated in degrees $\{d_1<d_2<\cdots <d_l\}$. Then the $\varepsilon$-density function of $A$ with respect to $I$ is the function $f_{\varepsilon(I)}\colon \R_{\geq 0}\longto \R_{\geq 0}$ given by
$$x \longmapsto  \lim_{n\to \infty}\frac{\ell_k\big({(\widetilde {I^n}/I^n)_{\lfloor xn\rfloor}\big)}}{n^{d-1}/d!}.$$
\end{defn}

\begin{thm}\label{MT} Let $A$ and $I$ be as in Definition \ref{epsilonden}. Also assume that $0<\mathrm{height}\,(I)<d$.
\begin{enumerate}
 \item The function $f_{\varepsilon(I)}$ is continuous everywhere possibly except at $x=d_1$. Further, it is continuously differentiable outside the finite set $\{\alpha_I, d_1, \ldots, d_l\}$.
\item The support of $f_{\varepsilon(I)}$ is contained in the interval $I(\alpha_I, d_l]$, and $\alpha_I >0$.
\item Further, $$\int_{0}^{\infty}f_{\varepsilon(I)}(x)dx = \varepsilon(I).$$
\end{enumerate}
\end{thm}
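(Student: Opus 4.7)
The plan is to derive all three assertions from the identity
$$f_{\varepsilon(I)}(x)=f_{A,\{\widetilde{I^n}\}}(x)-f_{A,\{I^n\}}(x),$$
which I verify at the outset: both individual limits exist by Theorem \ref{t3} and Theorem \ref{t2sat}, and $\ell_k\big((\widetilde{I^n}/I^n)_m\big)=\ell_k\big((\widetilde{I^n})_m\big)-\ell_k\big((I^n)_m\big)$ for each $m$, so linearity of limits delivers both the existence of the limit defining $f_{\varepsilon(I)}(x)$ and the displayed formula. Claim (1) is then immediate by subtraction: Theorem \ref{t3} ensures that $f_{A,\{I^n\}}$ is continuous on $\mathbb{R}_{\geq 0}\setminus\{d_1\}$ and is a polynomial on each interval cut out by the breakpoints $d_1,\ldots,d_l$ (so $\mathcal{C}^1$ off $\{d_1,\ldots,d_l\}$), while Theorem \ref{t2sat} ensures that $f_{A,\{\widetilde{I^n}\}}$ is continuous on $\mathbb{R}_{\geq 0}$ and $\mathcal{C}^1$ off $\{\alpha_I\}$; the union of these bad sets gives precisely the assertion.

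For claim (2), I first verify $\alpha_I>0$: by \cite[Lemma 6.3]{DDRV24} (cited in Notations \ref{densat}) there exists $\gamma\in\mathbb{Q}_{>0}$ with $(\widetilde{I^n})_m=0$ whenever $m<\gamma n$, so $\min\{m\mid(\widetilde{I^n})_m\neq 0\}\geq\gamma n$ and Lemma \ref{alpha_beta}(1) yields $\alpha_I\geq\gamma>0$. For the support, if $x<\alpha_I$ the same estimate gives $(\widetilde{I^n})_{\lfloor xn\rfloor}=0$ eventually, and since $\alpha_I\leq d_1$ also $(I^n)_{\lfloor xn\rfloor}=0$, so both density functions vanish. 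At $x=\alpha_I$, continuity of $f_{A,\{\widetilde{I^n}\}}$ forces $f_{A,\{\widetilde{I^n}\}}(\alpha_I)=0$, and the trivial inequality $f_{A,\{I^n\}}\leq f_{A,\{\widetilde{I^n}\}}$ (from $I^n\subseteq\widetilde{I^n}$) then forces $f_{A,\{I^n\}}(\alpha_I)=0$ as well. For $x>d_l$, the asymptotic linearity of the Castelnuovo-Mumford regularity of powers (Cutkosky-Herzog-Trung) provides a constant $c_0$ with $\mathrm{reg}(I^n)\leq d_l n+c_0$ for all $n$; since $\widetilde{I^n}/I^n\cong H^0_{\mathbf{m}}(A/I^n)$ vanishes in all degrees strictly above $\mathrm{reg}(A/I^n)=\mathrm{reg}(I^n)-1$, we obtain $(\widetilde{I^n})_m=(I^n)_m$ whenever $m\geq d_l n+c_0$, and for $x>d_l$ and $n$ sufficiently large this applies to $m=\lfloor xn\rfloor$, so $f_{\varepsilon(I)}(x)=0$.

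For claim (3), fix any integer $c>d_l$ and consider the step functions $h_n(x):=\ell_k\big((\widetilde{I^n}/I^n)_{\lfloor xn\rfloor}\big)/(n^{d-1}/d!)$ on $[0,c]$. By claim (2) together with part (1), $h_n(x)\to f_{\varepsilon(I)}(x)$ pointwise off the finite set $\{d_1\}$, and $0\leq h_n(x)\leq\ell_k(A_{\lfloor xn\rfloor})/(n^{d-1}/d!)$ is uniformly bounded on $[0,c]$ by a constant depending only on $c$ and the multiplicity of $A$. Lebesgue's dominated convergence theorem therefore yields $\int_0^c h_n\,dx\to\int_0^c f_{\varepsilon(I)}\,dx$. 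Evaluating the Riemann sum gives $\int_0^c h_n\,dx=\sum_{m=0}^{\lfloor cn\rfloor}\ell_k\big((\widetilde{I^n}/I^n)_m\big)/(n^d/d!)$, and the regularity bound from (2) shows that for $n\gg 0$ this truncated sum already equals the full sum $\ell_A(\widetilde{I^n}/I^n)/(n^d/d!)$. Combining this with Cutkosky's theorem \cite[Corollary 6.3]{Cut14} that $\varepsilon(I)=\lim_n\ell_A(H^0_{\mathbf{m}}(A/I^n))/(n^d/d!)$ exists as a limit, and the compact support from (2) giving $\int_0^\infty f_{\varepsilon(I)}=\int_0^c f_{\varepsilon(I)}$, we conclude $\int_0^\infty f_{\varepsilon(I)}(x)\,dx=\varepsilon(I)$.

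The main obstacle I anticipate is the uniform linear regularity bound $\mathrm{reg}(I^n)\leq d_l n+c_0$: the asymptotic linearity of $\mathrm{reg}(I^n)$ is classical, but one must carefully transfer the bound valid for $n\gg 0$ to a uniform bound (valid for all $n$) by enlarging $c_0$ to absorb finitely many small cases, and must check that the graded domain hypothesis is sufficient for the Cutkosky-Herzog-Trung theorem in the generality used here.
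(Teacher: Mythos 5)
Your proof is correct, and it takes a genuinely different route from the paper for the key step of part (2). Part (1) is identical: both derive it by subtraction from Theorems \ref{t3} and \ref{t2sat}. For the claim $f_{\varepsilon(I)}(x)=0$ when $x>d_l$, the paper argues that on $I(d_l,\infty)$ both $f_{A,\{\widetilde{I^n}\}}$ and $f_{A,\{I^n\}}$ agree with polynomials $\tilde{\mathbf p}$ and $\mathbf p_l$ (using $\beta_I\le d_l$ and Theorem \ref{t3}), observes $\tilde{\mathbf p}-\mathbf p_l\ge 0$ has bounded integral on $(d_l,\infty)$ by the boundedness of $\varepsilon(I)$, and concludes the two polynomials coincide. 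You instead invoke the Cutkosky--Herzog--Trung / Kodiyalam / Trung--Wang bound $\mathrm{reg}(I^n)\le d_l n+c_0$ together with $\widetilde{I^n}/I^n\cong H^0_{\mathbf m}(A/I^n)$ to get the stronger statement that $(\widetilde{I^n})_m=(I^n)_m$ for all $m\ge d_ln+c_0$ and $n\gg 0$. This buys you something concrete: the approximating functions $h_n$ are themselves eventually supported in a fixed compact interval, which makes the dominated-convergence argument in (3) airtight; the paper's chain of equalities in (3) silently identifies the truncated sum $\sum_{m\le d_ln}$ with the full length $\ell_A(\widetilde{I^n}/I^n)$, and your regularity bound is precisely what justifies that step. (You should verify the hypotheses of the regularity theorem in the generality of a standard graded domain over a field, as you yourself flag; the Trung--Wang version suffices.) One small imprecision in part (2): you write that for $x<\alpha_I$, ``the same estimate'' (the bound $(\widetilde{I^n})_m=0$ for $m<\gamma n$) gives the vanishing, but that only handles $x<\gamma$, and $\gamma$ may be strictly smaller than $\alpha_I$. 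For $\gamma\le x<\alpha_I$ one should instead cite Theorem \ref{t2sat}(2), or simply observe that $xH-E$ is not pseudoeffective, hence not big, hence $\mathrm{vol}_X(xH-E)=0$. The conclusion is unaffected and your continuity argument at $x=\alpha_I$ then goes through. Your proof of $\alpha_I>0$ via Lemma \ref{alpha_beta}(1) is slightly shorter than the paper's direct degree computation $\alpha_I\ge \deg E/e_0(A)>0$, but rests on the same cited fact from \cite{DDRV24}.
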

\begin{proof}{\em Assertion $(1)$}.\quad Consider the sequence
$$h_n\colon R_{\geq 0} \longto \R_{\geq 0}\quad\mbox{given by}\quad x\mapsto \frac{\ell_k\big((\widetilde {I^n}/I^n)_{\lfloor xn\rfloor}\big)}{n^{d-1}/d!}.$$
Then by Theorem \ref{t3} and Theorem \ref{t2sat},
$$\lim_{n\to \infty}h_n(x) = \lim_{n\to \infty}\frac{\ell_k\big((\widetilde {I^n})_{\lfloor xn\rfloor}\big)}{n^{d-1}/d!} - \lim_{n\to \infty}\frac{\ell_k\big(({I^n})_{\lfloor xn\rfloor}}{n^{d-1}/d!} = f_{A, \{\widetilde {I^n}\}}(x) - f_{A, \{I^n\}}(x)$$
is a well-defined continuous function on $\R_{\geq 0} \setminus \{d_1\}$ and is continuously differentiable outside the finite set $\{\alpha_I, d_1, \ldots, d_l\}$.

\vspace{5pt}

\noindent{\em Assertion $(2)$}. By Remark \ref{densatu} (2)(b) and Theorem \ref{t3}, there exist polynomials ${\tilde {\bf p}}(x)$ and ${\bf p}_l(x)$ in $\Q[x]$
such that
$$f_{A,\{\widetilde {I^n}\}}(x) = {\tilde {\bf p}(x)}\quad \text{and}\quad f_{A,\{I^n\}}(x) = {\bf p}_l(x) \quad \forall x\in I(d_l, \infty).$$
Therefore ${\tilde {\bf p}}(x)-{\bf p}_l(x)\geq 0$ for all $x\in I(d_l, \infty)$. Now as argued in the proof of Theorem \ref{t1sat} (replacing $q$ by $n$ everywhere) there is a constant $C_0$ such that for every $m\in \N$,
$$\int_{0}^{m}f_{\varepsilon(I)}(x)dx = \int_{0}^{m}(f_{A,\{\widetilde {I^n}\}}(x)-f_{A,\{I^n\}}(x))dx\leq C_0$$
which implies that
$\int_{d_l}^m ({\tilde {\bf p}}(x)-{\bf p}_l(x))dx \leq C_0 $, for all integers $m\geq d_l$. This forces
${\tilde {\bf p}}(x) = {\bf p}_l(x)$ and therefore $f_{\varepsilon(I)}(x) = 0$ for all $x\in I(d_l,\infty)$.

Now we need to prove that $\alpha_I> 0$. Recall Hartshorne's definition \cite[Chapter II, Exercise 6.2]{Har77} of the degree of a divisor. We know that $\deg H = e_0$, where $e_0 = e(A, {\bf m})$ is the multiplicity of $A$ with respect to the homogeneous maximal ideal ${\bf m}$. Since $E$ is an effective divisor, so $\deg E>0$. Therefore if we choose $\alpha = \frac{\deg E}{e_0}$ then $H^0(X,\sO_X(\lfloor xn\rfloor H-nE)) = 0$ for all $n\in \N_{>0}$ and $x \in I[0,\alpha)$. In particular, $\mbox{vol}_X(xH-E) = 0$ for all $x\in I[0,\alpha]$. Note that the volume function is monotonically increasing. Since $\overline{\mathrm{Big}(X)} = \overline{\mathrm{Eff}}(X)$, hence the $\R$-divisor divisor $xH-E$ is big for all $x\in I(\alpha_I,\infty)$ and has positive volume. Thus, $\alpha_I\geq \alpha = \frac{\deg E}{e_0}>0$.
 
\vspace{5pt}
 
\noindent{\em Assertion $(3)$} From the Lebesgue's dominated  convergence theorem, one gets that
$$\int_0^{\infty} f_{\varepsilon(I)}(x)dx = \lim_{n\to \infty}\int_0^{d_l}h_n(x)dx = \lim_{n\to \infty}\sum_{m=0}^{d_ln}\frac{\ell_k\big((\widetilde {I^n}/I^n)_m\big)}{n^{d}/d!} = \lim_{n\to \infty}\frac{\ell_A({\tilde I^n}/{I^n})}{n^{d}/d!} = \varepsilon(I).$$
\end{proof}

\begin{rmk}
Let $A$ and $I$ be as in Definition \ref{epsilonden}. Thus, by the continuity of volume function and the existence of density function $f_{\varepsilon(I)}$, we have shown that
$$\varepsilon(I) = \limsup_{n\to \infty}\frac{\ell_A\left(H^0_{\bf m}(A/{I^n})\right)}{n^{d}/d!} = \lim_{n\to \infty}\frac{\ell_A\left(H^0_{\bf m}(A/{I^n})\right)}{n^{d}/d!}.$$
\end{rmk}
 
\begin{rmk}\label{rmkintro}
From the proof of Theorem \ref{MT} $(2)$, one has $f_{A, \{\widetilde {I^n}\}}(x) = f_{A,\{I^n\}}(x)$ for all $x\in I(d_l,\infty)$, where it is same as the  polynomial given in Theorem \ref{t2sat} $(3)$. Hence the assertion follows by Theorem \ref{t1}.
\end{rmk}

\begin{rmk}\label{rmkfield}
Equation \eqref{rsat} continues to hold over an arbitrary field. The saturation density function $f_{A,\{\widetilde{I^n}\}}$ was then studied by interpreting it as a volume function. This was done following \cite{Laz04a} and \cite{Laz04b}, where it was assumed that the underlying field $k$ is algebraically closed. In general, extending these results to arbitrary fields cannot follow by simply making a flat base change, see \cite[Example 2.4]{Cut15}. However, we can eliminate this condition on $k$ by following later results of Cutkosky, see \cite[Theorem 10.7]{Cut14} and \cite[Theorem 2.5 and Theorem 5.6]{Cut15}.
\end{rmk}

\subsection{Mixed multiplicities}\label{mixed_mult}

We first recall the classical notion of mixed multiplicities of a pair of ideals. Let $(A,{\bf m})$ be a Noetherian local ring of dimension $d>0$. Let $I$ be an ${\bf m}$-primary ideal and $J$ be an arbitrary ideal with positive height. Bhattacharya \cite{Bha57} showed that for all $u\gg 0$ and $v\gg 0$, the bivariate Hilbert function $\ell_A \left(I^{u}J^v/I^{u+1}J^v\right)$ agrees with a bivariate numerical polynomial $Q(u,v)$ of total degree $d-1$. Moreover, we can write
\begin{equation*}
Q(u,v) = \sum_{i=0}^{d-1} \dfrac{e_i\left(I \vert J\right)}{i!(d -1-i)!}u^{d-1-i}v^i + \text{lower degree terms},
\end{equation*}
where the coefficients $e_i\left(I\vert J\right)$ are integers. Risler and Tessier \cite{Tei73} studied these coefficients when $J$ is ${\bf m}$-primary and called them the \emph{mixed multiplicities of $I$ and $J$}. Later in \cite{KV89}, Katz and Verma analyzed the coefficients when $J$ is an arbitrary ideal.

Now let $A$ and $I$ be as in Definition \ref{epsilonden}. It follows from Corollary \ref{HT} or \cite[Theorem 4.2]{HT03} that there exist constants $m_0,n_0>0$ such that for all $m\geq d_lm+m_0$ and $n\geq n_0$, the Hilbert function $\ell_k\left({(I^n)}_m\right)$ is given by a bivariate numerical polynomial $P(m,n)$ of total degree $d-1$. Further, we can write
\begin{equation*}
P(m,n) = \sum_{i=0}^{d-1} \dfrac{e_i\left(A[It]\right)}{i!(d -1-i)!}m^in^{d-1-i} + \text{lower degree terms},
\end{equation*}
where the coefficients $e_i\left(A[It]\right)$ are all integers.

The connection between intersection numbers and mixed multiplicities of $\mathbf{m}$-primary ideals in a local ring is well-known, for example see \cite[Chapter 1.6]{Laz04a}. The following result can be seen as a generalization in the graded framework.

\begin{prop}
 Let $A$ and $I$ be as in Definition \ref{epsilonden}. Let $\bf m$ denote the unique homogeneous maximal ideal of $A$. Then the following statements are true.
 \begin{enumerate}
  \item For all $i=0,\ldots,d-1$, we have $e_i\left(A[It]\right) = (-1)^{d-1-i} \left(H^i\cdot E^{d-1-i}\right)$.
  \item Suppose that $I$ is generated in equal degrees $d_l$. Then for all $i=0,\ldots,d-1$, we have $$e_i\left({\bf m}\vert I\right) = \sum_{j=0}^i \binom{i}{j}d_l^j e_{d-1-i+j}\left(A[It]\right).$$ In particular, $e_i\left({\bf m}\vert I\right) =\left((d_lH-E)^i\cdot H^{d-1-i}\right)$ for all such $i$.
 \end{enumerate}
\end{prop}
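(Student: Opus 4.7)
The plan for part (1) is to extract the coefficients $e_i(A[It])$ by comparing two expressions for the density function $f_{A,\{I^n\}}$ on the interval $(d_l, \infty)$. First I would observe that for $x > d_l$, Remark \ref{rmkintro} identifies $f_{A,\{I^n\}}(x)$ with $f_{A,\{\widetilde{I^n}\}}(x)$; since $\beta_I \leq d_l$ by Remark \ref{densatu}(2)(b), Theorem \ref{t2sat}(3) applies and yields the closed form
$$f_{A,\{I^n\}}(x) \;=\; \sum_{i=0}^{d-1} (-1)^i \frac{d!}{(d-1-i)!\,i!} \left(H^{d-1-i}\cdot E^i\right) x^{d-1-i}.$$
On the other hand, for $x > d_l$ the polynomial presentation $\ell_k((I^n)_m) = P(m,n)$ is valid, and only the degree $d{-}1$ homogeneous part of $P$ survives the limit defining $f_{A,\{I^n\}}$, giving
$$f_{A,\{I^n\}}(x) \;=\; \sum_{i=0}^{d-1} \frac{d!}{i!(d-1-i)!}\, e_i(A[It])\, x^i.$$
Matching coefficients of $x^i$ (after the substitution $k = d-1-i$) then yields (1).

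For part (2), the key reduction is to relate the bivariate Hilbert polynomial $Q(u,v)$ encoding the mixed multiplicities to $P(m,n)$. Since $I$ is generated in the single degree $d_l$, we have $(I^v)_n = A_{n-vd_l}\cdot (I^v)_{vd_l}$ for $n \geq vd_l$ and zero otherwise, and the standard gradedness of $A$ then gives $(\mathbf{m}^u I^v)_n = (I^v)_n$ for $n \geq u + vd_l$ and zero otherwise. Hence the graded quotient $\mathbf{m}^u I^v / \mathbf{m}^{u+1} I^v$ is concentrated in the single degree $u + vd_l$, and for $u, v \gg 0$,
$$\ell_A\bigl(\mathbf{m}^u I^v / \mathbf{m}^{u+1} I^v\bigr) \;=\; \ell_k\bigl((I^v)_{u+vd_l}\bigr) \;=\; P(u + vd_l,\, v),$$
so $Q(u,v) = P(u + vd_l, v)$ as polynomials. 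I would then compare the top-degree homogeneous parts, expand $(u + vd_l)^i$ binomially, and match coefficients of $u^{d-1-i} v^i$; a clean re-indexing of the resulting double sum produces the identity $e_i(\mathbf{m}|I) = \sum_{j=0}^i \binom{i}{j} d_l^j\, e_{d-1-i+j}(A[It])$.

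Finally, for the ``in particular'' assertion, I would substitute the formula $e_{d-1-i+j}(A[It]) = (-1)^{i-j}(H^{d-1-i+j}\cdot E^{i-j})$ from part (1) into the binomial identity above and factor out $H^{d-1-i}$ from every intersection number; the remaining sum $\sum_{j=0}^i \binom{i}{j} (d_l H)^j (-E)^{i-j}$ is, formally, the binomial expansion of $(d_l H - E)^i$, yielding $e_i(\mathbf{m}|I) = \bigl((d_l H - E)^i \cdot H^{d-1-i}\bigr)$. The step I expect to be the main technical obstacle is the graded identity $(\mathbf{m}^u I^v)_n = (I^v)_n$ for $n \geq u + vd_l$: while it is elementary, it requires careful bookkeeping of which graded components vanish and which coincide, and it is precisely the place where the equigeneration hypothesis on $I$ enters in an essential way — without it, the graded quotient would not be concentrated in a single degree and the clean identification $Q(u,v) = P(u + vd_l, v)$ would fail.
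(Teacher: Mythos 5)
Your argument is correct, and for part (2) it takes a genuinely different route from the paper's, which is worth noting. For part (1) you are essentially unpacking what the paper compresses into a single sentence: comparing the two polynomial expressions for $f_{A,\{I^n\}}$ on $(d_l,\infty)$ --- one obtained by scaling the leading form of $P(m,n)$ with $m=\lfloor xn\rfloor$, the other coming from $d\cdot\mathrm{vol}_X(xH-E)=d(xH-E)^{d-1}$ since $\beta_I\le d_l$, with Remark~\ref{rmkintro} supplying the identification $f_{A,\{I^n\}}=f_{A,\{\widetilde{I^n}\}}$ on $(d_l,\infty)$ --- and matching coefficients after the substitution $k=d-1-i$. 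This is the same approach as the paper, just spelled out.

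For part (2) the paper does not prove the binomial identity relating $e_i(\mathbf{m}\mid I)$ to the $e_j(A[It])$; it cites it from the companion paper [DDRV24, Lemma~5.3] and then substitutes part (1). You instead derive the identity from scratch by establishing $Q(u,v)=P(u+vd_l,v)$, and the chain you use is correct: since $I$ is equigenerated in degree $d_l$ one has $(I^v)_n=A_{n-vd_l}(I^v)_{vd_l}$, hence $(\mathbf{m}^uI^v)_n=(I^v)_n$ for $n\ge u+vd_l$ and $=0$ otherwise, so $\mathbf{m}^uI^v/\mathbf{m}^{u+1}I^v$ is concentrated in degree $u+vd_l$ and $\ell_A(\mathbf{m}^uI^v/\mathbf{m}^{u+1}I^v)=\ell_k((I^v)_{u+vd_l})$. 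Writing $m=u+vd_l,\ n=v$ puts $(m,n)$ in the last restricted cone once $u\ge m_0$ and $v\ge n_0$, so $Q(u,v)=P(u+vd_l,v)$ as polynomials, and expanding $(u+vd_l)^j$ binomially and reading off the coefficient of $u^{d-1-i}v^i$ gives exactly $e_i(\mathbf m\mid I)=\sum_{j=0}^i\binom{i}{j}d_l^j e_{d-1-i+j}(A[It])$. What this buys is a self-contained proof that does not lean on the companion paper; the price is the bookkeeping you anticipated, which you have in fact carried out correctly. The closing substitution of part (1) and the factorization $\sum_{j=0}^i\binom{i}{j}(d_lH)^j(-E)^{i-j}=(d_lH-E)^i$ agree with the paper.
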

\begin{proof}
Part $(1)$ of this proposition follows from part $(3)$ of Theorem \ref{t2sat} and Remark \ref{rmkintro}. Henceforth, let $I$ be generated in equal degrees $d_l$. Then the required expression for $e_i\left({\bf m}\vert I\right)$ is given in \cite[Lemma 5.3]{DDRV24}. Using part $(1)$, we have $e_{d-1-i+j}\left(A[It]\right) = (-1)^{i-j}\left(H^{d-1-i+j}\cdot E^{i-j}\right)$. Now substituting this into the expression for $e_i\left({\bf m}\vert I\right)$, we get
$$e_i\left({\bf m} \vert I\right) = \sum_{j=0}^i (-1)^{i-j} \binom{i}{j} d_l^j \left(H^{d-1-i+j}\cdot E^{i-j}\right) = \left((d_lH-E)^{i}\cdot H^{d-1-i}\right),$$ where the last equality is a consequence of the binomial exapansion.
\end{proof}

\section{Some properties of density functions}\label{sec6}

\subsection{\texorpdfstring{Density function $f_{A,\{I^n\}}$ and diagonal subalgebras of $A[It]$}{Density function and diagonal subalgebras}}
We recall that the homogenity property of the volume function on the integral divisors $\mathrm{Div}(X)$ allowed the function to extend to $\Q$-Cartier divisors of $X$. Further the continuity property of the volume function on $\Q$-divisors allowed it to extend to  $\R$-Cartier divisors.

Here the  theory of $f_{A, \{I^n\}}$ asserts a similar phenomenon for the `rescaled' diagonal multiplicity function, denoted by ${\tilde e_{\mathrm{rHSm}}}$.

For each $(p,q)\in\N^2$, we have a $(p,q)$-diagonal subalgebra $R_{\Delta_{I(p,q)}}$ as in Definition \ref{dg}, where
$A$ is a domain in addition. If $p/q<d_1$ then $R_{\Delta_{I(p,q)}} = k$. Let $p/q > d_1$ then it is known that $R_{\Delta_{I(p,q)}}$ is of dimension $d$. This can also be seen here easily because $f_{A,\{I^n\}}(p/q)\neq 0$, which implies that the multiplicity of the $(p,q)$-diagonal subalgebra $e(R_{\Delta_{I(p,q)}}) = \tfrac{q^{d-1}}{d}\cdot f_{A,\{I^n\}}(p/q) \neq 0$.

Moreover, the homogenity property holds: If $a\in \N$ and $(p,q)\in \N^2$ then $e(R_{\Delta_{I(ap,aq)}}) = a^{d-1}e(R_{\Delta_{I(p,q)}})$. This gives a well-defined function
$${\tilde e_{\mathrm{rHSm}}}:\Q_{\geq 0} \longto \Q_{\geq 0}\quad\mbox{where}\quad \tfrac{p}{q} \longmapsto \tfrac{e(R_{\Delta_{I(p,q)}})}{q^{d-1}} = \tfrac{1}{d}\cdot f_{A,\{I^n\}}(p/q).$$
Now the continuity property of  the density function  $f_{A,\{I^n\}}$ on $\R\setminus \{d_1\}$ implies that this function extends uniquely to ${\tilde e_{\mathrm{rHSm}}}:\R_{\geq 0}\longto \R_{\geq 0}$. In particular, we have the following.

\begin{cor}
Let $I$ and $J$ be homogeneous ideals in a standard graded domain $A$ of dimension $d\geq 1$ over a field $k$.
Then 
\begin{equation*}
f_{A, \{I^n\}}(x) = f_{A, \{J^n\}}(x) \;\;\mbox{for all}\;\; x\in \R_{\geq 0}\;\;
\iff\;\; e(R_{\Delta_{I(p,q)}}) = e(R_{\Delta_{J(p,q)}})\;\;\mbox{for all}\;\; (p,q)\in \N^2.
\end{equation*}
 \end{cor}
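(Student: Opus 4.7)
The plan is to use the bridge formula
\[
e\bigl(R_{\Delta_{I(p,q)}}\bigr) \;=\; \tfrac{q^{d-1}}{d}\cdot f_{A,\{I^n\}}(p/q),
\]
which is recalled immediately before the corollary, and which converts pointwise values of the density function at rational arguments into multiplicities of the diagonal subalgebras. Both implications will be obtained by feeding information through this single identity.

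The forward direction ($\Rightarrow$) is essentially automatic: given $(p,q)\in \N^2$, evaluate the bridge formula for $I$ and for $J$ at $x=p/q$; if the two density functions agree on all of $\R_{\geq 0}$, they in particular agree at $p/q$, whence $e(R_{\Delta_{I(p,q)}}) = e(R_{\Delta_{J(p,q)}})$.

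For the reverse direction ($\Leftarrow$), the first step is again to apply the bridge formula, this time in the opposite direction, to deduce that $f_{A,\{I^n\}}(r) = f_{A,\{J^n\}}(r)$ for every $r\in\Q_{\geq 0}$. After handling the trivial case $I=0$ or $J=0$ separately, we may assume both ideals are nonzero. Since $A$ is a domain, any nonzero homogeneous generator of minimal degree is a nonzerodivisor, so Corollary \ref{limit} applies to both ideals: the density functions $f_{A,\{I^n\}}$ and $f_{A,\{J^n\}}$ are continuous on $\R_{\geq 0}\setminus\{d_1^I\}$ and $\R_{\geq 0}\setminus\{d_1^J\}$, respectively, where $d_1^I, d_1^J$ denote the minimal generating degrees. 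The second step is to promote equality on $\Q_{\geq 0}$ to equality on $\R_{\geq 0}$: both functions are continuous on $\R_{\geq 0}\setminus\{d_1^I,d_1^J\}$ and agree on the dense subset $\Q_{\geq 0}\cap (\R_{\geq 0}\setminus\{d_1^I,d_1^J\})$, so they agree on $\R_{\geq 0}\setminus\{d_1^I,d_1^J\}$ by density.

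The only point requiring attention is the potential discontinuities at $d_1^I$ and $d_1^J$. This is where one might expect trouble, but the obstacle dissolves because both points are positive integers, and in particular rationals: the hypothesis applied to the pairs $(d_1^I,1)$ and $(d_1^J,1)$ already gives $f_{A,\{I^n\}}(d_1^I)=f_{A,\{J^n\}}(d_1^I)$ and $f_{A,\{I^n\}}(d_1^J)=f_{A,\{J^n\}}(d_1^J)$ directly through the bridge formula, with no appeal to continuity. Combining the two steps yields $f_{A,\{I^n\}}=f_{A,\{J^n\}}$ on all of $\R_{\geq 0}$, completing the proof.
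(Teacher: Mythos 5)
Your proof is correct and follows essentially the same route as the paper, which derives the corollary from the bridge identity $e(R_{\Delta_{I(p,q)}}) = \tfrac{q^{d-1}}{d} f_{A,\{I^n\}}(p/q)$ together with the continuity of $f_{A,\{I^n\}}$ on $\R_{\geq 0}\setminus\{d_1\}$. Your added care in noting that the two potential discontinuity points $d_1^I,d_1^J$ are themselves rationals, and hence are covered directly by the hypothesis rather than by a density argument, makes explicit a detail the paper leaves implicit.
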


\subsection{Density functions and integral closure}

\begin{thm}\label{6.1}
Let $I$ be a homogeneous ideal in a standard graded algebra $A$ over a field $k$. Let $J=\overline{I}$ be the integral closure of the ideal $I$ in $A$. Then
\begin{enumerate}
\item[(1)] $f_{A,\{I^n\}}(x) = f_{A,\{J^n\}}(x)$ for all $x\in \R_{\geq 0}\setminus \{d_1, \ldots, d_l\}$, where $\{d_1 < \cdots < d_l\}$ denote the distinct degrees of a set of homogeneous generators of $I$.
\item[(2)] If in addition $A$ is an integral domain then
           \begin{enumerate}
               \item[(a)] $f_{A,\{I^n\}}(x) = f_{A,\{J^n\}}(x)$ for all $x \in \R_{\geq 0}$ and
               \item[(b)] $f_{A,\{\widetilde {I^n}\}}(x) = f_{A,\{\widetilde {J^n}\}}(x)$ for all $x\in \R_{\geq 0}$.
           \end{enumerate}
\end{enumerate}
\end{thm}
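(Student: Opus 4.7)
The strategy is to establish the sandwich $I^n \subseteq J^n \subseteq \overline{I^n} \subseteq I^{n-c}$, valid for some integer $c \geq 0$ and all $n \geq c$, from which each of the three assertions will follow by squeezing the length or global-section function. The rightmost inclusion rests on a graded analogue of Rees's theorem: since $A$ is a standard graded finitely generated $k$-algebra, the integral closure $\bigoplus_{n \geq 0} \overline{I^n}\,t^n$ of the Rees algebra $A[It]$ inside $A[t]$ is module-finite over $A[It]$ (the graded avatar of \cite[Theorem 1.4]{Ree61}; cf.\ Example \ref{Nfiltration}(1)). If the generators as an $A[It]$-module live in $t$-degree at most $c$, then $\overline{I^n} = \sum_{j=0}^{c} I^{n-j}\,\overline{I^j} \subseteq I^{n-c}$ for every $n \geq c$, while $J^n = \overline{I}^n \subseteq \overline{I^n}$ is automatic.

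For (1), fix $x \in \R_{\geq 0} \setminus \{d_1, \ldots, d_l\}$ and let $j$ be the index with $x \in (d_j, d_{j+1})$. By Lemma \ref{density2}, $(\lfloor xn \rfloor,\, n - c)$ lies eventually in the restricted cone ${\mathfrak R}C_j$, so Theorem \ref{t1} furnishes a decomposition $\ell_k\big((I^{n-c})_{\lfloor xn \rfloor}\big) = P_j(\lfloor xn \rfloor, n - c) + Q_j(\lfloor xn \rfloor, n - c)$ with $P_j$ homogeneous of degree at most $d-1$ and $\deg Q_j < \deg P_j$. Using homogeneity of $P_j$, division by $n^{d-1}/d!$ followed by $n \to \infty$ yields ${\bf p}_j(x) = f_{A,\{I^n\}}(x)$, and squeezing $\ell_k\big((J^n)_{\lfloor xn \rfloor}\big)$ between the two corresponding sequences proves (1). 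For (2a), when $A$ is a domain every nonzero homogeneous element is a nonzerodivisor, and the minimal generating degree of $J$ is still $d_1$ (any homogeneous $g \in \overline{I}$ of degree $e$ satisfying $g^k + a_1 g^{k-1} + \cdots + a_k = 0$ with $a_i \in I^i$ forces $ie \geq i d_1$, hence $e \geq d_1$). Theorem \ref{t3} then makes both $f_{A,\{I^n\}}$ and $f_{A,\{J^n\}}$ continuous on $(d_1, \infty)$, and continuity propagates the equality from (1) to each $d_j$ with $j \geq 2$; the value at $d_1$ is pinned down by applying the same sandwich at the bidegree $(nd_1, n)$ together with the nonzerodivisor arguments of Lemmas \ref{i-1} and \ref{lrr1}.

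For (2b), invoke the volume identification $f_{A,\{\widetilde{I^n}\}}(x) = d \cdot \mathrm{vol}_X(xH - E)$ from Theorem \ref{t2sat} together with the formula $h^0\big(X, \mathcal{O}_X(\lfloor xn \rfloor H - nE)\big) = \ell_k\big((\widetilde{I^n})_{\lfloor xn \rfloor}\big)$ for $n$ large (Notations \ref{densat}). The ideal sandwich passes to saturations as $\widetilde{I^n} \subseteq \widetilde{J^n} \subseteq \widetilde{\overline{I^n}} \subseteq \widetilde{I^{n-c}}$, yielding the corresponding squeeze on global sections over $X$. Reparametrizing $n' = n - c$, the upper bound $h^0\big(X, \mathcal{O}_X(\lfloor xn \rfloor H - (n-c)E)\big)/(n^{d-1}/d!)$ differs from the lower bound by a perturbation of the divisor class by the bounded amount $cE$ together with the shift between $\lfloor x n' \rfloor$ and $\lfloor x(n'+c) \rfloor$; the continuity of the volume function (Theorem \ref{contivol}) then forces both endpoints to converge to $d \cdot \mathrm{vol}_X(xH - E)$, completing the squeeze. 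The principal technical obstacle lies in controlling these $O(1)$ perturbations: in (2b) via the continuity of $\mathrm{vol}_X$, and at the boundary $x = d_1$ in (2a), where $(nd_1, n)$ lies on the edge of the cone ${\mathfrak C}_1$ rather than in its interior and the polynomial framework of Theorem \ref{t1} does not apply directly.
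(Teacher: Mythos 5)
Your overall architecture matches the paper's: establish the containment $I^n\subseteq J^n\subseteq I^{n-c}$ for all $n\geq c$, then squeeze the lengths of graded pieces (resp.\ global sections on $X$) using Theorem~\ref{t1} (resp.\ the volume function and its continuity). Parts~(1), (2a) on $(d_1,\infty)\setminus\{d_1,\dots,d_l\}$ via continuity, and (2b) track the paper's route. One minor remark on part~(1): you justify the rightmost containment by invoking Noetherianity of the full integral-closure filtration $\{\overline{I^n}\}$ via Rees's theorem, which requires an analytically-unramified (or at least reduced) hypothesis that is not assumed when $A$ is merely a standard graded algebra. The paper needs only the weaker, hypothesis-free fact that $I$ is a reduction of $J=\overline{I}$ (equivalently $A[Jt]$ is module-finite over $A[It]$, true in any Noetherian ring), which already gives $J^n=I^{n-c}J^c\subseteq I^{n-c}$; your detour through $\overline{I^n}$ is unnecessary and not justified in the generality of part~(1).

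The genuine gap is at $x=d_1$ in part~(2a). The sandwich at bidegree $(nd_1,n)$ yields
\[
f_{A,\{I^n\}}(d_1)\;\leq\;f_{A,\{J^n\}}(d_1)\;\leq\;\lim_{n\to\infty}\frac{\ell_k\big((I^{n-c})_{nd_1}\big)}{n^{d-1}/d!},
\]
and since $(nd_1,n-c)$ eventually lies in the interior restricted cone ${\mathfrak R}C_1$, the right-hand side is $\mathbf{p}_1(d_1)$, i.e.\ the one-sided limit of $f_{A,\{I^n\}}$ as $x\to d_1^{+}$. But the density function can genuinely jump at $d_1$: in Example~\ref{r5} one has $f_{A,\{I^n\}}(d_1)=d$ while $\mathbf{p}_1(d_1)=d\cdot d_1^{\,d-1}$. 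So the two sides of the squeeze need not meet, and the argument does not close. Lemmas~\ref{i-1} and~\ref{lrr1}, which you invoke, are designed for $x=d_i$ with $i\geq 2$, where both adjacent restricted cones ${\mathfrak R}C_{i-1}$ and ${\mathfrak R}C_i$ are available; they give no purchase on the leftmost edge. The paper handles $x=d_1$ by a completely different mechanism: $R_{\Delta_{I(d_1,1)}}\hookrightarrow R_{\Delta_{J(d_1,1)}}$ is a module-finite extension of standard graded domains (both generated in degree one by $I_{d_1}$ and $J_{d_1}$ respectively), and the claim is then settled by comparing the Hilbert--Samuel multiplicities of these $(d_1,1)$-diagonal subalgebras. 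Your proposal contains no version of this diagonal-subalgebra argument, and as explained, the sandwich-plus-nonzerodivisor sketch you offer in its place cannot replace it.
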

\begin{proof}
{\em Assertion $(1)$}.\quad First we note that since $J$ has no term of degree $<d_1$, the function $f_{A,\{J^n\}} \equiv 0$ on $I[0, d_1)$. There exists integer an $c\geq 0$ such that $J^cI^{n-c} = J^n$ for all $n\geq c$. Now for $x\in I(d_i, d_{i+1})$ we can choose $n\gg 0$ such that both $(\lfloor xn \rfloor, n)$ and $(\lfloor xn \rfloor, n-c)$ lie in ${\mathfrak R}C_i$.
Then by Theorem \ref{t1}
$$\ell_k\big((I^n)_{\lfloor xn \rfloor}\big) = P_i(\lfloor xn \rfloor, n) + Q_i(\lfloor xn\rfloor, n)\quad\mbox{and}\quad
\ell_k\big((I^{n-c})_{\lfloor xn \rfloor}\big) = P_i(\lfloor xn \rfloor, n-c)+Q_i(\lfloor xn\rfloor, n-c),$$
where $\deg Q_i(X,Y) < \deg P_i(X,Y)$. Since $(I^n)_{\lfloor xn\rfloor} \subset (J^n)_{\lfloor xn\rfloor} \subset (I^{n-c})_{\lfloor xn\rfloor}$ we get that 
$f_{A,\{I^n\}}(x) = f_{A,\{J^n\}}(x)$ for $x\in I(d_i, d_{i+1})$.

\vspace{5pt}

\noindent{\em Assertion $(2)(a)$}.\quad Now if $A$ is a domain then the functions $f_{A,\{I^n\}}$ and $f_{A,\{J^n\}}$ are continuous on $I(d_1, \infty)$. Hence equality on the dense set $\R_{\geq 0}\setminus \{d_1, \ldots, d_l\}$ implies equality on $I(d_1, \infty)$. Further $J\subset {\bar I}$ implies that  the least degree of generator of $J$ is $d_1$.
Hence the equality of the density functions holds for $x<d_1$ too.

Now for $x=d_1$ we note that both $(d_1,1)$-diagonal subalgebra of
$A[It]$ and $A[Jt]$ are standard graded as generated by $I_{d_1}$ and $J_{d_1}$ respectively.
Therefore 
$R_{\Delta_{I(d_1,1)}} \longto R_{\Delta_{J(d_1,1)}}$ is a finite  extension of  integral domains. Hence 
$$f_{R,\{I^n\}}(d_1) = d\cdot e(R_{\Delta_{I(d_1,1)}}) = 
d\cdot e(R_{\Delta_{J(d_1,1)}} = f_{R, \{J^n\}}(d_1).$$

\vspace{5pt}

\noindent{\em Assertion $(2)(b)$}.\quad Note that we have $\widetilde {I^n} \subseteq \widetilde {J^n} \subseteq \widetilde {I^{n-c}}$ for all $n\geq c$. This gives, for $x\in \R_{\geq 0}$
$$\lim_{n\to \infty}\dfrac{\ell_k\big((\widetilde {I^n})_{\lfloor xn\rfloor}\big)}{n^{d-1}/d!}
\leq \lim_{n\to \infty}\dfrac{\ell_k\big((\widetilde {I^n})_{\lfloor xn\rfloor}\big)}{n^{d-1}/d!}
\leq \lim_{n\to \infty}\dfrac{\ell_k\big((\widetilde {I^{n-c}})_{\lfloor xn\rfloor}\big)}{n^{d-1}/d!}.$$
Now 
$$\lim_{n\to \infty}\dfrac{\ell_k\big((\widetilde {I^{n-c}})_{\lfloor xn\rfloor}\big)}{n^{d-1}/d!} = \lim_{n\to \infty}\frac{h^0\left(X,\mathcal{O}_X\left(\lfloor xn\rfloor H-(n-c)E\right)\right)}{n^{d-1}/d!} = d\cdot\mbox{vol}_X(xH-E),$$
where the last equality follows by the arguments as in Lemma \ref{VF}. In particular, $f_{A,\{\widetilde {J^n}\}}(x) = d\cdot\mbox{vol}_X(xH-E)$ and therefore $f_{A,\{\widetilde {I^n}\}}\equiv f_{A,\{\widetilde {J^n}\}}$.
\end{proof}

\begin{lemma}
Let $A=k[X,Y]$ be a standard graded polynomial ring in two variables over a field $k$. Let $I_1\subseteq I_2$ be two homogeneous ideals in $A$ such that $f_{A,\{I^n_1\}}(x)=f_{A,\{I^n_2\}}(x)$ for all real numbers $x\geq 0$. Then $I_1\subseteq I_2$ is a reduction.
\end{lemma}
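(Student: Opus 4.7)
The plan is to exploit the UFD structure of $A = k[X,Y]$ to factor out the $\gcd$ of each ideal and reduce the problem to Rees' classical theorem on the $\mathbf{m}$-primary parts. Write $I_i = f_i J_i$, where $f_i = \gcd(I_i)$ is homogeneous of degree $a_i$ and $J_i$ is either equal to $A$ or is $\mathbf{m}$-primary (with $\mathbf{m}=(X,Y)$). The containment $I_1 \subseteq I_2$ forces $f_2 \mid f_1$, hence $a_2 \leq a_1$.

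The main subtlety is showing that $a_1 = a_2$. Since $J_i$ is $\mathbf{m}$-primary (or equal to $A$), some power $\mathbf{m}^r \subseteq J_i$, so $(J_i^n)_m = A_m$ for all $m \geq rn$. Combined with the $k$-linear isomorphism $(I_i^n)_m \cong (J_i^n)_{m-na_i}$ furnished by multiplication by the nonzerodivisor $f_i^n$, this yields $\ell_k((I_i^n)_m) = m - na_i + 1$ whenever $m \gg na_i$, and therefore $f_{A,\{I_i^n\}}(x) = 2(x - a_i)$ for all sufficiently large $x$. Equating the density functions for large $x$ gives $a_1 = a_2$, and combined with $f_2 \mid f_1$ this forces $(f_1) = (f_2) =: (f)$. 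A quick case check shows that if either $J_1$ or $J_2$ equals $A$, then equality of the density functions forces $J_1 = J_2 = A$ and hence $I_1 = I_2$; so one may assume $J_1$ and $J_2$ are both $\mathbf{m}$-primary.

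In the main case $I_i = fJ_i$, the isomorphism $(I_i^n)_m \cong (J_i^n)_{m-na}$ transfers the identity $f_{A,\{I_1^n\}} \equiv f_{A,\{I_2^n\}}$ to $f_{A,\{J_1^n\}} \equiv f_{A,\{J_2^n\}}$ on $\R_{\geq 0}$. Since $J_i$ is $\mathbf{m}$-primary, $\widetilde{J_i^n} = A$ and hence $f_{A,\{\widetilde{J_i^n}\}}(y) = 2y$, so Theorem \ref{MT}(3) applied to the common $\varepsilon$-density function yields $\varepsilon(J_i) = \lim_{n\to\infty} \ell_A(A/J_i^n)/(n^2/2) = e(J_i)$, the Hilbert--Samuel multiplicity; hence $e(J_1) = e(J_2)$. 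Localizing at $\mathbf{m}$, the ring $A_\mathbf{m}$ is a $2$-dimensional regular local ring, hence formally equidimensional, and Rees' theorem applied to the $\mathbf{m} A_\mathbf{m}$-primary ideals $J_1 A_\mathbf{m} \subseteq J_2 A_\mathbf{m}$ of equal multiplicity gives $J_2 A_\mathbf{m} \subseteq \overline{J_1 A_\mathbf{m}} = \overline{J_1}A_\mathbf{m}$. Since $J_2$ and $\overline{J_1}$ are homogeneous and $A_0 = k$, this inclusion descends to $J_2 \subseteq \overline{J_1}$ in $A$, and multiplying through by the nonzerodivisor $f$ (using that $A$ is normal) gives $I_2 = fJ_2 \subseteq f\overline{J_1} = \overline{fJ_1} = \overline{I_1}$, so $I_1 \subseteq I_2$ is a reduction.
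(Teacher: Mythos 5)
Your proof is correct and follows essentially the same route as the paper's: factor each $I_j$ in the $2$-dimensional UFD $k[X,Y]$ as a principal ideal times an $\mathbf{m}$-primary part, read off the gcd degree from the slope of the density function at large $x$, pass to the $\mathbf{m}$-primary parts, deduce equality of Hilbert--Samuel multiplicities, and conclude with Rees' theorem. Two small remarks. First, you handle the degenerate case $J_i = A$ explicitly, whereas the paper's wording (``$Q_j$ is $\mathbf{m}$-primary'') quietly sets aside the possibility that $I_j$ is principal; a harmless point, but your care is warranted. Second, Theorem \ref{MT} carries the hypothesis $0<\mathrm{height}(I)<d$, so it does not formally apply to the $\mathbf{m}$-primary ideals $J_i$. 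The conclusion you draw is nonetheless true and does not need the $\varepsilon$-density detour: from $f_{A,\{J_1^n\}}\equiv f_{A,\{J_2^n\}}$ one gets $e(J_1)=e(J_2)$ directly by integrating over a large interval $[0,c]$ using the identity in Theorem \ref{0.1}(3), together with $\sum_{m=0}^{cn}\ell_k\big((J_i^n)_m\big)=\sum_{m=0}^{cn}\ell_k(A_m)-\ell_A(A/J_i^n)$ for $c\gg 0$. The paper sidesteps all of this by computing $\varepsilon(I_j)=e(Q_j)$ from $\widetilde{I_j^n}/I_j^n\cong A/Q_j^n$ and comparing the $\varepsilon$-density functions of the $I_j$'s themselves, which stays within the stated hypotheses.
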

\begin{proof}
Let $\mathbf{m}=(X,Y)$ denote the homogeneous maximal ideal of $A$. Since $A$ is a two-dimensional UFD, we may write $$I_j = (g_j)\cdot Q_j \quad \mbox{ for $j=1,2$,}$$
where $g_j$ is a homogeneous element and $Q_j$ is an $\mathbf{m}$-primary homogeneous ideal. Notice that $\widetilde{I_j^n} = (g_j^n)$ for all $n\geq 1$. So
\[\ell_A\left(H^0_{\mathfrak{m}}\left(A/I_j^n\right)\right)=\ell_A\big(\widetilde{I_j^n}/I_j^n\big)=\ell_A(A/Q_j^n),\]
as $g_j$ is a nonzerodivisor on $A$. It thus follows that $\varepsilon(I_j)=e(Q_j)$. There exists an integer $c>\max\{\deg g_1, \deg g_2\}$ such that $(I_j^n)_m={(\tilde I_j^n)}_m$ for every $m \geq cn$. So for all real numbers $x\geq c$,
$$f_{A,\{ I^n_j\}}(x)=\lim_{n\to\infty}\dfrac{\ell_k \big((I_j^n)_{\lfloor xn\rfloor}\big)}{n/2} = \lim_{n\to\infty}\dfrac{\ell_k \big((\widetilde {I_j^n})_{\lfloor xn\rfloor}\big)}{n/2} = \lim_{n\to\infty}\dfrac{\ell_k \big((g_j^n)_{\lfloor xn\rfloor}\big)}{n/2} = 2(x-\deg g_j).$$ The hypothesis then implies that $\deg g_1 = \deg g_2$. From the inclusion $(g_1) = \widetilde{I_1} \subseteq \widetilde{I_2} = (g_2)$, we conclude that $(g_1) = (g_2)$. Therefore, $Q_1 \subseteq Q_2$ and $f_{A,\{\widetilde {I^n_1}\}}(x)=f_{A,\{\widetilde {I^n_2}\}}(x)$. Hence $f_{\varepsilon(I_1)}(x)=f_{\varepsilon(I_2)}(x)$. Thus $e(Q_1)=e(Q_2)$. The statement now follows from Rees's theorem.
\end{proof}

\subsection{When the filtration \texorpdfstring{$\{\widetilde {I^n}\}_{n\in\N}$}{}  is Noetherian}

\begin{cor}\label{rational}
Let $A$ be a standard graded domain over a field $k$ and let $I\subseteq A$ be a homogeneous ideal such that $\{\widetilde {I^n}\}_{n\in\N}$ is a Noetherian filtration.
Then $f_{\varepsilon(I)}$ is a piecewise polynomial function in the following sense: There exists a set of nonnegative rational numbers $a_1< \cdots <a_r$ and polynomials ${\bf q}_1(x), \ldots, {\bf q}_r(x)$ in $\Q[x]$ of degrees at most $\dim A-1$ such that $$f_{\varepsilon(I)}(x) = \begin{cases}
                                   0 & \text{if}\;\; x<a_1 \;\text{or}\;x>a_r\\
                                   {\bf {q}}_i(x) & \text{if}\;\; a_i<x<a_{i+1}, \;\text{where}\; i=1,\ldots,r-1                                                                                                                                                                                                              \end{cases}.
$$
In particular, $\varepsilon(I)$ is a rational number.
\end{cor}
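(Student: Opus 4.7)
The plan is to derive the corollary from the decomposition $f_{\varepsilon(I)}(x) = f_{A,\{\widetilde{I^n}\}}(x) - f_{A,\{I^n\}}(x)$ established in Theorem \ref{MT}, by applying Theorem \ref{t3} to each of the two filtrations separately. Since everything we need is already in place, the proof reduces to bookkeeping with generator bidegrees.

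First, I would apply Theorem \ref{t3} to the ordinary Rees filtration $\{I^n\}$. Because $A$ is a domain, the bigraded Rees algebra $A[It]$ is a domain, so any nonzero homogeneous generator of $I$ of degree $d_1$ gives a nonzerodivisor of bidegree $(d_1,1)$. Theorem \ref{t3} then shows $f_{A,\{I^n\}}$ is piecewise polynomial with integer breakpoints $d_1 < \cdots < d_l$ and pieces in $\Q[x]$ of degree $\leq d-1$. For the saturated filtration, the Noetherian hypothesis means the saturated Rees algebra $\widetilde R = \oplus_{n\geq 0}\widetilde{I^n}t^n$ is a finitely generated bigraded $A$-algebra, and as a subalgebra of the domain $A[t]$ it is also a domain. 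Writing the generator bidegrees of $\widetilde R$ as $(d'_1,e'_1),\dots,(d'_s,e'_s)$ with distinct ratios $d'_1/e'_1 < \cdots < d'_{l'}/e'_{l'}$, a nonzero element of smallest bidegree is a nonzerodivisor, so Theorem \ref{t3} again applies and $f_{A,\{\widetilde{I^n}\}}$ is piecewise polynomial on the partition by the rational breakpoints $d'_j/e'_j$, with pieces in $\Q[x]$ of degree $\leq d-1$. A short argument with generator-bidegree averages (or Lemma \ref{alpha_beta}(1)) identifies $\alpha_I = d'_1/e'_1$, so the left endpoint of the support of $f_{A,\{\widetilde{I^n}\}}$ is itself a rational breakpoint.

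Taking the difference and refining the two partitions to a common one, I get a finite ordered set of rational breakpoints $a_1 < \cdots < a_r$ (including $\alpha_I$ and $d_l$) such that on each open subinterval $(a_i, a_{i+1})$ the function $f_{\varepsilon(I)}$ is a polynomial $\mathbf q_i(x) \in \Q[x]$ of degree $\leq d-1$, while by Theorem \ref{MT}(2) it vanishes outside $[\alpha_I, d_l]$. This gives the claimed piecewise polynomial description with $a_1 = \alpha_I$ and $a_r = d_l$. For the final claim, integration yields
\[
\varepsilon(I) \;=\; \int_0^\infty f_{\varepsilon(I)}(x)\,dx \;=\; \sum_{i=1}^{r-1}\int_{a_i}^{a_{i+1}}\mathbf q_i(x)\,dx,
\]
and each summand is the integral of a polynomial in $\Q[x]$ over an interval with rational endpoints, hence lies in $\Q$.

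The only step requiring mild care is the identification $\alpha_I = d'_1/e'_1$ in the Noetherian setting: a priori $\alpha_I$ is a volume/pseudoeffectivity threshold and could be irrational, but when $\widetilde R$ is finitely generated it collapses to the minimal generator ratio of $\widetilde R$, which is rational. Everything else is a direct application of Theorem \ref{t3} and Theorem \ref{MT}, so I do not anticipate a genuine obstacle beyond this bookkeeping.
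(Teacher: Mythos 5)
Your proof is correct and follows essentially the same route as the paper's: apply Theorem~\ref{t3} to both Rees algebras $A[It]$ and $\oplus_{n}\widetilde{I^n}t^n$ (which are domains, so the nonzerodivisor hypothesis holds), refine the resulting rational breakpoint sets to a common one, and subtract using $f_{\varepsilon(I)}=f_{A,\{\widetilde{I^n}\}}-f_{A,\{I^n\}}$. The identification $\alpha_I=d'_1/e'_1$ that you single out as the delicate step is not actually needed for the corollary, since Theorem~\ref{t3} already gives the vanishing of $f_{A,\{\widetilde{I^n}\}}$ to the left of its smallest rational breakpoint $d'_1/e'_1$ directly, without invoking the pseudoeffectivity threshold.
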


\begin{proof}
Let $h_1< \cdots <h_{r_1}$ be a distinct set of degrees of a homogeneous generating set of $I$. Hence the Rees algebra $A[It]$ is generated in bidegrees $(h_1, 1), \ldots, (h_{r_1},1)$. Set $h_{r_1+1} = \infty $. Let $\oOplus_{n\geq 0}\widetilde {I^n}t^n$ be generated in bidegrees $\{(d_1, e_1),\ldots, (d_s, e_s)\}$ as a bigraded $k$-algebra. Without any loss of generality, we may assume that $d_1/e_1 < \cdots < d_{r_2}/e_{r_2}$ are the possible distinct rationals from the set $\{d_i/e_i\}_{i=1}^s$. Further, let $d_{r_2+1}/e_{r_2+1} = \infty$.

Then the function $f_{A,\{I^n\}}$ agrees with a polynomial with rational coefficients on each open interval $(h_i, h_{i+1})$. Similarly, $f_{A,\{\widetilde {I^n}\}}$ is given by a polynomial with rational coefficients on every interval $(d_j/e_j, d_{j+1}/e_{j+1})$.
If we present the set
$$\{h_1, \ldots, h_{r_1}, d_1/e_1, \ldots, d_{r_2}/e_{r_2}\} = \{a_1 < \cdots <a_r\}$$
then the corollary follows as $f_{\varepsilon(I)} = f_{A,\{\widetilde {I^n}\}}-f_{A,\{I^n\}}$.
\end{proof}

\begin{rmk}\label{Herzog-hibi??}
The rationality of $\varepsilon(I)$ is known in a more general context: If $I$ is an ideal in a local ring $(A,\mathbf{m})$ such that $\{\widetilde {I^n}\}_{n\in\N}$ is a Noetherian filtration, then by \cite[Theorems 2.4 and 3.3]{HPV08} the function $\ell_A(\widetilde {I^n}/ I^n)$ is of quasi-polynomial type. Moreover, all the associated polynomials have the same degree and the same leading coefficient. Consequently, $\varepsilon(I)$ is a rational number.
\end{rmk}

\begin{ex}\label{mono}
Let $I$ be a monomial ideal in a standard graded polynomial ring $A$ over a field $k$. Then by \cite[Theorem 3.2]{HHT07} the filtration $\{\widetilde {I^n}\}_{n\in\N}$ is Noetherian. In particular, the density function $f_{A,\{\widetilde {I^n}\}}$ and therefore the $\varepsilon$-density function $f_{\varepsilon(I)}$ is a piecewise polynomial function and $\varepsilon(I)$ is a rational number. This fact also follows from \cite[Theorem 5.1]{JM13}.

Here we give a concrete  example. Let $A = k[X, Y, Z]$ and let
$$I= (X^2Y^3, X^3Y^2, XY^2Z^4, XY^3Z^3) = (X)\cap (Y^2)\cap (X^2, Z^3)\cap (X^3, Y^3, Z^4).$$
Then by \cite[Lemma $3.1$]{HHT07}
$$\widetilde {I^n} = {\big(\widetilde {I}\big)}^n = {(X^2Y^2, XY^2Z^3)}^n \quad\mbox{for all}\quad n\geq 1$$
and by {\sc Macaulay2} the bigraded Hilbert series for $\oOplus_{(m,n)\in\N^2}\big(\widetilde{I^n}\big)_mt^n$ is
$$\sum_{(m,n)\in \N^2}\ell_k\big((\widetilde {I^n})_m\big)x^my^n = \dfrac{1-x^7y}{(1-x)^3(1-x^6y)(1-x^4y)}.$$
Then 
$$f_{A,\{\widetilde {I^n}\}}(x) = \lim_{n\to \infty}\dfrac{\ell_k \big((\widetilde {I^n})_m\big)}{n^2/3!} = \begin{cases}
                                                                                                            0 & \mbox{if}\;\; x\leq 4\\
                                                                                                            \frac{9}{2}(x-4)^2 & \mbox{if}\;\; 4\leq x\leq 6\\
                                                                                                            3x^2-18x+18 & \mbox{if}\;\; x\geq 6
                                                                                                           \end{cases}.
$$
\end{ex}

\subsection{When \texorpdfstring{$\varepsilon(I) = 0$}{}}

Then $f_{A,\{\widetilde {I^n}\}} = f_{A,\{ I^n\}}$. In particular, $\alpha_I = d_1$ and the volume function $\mbox{vol}_X$ is a piecewise polynomial function with rational coefficients when restricted to the ray $\{xH-E\mid x\in \R_{\geq 0}\}$ of $N^1(X)_{\R}$.

\subsection{When \texorpdfstring{$I$}{I} is  generated in  equal degrees \texorpdfstring{$d_1 = \cdots = d_l$}{d1=...=dl}}
Then there are two possibilities:
\begin{enumerate}
\item  $\varepsilon(I) = 0$, and in this case $d_l=\alpha_I$,
\item $\varepsilon(I) \neq 0$, and in this case $d_l>\alpha_I$ and $\varepsilon(I) \geq (d_l-\alpha_I)^d\cdot e_0(A)$.
\end{enumerate}
Note that $d_1 = \cdots = d_l$ implies that $$f_{\varepsilon(I)}(x) = \begin{cases}
                                                                       f_{A,\{\widetilde {I^n}\}}(x) & \text{if}\;\;\alpha_I \leq x < d_l\\
                                                                       0 & \text{if}\;\; x\leq \alpha_I\;\;\text{or}\;\; x>d_l
                                                                      \end{cases}.
$$
On the other hand, the log-concavity relation for volume functions implies
$$f_{A,\{\widetilde{I^n}\}}(x) \geq d\cdot (x-\alpha_I)^{d-1}e_0(A)$$
for all real numbers $x>\alpha_I$. Hence, $\varepsilon(I) = 0$ if and only if $d_l = \alpha_I$. Further
$$\varepsilon(I) = \int_0^{\infty}f_{\varepsilon(I)}(x)dx = \int_{\alpha_I}^{d_l}f_{A, \{\widetilde{I^n}\}}(x)dx \geq de_0(A)\cdot \int_{\alpha_I}^{d_l}(x-\alpha_I)^{d-1}dx = (d_l-\alpha_I)^d\cdot e_0(A).$$

\subsection{When \texorpdfstring{$\alpha_I = \beta_I$}{alphaI=betaI}}
In this case $\varepsilon(I)$ is an algebraic number. This is because
\[f_{A, \{\widetilde {I^n}\}}(x)=
\begin{cases}
0 & \mbox{ if } x \leq \alpha_I\\
d \cdot (xH-E)^{d-1} & \mbox{ if } x \geq \alpha_I
\end{cases}.\]
Since $f_{A, \{\widetilde {I^n}\}}$ is a continuous function so $\alpha_I$ must be a root of the polynomial $(xH-E)^{d-1}$ and therefore is an algebraic number. Let $I$ be generated in degrees $d_1\leq \cdots \leq d_l$. Then
\[\varepsilon(I)=\int_{0}^{d_l}\big(f_{A, \{\widetilde {I^n}\}}(x)- f_{A, \{I^n\}}(x)\big)dx=\int_{\alpha_I}^{d_l}f_{A, \{\widetilde {I^n}\}}(x)dx-\int_{d_1}^{d_l} f_{A, \{I^n\}}(x) dx,\]
where the first term is an algebraic number and the second term is a rational number. In particular, if $\alpha_I= \beta_I$ and $\varepsilon(I)\neq 0$ then $\beta_I< d_l$.

\section{Density functions in dimensions two and three}\label{sec7}

\begin{propose}
Let $A$ be a two dimensional standard graded domain over an algebraically closed field $k$. Let $I\subseteq A$ be a nonzero homogeneous ideal. Then $\alpha_I = \beta_I$ and $\varepsilon(I)$ is a rational number.
\end{propose}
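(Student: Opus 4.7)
The plan is to exploit the low-dimensional geometry: since $\dim A = 2$, the blow-up $\pi\colon X\to V$ is a birational morphism of one-dimensional integral projective schemes, and on a curve the theory of divisor cones collapses to bookkeeping with degrees. (In the trivial edge case when $I$ is $\mathbf m$-primary one has $\varepsilon(I)=0$ and $\widetilde{I^n}=A$ for all large $n$, so we may assume $0<\mathrm{height}(I)<2$, i.e., $\mathrm{height}(I)=1$, in line with Notations \ref{densat}.)

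First I would prove $\alpha_I=\beta_I$. Since $V=\mathrm{Proj}\,A$ is a one-dimensional integral projective scheme, so is $X$. On such a curve the degree map identifies $N^1(X)_{\mathbb R}$ with $\mathbb R$, and both $\mathrm{Nef}(X)$ and $\overline{\mathrm{Eff}}(X)$ coincide with $[0,\infty)$ under this identification: a Cartier divisor $D$ on a curve is nef iff $\deg D\geq 0$, while any class of nonnegative degree is realized by an effective $\mathbb R$-divisor of the form $t\cdot P$ for a closed point $P$ and $t\in\mathbb R_{\geq 0}$. Hence $xH-E$ is nef iff it is pseudoeffective iff $x\deg H\geq \deg E$, which gives
\[
\alpha_I=\beta_I=\frac{\deg E}{\deg H}\in\mathbb Q_{>0},
\]
the positivity of $\deg H=e_0(A)$ and of $\deg E$ following respectively from the ampleness of $H$ on $V$ and the fact that $E$ is a nonzero effective Cartier divisor on $X$ (because $\mathcal I\mathcal O_X\subsetneq\mathcal O_X$ in the height-one case).

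For the rationality of $\varepsilon(I)$, I would combine the explicit formulas for the two density functions. By Theorem \ref{t2sat}(3) applied with $d=2$ and $\alpha_I=\beta_I$,
\[
f_{A,\{\widetilde{I^n}\}}(x)=\begin{cases}0 & x\leq \alpha_I,\\ 2\bigl(x\deg H-\deg E\bigr) & x\geq \alpha_I,\end{cases}
\]
a piecewise linear function with integer coefficients and rational break point. By Theorem \ref{t3}, $f_{A,\{I^n\}}$ is piecewise polynomial with rational coefficients of degree at most $d-1=1$, with breakpoints among the integers $d_1<\cdots<d_l$. By Theorem \ref{MT}(2) the support of $f_{\varepsilon(I)}=f_{A,\{\widetilde{I^n}\}}-f_{A,\{I^n\}}$ lies in $[\alpha_I,d_l]$, so
\[
\varepsilon(I)=\int_{\alpha_I}^{d_l}f_{A,\{\widetilde{I^n}\}}(x)\,dx-\int_{d_1}^{d_l}f_{A,\{I^n\}}(x)\,dx,
\]
which is a difference of definite integrals of piecewise polynomials with rational coefficients over intervals with rational endpoints, hence itself rational.

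The only subtle point, which I would pause to justify, is that on a possibly singular projective curve $X$ one still has $\overline{\mathrm{Eff}}(X)=\mathrm{Nef}(X)$ inside $N^1(X)_{\mathbb R}\cong\mathbb R$. This reduces to the observation that the only closed integral curve contained in the one-dimensional integral scheme $X$ is $X$ itself, so numerical equivalence of Cartier divisors is detected by the single intersection number $\deg$, and both cones clearly equal $[0,\infty)$ under this identification.
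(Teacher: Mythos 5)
Your proof is correct and takes essentially the same route as the paper: both deduce $\alpha_I=\beta_I$ from the coincidence of the pseudoeffective and nef cones on a projective curve, both identify $\alpha_I=\deg E/\deg H\in\mathbb Q$ as the root of the linear form $(\deg H)x-\deg E$, and both conclude rationality of $\varepsilon(I)$ from the integral formula for the $\varepsilon$-density function. The only (small) difference is that you spell out the curve-theoretic fact $\overline{\mathrm{Eff}}(X)=\mathrm{Nef}(X)$ via $N^1(X)_{\mathbb R}\cong\mathbb R$ rather than citing Lazarsfeld as the paper does; this makes your argument slightly more self-contained but does not change the substance.
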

\begin{proof}
As $X = \mathbf{Proj} \left(\oplus_{n\geq 0}\mathcal{I}^n\right)$ is a projective curve, it follows from \cite[Example 1.2.3]{Laz04a} that $\overline{\mathrm{Eff}}(X)=\overline{\mathrm{Amp}(X)}=\mathrm{Nef}(X)$. Consequently, $\alpha_I=\beta_I$. Now as seen above $\alpha_I$ is a root of the polynomial $(\deg H)x-(\deg E)$ and hence it is a rational number. Thus $\varepsilon(I)$ is also rational.
\end{proof}

\begin{propose}\label{dimthree}
Let $A$ be a three dimensional standard graded domain over an algebraically closed field $k$. Let $I\subseteq A$ be a nonzero homogeneous ideal. Then $f_{A, \{\widetilde {I^n}\}}$ and $f_{\varepsilon(I)}$ are given by piecewise (possibly countably many pieces) real polynomial functions of degrees at most two.
\end{propose}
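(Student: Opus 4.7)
The key observation is that since $d = \dim A = 3$, the blowup $X$ has dimension $\dim X = d-1 = 2$, so $X$ is a projective surface. The plan is to exploit the Zariski--Fujita decomposition of pseudoeffective $\R$-divisors on surfaces to describe the volume function along the ray $\{xH - E \mid x \in \R_{\geq 0}\}$ as a piecewise polynomial of degree at most two in $x$. First I would pass to a resolution of singularities $\pi' \colon \widetilde{X} \to X$, which exists for surfaces over any field and which preserves the volume of the pullback of a Cartier divisor, so without loss of generality I may assume $X$ is smooth. By Theorem \ref{t2sat}, the function $f_{A,\{\widetilde{I^n}\}}(x) = 3 \cdot \mathrm{vol}_X(xH - E)$ is supported on $[\alpha_I, \infty)$, and for $x$ in that range, the divisor $xH - E$ is pseudoeffective.

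For each $x \in [\alpha_I, \infty)$ I would apply Zariski--Fujita decomposition to write $xH - E = P(x) + N(x)$, where $P(x)$ is nef, $N(x) = \sum_{i} a_i(x) N_i(x)$ is an effective $\R$-combination of prime divisors with negative definite intersection matrix satisfying $P(x) \cdot N_i(x) = 0$, and $\mathrm{vol}_X(xH - E) = P(x)^2$. The main step is to analyze how this decomposition varies with $x$. I would partition $[\alpha_I, \infty)$ into maximal subintervals on which the support $\{N_1, \ldots, N_r\}$ of $N(x)$ is fixed. On such an interval, the coefficients $a_i(x)$ are determined by the linear system
\[
\sum_{i=1}^r a_i(x)\,(N_i \cdot N_j) \; = \; (xH - E) \cdot N_j, \qquad 1 \le j \le r,
\]
whose coefficient matrix is invertible (being negative definite) and whose right-hand side is affine in $x$. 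Hence each $a_i(x)$ is affine in $x$, so $P(x) = xH - E - N(x)$ is also affine, and $P(x)^2$ is a polynomial of degree at most two. Since the possible subintervals correspond to Zariski chambers of $X$ cut by our ray, their number is at most countable.

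For the epsilon density function, the identity $f_{\varepsilon(I)} = f_{A,\{\widetilde{I^n}\}} - f_{A,\{I^n\}}$ from Theorem \ref{MT}, combined with Theorem \ref{t3} which states that $f_{A,\{I^n\}}$ is piecewise polynomial of degree at most $d-1 = 2$ with only finitely many pieces, yields that $f_{\varepsilon(I)}$ inherits the same piecewise polynomial structure with possibly countably many pieces of degree at most two. The main obstacle I anticipate is making the chamber decomposition along the ray rigorous and verifying that the polynomial pieces $3\,P(x)^2$ glue continuously across chamber boundaries where the support of $N(x)$ jumps; here the continuity of the volume function (Theorem \ref{contivol}) combined with the known continuity of the Zariski decomposition on the big cone should handle the gluing, while the countability follows from the fact that only countably many prime divisors on $X$ can appear as components of a negative part.
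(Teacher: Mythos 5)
Your proof takes essentially the same approach as the paper: both pass to a resolution of singularities $X'\to X$, apply Zariski--Fujita decomposition on the resulting smooth surface, and deduce that $\mathrm{vol}_{X'}(xH'-E')$ is piecewise quadratic by noting that the coefficients of the negative part $N(x)$ are affine in $x$ on each region where its support is fixed (via the inverse of the negative-definite intersection matrix, exactly as you write). The paper obtains this by citing the Zariski chamber decomposition of Bauer--K\"uronya--Szemberg \cite{BKS04}, which packages the linearity you re-derive together with the polyhedrality and local finiteness of the chambers; your alternative countability argument --- negative curves are rigid in their numerical class, and $N^1(X')$ is countable, so only countably many prime divisors can ever occur in a negative part --- is a nice elementary substitute for local finiteness. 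The one step you flag as an ``obstacle'' but do not actually close is that the level sets of $x\mapsto\mathrm{Supp}(N(x))$ along the ray are intervals: this is supplied either by the polyhedrality of the Zariski chambers in \cite{BKS04} (the paper's route), or by a direct convexity check: if $\mathrm{Supp}(N(x_0))=\mathrm{Supp}(N(x_1))=S$, then for $\lambda\in[0,1]$ the divisor $\lambda P(x_0)+(1-\lambda)P(x_1)$ is nef and orthogonal to every component of $S$, while $\lambda N(x_0)+(1-\lambda)N(x_1)$ is effective with support exactly $S$ and negative-definite intersection form, so by uniqueness of the Zariski decomposition the support at $\lambda x_0+(1-\lambda)x_1$ is again $S$.
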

\begin{proof}
If $I$ is of finite colength then $\widetilde {I^n} = A$ for all $n$, hence the assertion is obvious. So we assume that $I$ is not of finite colength. In view of Theorems \ref{t3} and \ref{MT}, it suffices to show that $f_{A,\{\widetilde {I^n}\}}(x)$ is given by a piecewise real polynomials having countably many pieces. From the proof of Theorem \ref{t2sat}, it follows that there exist Cartier divisors $H$ and $E$ on the projective surface $X = \mathbf{Proj} \left(\oplus_{n\geq 0}\mathcal{I}^n\right)$ such that
$$3\cdot\mathrm{vol}_X(xH-E) = 3\cdot\lim\limits_{n\to\infty}\dfrac{h^0\left(X,\mathcal{O}_{X}\left(\lfloor xn\rfloor H - nE\right)\right)}{n^2/2} = \lim\limits_{n\to\infty}\dfrac{\ell_k\big({(\widetilde{I^{n}})}_{\lfloor xn\rfloor}\big)}{n^2/3!} =: f_{A,\{\widetilde{I^n}\}}(x)$$
for all real numbers $x\geq 0$. Let $\varphi \colon X^{\prime} \to X$ be a resolution of singularities and consider the pullbacks $H^{\prime} = \varphi^*H$ and $E^{\prime} = \varphi^*E$ to $X^{\prime}$.
Since the volume function is a birational invariant, $$\mathrm{vol}_{X^{\prime}}(xH^{\prime}-E^{\prime}) = \mathrm{vol}_{X}(xH-E) = \frac{1}{3}\cdot f_{A,\{\widetilde{I^n}\}}(x).$$ Note that
$$\alpha_I =\min\left\{x\in\mathbb{R}_{\geq 0} \mid xH^{\prime}-E^{\prime}\in \overline{\mathrm{Eff}}(X^{\prime})\right\}\quad\mbox{and}\quad \beta_I = \min\left\{x\in\mathbb{R}_{\geq 0} \mid xH^{\prime}-E^{\prime}\in \mathrm{Nef}(X^{\prime})\right\}.$$ Now if $\alpha_I = \beta_I$, then $f_{A,\{\widetilde {I^n}\}}(x)$ is given by a single quadratic polynomial. So we may further assume that $\alpha_I<\beta_I$.

Since $X^{\prime}$ is a nonsingular projective surface, by \cite{BKS04} there is a locally ﬁnite decomposition of $\mathrm{Big}(X^{\prime})$ into rational locally polyhedral (convex) subcones such that on each subcone, the support of the negative part of the Zariski decomposition of the divisors in the subcone is constant, and the volume function is given by a single polynomial of degree $2$. This means that given $x>\alpha_I$, there is an open interval $(x_0,x_1)$ with $\alpha_I<x_0<x<x_1$, such that if $$D^{\prime}_{x_0}:=x_0H^{\prime} - E^{\prime} = P_{D^{\prime}_{x_0}} + N_{D^{\prime}_{x_0}},\quad \text{and}\quad D^{\prime}_{x_1}:=x_1H^{\prime} - E^{\prime} = P_{D^{\prime}_{x_1}} + N_{D^{\prime}_{x_1}}$$ are their respective Zariski decompositions then for all $\lambda\in (0,1)$, $$\left(\lambda P_{D^{\prime}_{x_0}} + (1-\lambda) P_{D^{\prime}_{x_1}}\right) + \left(\lambda N_{D^{\prime}_{x_0}} + (1-\lambda) N_{D^{\prime}_{x_1}}\right)$$ is the Zariski decomposition of $\lambda D^{\prime}_{x_0} + (1-\lambda)D^{\prime}_{x_1} = \left(\lambda x_0 + (1-\lambda)x_1\right)H^{\prime} - E^{\prime}$. In other words,
\begin{align*}
 \mathrm{vol}_{X^{\prime}}\left(\left(\lambda x_0 + (1-\lambda)x_1\right)H^{\prime} - E^{\prime}\right) &= \left(\lambda P_{D^{\prime}_{x_0}} + (1-\lambda) P_{D^{\prime}_{x_1}}\right)^2\\
 &= \lambda^2 \big(P_{D^{\prime}_{x_0}}^2\big) + 2\lambda (1-\lambda) \big(P_{D^{\prime}_{x_0}}\cdot P_{D^{\prime}_{x_1}}\big) + (1-\lambda)^2 \big(P_{D^{\prime}_{x_1}}^2\big)
\end{align*}
for all $\lambda\in (0,1)$. Thus if $x=\lambda x_0 + (1-\lambda)x_1$, where $\lambda\in (0,1)$, then
\begin{align*}
 f_{A,\{\widetilde{I^n}\}}(x) &= 3\cdot\mathrm{vol}_{X^{\prime}}(xH^{\prime}-E^{\prime})\\
 &= \dfrac{3}{(x_1-x_0)^2}\left[(x_1-x)^2 \big(P_{D^{\prime}_{x_0}}^2\big) + 2(x_1-x)(x-x_0) \big(P_{D^{\prime}_{x_0}}\cdot P_{D^{\prime}_{x_1}}\big) + (x-x_0)^2 \big(P_{D^{\prime}_{x_1}}^2\big)\right]
\end{align*}
is a polynomial in $x$ of degree at most $2$.
\end{proof}

\begin{ex}[Nagata]\label{eg_Nagata}
Suppose that $A=\mathbb{C}[X,Y,Z]$ is the homogeneous coordinate ring of $\mathbb{P}^2_{\mathbb{C}}$, where $\mathbb{C}$ denotes the field of complex numbers. Let $s\geq 4$ be an integer and let $p_1,\ldots,p_{s^2}$ be $s^2$ general points in $\mathbb{P}^2_{\mathbb{C}}$. Consider the homogeneous ideal $I = \bigcap_{i=1}^{s^2} I(p_i)$, where $I(p_i)$ is the ideal generated by all homogeneous polynomials vanishing at $p_i$. Then the saturated Rees algebra $\oOplus_{n\geq 0}\widetilde{I^n}t^n$ is non-Noetherian and $$f_{A,\{\widetilde{I^n}\}}(x) = \begin{cases}
0 & \text{if}\;\; 0\leq x \leq s\\
3\cdot (x^2-s^2) & \text{if}\;\; x\geq s
\end{cases}.
$$
\end{ex}
\begin{proof}
Let $\pi \colon X \to \mathbb{P}^2_{\mathbb{C}}$ be the blow up of the points $p_1,\ldots,p_{s^2}$ with exceptional lines $E_1 = \pi^{-1}(p_1),\ldots, E_{s^2} = \pi^{-1}(p_{s^2})$ and let $E = E_1+\cdots + E_{s^2}$. Let $H^{\prime}$ be a linear hyperplane section of $\mathbb{P}^2_{\mathbb{C}}$ and let $H = \pi^*(H^{\prime})$. For all nonnegative integers $m$ and $n$,
\begin{align*}
H^0\left(X,\mathcal{O}_X\left(mH - nE\right)\right) &= H^0\left(\mathbb{P}^2_{\mathbb{C}}, \mathcal{O}_{\mathbb{P}^2_{\mathbb{C}}}(mH^{\prime})\otimes \mathcal{I}_{p_1}^{n}\otimes \cdots \otimes \mathcal{I}_{p_{s^2}}^{n}\right) = \bigcap\limits_{i=1}^{s^2} \left(I(p_i)^{n}\right)_m = \big(\widetilde{I^n}\big)_m.
\end{align*}
It was shown by Nagata \cite{Nag59} that the saturated Rees algebra $\oOplus_{n\geq 0}\widetilde{I^n}t^n$ is infinitely generated. The Riemann-Roch theorem for surfaces combined with \cite[Lemma $6.3$]{CS22} gives us that $$h^0\left(X,\mathcal{O}_X\left(mH - nE\right)\right) = \begin{cases}
0 & \text{if}\;\; 0\leq m \leq sn,\;n>0\\
\binom{m+2}{2} - s^2\binom{n+1}{2} & \text{if}\;\; m>sn+s-3,\;n\geq 0
\end{cases}.
$$ Our assertions are now immediate.
\end{proof}

\begin{ex}[Cutkosky]\label{eg_Cutkosky}
The following example was used in \cite[Example $4.4$]{CHT99} to show that $\lim_{n\to\infty} \frac{\mathrm{reg}(\widetilde{I^n})}{n}$ can be irrational. Let $k$ be an algebraically closed field. Then there exists a three dimensional standard graded domain $R$ over $k$ and a homogeneous ideal $I\subseteq R$ such that $$f_{R,\{\widetilde{I^n}\}}(x) = \begin{cases}
0 & \text{if}\;\; 0\leq x \leq \frac{1}{33}(6+\sqrt{3})\\
18\cdot (33x^2-12x+1) & \text{if}\;\; x\geq \frac{1}{33}(6+\sqrt{3})
\end{cases}.
$$ 
\end{ex}
\begin{proof}
Here we shall omit the details of the computations which can be found in \cite[Example $4.4$]{CHT99}. Let $C$ be an elliptic curve over $k$ and let $S=C\times C$. Let $\Delta \subset S$ be the diagonal, $p\in S$ a closed point and $A=\pi_1^{-1}(p), B=\pi_2^{-1}(p)$, where $\pi_i \colon S \to C$ are the projections. The intersection products on $S$ are given by $$\Delta^2 = A^2 = B^2 =0 \quad \text{and}\quad (A\cdot B) = (A\cdot \Delta) = (B\cdot \Delta) = 1.$$ Let $H=3A+6B+9\Delta$ and $D=A+B+\Delta$. The quadratic equation $(xH-D)^2 = 198x^2-72x+6 = 0$ has the roots $s_1 = \frac{1}{33}(6-\sqrt{3})$ and $s_2 = \frac{1}{33}(6+\sqrt{3})$. Note that $H$ is very ample on $S$ and set $R = \oOplus_{m\geq 0}H^0(S,\mathcal{O}_S(mH))$. Define $I = I_1\cap I_2\cap I_3$, where $I_1,I_2$ and $I_3$ are the defining homogeneous ideals of $A,B$ and $\Delta$ respectively. For all integers $m\geq 0$ and $n\geq 0$, we have $H^0(S,\mathcal{O}_S(mH-nD)) = \big(\widetilde{I^n}\big)_m$ and $$h^0\left(S,\mathcal{O}_S\left(mH - nD\right)\right) = \begin{cases}
0 & \text{if}\;\; 0\leq m < s_2n\\
\frac{(mH-nD)^2}{2} = 99m^2 - 36mn + 3n^2 & \text{if}\;\; m>s_2n
\end{cases}.
$$ Our claims are now immediate.
\end{proof}

\begin{ex}\label{edgeIdeal}
Let $k$ be an algebraically closed field and let $A=k[X,Y,Z]$ be the standard graded polynomial ring in the variables $X,Y,Z$ over $k$. Consider the monomial ideal $I=(XY,YZ,ZX)$ in $A$. Then $$f_{A,\{\widetilde{I^n}\}}(x) = \begin{cases}
0 & \text{if}\;\; x \leq \frac{3}{2}\\
3\cdot(2x-3)^2 & \text{if}\;\; \frac{3}{2}\leq x \leq 2\\
3\cdot(x^2 - 3) & \text{if}\;\; x\geq 2
\end{cases}$$
and $$f_{\varepsilon(I)}(x) = \begin{cases}
                              3\cdot (2x-3)^2 & \text{if}\;\; \frac{3}{2}\leq x \leq 2\\
                              0 & \text{if}\;\; x\leq \frac{3}{2}\;\;\text{or}\;\;x\geq 2
                             \end{cases}.$$
In particular, it reproves the assertion of \cite[Example $2.4$]{CHS10} that $\varepsilon(I) = 1/2$.
\end{ex}
\begin{proof}
Let $\pi \colon X \to \mathbb{P}^2_{k}$ be the blow up of the three points $p_1 = [1\colon 0\colon 0], p_2 = [0\colon 1\colon 0], p_3 = [0\colon 0\colon 1]$ with exceptional lines $E_1 = \pi^{-1}(p_1), E_2 = \pi^{-1}(p_2), E_3 = \pi^{-1}(p_3)$ and let $E = E_1+E_2+E_3$. Let $H^{\prime}$ be a linear hyperplane section of $\mathbb{P}^2_{k}$ and let $H = \pi^*(H^{\prime})$. For all integers $m\geq 0$ and $n\geq 0$, $$H^0\left(X,\mathcal{O}_X\left(mH - nE\right)\right) = \big(\widetilde{I^n}\big)_m.$$ The intersection products on $X$ are given by $$H^2 = 1, H\cdot E_i = 0, E_i^2 = -1,\;\text{and}\; E_i\cdot E_j = 0\;\text{for}\; i\neq j.$$ It is shown in \cite[Proposition $5.3$ and Example $4.7$]{HHT07} that for all $n\geq 0$, $$\widetilde{I^{2n}} = \big(\widetilde{I^2}\big)^n = {\left(X^2Y^2, Y^2Z^2, Z^2X^2, XYZ\right)}^n.$$ Thus, $\alpha_I = \frac{3}{2}$ and $\beta_I = 2$ as the ideal $\widetilde{I^2}$ is minimally generated in degrees $3,4$. So for $x\geq 2$, $$\mathrm{vol}_X(xH-E) = (xH-E)^2 = H^2x^2 - 2(H\cdot E)x + E^2 = x^2-3.$$ In view of Theorems \ref{t3} and \ref{t2sat}, there exists a quadratic polynomial $g(x)$ such that $$f_{A,\{\widetilde{I^n}\}}(x) = \begin{cases}
0 & \text{if}\;\; 0\leq x \leq \frac{3}{2}\\
g(x) & \text{if}\;\; \frac{3}{2}\leq x \leq 2\\
3\cdot (x^2 - 3) & \text{if}\;\; x\geq 2
\end{cases}
$$ and satisfying $g\left({3}/{2}\right) = 0$, $g(2) = 3$, and $g^{\prime}(2) = 12$. This gives that $g(x) = 3\cdot (2x-3)^2$. Since the ideal $I$ is generated in equal degrees $2$,
$$f_{A,\{I^n\}}(x) = \begin{cases}
0 & \text{if}\;\; 0\leq x \leq 2\\
3\cdot (x^2 - 3) & \text{if}\;\; x\geq 2
\end{cases}.$$ Thus $$\varepsilon(I) = \int\limits_{0}^{\infty}\left(f_{A,\{\widetilde{I^n}\}}(x)-f_{A,\{I^n\}}(x)\right)dx = 3\int\limits_{3/2}^2 (2x-3)^2 dx = \dfrac{1}{2}.$$
\end{proof}

\section{Acknowledgements}

The authors would like to thank Prof. Steven Dale Cutkosky for his valuable comments and suggestions. The authors would also like to thank the referee for  meticulously reading  the manuscript, and for numerous suggestions which greatly improved the exposition.

\bibliographystyle{alpha}
\bibliography{Reference}
\end{document}